\long\def\comment#1\endcomment{}
\theoremstyle{plain}
\newtheorem{theorem}{\sc Theorem}[section]
\newtheorem{lemma}[theorem]{\sc Lemma}
\newtheorem{prop}[theorem]{\sc Proposition}
\newtheorem{coroll}[theorem]{\sc Corollary}
\theoremstyle{plain}
\newtheorem{defn}[theorem]{\sc Definition}
\theoremstyle{exercise}
\newtheorem{remark}[theorem]{\sc Remark}
\newtheorem{example}[theorem]{\sc Example}
\newcommand*{\doublerightarrow}[2]{\mathrel{
  \settowidth{\@tempdima}{$\scriptstyle#1$}
  \settowidth{\@tempdimb}{$\scriptstyle#2$}
  \ifdim\@tempdimb>\@tempdima \@tempdima=\@tempdimb\fi
  \mathop{\vcenter{
    \offinterlineskip\ialign{\hbox to\dimexpr\@tempdima+1em{##}\cr
    \rightarrowfill\cr\noalign{\kern.5ex}
    \rightarrowfill\cr}}}\limits^{\!#1}_{\!#2}}}
\makeatletter \@addtoreset{equation}{section} \makeatother
\def\eqref#1{\thetag{\ref{#1}}}
\let\latexref=\ref
\def\ref#1{{\normalfont{\latexref{#1}}}}
\newcommand{\ldot}{{\:\raisebox{2,3pt}{\text{\circle*{1.5}}}}}
\newcommand{\udot}{{\:\raisebox{3pt}{\text{\circle*{1.5}}}}}
\def\dlim_#1{{\displaystyle\lim_{#1}}^\hdot}
\newcommand{\id}{\operatorname{\rm id}}
\newcommand{\eqto}{\mathrel{\stackrel{\sim}{\to}}}
\newcommand{\Ob}{\mathrm{Ob}}
\newcommand{\Mod}{\mathrm{Mod}}
\newcommand{\opp}{\mathrm{op}}
\renewcommand{\Bar}{\mathrm{Bar}}
\newcommand{\GS}{\mathrm{GS}}
\newcommand{\Hom}{\mathrm{Hom}}
\newcommand{\Bimod}{\mathrm{Bimod}}
\newcommand{\free}{\mathrm{free}}
\newcommand{\RHom}{\mathrm{RHom}}
\newcommand{\Hoch}{\mathrm{Hoch}}
\newcommand{\op}{\mathrm{op}}
\newcommand{\Mon}{{\mathscr{M}on}}
\newcommand{\colim}{\mathrm{colim}}
\newcommand{\Id}{\mathrm{Id}}
\newcommand{\Sets}{\mathscr{S}ets}
\newcommand{\Vect}{\mathscr{V}ect}
\newcommand{\Cat}{{\mathscr{C}at}}
\newcommand{\Top}{{\mathscr{T}op}}
\newcommand{\Fun}{{\mathrm{Fun}}}
\newcommand{\str}{\mathrm{str}}
\renewcommand{\k}{\Bbbk}
\newcommand{\pprime}{{\prime\prime}}
\newcommand{\tot}{\mathrm{tot}}
\newcommand{\Ord}{\mathrm{Ord}}
\newcommand{\Tot}{\mathrm{Tot}}
\renewcommand{\emptyset}{\varnothing}
\newcommand{\height}{\mathrm{ht}}
\newcommand{\codim}{\mathrm{codim}}
\renewcommand{\min}{\mathrm{min}}
\renewcommand{\max}{\mathrm{max}}
\newcommand{\Lan}{\mathrm{Lan}}
\newcommand{\Ran}{\mathrm{Ran}}
\newcommand{\lk}{\mathbf{lk}}
\newcommand{\norm}{\mathrm{norm}}
\renewcommand{\colim}{\mathrm{colim}}
\newcommand{\Nat}{\mathrm{Nat}}
\newcommand{\Bicat}{\mathrm{Bicat}}
\newcommand{\sevafigc}[4]{\begin{figure}[h]\centerline{
 \epsfig{file=#1,width=#2,angle=#3}}
\bigskip\caption{#4}\end{figure}}
\title{\sc The category $\Theta_2$, derived modifications, and \\ deformation theory of monoidal categories}
\author{\sc Piergiorgio Panero and Boris Shoikhet}
\date{}
\begin{document}\maketitle
{\footnotesize
\begin{center}{\parbox{4,5in}{{\sc Abstract.}
A complex $C^\udot(C,D)(F,G)(\eta, \theta)$, generalising the Davydov-Yetter complex of a monoidal category [D], [Y], is constructed. Here $C,D$ are $\k$-linear (corresp., dg) bicategories, $F,G\colon C\to D$ are $\k$-linear (corresp., dg) strong functors, $\eta,\theta\colon F\Rightarrow G$ are strong natural transformations. Morally, it is a complex of ``derived modifications'' $\eta \Rrightarrow \theta$, likewise for the case of dg categories one has the complex of ``derived natural transformations'' $F\Rightarrow G$, given by the Hochschild cochain complex of $C$ with coefficients in $C$-bimodule $D(F-,G=)$.

The complex $C^\udot(C,D)(F,G)(\eta,\theta)$ naturally arises from a 2-cocellular dg vector space $A(C,D)(F,G)(\eta,\theta)\colon \Theta_2\to C^\udot(\k)$, as its $\Theta_2$-totalization (here $\Theta_2$ is the category dual to the category of Joyal 2-disks, [J]). 

It is shown that for a $\k$-linear monoidal category $C$ the third cohomology $H^3(C^\udot(C,C)(\Id,\Id)(\id,\id)))$ is isomorphic to the vector space of the outer (modulo twists) infinitesimal deformations of the $\k$-linear monoidal category which we call the {\it full} deformations. It means that the following data is to be deformed: (a) the underlying dg category structure, (b) the monoidal product on morphisms (the monoidal product on objects is a set-theoretical datum and is maintained under the deformation), (c) the associator. The data (a), (b), (c) is subjects to the (infinitesimal versions of) numerous monoidal compatibilities, which we interpret as the closeness of the corresponding degree 3 element. 
Similarly, $H^2(C^\udot(C,D)(F,F)(\id,\id))$ is isomorphic to the vector space of the outer infinitesimal deformations of the strong monoidal functor $F$.

A relative totalization $Rp_*A(C,D)(F,F)(\id,\id)$ along the projection $p\colon \Theta_2\to\Delta$ is defined, and it is shown to be a cosimplicial monoid, which fulfils the Batanin-Davydov 1-commutativity condition [BD]. Then it follows from loc.cit. that $C^\udot(C,D)(F,F)(\id,\id)$ is a $C_\ldot(E_2;\k)$-algebra. 
Conjecturally, $C^\udot(C,C)(\Id,\Id)(\id,\id)$ is a $C_\ldot(E_3;\k)$-algebra; however the proof requires more sophisticated methods.

}}
\end{center}
}

\newpage

\section*{\sc Introduction}
\subsection{\sc }
In formal deformation theory, with a deformation functor (defined on a suitable category of commutative differential graded (dg) coalgebras) one associates a differential graded Lie algebra (or, more generally, an $L_\infty$ algebra), whose completed chain Chevalley-Eilenberg complex pro-represents the deformation functor. In characteristic 0, such representability has been a proven statement ([L-DAGX], [P], [GLST]). Sometimes the underlying complex of the dg Lie algebra (corresp., of an $L_\infty$ algebra) is called the deformation complex. In many particular deformation problems, such complex is easier to find than the entire dg Lie algebra structure. The first cohomology of the deformation complex is isomorphic to the infinitesimal deformations mod out equivalences, whence the dg Lie algebras encodes, via the Maurer-Cartan equation, the formal (global) deformations mod out formal equivalences. When the ``base of deformation'' can be chosen as a $E_n$-coalgebra (in a suitable sense, e.g., a coalgebra over the dg operad $C_\ldot(E_n;\k)$, or a coalgebra over the Koszul resolution $\mathrm{hoe}_n$ of the operad $e_n=H_\ldot(E_n;\k)$, etc.), such that it gives rise to a deformation functor defined on a suitable category of $E_n$-coalgebras,  the shifted by $[n]$ deformation complex has a structure of $E_{n+1}$-algebra, [Tam2], [L-DAGX].

The higher structures on a deformation complex are important if we are interested by the formality phenomena. For instance, the Deligne conjecture asserts that the Gerstenhaber bracket on the Hochschild cohomological complex of a (dg) algebra $A$ (which defines a dg Lie algebra structure) can be ``extended'' to a $C_\udot(E_2;\k)$-algebra structure (it has found several proofs, see [MS1,2], [KS], [Tam1], [BB]). It was used by Tamarkin [Tam3] in his proof of (a stronger version of) the Kontsevich formality theorem [K]. The idea is roughly that the more higher structure on the deformation complex we consider, the more rigid the deformation complex with this structure becomes. Thus, the original formality theorem of Kontsevich was stated for dg Lie algebra structure on the Hochschild cohomological complex of a polynomial algebra, and was proven by methods inspired by the Topological Quantum Field Theory.
The idea of Tamarkin was to consider the entire higher structure of homotopy 2-algebra on the complex, and using the aforementioned rigidity, it can be proven by homotopy theoretical methods. The ``transcendental part'' of the proof becomes hidden in a solution to the Deligne conjecture.

\subsection{\sc }
In this paper, we are interested in deformation theory of monoidal dg categories (and more generally of dg bicategories). 
Our interest originates in (partly open) deformation theory of associative bialgebras. In this case, the deformation complex was constructed by Gerstenhaber and Schack [GS], its intrinsic interpretation in terms of abelian category of tetramodules over the bialgebra was given in [Sh1]. This interpretation made it possible to compute the Gerstenhaber-Schack cohomology for the case of $B=S(V)$, the (co)free (co)commutative bialgebra.  The answer was (as it had been conjectured by Kontsevich) 
$$
H^k_{\GS}(S(V))=\bigoplus_{a+b=k}\Hom(V^{\otimes a}, V^{\otimes b})=S^k(V^*[-1]\oplus V[-1])
$$
The symmetric algebra $H^\udot(B){\GS}=S(V^*[-1]\oplus V[-1])$ has a Poisson algebra structure of degree -2, which comes from the degree -2 pairing $V^*[-1]\oplus V^*[-1]\to\k$. It gives, along with the graded commutative product, a structure of $e_3$-algebra, where $e_3=H_\ldot(E_3;\k)$. An interesting and important open question is how one can lift this structure to the Gerstenhaber-Schack complex for $B=S(V)$, or for general $B$.

Motivated by the problem of finding the higher structures on the (Gerstenhaber-Schack) deformation complex of a bialgebra $B$, we consider in this paper the deformation theory of a (dg or $\k$-linear) monoidal category $C$. In the example associated to deformation theory of $B$, the monoidal category $C=\Mod(B)$ is the category of left modules over the underlying algebra $B$, it is known to be a monoidal category: for $M,N\in \Mod(B)$, the tensor product $M\otimes_\k N$ is a $B\otimes_\k B$-module, then the precomposition along the coproduct map $\Delta\colon B\to B\otimes_\k B$ makes $M\otimes_\k N$ a $B$-module. The counit map $\varepsilon\colon B\to \k$ makes $\k$ the unit for this monoidal structure. 

The link between the deformations of $\Mod(B)$ and the deformations of $B$ is partially established by the Tannaka-Krein duality (though in what concerns the deformation complexes this link has to be understood better). An advantage of the monoidal category approach is that the higher structures on the deformation complex of a dg monoidal category are more manageable and can be explicitly found. A recent paper [BD], where the authors deal with a truncation of our complexes, called the Davydov-Yetter complex of a monoidal category. In loc.cit. the authors constructed a $C_\udot(E_3;\k)$-algebra structure on this truncated deformation complex. The Davydov-Yetter complex controls only the (infinitesimal) deformations of the associator, whence our complex controls the (infinitesimal) deformations of all linear data (the associator, the underlying dg category, the morphisms part of the product bifunctor, see (A1)-(A4) in Section \ref{sectionlast}). 

Work in progress [Sh3] aims to find a structure of $C_\ldot(E_3;\k)$-algebra on the (non-truncated) deformation complex, employing the technique of Batanin $n$-operads [Ba1-3]. 
According to [Ba3] an action of a contractible $(n-1)$-terminal  (pruned and reduced) $n$-operad on a complex gives, via the the symmetrisation functor and the cofibrant replacement, an action of the chain operad $C_\ldot(E_n;\k)$ on it. In our opinion, the theory of $n$-operads provides, via the aforementioned result, a very flexible and powerful approach to higher generalisations of the Deligne conjecture. In [BM1] a version of Deligne conjecture for general $n$ is stated, and it is proven in [BM2] for $n=2$.

\subsection{}
Let $\k$ be a field. 
We consider $\k$-linear bicategories, see [Ke], [L], we recall the basic definitions related to (enriched) bicategories in Section \ref{sectionbicat} . A particular case of a ($\k$-linear) bicategory with a single onject is a ($\k$-linear) monoidal category.
For a $\k$-linear monoidal category $C$, we provide a complex $C^\udot(C,C)(\Id,\Id)(\id,\id)$, whose 3rd cohomology is proven to parametrise the infinitesimal deformations of $C$ mod out the infinitesimal equivalences (see Theorem \ref{infdefcat}). We also construct more general complexes. More precisely, for dg monoidal categories (resp., dg bicategories) $C,D$, two strong $\k$-linear monoidal (resp., strong bicategorical) functors $F,G\colon C\to D$, and two strong bicategorical natural transformations $\eta,\theta\colon F\Rightarrow G$, we provide a complex $C^\udot(C,D)(F,G)(\eta,\theta)$, whose 0-th cohomology is equal to the modifications $\eta\Rrightarrow\theta$ (playing the role of 3-morphisms for the tricategory of bicategories, see Section \ref{sectionbicat}).
The entire complex $C^\udot(C,D)(F,G)(\eta,\theta)$ (or rather the closed elements of it) plays the role of {\it derived modifications}.

Thus what we are dealing with here is a set-up for further theory one level (dimension) higher than the one developed in Tamarkin's paper ``What do dg categories form?'' [Tam1]. The work in progress [Sh3] aims to construct a contractible Batanin 3-operad [Ba2,3] acting on the corresponding 3-quiver (whose underlying 2-quiver is a strict 2-category), which, among the other things, would provide a homotopy 3-algebra structure on $C^\udot(C,C)(\Id,\Id)(\id,\id)$. Here we construct the 3-quiver itself. 

The complexes $C^\udot(C,D)(F,G)(\eta,\theta)$ emerge as the totalization\footnote{By $\Theta_2$-totalization we mean here the corresponding (non-normalized) cochain Moore complex, as it is defined in \eqref{tott}, \eqref{totd}, see also discussion in Section \ref{remtot}} of 2-cocellular complexes, that is, of functors 
$A(C,D)(F,G)(\eta,\theta)\colon \Theta_2\to C^\udot(\k)$. Here $\Theta_2$ is the category dual to the category of Joyal 2-disks [J], [B1,2]. 

\subsection{}\label{dyintro}
Our complex $C^\udot(C,C)(\Id,\Id)(\id,\id)$ can be thought of as a relaxed version of the Davydov-Yetter complex [D], [Y] of a monoidal category, which fits better for aims of deformation theory. . 
Recall that the Davydov-Yetter cochains in degree $n$ are {\it natural} transformations from the functor $M^n$ to itself, where $M^n(X_1,\dots,X_n)=
X_1\otimes(X_2\otimes(\dots(X_{n-1}\otimes X_n)\dots)$. It gives rise to a cosimplicial (dg) vector space. Then the Davydov-Yetter complex is defined as the totalization of this cosimplicial vector space. 

Often the Davydov-Yetter complex is said to compute the infinitesimal deformations of a monoidal category. 
In fact,  it is not quite correct, the Davydov-Yetter complex only encodes the deformations of the associator, leaving the underlying $\k$-linear category and the monoidal product {\it on morphisms} fixed.

In our set-up of $\k$-linear monoidal category, it is natural to deform all $\k$-linear data. More precisely, we assume that the monoidal product {\it on objects} remains fixed, while the underlying $\k$-linear category, the monoidal product {\it of morphisms}, and the associator are being deformed. We refer to such deformations as {\it full}. 

Theorem \ref{infdefcat} states that $H^3(C^\udot(C,C)(\Id,\Id)(\id,\id))$ parametrises the infinitesimal {\it full} deformations of $C$ mod out the infinitesimal equivalences. 

The link between $C^\udot(C,C)(\Id,\Id)(\id,\id)$ and the Davydov-Yetter complex can be described as follows. 
The natural embedding $\Delta\times\Delta\to\Theta_2$ makes it possible to consider\\ 
$C^\udot(C,C)(\Id,\Id)(\id,\id)$ as a bicomplex, with the horizontal differential $d_0$ and the vertical differential $d_1$. One can show that $C_{DY}^\udot(C)$ is the kernel of the vertical differential $d_1$ restricted to the 0-th row of this bicomplex (with the differential $d_0$). 

The naturality of the Davydov-Yetter cochains is dropped in $C^\udot(C,C)(\Id,\Id)(\id,\id)$, and is replaced by the naturality with respect to monoidal structural maps. 
The reader is advised to look directly to Section \ref{sectiondygen} for more detail on connection between the Davydov-Yetter and our complexes and on this restricted naturality for cochains. 

One of our motivations here was the recent paper [BD], where a $C_\ldot(E_3;\k)$ algebra structure on the Davydov-Yetter complex $C_{DY}^\udot(C)$ of a $\k$-linear monoidal category $C$ was constructed. In [BD], the authors consider more generally $n$-commutative cosimplicial monoids, and prove that the totalization of such cosimplicial monoid has a structure of homotopy $(n+1)$-algebra. 
On the other hand, it is shown in [BD] that $C_{DY}^\udot(C)$ is the totalization of a 2-commutative cosimplicial monoid, which implies that $C_{DY}^\udot(C)$ is a homotopy 3-algebra. 

Unfortunately, the 2-commutativity fails for our complex (even for the strict case). More precisely, there is a natural projection $p\colon \Theta_2\to\Delta$, which defines a cosimplicial complex $R^\udot p_*(A(C,D)(F,G)(\eta,\theta))$. \footnote{One can not state that $R^\udot p_*(A(C,D)(F,G)(\eta,\theta))$ is the right homotopy Kan extension, because $A(C,D)(F,G)(\eta,\theta)$ fails to be Reedy fibrant. One can alternatively define $R^\udot p_*(\dots)$ as ``relative totalization'', see Section \ref{remtot}.} 
For the case $F=G$, $\eta=\theta=\id$, this cosimplicial complex is in turn a cosimplicial (dg) monoid. 
One easily shows that the $\Delta$-totalization of the latter monoid is equal to the $\Theta_2$-totalization of $A(C,D)(F,G)(\eta,\theta)$. However, the cosimplicial monoid $Rp_*(A(C,C)(\Id,\Id)(\id,\id)$ fails to be 2-commutative. (It is, in a sense, a {\it homotopy 2-commutative monoid}, a cosimplicial monoid in which the 2-commutativity relation holds only up to homotopy, in a suitable sense, which conjecturally should be enough for its totalization to be a homotopy 3-algebra).\footnote{It would be interesting to define the concept of a cosimplicial homotopy $n$-commutative monoid; its totalization should be an algebra over $C^\udot(E_{n+1};\k)$.}

On the other hand, for a strong bicategorical $\k$-linear functor $F\colon C\to D$, the cosimplicial monoid $R^\udot p_*A(C,D)(F,F)(\id,\id)$ is 1-commutative (Proposition \ref{propneq2}). Then it follows from [BD], Corr. 2.46 that the complex $C^\udot(C,D)(F,F)(\id,\id)$ is a homotopy 2-algebra (Theorem \ref{defftheorem}).

\subsection{\sc Organisation of the paper}
The paper consists of 6 Sections and 2 Appendices. 

In Section 1, we recall definitions and well-known results on the categories $\Theta_n$, Joyal $n$-disks, and their interplay. None of the results of Section 1 is new, we basically follow [B1,2] and [J].

In Section 2, we define the totalization of a 2-cocellelular complex (that is, of a functor $\Theta_2\to C^\udot(\k)$), as well as the relative totalization along the projection $p\colon \Theta_2\to \Delta$. For $X\colon \Theta_2\to C^\udot(\k)$, the relative totalization $Rp_*(X)$ is a functor $\Delta\to C^\udot(\k)$. In Proposition \ref{prop!}, we prove, for any $X$ as above, the transitivity property for its totalizations: $\Tot_{\Theta_2}(X)=\Tot_\Delta(Rp_*(X))$. 

In Section 3, we recall the basic notions related to bicategories, and introduce a (seemingly, new) concept of a 2-bimodule over a bicategory. We introduce a ``bicategorical'' version $\hat{\Theta_2}$ of the category $\Theta_2$ and study the left Kan extension along the projection $\hat{\Theta}_2^\op\to \Theta_2^\op$. 

In Section 4, we introduce our main new construction, the 2-cocellular complexes\\ $A(C,D)(F,G)(\eta,\theta)$. The definition is more tricky than one could expect, namely, the components $A(C,D)(F,G)(\eta,\theta)_T$ are subcomplexes of the corresponding components of a ``more natural'' complex $\hat{A}(C,D)(F,G)(\eta,\theta)_T$. The passage from $\hat{A}$ to $A$ is performed by  imposing the {\it bicategorical relations} \eqref{relmain11}-\eqref{relmain33} and taking the corresponding subspaces; without that, the assignment $T\rightsquigarrow\hat{A}(C,D)(F,G)(\eta,\theta)_T$ itself fails to be 2-cocellular.

In Section 5, we employ results of [BD] for studying higher structures on the complexes $C^\udot(C,D)(F,G)(\eta,\theta)$. It is possible due to the transitivity property of Proposition \ref{prop!}. We prove that $C^\udot(C,D)(F,F)(\id,\id)$ is a homotopy 2-algebra, for any strong $\k$-linear 2-functor, in Theorem \ref{defftheorem}.

Section 6 contains an identification of\\ $H^3(C^\udot(C,C)(\Id,\Id)(\id,\id))$ with infinitesimal full deformations of $C$ mod out infinitesimal equivalences. It justifies our complexes $C^\udot(C,D)(F,G)(\eta,\theta)$ as related to deformation theory of monoidal $\k$-linear categories (and, more generally, of $\k$-linear bicategories).

In Appendix A we list the relations between (co)dimension 1 operators in $\Theta_2$, used throughout the paper. Appendix B contains a (computational) proof of Proposition \ref{propdeltaaction}.

\subsection*{\sc}
\subsubsection*{\sc Acknowledgements}
The authors are thankful to Michael Batanin for his interest and suggestions. \\
The work of Piergiorgio Panero was supported by the FWO Research Project Nr. G060118N.\\
The work of Boris Shoikhet was supported by Support grant
for International Mathematical Centres Creation and Development, by the Ministry of Higher Education and Science
of Russian Federation and PDMI RAS agreement № 075-15-2022-289 issued on April 6, 2022.

\section{\sc The categories $\Theta_n$}
Here we recall the definition of the categories $\Theta_n$, $n\ge 1$, and some related concepts. 
\subsection{\sc $n$-ordinals and $n$-leveled trees}\label{nord}
We denote by $[n]$ the ordinal $0<1<\dots<n$ having $n+1$ elements.  Recall that the simplicial category $\Delta$ has objects $[n], n\ge 0$, that is, all non-empty finite ordinals. 
Its morphisms are the mononotonous maps $f\colon [k]\to [\ell]$, that is, $f(i)\le f(j)$ if $i\le j$. 

Recall the relations between the standard elementary face operators  $\partial^i\colon [n-1]\to [n]$ and the elementary degeneracy operators $\varepsilon^i\colon [n+1]\to [n]$, $i=0,\dots,n$,  in $\Delta$:

\begin{equation}\label{deltarel}
\begin{aligned}
\ &\partial^j\partial^i=\partial^i\partial^{j-1}\text{   if   }i<j\\
&\varepsilon^j\varepsilon^i=\varepsilon^i\varepsilon^{j+1}\text{   if   }i\le j\\
&\varepsilon^j\partial^i=\begin{cases}
\partial^i\varepsilon^{j-1}&\text{   if   }i<j\\
\id&\text{   if   }i=j,j+1\\
\partial^{i-1}\varepsilon^j&\text{   if   }i>j+1
\end{cases}
\end{aligned}
\end{equation}

An ordinal as above is also called a 1-ordinal. The following definition is due to M.Batanin:
\begin{defn}\label{defnord}{\rm 
		{\it An $n$-ordinal} $S$ is a sequence of surjective maps in $\Delta$:
		\begin{equation}\label{eq1.0}
[k_n]\xrightarrow{\rho_{n-1}} [k_{n-1}]\xrightarrow{\rho_{n-2}}\dots \xrightarrow{\rho_1} [k_1]\xrightarrow{\rho_0} [0]	
	\end{equation}
	The category $\Ord_n$ of $n$-ordinals has all $n$-ordinals as its objects, and the morphisms $S\to T$ are commutative diagrams
	\begin{equation}\label{eq1}
	\xymatrix{
	[k_n]\ar[r]^{\rho_{n-1}}\ar[d]^{f_n} &[k_{n-1}]\ar[r]^{\rho_{n-2}}\ar[d]^{f_{n-1}}&\dots \ar[r]^{\rho_1}& [k_1]\ar[r]^{\rho_0}\ar[d]^{f_1}& [0]\ar[d]^{\id}	\\
	[\ell_n]\ar[r]^{\rho_{n-1}^\prime} &[\ell_{n-1}]\ar[r]^{\rho_{n-2}^\prime}&\dots \ar[r]^{\rho_1^\prime}& [\ell_1]\ar[r]^{\rho_0^\prime}& [0]
}
\end{equation}
in which $f_1,\dots,f_n$ are not necessarily maps in $\Delta$, but a {\it weaker condition} holds: for any $a\in [k_j]$ the restriction of $f_{j+1}$ to $\rho_j^{-1}(a)$ is order-preserving. {\it Remark:} note that $f_j$'s are not necessarily morphisms in $\Delta$, the above condition is weaker than order-preserving. 

An object of $\Ord_n$ is a non-empty $n$-ordinal. 

When the assumption that the maps $\{\rho_i\}$ are surjective is dropped, the object \eqref{eq1.0} is called {\it an $n$-level tree} (or, shortly, an $n$-tree). A morphism of $n$-trees is defined as in \eqref{eq1}. 
}
\end{defn}

The difference between the $n$-ordinals and the $n$-level trees is that the former have all input vertices at the top $n$-th level, whence an $n$-tree may have input vertices at all levels. Sometimes $n$-ordinals are called {\it pruned} $n$-trees. 

\comment
One can define an $n$-tree alternatively as follows:
\begin{defn}{\rm
A $n$-leveled pre-tree $T$ is a collection of finite ordered sets $\{T(i)\}_{0\le i\le n}$ endowed with a map $i_T\colon T_{\ge 1}\to T$ which lowers the level by 1, such that $T(0)$ is a 1-element set, and such that the sets $i_T^{-1}(x)$, $x\in T$, are linearly ordered.  An $n$-level tree is an isomorphism class of an $n$-level pre-trees. 
}
\end{defn}
Given an $n$-ordinal as above, the associated leveled tree $T$ has $T(i)=k_i+1$, and $i_T|_{T(i)}=\rho_{i-1}$. 
\endcomment

Introduce some terminology related to leveled trees, which is used later in the paper.

We represent an $n$-level trees as a collection of finite sets $\{T(i)\}_{0\le i\le n}$ endowed with a map $i_T\colon T_{\ge 1}\to T$ which lowers the level by 1. The map $i_T$ is defined as $\rho_{i-1}$ at level $i$.

For $x\in T(i)$ we write $\height(x)=i$. By definition, $n=\height(T)=\max_{x\in T}\height(x)$.  A vertex $x$ of a leveled tree is called an {\it input}, or a {\it leaf}, if $i_T^{-1}(x)=\emptyset$. Note that for an $n$-leveled tree, the height of an input may be smaller than $n$ (but it always exists an input of the height $n$). 

An {\it edge} is a pair $(x,y)$ with $x=i_T(y)$. The set of edges of $T$ is denoted by $e(T)$. We define the {\it dimension} $d(T)=\sharp e(T)$. A leveled tree is called {\it linear} if $d(T)=\height(T)$. 

For each vertex $x\in T$, the ordered set of incoming edges $e_x(T)$ is defined as $i_T^{-1}(x)$. 

For a leveled tree $T$ define a leveled tree $\bar{T}$ as follows. For each $x\in T$, we set $e_x(\bar{T})=e_x(T)\cup (x,x_{-})\cup (x,x_+)$ with the order in which $(x,x_-)$ is the minimal element and $(x,x_+)$ is the maximal element. Thus we add the leftmost and the rightmost element to each set $e_x(T)$. It results in $\bar{T}(i)=T(i)+2T(i-1)$, and $\height(\bar{T})=\height(T)+1$. 
A {\it $T$-sector} of height $k$ is a triple $(x;y_L,y_R)$ where $x\in T(k)$, $y_L,y_R\in\bar{T}(k+1)$, $i_{\bar{T}}(y_L)=i_{\bar{T}}(y_R)=x$, and $y_L,y_R$ are {\it consequtive} elements of $\bar{T}(k+1)$. We say that $x$ {\it supports} a sector $(x;y_L,y_R)$  It follows that each input vertex $x$ of $T$ supports a unique sector (which is $(x;x_-,x_+)$).

\subsection{\sc The wreath product definition of $\Theta_n$}
The definition of the categories $\Theta_n$, $n\ge 1$, is given inductively via the {\it wreath product} $\Delta\wr \mathscr{A}$, see below. 
\begin{defn}\label{wreathdef}{\rm
Let $\mathscr{A}$ be a category. The objects of the category $\Delta\wr\mathscr{A}$ are tuples $([\ell]; A_1,\dots,A_\ell)$, where $A_1,\dots,A_\ell\in\mathscr{A}$. A morphism $\Phi\colon ([\ell];A_1,\dots,A_\ell)\to ([m]; B_1,\dots,B_m)$ is a tuple $(\phi; \phi_1,\dots,\phi_\ell)$ where $\phi\colon [\ell]\to [m]$ is a morphism in $\Delta$, and $\phi_i=(\phi_i^{\phi(i-1)+1},\dots,\phi_i^{\phi(i)})$ is a tuple of morphisms in $\mathscr{A}$, with $\phi_i^k\colon A_i\to B_k$. The composition is defined in the natural way. 
}
\end{defn}
The reader is adviced to look at Lemma \ref{lemmanice} which explains a natural framework in which the category $\Delta\wr\mathscr{A}$ emerges.

We set: 
\begin{equation}\label{defthetawp}
\Theta_1=\Delta \text{ and }\Theta_n=\Delta\wr \Theta_{n-1}, n\ge 2
\end{equation}

\begin{remark}{\rm
The case $\ell=0$ is allowed for an object of $\Theta_n$. In this case the object $([0];\emptyset)$ is final.
}
\end{remark}

\comment
There is an isomorphism between the set of objects of the category $\Theta_n$ and the set of $n$-ordinals (possibly empty).
Indeed, for $n=1$, to the object $[m]\in \Theta_1$ one associated the 1-ordinal $[m-1]$. We write $t([m])=[m-1]$. 
Assume we have already identified the objects of $\Theta_{n-1}$ with (possibly empty) $(n-1)$-ordinals.
Let $A=([m], A_1,\dots,A_m)$ be an object of $\Theta_n=\Delta\wr\Theta_{n-1}$. Denote by $t(A_1),\dots, t(A_m)$ the corresponding $(n-1)$-ordinals. Then the $n$-ordinal $t(A)$ is obtained by the ``grafting'' operation. The corresponding $n$-leveled tree has $[m-1]$ in level 1, and then the $(n-1)$-leveled tree $t(A_i)$ is ``attached'' to $i-1\in [m-1]$, $i=1,\dots,m$.
\endcomment

\subsection{\sc $n$-globular sets and strict $n$-categories}
There is another category equivalent to  the category $\Theta_n$.

Recall that an {\it $n$-globular set} is the data one has on the undelying sets of objects, 1-morphisms, ..., $n$-morphisms of a strict $n$-category. In this sense, it is a ``pre-$n$-category''. For $n=1$, it is a quiver. 

The general definition is as follows. 
\begin{defn}{\rm
{\it An $n$-globular set} is a collection of sets $X_0, X_1,\dots, X_n$ and maps 
\begin{equation}
X_n \doublerightarrow{s_{n-1}}{t_{n-1}}X_{n-1}\doublerightarrow{s_{n-2}}{t_{n-2}}\dots X_1\doublerightarrow{s_0}{t_0}X_0
\end{equation}
(here $s_k$ are {\it source maps} and $t_k$ are {\it target maps}), such that 
$s_ks_{k+1}=s_kt_{k+1}$, $t_ks_{k+1}=t_kt_{k+1}$, $0\le k\le n-1$.

For two $n$-globular sets $X,Y$, a morphism $f\colon X\to Y$ is defined as a sequence of maps $f_i\colon X_i\to Y_i$, $0\le i\le n$, which commute with the source and the target maps $s$ and $t$. 

The category of $n$-globular sets is denoted by $\mathrm{Glob}_n$. The reader easily interprets the category $\mathrm{Glob}_n$ as some presheaf category. 
}
\end{defn}

The following question arises: {\it how one can define the free strict $n$-category generated by an $n$-globular set?} 
More precisely, the question is in defining the left adjoint functor $\omega_n$ to the forgetful functor $R\colon \mathrm{Cat}_n\to \mathrm{Glob}_n$. (The $n=1$ case is the well-known construction of the free category generated by a quiver). 

The construction of M.Batanin [Ba1], Sect. 4, which associates an $n$-globular set $T^*$ to an $n$-ordinal $T$, is served to solve this problem. 

We recall this construction, following a more explicit treatment given in [B1, Lemma 1.2]: 
\begin{lemma}
Let $T$ be an $n$-leveled tree, denote by $T^*_k$ the set of all sectors of $T$ of height $k$, $0\le k\le n$. Then $T^*$ is an $n$-globular set.
\end{lemma}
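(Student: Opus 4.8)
The plan is to exhibit explicit source and target maps $s_k,t_k\colon T^*_{k+1}\to T^*_k$ and then check the two globular identities. The guiding principle is that the source and the target of a sector should depend \emph{only} on the vertex that supports it, and not on the flanking edges of the sector; once this is established the identities become essentially formal.

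First I would define the structure maps. Let $\sigma=(x;y_L,y_R)$ be a sector of height $k+1$, so that $x\in T(k+1)$ and $y_L,y_R\in\bar{T}(k+2)$ are consecutive children of $x$. Regarding $x$ as an element of $\bar{T}(k+1)$, the passage to $\bar{T}$ is exactly what I exploit: since the leftmost and rightmost virtual edges $(i_T(x),i_T(x)_-)$ and $(i_T(x),i_T(x)_+)$ were adjoined to $e_{i_T(x)}(\bar{T})$, the genuine vertex $x$ has a well-defined immediate predecessor $p$ and immediate successor $q$ among the children of $i_T(x)$ in $\bar{T}(k+1)$. I then set
$$
s_k(\sigma)=(i_T(x);p,x),\qquad t_k(\sigma)=(i_T(x);x,q).
$$
Both are genuine sectors of height $k$ supported by $i_T(x)\in T(k)$: the pairs $(p,x)$ and $(x,q)$ are consecutive in $\bar{T}(k+1)$ by construction, and the existence of $p,q$ is precisely what the virtual extreme edges guarantee. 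The key feature of this definition is that it does not involve $y_L,y_R$ at all, so the pair $(s_k,t_k)$ factors through the map sending $\sigma$ to its supporting vertex $x$.

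The crucial observation, which I would isolate as the main step, is therefore: for any height-$(k+1)$ sector $\sigma$ supported by $x$, the sectors $s_k(\sigma)$ and $t_k(\sigma)$ are both supported by the \emph{single} vertex $i_T(x)$, and each is determined by $x$ alone. Consequently, if $\tau$ is a sector of height $k+2$ supported by $z\in T(k+2)$, then $s_{k+1}(\tau)=(i_T(z);\,\cdot\,,z)$ and $t_{k+1}(\tau)=(i_T(z);z,\,\cdot\,)$ are (in general distinct) height-$(k+1)$ sectors which nevertheless \emph{share the same supporting vertex} $i_T(z)$. Applying $s_k$, which depends only on the supporting vertex of its argument, yields $s_k s_{k+1}(\tau)=s_k t_{k+1}(\tau)$, and identically $t_k s_{k+1}(\tau)=t_k t_{k+1}(\tau)$. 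These are exactly the required relations $s_k s_{k+1}=s_k t_{k+1}$ and $t_k s_{k+1}=t_k t_{k+1}$, so $T^*$ is an $n$-globular set.

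I expect the only genuine difficulty to be bookkeeping rather than conceptual. I must keep careful track of the level shifts, so that a height-$(k+1)$ sector supported by $x$ maps to height-$k$ sectors supported by $i_T(x)$, and I must verify that $p$ and $q$ always exist and are unambiguous, which is what forces the adjunction of the extreme virtual edges $x_-,x_+$ in the definition of $\bar{T}$ into the argument. Once $s_k$ and $t_k$ are seen to factor through the supporting vertex, the globular identities say nothing more than that $s_k,t_k$ cannot distinguish $s_{k+1}(\tau)$ from $t_{k+1}(\tau)$, these two having equal support. As a sanity check I would run the case of a $1$-ordinal $[m]$, where $T^*$ should recover the quiver with $m+1$ objects (the height-$0$ sectors) and $m$ arrows (the height-$1$ sectors) arranged along a single directed path.
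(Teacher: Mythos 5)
Your proof is correct and follows essentially the same route as the paper: you define $s_k,t_k$ by sending a sector supported by $x$ to the sectors $(i_T(x);p,x)$ and $(i_T(x);x,q)$ formed from the immediate neighbours of $x$ in $\bar{T}$, exactly as in the paper's proof. The only difference is that you spell out the step the paper dismisses with ``one easily sees'' — namely that $s_k,t_k$ factor through the supporting vertex, so that $s_{k+1}(\tau)$ and $t_{k+1}(\tau)$, having equal support, cannot be distinguished by $s_k$ or $t_k$ — which is indeed the correct justification of the globular identities.
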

\begin{proof}
Let $(x; y_L,y_R)\in T^*_k$, we have to define $s_{k-1}(x;y_L,y_R)$ and $t_{k-1}(x;y_L,y_R)$. Let $x_L,x,x_R$ be the three consequtive elements in $\bar{T}(k)$. Define $$s_{k-1}(x; y_L,y_R)=(i_T(x); x_L,x) \text{ and }t_{k-1}(x;y_L,y_R)=(i_T(x);x,x_R)$$
One easily sees that the globular identities hold, see [B1, Lemma 1.2] for more detail.
\end{proof}

\begin{example}{\rm
Some examples for 2-level trees are shown in Figure \ref{fig1} below.
\sevafigc{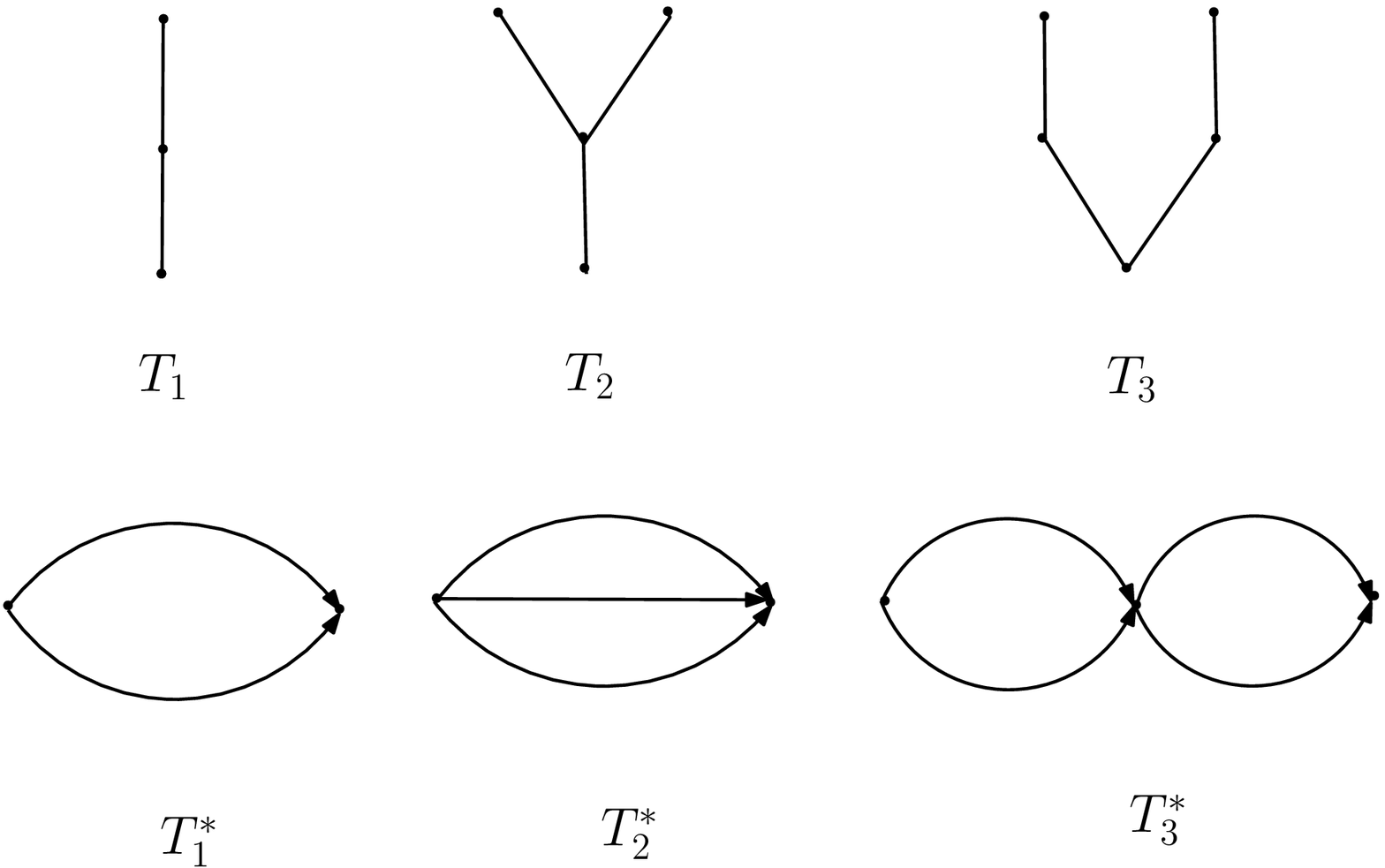}{80mm}{0}{\label{fig1}}
}
\end{example}

\begin{example}{\rm
For the 2-level tree $T$ having $n$ vertices at level 1 with the preimages having $\ell_1,\dots,\ell_n$ vertices at level 2, the 2-globular set $T^*$ is schematically shown in Figure \ref{example4tree}.
}
\end{example}

With the help of the $T^*$ construction, one can define the left adjoint $\omega_n\colon\mathrm{Glob}_n\to\mathrm{Cat}_n$ to the forgetful functor $R\colon\mathrm{Cat}_n\to\mathrm{Glob}_n$, as follows. 

Let $X$ be an $n$-globular set. We define an $n$-globular set $\omega_n(X)$ and prove that it is a strict $n$-category. 

Set 
\begin{equation}\label{ncatfree}
(\omega_n(X))_k=\coprod_{T:\height(T)\le k}\Hom_{\mathrm{Glob}_n}(T^*,X)
\end{equation}
(one often uses the notation $\Hom_{\mathrm{Glob}_n}(T^*,X)=X^T$).

First of all, we show that $\omega_n(X)$ is an $n$-globular set.

Denote by $\partial_k T$ the $(k-1)$-leveled tree, obtaing by removing all vertices of height higher than $k-1$. There are two maps of $n$-globular sets $s^*_{k-1},t^*_{k-1}\colon (\partial_kT)^*\to (\partial_{k+1}T)^*$. In general, a map of globular sets $S^*\to T^*$ is determined by its restriction to the input sectors of $S^*$, see [B1, Lemma 1.3]. The map $s^*_{k-1}$ (corresp., $t^*_{k-1}$) is obtained by assigning to each input vertex $x$ of $\partial_kT$ (which uniquely defined its input sector) the leftmost (resp., rightmost) input sector in $\partial_{k+1}T$ supported by $x$. One shows that the maps $s_{k-1}^*,t_{k-1}^*$ satisfy the identities dual to the globular identities. 
Thus, for any $n$-globular set $X$, and for a $k$-leveled tree $T$, $k\le n$, the precompositions with the maps $s^*_{k-1},t^*_{k-1}$ define maps 
$$
s_{k-1},t_{k-1}\colon X^T\to X^{\partial_k T}
$$
It follows that these maps satisfy the globular identities. Thus, $\omega_n(X)$ is a globular set. 

Next, prove that $\omega_n(X)$ is a strict $n$-category. 
\\
The following statement is proven in [B2, Th. 3.7].
\begin{prop}\label{propnice}
For any $n$-ordinals $S,T$, one has $\Theta_n(S,T)=\mathrm{Cat}_n(\omega_n(\overline{S}^*), \omega_n(\overline{T}^*))$.
\end{prop}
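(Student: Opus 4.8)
\emph{The plan is to} argue by induction on $n$, comparing the two sides through the inductive wreath decomposition $\Theta_n=\Delta\wr\Theta_{n-1}$ (Definition \ref{wreathdef}) and the corresponding description of strict $n$-categories as categories enriched in strict $(n-1)$-categories. The comparison functor is $T\rightsquigarrow\omega_n(\bar T^*)$, and the assertion is precisely that it is fully faithful on $n$-ordinals. The role of the bar construction $T\rightsquigarrow \bar T$ is to adjoin the extreme left/right edges that pin down a \emph{global source} and a \emph{global target} object, so that the homs between these two extreme objects recover the pasting shape $T$ itself.

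\emph{For the base case} $n=1$ one has $\Theta_1=\Delta$ and $\omega_1$ is the free category on a quiver; a $1$-ordinal $[k]$ produces a linearly oriented quiver $\bar{[k]}^*$ whose free category is an ordinal, and I would check by hand that functors between such free categories are exactly the monotone maps, recovering $\Delta([k],[m])$.

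\emph{For the inductive step}, I would first make the right-hand side concrete via the adjunction $\omega_n\dashv R$ recalled above: $\mathrm{Cat}_n(\omega_n(\bar S^*),\omega_n(\bar T^*))\cong \mathrm{Glob}_n(\bar S^*, R\,\omega_n(\bar T^*))$, where the cells of $R\,\omega_n(\bar T^*)$ are given explicitly by (\ref{ncatfree}). By [B1, Lemma 1.3] such a globular map is determined by the images of the input sectors of $\bar S^*$, each of which lands in a cell $(U,\,g\colon U^*\to\bar T^*)$ of $\omega_n(\bar T^*)$ subject to source/target constraints. I would then organize this data by height: the height-$\le 1$ part singles out a monotone map $\phi\colon[\ell]\to[m]$ of the level-$1$ ordinals (the underlying $\Delta$-morphism of the wreath datum), while the deeper part, restricted over each $i$, is governed by globular maps into the sub-$n$-categories cut out by the subtrees. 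Writing $S=([\ell];A_1,\dots,A_\ell)$ and $T=([m];B_1,\dots,B_m)$, the enriched/wreath description of $\omega_n(\bar T^*)$ as an $(n-1)$-$\mathrm{Cat}$-enriched category, whose generating hom between consecutive objects $k-1,k$ is $\omega_{n-1}(\bar B_k^*)$, lets me apply the inductive hypothesis: each piece of the restricted data is a morphism $A_i\to B_k$ in $\Theta_{n-1}$, i.e. exactly the components $\phi_i^k$ of Definition \ref{wreathdef}.

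\emph{The main obstacle} is the bookkeeping that makes this matching a bijection, \emph{natural} in $S$ and $T$: one must show that respecting horizontal composition of homs in a strict $n$-functor forces the deep components to appear with exactly the index range $\phi(i-1)+1\le k\le\phi(i)$ prescribed by the wreath product, and that the globular source/target coherence of the maps $\bar S^*\to R\,\omega_n(\bar T^*)$ translates precisely into the order-preservation-on-fibers condition of Definition \ref{defnord}. Establishing the structural lemma that the hom-$(n-1)$-categories of $\omega_n(\bar T^*)$ between non-consecutive objects are the pasting composites of the $\omega_{n-1}(\bar B_k^*)$ --- so that a functor is pinned down by its values on the consecutive (generating) homs --- is the technical heart, and it is here that the extra edges of $\bar T$ do the essential work of separating the boundary from the interior.
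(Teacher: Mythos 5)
Your proposal is correct and follows essentially the same route as the paper: the paper reduces the statement (citing [B2, Th.\ 3.7]) to an induction on $n$ via the wreath decomposition $\Theta_n=\Delta\wr\Theta_{n-1}$, whose inductive step is exactly your "technical heart" --- Lemma \ref{lemmanice}, asserting that a functor out of the free $\mathscr{V}$-category on a linear $\mathscr{V}$-quiver is determined by a monotone map $\phi$ on objects together with morphisms $A_i\to B_{\phi(i-1)+1}\times\dots\times B_{\phi(i)}$, which in the cartesian case is precisely a wreath-product morphism. Your additional detour through the adjunction $\omega_n\dashv R$ and input sectors is compatible with, but not needed beyond, that enriched-generators argument.
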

\qed

The proof is obtained, by induction, from the following nice interpretation of the wreath product, [B2, Prop. 3.5]:
\begin{lemma}\label{lemmanice}
Assume that a small category $\mathscr{A}$ is a full subcategory of a cocomplete cartesian monoidal category $\mathscr{V}$.
Then $\Delta\wr \mathscr{A}$ is a full subcategory of $\mathscr{V}{-}\Cat$.
\end{lemma}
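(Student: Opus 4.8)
The plan is to realize each object of $\Delta\wr\mathscr{A}$ as a $\mathscr{V}$-enriched category whose set of objects is a linear order, and then to recognize the morphisms of $\Delta\wr\mathscr{A}$ as \emph{exactly} the $\mathscr{V}$-functors between these. First I would define the embedding $\iota\colon\Delta\wr\mathscr{A}\to\mathscr{V}{-}\Cat$ on objects: to $([\ell];A_1,\dots,A_\ell)$ I associate the $\mathscr{V}$-category $C$ with object set $\{0,1,\dots,\ell\}$ and hom-objects
$$
C(i,j)=\begin{cases} A_{i+1}\times A_{i+2}\times\dots\times A_j & i<j,\\ \mathbf 1 & i=j,\\ \varnothing & i>j,\end{cases}
$$
where $\mathbf 1$ is the monoidal unit (the terminal object) and $\varnothing$ the initial object. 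Equivalently, $C$ is the free $\mathscr{V}$-category on the $\mathscr{V}$-graph with vertices $0,\dots,\ell$ and one edge $i-1\to i$ labelled $A_i$: for $i\le j$ there is a unique directed path $i\to j$, whose product of labels is $C(i,j)$, while for $i>j$ there are no paths, so $C(i,j)$ is a sum over the empty set. Identities are the canonical maps $\mathbf 1\to C(i,i)$ and composition is reindexing/concatenation of products; the associativity and unit axioms are routine. The only delicate point is composition through a hom equal to $\varnothing$, which uses that in the relevant $\mathscr{V}$ the cartesian product distributes over colimits (so $X\times\varnothing\cong\varnothing$); this holds in the presheaf categories into which $\mathscr{A}=\Theta_{n-1}$ will be placed, and I would record it as a standing hypothesis on $\mathscr{V}$.

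Next I would define $\iota$ on morphisms and check functoriality. A morphism $(\phi;\phi_1,\dots,\phi_\ell)$ yields the object map $\phi\colon\{0,\dots,\ell\}\to\{0,\dots,m\}$ together with, on each generating hom $C(i-1,i)=A_i$, the map
$$
A_i\longrightarrow D(\phi(i-1),\phi(i))=\prod_{\phi(i-1)<k\le\phi(i)}B_k
$$
assembled from the tuple $(\phi_i^k)_k$ by the universal property of the product. Functoriality then forces the value of $\iota(\phi;\phi_\bullet)$ on every $C(i,j)$, and one verifies that the result is a genuine $\mathscr{V}$-functor and that $\iota$ preserves composition and identities.

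Finally I would prove that $\iota$ is a full embedding. Injectivity on objects is clear, since the generating hom-objects $C(i-1,i)$ recover the ordered list $A_1,\dots,A_\ell$ (as objects of $\mathscr{A}$, using that $\mathscr{A}\subset\mathscr{V}$ is full). For full faithfulness, let $\Phi\colon\iota([\ell];A_\bullet)\to\iota([m];B_\bullet)$ be an arbitrary $\mathscr{V}$-functor with object map $\phi$. Since each $A_i=C(i-1,i)$ is non-initial, its image component $A_i\to D(\phi(i-1),\phi(i))$ cannot land in $\varnothing$; by strictness of the initial object this forces $\phi(i-1)\le\phi(i)$, so $\phi$ is order-preserving, i.e.\ a morphism of $\Delta$. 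The universal property of the product then identifies this component with a tuple $(\phi_i^k\colon A_i\to B_k)$ of maps in $\mathscr{V}$, hence in $\mathscr{A}$ by fullness; and since $\Phi$ is $\mathscr{V}$-functorial it is completely determined by these components on generators. This furnishes a two-sided inverse to $\iota$ on hom-sets, giving $\mathscr{V}{-}\Cat(\iota X,\iota Y)=(\Delta\wr\mathscr{A})(X,Y)$.

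I expect the main obstacle to be the bookkeeping around the empty hom-objects $C(i,j)=\varnothing$ for $i>j$. They are needed both to make $\iota X$ a genuine $\mathscr{V}$-category (composition through such homs) and, via strictness of the initial object, to force the object map of an arbitrary $\mathscr{V}$-functor to be monotone — which is precisely what pins $\phi$ down to a morphism of $\Delta$ rather than an arbitrary function of sets. This is the one step that uses strictly more than ``cocomplete cartesian monoidal'', namely distributivity of $\times$ over colimits, so I would make that property explicit for $\mathscr{V}$ at the outset.
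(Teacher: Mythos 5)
Your proof is correct and follows essentially the same route as the paper's: both realize $([\ell];A_1,\dots,A_\ell)$ as the free $\mathscr{V}$-category on the linear $\mathscr{V}$-quiver $0\xrightarrow{A_1}1\xrightarrow{A_2}\cdots\xrightarrow{A_\ell}\ell$ and identify $\mathscr{V}$-functors between such categories with their restrictions to the generating hom-objects. You are in fact more explicit than the paper on two points it glosses over --- the distributivity of $\times$ over colimits needed to handle the empty hom-objects $C(i,j)=\varnothing$ for $i>j$, and the strict-initial-object argument forcing the object map of an arbitrary $\mathscr{V}$-functor to be monotone --- both of which hold in the intended applications ($\mathscr{V}=\Cat$ or a presheaf category).
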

\begin{proof}
Any 1-ordinal $[n]$ can be considered as a linear category $\mathbf{n}$ with $n+1$ objects $0,\dots,n$, with a single morphism in $\mathbf{n}(i,j)$ for $i\le j$ and with empty set of morphisms otherwise. Having $n$ objects $A_1,\dots,A_n$ of $\mathscr{A}$, we regard them as objects of $\mathscr{V}$, and consider the linear $\mathscr{V}$-quiver:
$$
0\xrightarrow{A_1}1\xrightarrow{A_2}2\xrightarrow{A_3}\dots\xrightarrow{A_n}n
$$
Consider the $\mathscr{V}$-category generated by this quiver, denote it by $F_\mathscr{V}(A_1,\dots,A_n)$ (here we use cocompleteness of $\mathscr{V}$ to show that the forgetful functor from $\mathscr{V}$-categories to $\mathscr{V}$-quivers has a left adjoint). 

For $B_1,\dots,B_m\in \mathscr{A}$, a $\mathscr{V}$-functor $\Phi\colon F_\mathscr{V}(A_1,\dots,A_n)\to F_\mathscr{V}(B_1,\dots,B_m)$ is defined by its restriction to ``generators'', that is, by a map $\phi\colon [n]\to [m]$, and, for any $1\le i\le n$, a morphism $A_i\to F_\mathscr{V}(\phi(i-1),\phi(i))=B_{\phi(i-1)+1}\times\dots \times B_{\phi(i)}$.
We conclude that these $\mathscr{V}$-functors are the same as the morphisms $([n], A_1,\dots,A_n)\to ([m], B_1,\dots,B_m)$ in $\Delta\wr \mathscr{A}$. 
\end{proof}

\begin{example}{\rm
For the case $\Theta_2=\Delta\wr\Delta$, we set $\mathscr{V}=\Cat$, using the imbeding $\Delta\to \Cat$, $[n]\rightsquigarrow\mathbf{n}$. Thus to the element $([n], [\ell_1],\dots,[\ell_n])$ is associated the 2-category generated by the following 2-globular set:

\sevafigc{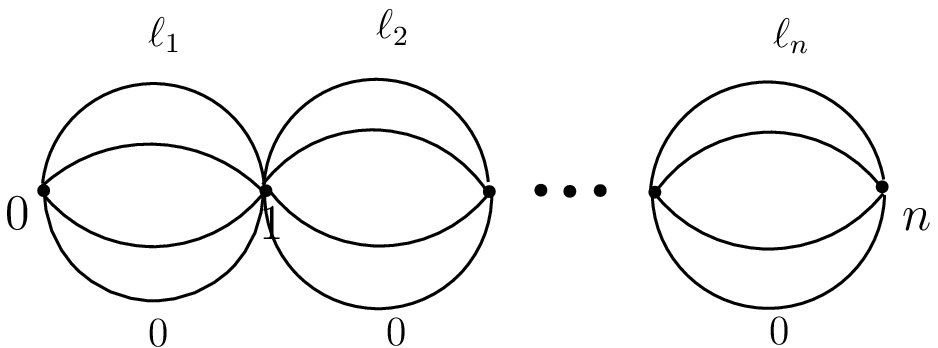}{70mm}{0}{\label{example4tree}}
\comment
\begin{equation}
\begin{tikzpicture}[baseline=(current bounding box.center),scale=1]
    \node (1) at (0,0){$\udot_{0}$}; \node(2) at (2,0){$\udot_{1}$}; \node(3) at (4,0){$\udot_{2}$}; \node (4) at (6.5,0){$\udot_{n-1}$}; \node (5) at (8.5,0) {$\udot_{n}$}; \node (6) at (5.25,0) {$\dots$};
    \node (A) at (1,0.15) {$\vdots$}; \node (B) at (3,0.15) {$\vdots$}; \node (C) at (7.5,0) {$\vdots$};
    \path[->, font=\scriptsize]
    (1)edge[bend left=60, "$\ell_1$"](2)(1)edge[bend left = 30](2)(1)edge[bend right=30,"$0$"'](2)
    (2)edge[bend left=60, "$\ell_2$"](3)(2)edge[bend left = 30](3)(2)edge[bend right=30](3)
    (2)edge[bend right=60,"$0$"'](3)(4)edge[bend left=60, "$\ell_n$"](5)(4)edge[bend left = 30](5)(4)edge[bend right=60,"$0$"'](5);
    \end{tikzpicture}
\end{equation}
\endcomment
Of course, this globular set is $T^*$, where $T$ is the corresponding 2-ordinal $[\ell_1+\dots+\ell_n+n-1]\to [n-1]$.
}
\end{example}

\comment

How to define a map of free $n$-categories $\omega_n(\overline{S}^*)\to \omega_n({T}^*)$? As the left-hand side $n$-category is {\it freely} defined by the corresponding globular set, the intuition says that it is enough to fix the images of the generators, which can be arbitrary (but should respect the relations in $\overline{S}^*$). By {\it generators} we understand the elements of the globular set ${S}^*$, corresponded to the {\it input} sectors. These images may be arbitrary morphisms of the same degree in $\omega_n(\overline{T}^*)$, which are $S$-compatible (that is, should respect the relations coming from the operators $s_k,t_k$ on $\overline{S}^*$). More precisely, there is the following statement, [B1, Lemma1.3, Def. 1.8]:

\begin{lemma}\label{lemmamaps}
Let $S,T$ be $n$-leveled trees.
\begin{itemize}
\item[(a)] Any map of $n$-globular sets $S^*\to T^*$ is injective, and such maps are in 1-to-1 correspondence with connected subtrees $S\subset T$, with a choice of a $T$-sector to each input sector of $S$;
\item[(b)] any map of strict $n$-categories $\omega_n(\overline{S}^*)\to\omega_n(\overline{T}^*)$ is in 1-to-1 correspondence with the following data: for each input sector $\sigma$ of $\overline{S}$ an imbedding $\phi_\sigma\colon \overline{T}^*_\sigma\to \overline{T}$, such that $\{\phi_\sigma\}$ for all input sectors of $\overline{S}$ are {\it $S$-compatible}. 
\end{itemize}
\end{lemma}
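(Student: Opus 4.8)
The plan is to deduce both parts from a single structural fact about the globular sets $T^*$: each $T^*$ is \emph{generated}, under the source and target operations, by its input sectors, and subject only to the relations forced by the globular identities together with the linear orders on the sets $e_x(\overline{T})$. I would first make this precise by showing, by downward induction on the height, that every sector of $T$ is an iterated source/target of an input sector (each vertex of a leveled tree lies below some leaf, and $s_{k-1},t_{k-1}$ produce exactly the codimension-one faces). Since any morphism of globular sets commutes with $s$ and $t$, it follows immediately that a map $S^*\to T^*$ is \emph{uniquely determined} by the images of the input sectors of $S$. This gives injectivity of the assignment (map) $\mapsto$ (its restriction to input sectors); the content of part (a) is then to decide which assignments extend to a globular map and to read off the combinatorial data.

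For the extension problem I would show that an assignment $\sigma\mapsto f(\sigma)$, sending each input sector of $S$ of height $k$ to a $T$-sector of the same height, extends to a (necessarily unique) globular map exactly when it respects every coincidence of iterated $s/t$-faces among the input sectors of $S$: whenever two input sectors share a common iterated boundary, their images must share the corresponding boundary in $T$. This boundary-matching condition is precisely the \emph{$S$-compatibility} of the statement. Feeding these conditions back down the globular structure, the images of the boundary (non-input) sectors of $S$ are forced and assemble into a connected sub-pasting of $T$, i.e.\ a connected subtree $S\subset T$, while the input sectors retain exactly the freedom of a choice of a $T$-sector each. Injectivity I would obtain, using the generation statement, by reducing to the input sectors: if $f(\sigma)=f(\sigma')$ then $f$ identifies the sources and targets of $\sigma,\sigma'$, so by order-preservation of $s,t$ in $T^*$ their boundaries coincide in $S$; since an input sector is the unique sector supported by its leaf, it is determined by its boundary, forcing $\sigma=\sigma'$. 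This yields the bijection in (a).

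For part (b) I would pass through the defining adjunction $\omega_n\dashv R$, under which an $n$-functor $\omega_n(\overline{S}^*)\to\omega_n(\overline{T}^*)$ corresponds to a morphism of globular sets $\overline{S}^*\to R\,\omega_n(\overline{T}^*)$. Applying the generation principle of the first paragraph now to the tree-globular-set $\overline{S}^*$, such a morphism is determined by the images of the input sectors $\sigma$ of $\overline{S}$, each of which is a cell of the free $n$-category $\omega_n(\overline{T}^*)$. By the defining formula $(\omega_n(X))_k=\coprod_{\height(U)\le k}\Hom_{\mathrm{Glob}_n}(U^*,X)$, a cell of $\omega_n(\overline{T}^*)$ is a morphism $U^*\to\overline{T}^*$ for some tree $U$, which by part (a) is an embedding of the tree-shape $U=\overline{T}^*_\sigma$ into $\overline{T}$; this is precisely the datum $\phi_\sigma\colon\overline{T}^*_\sigma\to\overline{T}$. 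The requirement that these cells fit together into a genuine globular map — that the $s/t$-boundaries imposed by $\overline{S}$ are matched inside $\omega_n(\overline{T}^*)$, where composites of cells are now available — is again the \emph{$S$-compatibility}, giving the asserted bijection.

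The step I expect to be the main obstacle is the identification, underlying part (b), of the cells of the free strict $n$-category $\omega_n(\overline{T}^*)$ with embeddings of tree-shapes (pasting diagrams) into $\overline{T}$, together with the verification that source, target and composition on these cells agree with the source, target and grafting of the corresponding trees. Equivalently, the delicate bookkeeping is the precise translation of the globular identities and of the composition in $\omega_n(\overline{T}^*)$ into the combinatorial \emph{$S$-compatibility} relations; handling inputs of various heights and the role of the leftmost/rightmost edges added in passing from $T$ to $\overline{T}$ — which is exactly what makes the left and right boundaries of composites well defined — is where the argument requires the most care. By contrast, generation by input sectors, uniqueness, and injectivity are comparatively formal consequences of the rigidity of the order structure on sectors.
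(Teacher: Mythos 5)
Be aware first that the paper offers no proof of this lemma: it is stated without argument, attributed to [B1, Lemma 1.3, Def.~1.8], and in the source it even sits inside a commented-out block, so there is no in-paper proof to measure yours against. Your architecture is the right one and is essentially Berger's: generation of $T^*$ under $s,t$ by its input sectors (this is exactly the fact the paper quotes from [B1, Lemma 1.3] in the surrounding text), hence determination of a globular map by its restriction to input sectors; then, for (b), the adjunction $\omega_n\dashv R$ reducing an $n$-functor to a globular map $\overline{S}^{\,*}\to R\,\omega_n(\overline{T}^{\,*})$, and the identification, via (a) and the coproduct formula \eqref{ncatfree}, of cells of the free $n$-category with embedded tree-shaped subdiagrams of $\overline{T}$.

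The genuine gap is the injectivity claim in part (a). As written, your argument only tests pairs of \emph{input} sectors, whereas the lemma asserts injectivity of the whole map, and determination by input sectors does not yield injectivity on the remaining cells. Worse, the step ``$f$ identifies the sources and targets of $\sigma,\sigma'$, so \dots their boundaries coincide in $S$'' is circular: from $f(\sigma)=f(\sigma')$ you only get $f(s\sigma)=f(s\sigma')$, and passing from this to $s\sigma=s\sigma'$ already uses injectivity one height down, which is what is being proved. The repair is an upward induction on height exploiting the rigid linear orders: the $0$-cells of $S^*$ form a chain whose consecutive members are joined by $1$-cells, every $1$-cell of $T^*$ has distinct and \emph{adjacent} source and target, so $f$ is strictly increasing, hence injective, on $0$-cells; for $k\ge 1$, two $k$-cells with equal image have equal boundary by the inductive hypothesis, hence are supported by the same vertex, and the same order argument applied to the chain of sectors over that vertex (joined by $(k+1)$-cells) finishes the step. (Your appeal to ``an input sector is determined by its boundary'' also breaks at height $0$, where there is no boundary.) The remaining substance---that $S$-compatibility is exactly the extension condition, that the forced images of the non-input cells assemble into a connected subtree, and in (b) that sources, targets and composites in $\omega_n(\overline{T}^{\,*})$ match the grafting of trees---you correctly identify as the hard bookkeeping, but it remains asserted rather than carried out.
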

\qed

See [B1, Remark 1.9] on different interpretations of $S$-compatibility. 
\endcomment

\subsection{\sc Disks}
The category of disks was introduced in [J]. 
An {\it interval} is a finite ordinal, a {\it map of intervals} is a map in $\Delta$ preserving the leftmost and the rightmost elements. 
The category of the non-empty intervals is denoted by $\Delta_f$. Joyal (loc.cit.) showed that $\Delta_f^\opp\simeq \Delta_+$ where $\Delta_+$ is the category of {\it all} finite ordinals (including the empty ordinal which is the initial object, we denote it [-1]). The functor $F\colon \Delta_+^\opp\to \Delta_f$ is $[n]\mapsto \Delta_+([n],[1])$, $F([n])=[n+1]$. The dual functor $G\colon \Delta_f^\opp\to \Delta_+$ is $[n]\mapsto \Delta_f([n],[1])$, then the initial object $[-1]$ is $\Delta_f([0],[1])$, and in general $G([n])=[n-1]$. 
\begin{defn}{\rm
A {\it disk of finite sets} $D_\ldot$ is a sequence $D_1,D_2,\dots$ of finite sets, equiped with the following data: 
\begin{itemize}
\item[(a)] a map $p\colon D_k\to D_{k-1}$ such that for any $x\in D_{k-1}$ the pre-image $p^{-1}(x)$ has an interval structure, $k\ge 1$
\item[(b)] two maps $d_0,d_1\colon D_{k-1}\to D_k$ sending $x\in D_{k-1}$ to the leftmost and the righmost elements of the interval $p^{-1}(x)$, $k\ge 1$
\item[(c)] 
for $k\ge 1$, the diagram 
$$d_0(D_{k-1})\cup d_1(D_{k-1})\to D_k\underset{d_1}{\overset{d_0}{\rightrightarrows}} D_{k+1}
$$
is an equalizer, where the first arrow is the canonical embedding,
\item[(c)] $D_0$ is a single point.
\end{itemize}

A map of two disks $F\colon D_\ldot\to D^\prime_\ldot$ is a collection of maps $\{F_k\colon D_k\to D_k^\prime\}_{k\ge 0}$ compatible with $p, d_0,d_1$, such that for any $x\in D_k$ the map $p^{-1}(x)\to p^{-1}(F_k(x))$ is a map of intervals, $k\ge 0$.\\
The category of disks is denoted by $\mathrm{Disk}$.\\
For a disk $D_\ldot$ the {\it interior} $i(D_k)$ is defined as $D_k\setminus\{d_0(D_{k-1}\cup d_1(D_{k-1})\}$. It is an ordinal, and the sequence of maps of ordinals $p\colon i(D_k)\to i(D_{k-1}), k\ge 1$ makes $i(D_\ldot)=\{i(D_k)\}_{k\ge 0}$ a {\it leveled tree}. The height $\height(D_\ldot)$ is defined as the height of the level tree $i(D_\ldot)$. The category of disks of height $\le n$ is denoted by $\mathrm{Disk}_n$. 
}
\end{defn}
The functor $i$ sends disks to leveled trees. The functor $T\mapsto\overline{T}$ is a left adjoint to it. For any leveled tree $T$, the leveled tree $\overline{T}$ is a disk of finite sets. The elements of $\overline{T}$ in the image of $i$ are {\it internal}, and the elements in $\overline{T}\setminus T$ are {\it boundary}.\\ A map of disks $\overline{S}\to\overline{T}$ is ``more general'' than a map of leveled trees $S\to T$. The reason is that a map of disks $\overline{S}\to \overline{T}$ may map an internal point to a boundary point in $\overline{T}$. Thus, the category $\mathrm{Ord}_n$ is identified with a not full subcategory of $\mathrm{Disk}_n$. 

The following Proposition is [B1, Prop. 2.2]:
\begin{prop}\label{berdisks}
For any $n$-leveled trees $S,T$ one has
$$\Cat_n(\omega_n(\overline{S}^*),\omega_n(\overline{T}^*))=\mathrm{Disk}_n(\overline{T},\overline{S})$$
Thus, the assignment $T\mapsto \overline{T}$ provides an equivalence of $\Theta_n^\opp$ and $\mathrm{Disk}_n$.
\end{prop}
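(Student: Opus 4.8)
The plan is to prove the displayed equality
\[
\Cat_n(\omega_n(\overline{S}^*),\omega_n(\overline{T}^*))=\mathrm{Disk}_n(\overline{T},\overline{S})
\]
of hom-sets directly, natural in $S$ and $T$, and then to combine it with Proposition \ref{propnice} (which identifies the left-hand side with $\Theta_n(S,T)$) to obtain the asserted equivalence $\Theta_n^\opp\simeq\mathrm{Disk}_n$, the contravariance being built into the reversal $\overline{T},\overline{S}$ of the arguments on the right.

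First I would unwind the left-hand side. As $\omega_n(\overline{S}^*)$ is free on the globular set $\overline{S}^*$, the adjunction $\omega_n\dashv R$ gives
\[
\Cat_n(\omega_n(\overline{S}^*),\omega_n(\overline{T}^*))=\Hom_{\mathrm{Glob}_n}\!\bigl(\overline{S}^*,\,R\,\omega_n(\overline{T}^*)\bigr),
\]
so a strict $n$-functor amounts to an assignment of a cell of $\omega_n(\overline{T}^*)$ to each generator of $\overline{S}^*$, compatibly with the operators $s^*_k,t^*_k$. Substituting the formula \eqref{ncatfree} for the cells of a free $n$-category together with the combinatorial description of globular maps $U^*\to\overline{T}^*$ as embedded connected subtrees of $\overline{T}$ equipped with a chosen $\overline{T}$-sector over each input sector of $U$ ([B1, Lemma 1.3]), I would reduce the left-hand side to the data of an $S$-compatible family $\{\phi_\sigma\}$, indexed by the input sectors $\sigma$ of $\overline{S}$, of such decorated subtrees of $\overline{T}$.

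Next I would unwind the right-hand side. A disk map $f\colon\overline{T}\to\overline{S}$ is a levelwise family $f_k\colon\overline{T}_k\to\overline{S}_k$ commuting with $p,d_0,d_1$ and restricting on each fibre of $p$ to a map of intervals. The reversal of direction is produced by Joyal's duality $\Delta_f^\opp\simeq\Delta_+$ [J]: dualising the fibrewise interval maps of $f$ turns $f$ into an assignment, over each point of $\overline{S}$, of a configuration of points of $\overline{T}$. I would then check that the interior/boundary dichotomy of $\overline{S}$ (its interior being the sectors of $S$, its boundary being $\overline{S}\setminus S$, handled by $d_0,d_1$) matches the distinction between the input sectors of $\overline{S}$ and the adjoined leftmost/rightmost sectors, and that compatibility of $f$ with $p,d_0,d_1$ is exactly the $S$-compatibility of the family $\{\phi_\sigma\}$ obtained on the left. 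This produces the required bijection, with naturality immediate from the constructions.

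The natural organising device is induction on $n$ via the wreath-product presentation $\Theta_n=\Delta\wr\Theta_{n-1}$ of \eqref{defthetawp}, together with the matching decomposition of a height-$n$ disk as a base interval carrying a family of height-$(n-1)$ disks over its interior; the base case $n=1$ is precisely Joyal's duality $\Delta_f^\opp\simeq\Delta_+$, and Lemma \ref{lemmanice} supplies the inductive bookkeeping on the $\Theta$-side. I expect the main obstacle to be the faithful translation between the two combinatorial languages, namely verifying that the dualised fibrewise interval maps of a disk morphism reproduce exactly the $S$-compatible decorated-subtree data defining a free $n$-functor, with the boundary points correctly accounted for under $d_0$ and $d_1$; once this dictionary is set up level by level, the coincidence of the two hom-sets reduces to checking that the two systems of compatibility constraints agree.
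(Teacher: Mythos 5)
Your proposal is correct and follows essentially the same route as the paper, which simply quotes this statement as [B1, Prop.~2.2]: reduce strict functors out of the free $n$-category $\omega_n(\overline{S}^*)$ to $S$-compatible families of decorated connected subtrees of $\overline{T}$ indexed by the input sectors of $\overline{S}$ (via [B1, Lemma 1.3]), and match these against disk maps $\overline{T}\to\overline{S}$ unwound fibrewise through Joyal's interval/ordinal duality; your wreath-product induction is just a reorganisation of the same dictionary in the style of [B2]. The only point you leave implicit is essential surjectivity of $T\mapsto\overline{T}$ (every disk $D$ of height $\le n$ is canonically isomorphic to $\overline{i(D)}$), which is needed to pass from the hom-set bijection to the stated equivalence $\Theta_n^{\opp}\simeq\mathrm{Disk}_n$.
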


\qed
\comment
\begin{proof}
We outline the main idea, refering the reader to loc.cit. for more detail.\\
According to Lemma \ref{lemmamaps}, to define a map of $n$-categories $\omega_n(\overline{S}^*)\to\omega_n(\overline{T}^*)$, we have to associate a map (an imbedding) $\phi_\sigma\colon T^*_\sigma\to \overline{T}^*$, for any input sector $\sigma\in \overline{S}^*$, in an $S$-compatible way.

Assume a map in $f\in \mathrm{Disk}_n(\overline{T},\overline{S})$ is given. Then each element of $T=i(\overline{T})$ is mapped to an element of $\overline{S}$. Let $\sigma$ be an input sector $(x; y_L,y_R)$ of $\overline{S}$. We can talk on subtrees $S_L, S_R, S_\sigma$ in $\overline{S}$ which are formed by all descendants (=iterative images of the map $i_{\overline{S}}$) of the points $y_L,y_R, x$ correspondingly. Clearly they are {\it linear} subtrees. Now take $T_L:=f^{-1}(S_L)$, $T_R:=f^{-1}(S_R)$, $T_\sigma=f^{-1}(S_\sigma)$. They are all {\it connected} level subtrees of $\overline{T}$ (but possibly having the height lower then the height of $S_L,S_R,S_\sigma$, correspondingly. \\
Any input vertex of $T_\sigma$ supports a unique input sector in $\overline{T}_\sigma^*$, and all input sectors come in this way from input vertices. Take an input vertex $v$ of $T_\sigma$, consider the corresponding input sector $\kappa(v)$ of $\overline{T}^*_\sigma$.
According to Lemma \ref{lemmamaps}, we have to associate to $\kappa(v)$ an input sector of $\overline{T}^*$ having the same height. To this end, note that, whenever $x\in S$ is in the image of $f$, $v\in \overline{T}$  {\it supports a unique sector which is placed in between of $T_L$ and $T_R$}. For the case when $x\notin \mathrm{Im}(f)$, and $f(v)=i_S^k(x)$, $k\ge 1$, we send $\kappa(v)$ to the unique $\overline{T}$-sector {\it whose image in $\overline{S}$ contains $i_S^{k-1}(x)$}.\\
 One checks that the conditions of Lemma \ref{lemmamaps}(b) are fulfilled, and that such correspondence between $\mathrm{Disk}_n(\overline{T},\overline{S})$ and $\Cat_n(\omega_n(\overline{S}^*),\omega_n(\overline{T}^*))$ is 1-to-1.

\end{proof}
\endcomment

\begin{remark}\label{tam}{\rm
We can restrict the assignment from the proof [B1, Prop. 2.2] to the maps of disks $\overline{S}\to\overline{T}$ which come from maps of leveled trees $S\to T$ (that is, which map internal points to internal). 
The corresponding sub-category $C$ of $\mathrm{Cat}_n$ has objects $\omega_n(\overline{T}^*)$, $T\in \Ord_n$, and has the set of morphisms 
$C(\omega_n(\overline{S}^*),\omega_n(\overline{T}^*))$ which is the subset of $\Cat_n(\omega_n(\overline{S}^*),\omega_n(\overline{T}^*))$ formed by maps of $n$-categories, preserving minima and maxima, in an appropriate sense. For $n=2$, this equivalence is used by Tamarkin [Tam1].

In fact, this equivalence (rather than the equivalence of Proposition \ref{berdisks}) can be thought of as a proper analogue of the Joyal equivalence $\Delta_f\simeq\Delta_+^\opp$, for $n\ge 2$.
}
\end{remark}

\subsection{\sc The categories $\Theta_n$ as higher analogs of the category $\Delta$; inner and outer face maps}
We have three equivalent descriptions of the category $\Theta_n$ which are
\begin{itemize}
\item[(a)] the definition via the wreath product \eqref{defthetawp},
\item[(b)] the definition via morphisms of free strict $n$ categories $\omega_n(\overline{T}^*)$, Proposition \ref{propnice},
\item[(c)] as the dual of the category $\mathrm{Disk}_n$, Proposition \ref{berdisks}.
\end{itemize}
We will take advantage of all three equivalences. In particular, (c) is used to naturally define the realization/totalization, (b) is used to see that any strict $n$ category $C$ has a nerve which is a cocellular set $N(C)\colon \Theta_n^\opp\to \Sets$, and (a) is the most combinatorially explicit and manageable. 

Existence of the nerve was the main motivation in [Joy], where the disk categories were defined. 
It also makes it possible to consider $\Theta_n$ as an analogue of $\Delta$, for $n\ge 2$.

Note that the nerve $N(C)$ of the ordinary category $C$ is a simplicial set, whose components can be defined as 
$$
N(C)_k=\mathrm{Cat}(\mathbf{k},C)
$$
(where $k$ is the linear category with $k+1$ objects). We see directly that it gives rise to a simplicial set, because a map $[k]\to [m]$ in $\Delta$ amounts to the same thing as a map of the linear categories $\mathbf{k}\to\mathbf{m}$.

Let now $C$ be a strict $n$-category. Define its {\it $n$-nerve} as a cellular set $N(C)\colon \Theta_n^\opp\to\Sets$, for which
\begin{equation}\label{nerve2cat}
N(C)_T=\mathrm{Cat}_n(\omega_n(\overline{T}^*),C)
\end{equation}
It gives rise to an $n$-cellular set because, by Proposition \ref{propnice}, 
$$
\Theta_n(S,T)=\mathrm{Cat}_n(\omega_n(\overline{S}^*),\omega_n(\overline{T}^*))
$$

For any strict $n$-category $C$, the $n$-cellular set $N(T)$ has a property which is a higher $n$ counterpart of the Boardman-Vogt inner horns filling property for $n=1$, called the ``weak Kan complexes''. The simplicial sets with inner horns filling condition 
were further studied by Joyal (under the name quasi-categories) and Lurie  (under the name $(\infty,1)$-categories, as a model for weak analogues of ordinary categories. The aim in [J] was to define a model for weak analogues of strict $n$-categories, and it was the motivation for introducing the categories $\Theta_n$. 

In $\Theta_n$, there are two classes of maps, {\it face maps} and {\it degeneracy maps}, and face maps are further subdivided to {\it outer face maps} and {\it inner face maps}. 

The most direct way to define them is by using the category of disks $\mathrm{Disk}_n$, see Proposition \ref{berdisks}. 

Let $S,T$ be two $n$-leveled trees. \\
A map $\overline{S}\to \overline{T}$ in $\mathrm{Disk}_n$ is called a {\it degeneracy} if it is an embedding on each interval $p^{-1}(x)$, and $|S|+1=|T|$.\\
A map $f\colon\overline{S}\to \overline{T}$ is called an  {\it inner face map} 
if $|S|=|T|+1$, $f$ contracts two neighbour  {\it inner} points $a,b$ at some interval $p^{-1}(x)$, and is a {\it shuffling} map on the interiors of the intervals $p^{-1}(a)$ and $p^{-1}(b)$ (when such shuffling is fixed, the map of disks with these conditions is uniquely defined).\\
Let $x\in S$, $a\in p^{-1}(x)$, $a\in i(S)$ is an extreme (=leftmost or rightmost) vertex of $p^{-1}(x)$ in $i(S)$. We call $a$ {\it a special extreme vertex} if $a$ is also an input vertex of $i(S)$.
Let $f\colon\overline{S}\to\overline{T}$ be a map in $\mathrm{Disk}_n$, $|S|=|T|+1$,  $a$ is a special extreme vertex in $S$, $b$ a vertex in $\partial p^{-1}(x)\subset\overline{S}$ (left or right) neighbour to $a$ (the vertex $b$ is unique except for the case when $p^{-1}(x)\subset \overline{S}$ consists of 3 vertices, two of which are boundary). We call $f$ the {\it outer face map} associated with $(a,b)$ as above if $T=S\setminus\{a\}$ and $f$ maps $a$ to $b$. \\
It is clear that any surjective map of codimension 1 is either inner or outer face map. 

\begin{remark}{\rm
The inner (resp. all) face maps are used to define {\it inner horns} (resp., {\it all horns}) in [Joy, Def.2], and to define {\it weak $n$-categories} (resp., {\it weak $n$-groupoinds} as cellular sets $X\colon \Theta_n\to\Sets$ with inner (resp., all) horns filling property.
This idea was further elaborated in [B1,2] and [Ar]. 
}
\end{remark}

\subsection{\sc The Reedy structure on $\Theta_2$, description of elementary coface and codegeneracy maps}\label{facetheta}
One can translate the above definition of coface and codegeneracy maps to the wreath product definition of $\Theta_2$. For $D=([k]; [n_1],\dots,[n_k])\in \Theta_2$, define {\it dimension} of $D$ as 
\begin{equation}\label{eqdim}
|D|=k+n_1+\dots+n_k
\end{equation}
It was proven in [B1, Lemma 2.4(a)] that $\Theta_2$ is a Reedy category, in which {\it degree} is equal to the dimension \eqref{eqdim}, and there are two classes of morphisms, coface maps and codegeneracy maps, which raise (corresp., lower) the degree. The construction was clarified in [BR], using the wreath product definition. (In [B1], [BR] a Reedy structure on $\Theta_n$ for general $n\ge 1$ is defined).

Recall following [BR] the Reedy category structure on $\Theta_2$.

Recall that an object of $\Theta_2$ is given by a tuple $([k]; [n_1],\dots,[n_k])$, a morphism
$\Phi\colon ([n];[\ell_1],\dots,[\ell_n])$ $\to ([m]; [k_1],\dots,[k_m])$ is $(\phi;\phi_1,\dots,\phi_n)$, where $\phi\colon [n]\to [m]$ is a morphism in $\Delta$, and $\phi_i=(\phi_i^{\phi(i-1)+1},\dots, \phi_i^{\phi(i)})$, $\phi_i^s\colon [\ell_i]\to [k_s]$ is a tuple of morphisms in $\Delta$. 

Define two subcategories $\Theta_2^-,\Theta_2^+\subset\Theta_2$, such that $\Ob\Theta_2^-=\Ob\Theta_2^+=\Ob\Theta_2$. 
We say that $\Phi\in \Theta_2^-$ if $\phi$ is surjective, and for $\phi(i-1)<\phi(i)$ the map $\phi_i^{\phi(i)}\colon [k_i]\to [\ell_{\phi(i)}]$ is surjective. We say that a map $\Phi\in\Theta_2^+$ if $\phi$ is injective, and for any $i$ the family of maps 
$\{\phi_i^j\colon [k_i]\to [\ell_j]\}_{\phi(i-1)+1\le j\le \phi(i)}$ is {\it jointly} injective, that is, for any $a,a+1\in [k_i]$ there is $j$ such that $\phi_i^j(a)\ne \phi_i^j(a+1)$ (note that individual $\phi_i^j$ may not be injective for all $j$). 

The following statement is a particular case of [BR], Prop. 2.11:

Any $\Phi\in\Theta_2$ can be uniquely decomposed as $\Phi=\alpha^+\circ\alpha^-$ with $\alpha_+\in \Theta_2^+$, $\alpha_-\in\Theta_2^-$. One has $\Phi\in\Theta_2^+\cap\Theta_2^-$ if $\Phi=\id$, the morphisms in $\Theta_2^-$ decrease $|D|$ and the morphisms in $\Theta_2^+$ raise $|D|$.

Below we list the $\codim=\pm 1$ (with respect to $|-|$) coface and codegeneracy maps in the wreath product model of $\Theta_2$.

We denote by $\partial^j$ the $j$-th coface maps $\partial^j\colon [n]\to [n+1]$ in $\Delta$, $0\le j \le n+1$.

\vspace{2mm}

{\it Inner coface maps of codimension 1:}

\vspace{2mm}

\begin{itemize}
\item[(F1)] $n=m$, $\ell_i=k_i$ for $i\ne p$, $k_p=\ell_p+1$, all $\phi_i^s=\id$ except for $\phi_p^{\phi(p)}$ equal to the $j$-th coface map $\partial^j\colon [\ell_p]\to[\ell_p+1]$, $j\ne 0, \ell_p+1$ (that is, $\partial^j$ is an inner coface map in $\Delta$). We denote this coface map $\partial_p^j$,
\item[(F2)] $m=n+1$, the morphism $\phi\colon [m]=[n]\to [n+1]$ is $\partial^j$, $j\ne 0, n+1$.
Next, $k_s=\ell_s$ except for $s=j,j+1$, $k_j+k_{j+1}=\ell_j$, and all $\phi_s=\id$ except for $s=j$. 
Let $\sigma$ be a $(k_j,k_{j+1})$-shuffle permutation in $\Sigma_{\ell_j}$. The permutation $\sigma$ defines two maps $p\colon [k_j-1]\to [\ell_{j}-1]$ and $q\colon [k_{j+1}-1]\to [\ell_j-1]$ in $\Delta$. They define the Joyal dual maps $p^*\colon [\ell_j]\to [k_j]$ and $q^*\colon [\ell_j]\to [k_{j+1}]$ in $\Delta$ preserving the end-points. Then $(p^*,q^*)\colon [\ell_j]\to [k_j]\times [k_{j+1}]$, extended by the identity maps of the ordinals $[\ell_i], i\ne j$, defines a map in $\Theta_2$. It is the $\codim=1$ coface map associated with a shuffle permutation $\sigma$.

We denote this coface map by $D_{j,\sigma}$.

Let us define $p^*,q^*$ explicitly, unwinding the definition. 
We think of the sets $\{1,\dots,k_j\}$, $\{1,\dots, k_{j+1}\}$, $\{1,\dots,\ell_j\}$ as the elementary {\it arrows} in the interval categories $I_{k_j}, I_{k_{j+1}}$, and $I_{\ell_j}$, correspondingly.
Then $p^*$ and $q^*$ are defined as follows. Both $p^*$ and $q^*$ preserve the end-points. 
If $\sigma^{-1}(\overrightarrow{i,i+1})=\overrightarrow{a,a+1}\in I_{k_j}$, then $p^*(i)=a,p^*(i+1)=a+1, q^*(i)=q^*(i+1)$. If $\sigma^{-1}(\overrightarrow{i,i+1})=\overrightarrow{b,b+1}\in I_{k_{j+1}}$, then $q^*(i)=b,q^*(i+1)=b+1,p^*(i)=p^*(i+1)$.

\end{itemize}

\vspace{2mm}

{\it Outer coface maps of codimension 1:}

\vspace{2mm}

\begin{itemize}
\item[(F3)] $n=m$, $\ell_i=k_i$ for $i\ne p$, $k_p=\ell_p+1$, all $\phi_i^s=\id$ except for $\phi_p^{\phi(p)}$ equal to the $j$-th coface map $\partial^j\colon [\ell_p]\to[\ell_p+1]$, $j= 0, \ell_p+1$ (that is, $\partial^j$ is an outer coface map in $\Delta$). We denote this coface map $\partial_p^j$
\item[(F4)] the two remaining codim 1 coface maps are $D_\min$ and $D_\max$. In both cases, $m=n+1$. For the case of $D_\min$, $\phi=\partial^0$, and $k_1=0$, $k_s=\ell_{s-1}$ for $s\ge 1$, the maps $\phi_i=(\phi_i^{i+1})=(\id)$. For the case of $D_\max$, $k_{m+1}=0$, $\phi=\partial^{n+1}$, $\phi_i=(\phi_i^i)=(\id)$. 
\end{itemize}

More generally, we call a map $\Phi\colon ([n];[\ell_1],\dots,[\ell_n])\to ([m]; [k_1],\dots,[k_m])$, $\Phi=(\phi;\ \phi_1,\dots,\phi_n)$, a {\it coface map} if $\phi\colon [n]\to [m]$ is injective, and each $\phi_i\colon [\ell_i]\to [k_{\phi(i-1)+1}]\times\dots\times [k_{\phi(i)}]$ is a (jointly) injective map (the latter means that for any $a,b\in [\ell_i], a\ne b$, for at least one $\phi_i^s, \phi(i-1)+1\le s\le \phi(i)$, one has $\phi_i^s(a)\ne \phi_i^s(b)$).\\
\\

Here is the list of elementary codegeneracy maps in $\Theta_2$:
\begin{itemize}
\item[(D1)] $n=m$, $\ell_i=k_i$ for $i\ne p$, $k_p=\ell_p-1$, all $\phi_i^s=\id$ except for $\phi_p^{\phi(p)}$ equal to the $j$-th codegeneracy map $\varepsilon^j\colon [\ell_p]\to[\ell_p-1]$.\\ We denote this codegeneracy map by $\varepsilon_p^j$,
\item[(D2)] $n-1=m$, the first component $p(\Phi)$ is $\varepsilon^p\colon [n]\to [n-1]$. For any $[\ell_{p+1}]$, it extends uniquely to a morphism $$\Phi\colon ([n]; [\ell_1],\dots, [\ell_p],[\ell_{p+1}],[\ell_{p+2}],\dots,[\ell_n])\to ([n-1]; [\ell_1],\dots,[\ell_p], [\ell_{p+2}],\dots,[\ell_n])$$
for which $\phi_1,\dots,\phi_p,\phi_{p+2},\dots,\phi_{n}$ are identity maps. \\
We denote this operator $\Upsilon^p_{\ell_{p+1}}$. Note that the morphism $\Upsilon^p_{\ell_{p+1}}$ is of codimension 1 iff $\ell_{p+1}=0$. We define $\Upsilon^p_\ell=0$ if $\ell\ne\ell_{p+1}$.

\end{itemize}

One can show that the morphisms in $\Theta_2$ listed above form a set of generators for $\Theta_2$. 
For relations between these generators, see Appendix \ref{relationstheta}.

\section{\sc The totalization of a 2-cocellular vector space}
Here we define the non-normalized $\Theta_2$-totalization of a 2-cocellular complex. Also we define a relative $p$-totalization $Rp_*$, for the functor $p\colon \Theta_2\to \Delta$, and prove the transitivity property saying that $\Tot_\Delta\circ  Rp_*=\Tot_{\Theta_2}$.

\subsection{\sc Generalities on realizations and totalizations}\label{remtot}
Recall that for a general category $\Xi$ and a functor $C\colon \Xi\to C^\udot(\k)$ the corresponding {\it realization} in $C^\udot(\k)$ is a functor $\Sets^{\Xi^\opp}\to C^\udot(\k)$ defined as the left Kan extension of $C$ along the Yoneda embedding $\Xi\to \Sets^{\Xi^\opp}$. 
That is, the realization depends on a functor $C\colon \Xi\to C^\udot(\k)$. For $X\in \Sets^{\Xi^\opp}$, we denote by $|X|_C$ or just $|C|$ its realization with respect to the functor $C$.
Dually, for the same $\Xi$ and $C$, define the {\it totalization} functor $\Sets^\Xi\to C^\udot(\k)$ as the right Kan extension of $C^\opp\colon \Xi^\opp\to C^\udot(\k)^\opp$ by the dual Yoneda functor $\Xi^\op\to (\Sets^\Xi)^\op$. The result is a functor 
$\Sets^\Xi\to C^\udot(\k)$. For $Y\in \Sets^\Xi$, we denote by $\Tot_C(Y)$ ot $\Tot(Y)$ its totalization. 
\begin{remark}{\rm
One similarly defines the realization (resp., the totalization) with values in any cocomplete (resp., complete) category $E$ (replacing the category $C^\udot(\k)$ in the above definition) out of a functor $C\colon \Xi\to E$. For example, for $\Xi=\Delta$ one can take $E=\Top$, $\Xi([n])=\Delta^n$, or $E=\Cat, \Xi([n])=I_n$ (where $I_n$ is the linearly ordered poset with $n+1$ objects), etc.
In the case $Xi=\Delta, E=C^\udot(\k)$, one can takes $C([n])=N(\k\Delta(=,[n])$, or $C([n])=C_{\mathrm{Moore}}(\k\Delta(=,[n]))$ for a realization/totalization in $E=C^\udot(\k)$ (here $C_{\mathrm{Moore}}$ and $N$ denote the Moore chain complex and the normalized Moore chain complex of a simplicial vector space).
}
\end{remark}

It follows from the definition that 
the realization commutes with all small colimits and the totalization commutes with all small limits.
\comment
for $X\in \Sets^{\Xi^\opp}$ and for $Y\in \Sets^{\Xi^\opp}$, one has
\begin{equation}\label{realcolim}
|\colim_{i\in I} X_i|=\colim_{i\in I} |X_i|
\end{equation}
\begin{equation}\label{totlim}
\Tot(\lim_{i\in I} Y_i)=\lim_{i\in I}\Tot(X_i)
\end{equation}
\endcomment

Using this observation, one proves that the realization of a simplicial set $X$ in $C^\udot(\k)$, for $C([n])$ equal to the non-normalized Moore complex of $\k\Delta(=,[n])$, is equal to the non-normalized Moore complex of $X$. 
This fact is fairly standard, but as we employ a similar argument for $\Xi=\Theta_2$ later in the paper, we recall the argument for completeness. 

Represent $X=\colim_{h_{[j]}\to X}h_{[j]}$, where $h_{[j]}([i])=\Delta([i],[j])$.
Then
$$
|X|=|\colim_{h_{[j]}\to X}h_{[j]}|=\colim_{h_{[j]}\to X}|h_{[j]}|\overset{*}{=}\colim_{h_{[j]}\to X}C([j])=\colim_{h_{[j]}\to X}C^\udot_{\mathrm{Moore}}(h_{[j]})
$$
where in the equality marked by $*$ we use that the Yoneda functor is fully faithful and that the unit of the left Kan extension adjunction along a fully faithful functor is the identity [R, Cor. 1.4.5].
The rightmost complex has its $i$-th term equal to $\colim_{h_{[j]}\to X}\k\Delta([i],[j])$. 
Then
$$
(\colim_{h_{[j]}\to X} C^\udot([j]))_i=\colim_{h_{[j]}\to X}(\k h_{[j]}([i]))=(\colim_{h_{[j]}\to X}\k h_{[j]})[i]=X([i])
$$
Similarly we show, for $C([n])=N(\k\Delta(=,[n]))$, the totalization counterparts of these statements.  

In this paper, we consider the realization and the totalization for $C([n])=C^\udot_{\mathrm{Moore}}(\k\Delta(=,[n]))$, for the case of $\Delta$, and $C(T)=C^\udot(\k\Theta_2(=,T))$, where $C^\udot(=)$ is the complex defined in \eqref{tott}, \eqref{totd}. 

We also define ``relative totalization'' with respect to the projection $p\colon \Theta_2\to\Delta$. For the case of realization, it can be defined as the following left Kan extension of $\Phi$ along the Yoneda embedding
$$
\begin{tikzpicture}[baseline=(current bounding box.center), scale=1]
\node (1) at (0,0) {$\Theta_2$};
\node (2) at (2,0) {$\Sets^{\Theta_2^\op}$};
\node (3) at (0,-2) {$C^\udot(\k)^{\Delta^\op}$};
\node (A) at (0.8,-0.5) {$\Downarrow$};
\path[->,font=\scriptsize]
(1)edge(2)
(1)edge["$\Phi$"](3)
(2)edge["$\Lan$"](3);
\end{tikzpicture}
$$
where the functor $\Phi$ sends an object $T\in \Theta_2$ to the functor $\Delta^\opp\to C^\udot(\k)$, $[n]\mapsto \mathfrak{R}_{[n]}(T)$, see \eqref{relphi}. 

Note that the functor $\Delta\to C^\udot(\k)$, $[\ell]\mapsto C^\udot_{\mathrm{Moore}}(\k\Delta(=,[\ell]))$ is a resolution of the constant functor $[\ell]\mapsto \k$. The functor $T\mapsto C(T)$ is a projective resolution by Yonedas of the constant functor $T\mapsto \k$ (as is proven in Section \ref{sectionthetares}),  and the functor $\Phi$ above is a resolution of the functor $T\mapsto\{[n]\mapsto \k\Delta([n],p(T))$ (as it is proven in Lemma \ref{lemmarelphi} below).

Morally, we wanted to talk about the homotopy right Kan extension along $p$, instead of the $p$-relative totalization. We want to use the transitivity of the right homotopy Kan extensions  for the composition $\Theta_2\xrightarrow{p} \Delta\to *$, which is clear from the derived functor of composition theorem, to prove Proposition \ref{prop!}. However, the totalization agrees with the homotopy right Kan extension for the projection to $*$ only for {\it Reedy fibrant cosimplicial (resp., 2-cocellular) complexes of vector spaces} (see [Hir], Section 18.7). Typically, the cosimplicial space whose Moore complex is the Hochschild complex is {\it not} Reedy fibrant (despite the fact that the corresponding bar-complex is Reedy cofibrant). To overcome this difficulty, we talk about the ``relative totalization'' along $p\colon \Theta_2\to \Delta$, choosing the functor $\Phi$ exactly as a resolution of $\k\Delta([n],p(T))$ by the (projective) Yoneda functors. Then we prove the transitivity comparing the explicit formulas for the totalizations.

\subsection{\sc The (absolute) totalization of a $2$-cocellular vector space}\label{sectionthetares}
Let $X\colon \Theta_2\to C(\k)$ be a cocellular complex. First of all, we define its non-normalized Moore complex explictly, as the complex whose degree $\ell$ component is:
\begin{equation}\label{tott}
\Tot_{\Theta_2}(X)^\ell=\bigoplus_{{\substack{{T\in\Theta_2}\\{\dim T=\ell}}}}X_T
\end{equation}
and the differential of degree +1 is equal to the sum of (taken with appropriate signs) all codimension 1 face maps:
\begin{equation}\label{totd}
\begin{aligned}
d|_{X_T}=&\sum_{\substack{{\text{coface maps $\partial_p^j$}}\\{ (F1), (F3)}}}(-1)^{k_1+\dots+k_{p-1}+p-1+i-1} \partial_p^i+\sum_{\substack{{\text{coface maps}}\\{\text{$D_{p,\sigma}$ (F2)}}}}(-1)^{k_1+\dots+k_{p-1}+p-1+\sharp(\sigma)} D_{p,\sigma}+\\
& D_\min+(-1)^{k_1+\dots+k_n+n} D_\max
\end{aligned}
\end{equation}
where $T=([n]; [k_1],\dots,[k_n])$, $\dim T=k_1+\dots+k_n+n$, and for the notations for the face maps see Section \ref{facetheta}.

\begin{lemma}\label{lemmatotd}
One has $d^2=0$.
\end{lemma}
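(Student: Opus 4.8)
The plan is to show that $d^2=0$ is a purely combinatorial statement about $\Theta_2$, independent of the cocellular complex $X$, and then to verify it by a sign-matched pairing of factorizations.

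First I would observe that, since $X$ is a functor, every term of $d^2|_{X_T}$ has the form $\pm\, X_\chi$, where $\chi=\psi\circ\phi$ is the composite of two codimension $1$ coface maps $\phi\colon T\to S$ and $\psi\colon S\to R$ (with $\dim S=\dim T+1$, $\dim R=\dim T+2$). The coface maps of Section \ref{facetheta} (types (F1)--(F4)) are closed under composition: the first component stays injective and the fibrewise components stay jointly injective, in the sense of the general definition of a coface map given after (F4). Hence every such $\chi$ is again a coface map, now of codimension $2$. Collecting the terms of $d^2$ according to the value of the composite $\chi$, the identity $d^2=0$ becomes the assertion that for every codimension $2$ coface map $\chi\colon T\to R$ the signed count
\[
\sum_{\chi=\psi\circ\phi}(-1)^{\,s(\psi)+s(\phi)}=0,
\]
the sum running over all factorizations of $\chi$ into two codimension $1$ coface maps, where $s(\phi)$ denotes the sign exponent assigned in \eqref{totd} to $\phi$ as a summand of the differential on its source $T$, and $s(\psi)$ the analogous exponent computed at the intermediate object $S$. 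This reduces the lemma to combinatorics in $\Theta_2$ and removes $X$ from the picture entirely.

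Next I would enumerate, for each codimension $2$ coface map $\chi$, its factorizations, using the list of relations between codimension $1$ operators collected in Appendix \ref{relationstheta}. As in the classical cosimplicial proof based on \eqref{deltarel}, I expect each $\chi$ to admit exactly two factorizations, so that the vanishing of the signed count amounts to checking that the two contributions carry opposite signs. The factorizations fall into families according to the types of the two elementary cofaces involved: (i) two maps of type $\partial_p^j$, governed by the simplicial identities \eqref{deltarel} applied either inside a single ordinal $[k_p]$ or at two distinct positions; (ii) one $\partial_p^j$ and one shuffle map $D_{q,\sigma}$; (iii) two shuffle maps $D_{p,\sigma},D_{q,\tau}$; and (iv) any of the above together with an outer coface $D_{\min}$ or $D_{\max}$. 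For each relation $\psi_1\phi_1=\psi_2\phi_2$ recorded in Appendix \ref{relationstheta} I would compute the two sign exponents and check that $(-1)^{s(\psi_1)+s(\phi_1)}=-(-1)^{s(\psi_2)+s(\phi_2)}$.

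The main obstacle is the sign bookkeeping. Because the signs in \eqref{totd} are built from the cumulative prefactor $k_1+\dots+k_{p-1}+p-1$ together with local data ($i-1$ for a $\partial_p^i$, or $\sharp(\sigma)$ for a shuffle), I must recompute these exponents at the intermediate object $S$, where the dimensions $k_i$ and the positions of the ordinals have been shifted by the first coface. The cleanest way to manage this is to localize: since a coface map alters the tree only near a single vertex, and the total dimension strictly to the left of the active position is the same for both factorizations of a given $\chi$, the cumulative prefactors either agree on the two sides or differ in a way dictated purely by the relation, so the cancellation can be verified by a bounded local computation around the affected vertices. The genuinely delicate points are the shuffle contributions (iii), where the composite of two shuffle moves corresponds to a single splitting of an ordinal into three blocks realised in two different orders, and the matching of $\sharp(\sigma_1)+\sharp(\sigma_2)$ across the two orders is the $\Theta_2$ analogue of the Eilenberg--Zilber shuffle-sign identity; and the boundary interactions in (iv), where an ordinal may collapse to $[0]$ and the special cases in the definitions of $D_{\min},D_{\max}$ must be treated individually. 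Once these cases are checked against Appendix \ref{relationstheta}, the signed counts vanish and $d^2=0$ follows.
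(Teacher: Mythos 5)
Your proposal is correct and follows essentially the same route as the paper: the paper's proof likewise observes that, by the relations \eqref{reltheta1}--\eqref{reltheta7} of Appendix \ref{relationstheta}, the summands of $d^2$ pair up into equal composites, and then checks by hand that the two members of each pair carry opposite signs. Your reduction to a sign-matched count over factorizations of codimension~$2$ cofaces is just a more explicit packaging of that same argument.
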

\begin{proof}
It follows from relations \eqref{reltheta1}-\eqref{reltheta7} that the summands in $d^2$ come in pairs, in which the two operators are equal one to another. One checks by hand that for each pair the two terms have opposite signs, which makes them mutually cancelled. 
\end{proof}

This formula can be ``explained'' in the following way. We compute $\underline{\Hom}_{\Theta_2}(K^\udot, X)$ where $K^\udot$ is a (projective) resolution of the constant 2-cocellular vector space $\k$, formed by the (linearised) Yoneda modules $\k\Theta_2(-,T)$, for $T\in \Theta_2$ (here $\underline{\Hom}_{\Theta_2}$ stands for $\Hom$ taking values in $C^\udot(\k)$, it can be equally defined as enriched natural transformations [R, Sect. 7.3]) . In this way, it is ``semi-derived'' functor of $\colim_{\Theta_2}X$, because $\Hom_{\Theta_2}(\k,X)=\colim_{\Theta_2} X$, but it is not properly derived because $X$ is not, in general, Reedy fibrant. We can not claim that for another choice of resolution of $\k$ we get a quasi-isomorphic complex. 

Here is an explicit resolution. 

Fix $T\in \Theta_2$. Denote by $K^\udot(T)$ a complex whose components are 
\begin{equation}
K^{-\ell}(T)=\bigoplus_{\substack{{T^\prime\in \Theta_2}\\ {|T^\prime|=\ell}}}\k\Theta_2(T^\prime,T)
\end{equation}
The differential $d\colon K^{-\ell}(T)\to K^{-\ell+1}(T)$ is defined likewise the differential in \eqref{totd}, so that $K^\udot(-,T)$ for a given $T$ is the realization of the cellular vector space $?\mapsto \k\Theta_2(?,T)$. We check $d^2=0$ as in Lemma \ref{lemmatotd}. 

Clearly the post-composition makes $K^\udot(T)$ functorial in $T$. One remains to show that, for given $T$, $(K^\udot(T),d)$ is quasi-isomorphic to $\k$.\\
Consider $(K^\udot(T),d)$ as the total complex of a bicomplex, with ``vertical'' component of the differential $$d_1=\sum_{\substack{{\text{coface maps $\partial_p^j$}}\\{ (F1), (F3)}}}(-1)^{k_1+\dots+k_{p-1}+p-1+i-1} \partial_p^i
$$
and the ``horizontal'' component 
$$
d_0=\sum_{\substack{{\text{coface maps}}\\{\text{$D_{p,\sigma}$ (F2)}}}}(-1)^{k_1+\dots+k_{p-1}+p-1+\sharp(\sigma)} D_{p,\sigma}+
 D_\min+(-1)^{k_1+\dots+k_n+n} D_\max
$$
We claim that $d_0d_1+d_1d_0=0$, we check it later in the proof of Proposition \ref{propdeltaaction}.

\subsection{\sc The $p$-relative totalization $R p_*(X)$: an explicit description}
\subsubsection{\sc }
The ordinary enriched right Kan extension of $X$ along $p\colon \Theta_2\to\Delta$ is taken equal to 
\begin{equation}
p_*(X)([n])=\underline{\Hom}_{\Theta_2}\big(\k\Delta([n],p(-)),X(-)\big)
\end{equation}
where, as above, $\underline{\Hom}_{\Theta_2}$ taking values in $C^\udot(\k)$ is the enriched natural transformations. 
We replace $\k\Delta([n], p(T))$ by its resolution $\mathfrak{R}_{[n]}$ by Yoneda modules.
Note that it is not the homotopy right Kan extension along $p$, for the case $X$ is not Reedy fibrant. Therefore, we can not say that for any other choice of resolution we get a quasi-isomorphic complex. The resulting $\Delta$-complex $\Hom_{\Theta_2}(\mathfrak{R}, X)$ is ``closely related'' to our relative realization. 

We have to resolve the functor $T\mapsto \k\Delta([n],p(T))$ (for a given $[n]\in\Delta$) by the Yoneda functors $h_{T^\prime}(T)=\k\Theta_2(T^\prime,T)$. Below we provide an explicit resolution $\mathfrak{R}_{[n]}^\udot$ (which is a complex of vector spaces over $\k$).

The degree $\ell$ component is
\begin{equation}\label{relphi}
\mathfrak{R}_{[n]}^{\ell}(T)=\bigoplus_{\substack{{T^\prime\in \Theta_2,\  p(T^\prime)=[n]}\\{\dim T^\prime=n-\ell}}}\k\Theta_2(T^\prime,T)
\end{equation}
Thus, the complex $\mathfrak{R}_{[n]}^\udot$ has non-zero components in degrees $\le 0$. 

The differential $d\colon \mathfrak{R}_{[n]}^\ell\to\mathfrak{R}_{[n]}^{\ell+1}$ is defined as the alternated sum of the ``vertical'' face operators (acting on the first argument $T^\prime$), that is, of face operators (F1) and (F3) from  the list in Section \ref{facetheta}. More precisely, for $T=([q]; [t_1],\dots,[t_q])$,  $T^\prime=([n]; [k_1],\dots,[k_n]), \Phi=(\phi;\phi_1,\dots,\phi_n)\colon T^\prime\to T$, one has
\begin{equation}\label{eqdred}
d(\Phi)=\sum_{s=1}^n\sum_{i=0}^{k_s}(-1)^{k_1+\dots+k_{s-1}+s-1+i}\Phi_{s,i}
\end{equation}
where $\Phi_{s,i}\colon T_{s,i}^\prime\to T$ is defined as the pre-composition $ \Phi \circ \partial_s^i$ (see Section \ref{facetheta}, (F1) and (F3)), and $T^\prime_{s,i}=([n]; [m_1],\dots,[m_n])$, where $m_j=k_j$ for $j\ne s$, $m_s=k_s-1$, and $\partial_s^i\colon T^\prime_{s,i}\to T^\prime$ is the corresponding ``vertical'' face operator. Note that this pre-composition does not affect the ``horisontal'' map $\phi$. It is clear that $d^2=0$. 

\begin{lemma}\label{lemmarelphi}
The following statements are true:
\begin{itemize}
\item[(1)] degree 0 cohomology of $\mathfrak{R}^\udot_{[n]}$ is isomorpic to $\k\Delta([n],p(T))$,
\item[(2)] the higher cohomology (in the negative degrees $\le -1$) vanish.
\end{itemize}
\end{lemma}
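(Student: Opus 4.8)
\emph{Proof proposal.} The plan is to fix $T=([q];[t_1],\dots,[t_q])$ (so that $p(T)=[q]$) and show directly that $\mathfrak{R}^\udot_{[n]}(T)$ is a resolution of $\k\Delta([n],[q])$, by splitting it according to the underlying simplicial map and recognising each piece as a tensor product of contractible nerve complexes. A morphism $\Phi\colon T'\to T$ with $T'=([n];[k_1],\dots,[k_n])$ is a tuple $(\phi;\phi_1,\dots,\phi_n)$ with $\phi=p(\Phi)\in\Delta([n],[q])$ and $\phi_i^s\colon[k_i]\to[t_s]$. Since the differential \eqref{eqdred} only precomposes the fibre maps with the vertical cofaces $\partial_s^i$ and, as noted right after \eqref{eqdred}, leaves the component $\phi$ unchanged, the complex decomposes as a direct sum of subcomplexes indexed by $\phi$:
$$\mathfrak{R}^\udot_{[n]}(T)=\bigoplus_{\phi\in\Delta([n],p(T))}\mathfrak{R}^\udot_{[n],\phi}(T),$$
and it suffices to compute the cohomology of each summand.

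Next, for a fixed $\phi$ I would identify $\mathfrak{R}^\udot_{[n],\phi}(T)$ with a tensor product. Put $P_i=\prod_{s=\phi(i-1)+1}^{\phi(i)}[t_s]$, the product poset over the $i$-th block of $\phi$ (an empty product, i.e.\ a one-point poset, when $\phi(i-1)=\phi(i)$). Then $\prod_{s}\Delta([k_i],[t_s])=\mathrm{Pos}([k_i],P_i)=N(P_i)_{k_i}$, so the degree $-k$ part of the summand is $\bigoplus_{k_1+\dots+k_n=k}\bigotimes_{i=1}^n\k[N(P_i)_{k_i}]$. For fixed $s$ the inner sum $\sum_i(-1)^i\partial_s^i$ in \eqref{eqdred} is exactly the simplicial differential of the $i$-th fibre, and after rescaling each generator by $(-1)^{\sum_i(i-1)k_i}$ the remaining signs become the Koszul signs of a tensor product. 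Hence $\mathfrak{R}^\udot_{[n],\phi}(T)\cong\bigotimes_{i=1}^n C_{\mathrm{Moore}}(N(P_i))$, the tensor product (in non-positive cohomological degrees) of the non-normalised Moore chain complexes of the nerves $N(P_i)$.

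Finally I would invoke contractibility and Künneth. Each $P_i$ has a least element $(0,\dots,0)$, so its nerve admits the extra degeneracy $s_{-1}(x_0\le\dots\le x_k)=(0\le x_0\le\dots\le x_k)$; this is a contracting homotopy exhibiting the augmentation $C_{\mathrm{Moore}}(N(P_i))\to\k$ as a quasi-isomorphism (the nerve of a poset with a minimum is contractible). Since $\k$ is a field, the Künneth formula then gives that $\mathfrak{R}^\udot_{[n],\phi}(T)$ has cohomology $\k$ concentrated in degree $0$, with the degree-$0$ augmentation $\bigotimes_i\k[P_i]\to\k$ sending every generator to $1$. Assembling over $\phi$, the total degree-$0$ augmentation is $\Phi\mapsto p(\Phi)$, which yields the natural isomorphism $H^0(\mathfrak{R}^\udot_{[n]}(T))\cong\k\Delta([n],p(T))$ of (1), while the vanishing of the higher cohomology gives (2).

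The main obstacle is not conceptual — contractibility via a least element is immediate — but combinatorial: one must check carefully that the differential really restricts fibre-wise, that the signs in \eqref{eqdred} reduce to the tensor-product signs after the indicated rescaling, and that the degenerate blocks with $\phi(i-1)=\phi(i)$ (empty products) are handled uniformly so that the tensor-product identification and the Künneth step go through without exceptions.
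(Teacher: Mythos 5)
Your proposal is correct, and for part (2) it takes a genuinely different route from the paper's. The paper also records the splitting of $\mathfrak{R}^\udot_{[n]}$ over $\phi\in\Delta([n],p(T))$ (in the remark immediately following the lemma), but its proof of acyclicity is a single global contracting homotopy: $H$ is the alternating sum of the vertical codegeneracy operators, and one checks $(dH+Hd)|_{\mathfrak{R}^\ell_{[n]}}=c_\ell\cdot\mathrm{id}$ with $c_\ell$ a non-zero integer for $\ell\ne 0$; part (1) is then a direct computation of the cokernel $\mathfrak{R}^0_{[n]}/d(\mathfrak{R}^{-1}_{[n]})$, observing that for fixed $\phi$ all choices of the degree-$0$ fibre data become identified. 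Your identification of each summand $\mathfrak{R}^\udot_{[n],\phi}$ with $\bigotimes_i C_{\mathrm{Moore}}(N(P_i))$ for $P_i=\prod_s[t_s]$, followed by contractibility of the nerve of a poset with a least element and K\"unneth over the field $\k$, is more structural: it explains why the complex is a resolution, treats (1) and (2) uniformly through the augmentation, and avoids dividing by the integers $c_\ell$ (which the paper's homotopy implicitly requires to be invertible in $\k$ --- a point that could matter in positive characteristic, whereas the extra-degeneracy homotopy on each factor is an exact contraction). The price is the sign bookkeeping for the tensor-product identification, which you correctly isolate and which your rescaling by $(-1)^{\sum_i(i-1)k_i}$ handles, together with the uniform treatment of the blocks with $\phi(i-1)=\phi(i)$ as one-point posets. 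Both arguments are sound.
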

\begin{proof}
(1): the degree 0 component $\mathfrak{R}^0_{[n]}$ is a direct sum $\oplus\k\Phi$, where $\Phi\colon ([n];[0],\dots,[0])\to T$, which is the same as $\Phi=(\phi\colon [n]\to [q]; T_1,\dots, T_n)$ where $T_i\in [t_{\phi(i-1)+1}]\times\dots\times [t_{\phi(i)}]$ an element
(recall that $[q]=p(T)$). Degree 0 cohomology is equal to the quotient-space by the image of $\oplus\k\Phi^\prime$, with $\Phi^\prime\colon ([n]; [0],\dots,[0],[1],[0],\dots,[0])\to T$. IFor a given $\phi\colon [n]\to [q]$, all choices of $T_i$ become equal in the quotient-space $H^0(\mathfrak{R}_{[n]}^\udot)=\mathfrak{R}_{[n]}^0/d(\mathfrak{R}_{[n]}^{-1})$. It shows that $H^0(\mathfrak{R}_{[n]}^\udot)\simeq \k\Delta([n],[q])=\k\Delta([n],p(T))$.\\
(2): we construct a contracting homotopy operator $H$ of degree -1, that is an operator such that $(dH+Hd)|_{\mathfrak{R}_{[n]}^\ell}=c_\ell$, where $c_\ell$ is the multiplication by an integer $c_\ell$, non-zero for $\ell\ne 0$. This $H$ is constructed in a standard way as the alternated sum of the ``vertical'' degeneracy operators. 
\end{proof}
\begin{remark}{\rm
It is clear that the complex $\mathfrak{R}_{[n]}^\udot$ is a direct sum $\oplus_\phi\mathfrak{R}^\udot_{[n],\phi}$ over $\phi\colon [n]\to p(T)$, because the differential does not affect $\phi$. Each complex $\mathfrak{R}_{[n],\phi}^\udot$ is a resolution of $\k$ (where $\k$ denotes the complex-object $\k$ in degree 0). 
}
\end{remark}

One checks that $\mathfrak{R}_{[n]}^\udot$ is a functor $\Theta_2\to C^\udot(\k)$, where the action of $\Theta_2$ is given by the {\it post-composition}. It commutes with the differential as the general post-composition and pre-composition do. 

\subsubsection{\sc The action of $\Delta$}
Our next task is to endow our resolution $\mathfrak{R}_{[n]}^\udot$ with a structure of a functor $\Delta^\opp\to C^\udot(\k)$, when $[n]$ varies. Note, that unlike for the cohomology $\k\Delta([n],p(T))$ of $\mathfrak{R}^\udot_{[n]}$, the ``lifted'' action of $\Delta$ on $\mathfrak{R}^\udot_{[n]}$ does not come automatically.  

We need to define the actions of the elementary face operators $\partial^i$ and the elementary degeneracy operators $\varepsilon^j$ in $\Delta$, which we denote, in this context, by $\Omega^i_\Delta$ and $\Upsilon^j_\Delta$, correspondingly. 

Here are the definitions.\\
Let $\Phi\colon T^\prime\to T$ be an element in $\mathfrak{R}^\ell_{[n]}$, $p(T^\prime)=[n]$. 

\begin{equation}
\Omega^i_\Delta(\Phi)=\sum_\sigma\pm \Phi\circ D_{i,\sigma}\pm \Phi\circ D_\min\pm\Phi\circ D_\max
\end{equation} 

\begin{equation}
\Upsilon^j_\Delta(\Phi)=\pm \Phi\circ \Upsilon^j_{0}
\end{equation}
where $\Upsilon^j_{0}\colon ([n]; [\ell_1],\dots,[\ell_j],[0],[\ell_{j+2}],\dots,[\ell_n])\to T$. Note that we take only $T^\prime$ with $[\ell_{j+1}]=[0]$. \\
(See Section \ref{facetheta} for the notations $D_{i,\sigma}$ and $\Upsilon^j$). 

\begin{prop}\label{propdeltaaction}
\begin{itemize}
\item[(1)]
The operators $\Omega^i_\Delta$ and $\Upsilon^j_\Delta$ define maps of complexes $\Omega^i_\Delta\colon \mathfrak{R}^\udot_{[n]}\to \mathfrak{R}^\udot_{[n-1]}$ and $\Upsilon^j_\Delta\colon \mathfrak{R}^\udot_{[n]}\to \mathfrak{R}^\udot_{[n+1]}$, preserving the cohomological degree.
\item[(2)]
The operators $\Omega_\Delta^i$ and $\Upsilon^j_\Delta$ fulfil the simplicial relations, defining a simplicial object $\mathfrak{R}^\udot_?\colon \Delta^\opp\to C^\udot(\k)$, functorial in $T$. The cohomology $H^\udot(\mathfrak{R}^\udot_{[n]})$ with respect to the differential \eqref{eqdred}, with its simplicial action, is isomorphic to $\Delta([n],p(T))$ with its natural simplicial action.
\end{itemize}
\end{prop}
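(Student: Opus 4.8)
The plan is to reduce every assertion to the composition relations among the generating morphisms of $\Theta_2$ collected in Appendix \ref{relationstheta}. The guiding observation is that both the differential \eqref{eqdred} and the operators $\Omega^i_\Delta$, $\Upsilon^j_\Delta$ act by \emph{pre-composition} on the source object $T^\prime$ of a generator $\Phi\colon T^\prime\to T$: the differential inserts the ``vertical'' codimension-$1$ cofaces $\partial_s^i$ of types (F1),(F3) of Section \ref{facetheta}, the operator $\Omega^i_\Delta$ inserts the ``horizontal'' cofaces $D_{i,\sigma}$, $D_\min$, $D_\max$ of types (F2),(F4), and $\Upsilon^j_\Delta$ inserts the codegeneracy $\Upsilon^j_0$ of type (D2). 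Hence, after expanding and using associativity of composition in $\Theta_2$, each identity to be proved becomes an identity between two signed sums of pre-compositions, and it suffices to match them term by term using the relations of Appendix \ref{relationstheta}.

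For the degree-preservation half of (1) I would argue by dimension bookkeeping: a horizontal coface raises $\dim$ by $1$ and raises the number of level-$1$ blocks by $1$, so pre-composing a $\Phi$ with $p(T^\prime)=[n]$ and $\dim T^\prime=n-\ell$ produces a generator with $p=[n-1]$ and dimension $(n-1)-\ell$, i.e. again cohomological degree $\ell$; the analogous count for the codimension-$1$ codegeneracy $\Upsilon^j_0$, which collapses a trivial $[0]$-block, places the result in $\mathfrak{R}^\udot_{[n+1]}$ in the same degree $\ell$. For the chain-map half I would expand $\Omega^i_\Delta\, d$ and $d\,\Omega^i_\Delta$ (and similarly for $\Upsilon^j_\Delta$): both are signed sums of pre-compositions of $\Phi$ with a vertical and a horizontal generator composed in the two possible orders, and the commutation relations of Appendix \ref{relationstheta} between the $\partial_s^i$ and the horizontal operators furnish a bijection between the two collections of terms. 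The only real work is checking that the signs prescribed in \eqref{eqdred} and in the definitions of $\Omega^i_\Delta,\Upsilon^j_\Delta$ make paired terms agree, together with the reindexing of the shuffle $\sigma$ that occurs when a vertical face is pushed past a shuffle-coface $D_{i,\sigma}$.

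For the simplicial identities in (2) I would again expand the relevant composites $\Omega^i_\Delta\Omega^j_\Delta$, $\Upsilon^i_\Delta\Upsilon^j_\Delta$, and the mixed composites, and compare with the required right-hand sides (the relations \eqref{deltarel}, dualized) using the composition relations among $D_{\bullet,\bullet}$, $D_\min$, $D_\max$ and $\Upsilon^\bullet$ in $\Theta_2$. Here the essential point is that composing two shuffle-summed operators produces a double sum over pairs of shuffles which, by the relation expressing a composite of two inner cofaces as a single shuffle-coface summed over the appropriate shuffle products, reorganizes exactly into the shuffle sum defining the target operator; this is the combinatorial heart of the statement, and it is where the sum over all $\sigma$ in the definition of $\Omega^i_\Delta$ is indispensable.

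Finally, for the identification of cohomology I would invoke Lemma \ref{lemmarelphi}, by which $H^\udot(\mathfrak{R}^\udot_{[n]})$ is concentrated in degree $0$ and equal to $\k\Delta([n],p(T))$; since the operators are chain maps of degree $0$ by part (1), they descend to $H^0$. Representing a class by a generator $\Phi\colon([n];[0],\dots,[0])\to T$, which in $H^0$ is recorded only by $\phi=p(\Phi)\in\Delta([n],p(T))$, I would compute the operators on such representatives: on an all-$[0]$ object every relevant shuffle is the unique $(0,0)$-shuffle, so $\Omega^i_\Delta$ reduces to a single pre-composition with a horizontal coface projecting under $p$ to $\partial^i$, and $\Upsilon^j_\Delta$ to pre-composition with $\Upsilon^j_0$ projecting to $\varepsilon^j$. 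Thus on $H^0$ the operators induce $\phi\mapsto\phi\circ\partial^i$ and $\phi\mapsto\phi\circ\varepsilon^j$, which is precisely the natural simplicial structure on the representable $\Delta(-,p(T))$. The main obstacle throughout is the bookkeeping of signs and of the shuffle reindexing in parts (1) and (2); I expect it to be purely computational (and it is carried out in Appendix B), with the degree-$0$ specialization in the cohomology step being comparatively immediate because the shuffles trivialize there.
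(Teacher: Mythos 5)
Your plan coincides with the paper's own proof (carried out in Appendix B): both reduce the chain-map property and the simplicial identities to term-by-term matching of signed pre-compositions via the relations of Appendix \ref{relationstheta}, with the case split at $s<p$, $s\in\{p,p+1\}$, $s>p+1$ and the extended-shuffle reindexing being exactly the computation the paper performs, and with \eqref{reltheta2} supplying the shuffle-composition identity you correctly flag as the combinatorial heart. Your closing step, identifying the induced action on $H^0$ with the natural simplicial action on $\k\Delta([n],p(T))$ by specializing to all-$[0]$ representatives where the shuffles trivialize, is a detail the paper leaves implicit and is argued correctly.
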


The proof of Proposition \ref{propdeltaaction} is a straightforward computation performed in Appendix \ref{appendixb} using relations between the generators of $\Theta_2$ listed in Appendix \ref{appendixa}.

\subsubsection{\sc Finally, here is $Rp_*(X)$}
For $X\in \Fun(\Theta_2, C^\udot(\k))$, define $Rp_*(X)[n]\in C^\udot(\k)$
as the following complex:
\begin{equation}
0\to Rp_*(X)[n]^0\xrightarrow{d} Rp_*(X)[n]^1\xrightarrow{d}Rp_*(X)[n]^2\xrightarrow{d}\dots
\end{equation}
where
\begin{equation}
Rp_*(X)[n]^\ell=\bigoplus_{\substack{{T\in \Theta_2,\ p(T)=[n]}\\{\dim T=n+\ell}}}X(T)
\end{equation}
and the differential $d$ is the alternated sum of ``vertical'' face operators (of type (F1) and (F3) in Section \ref{facetheta}):
\begin{equation}
d|_{X(T)}=\sum_{i=1}^{p(T)}\sum_{j=0}^{T_i}(-1)^{T_1+\dots+T_{i-1}+j}\partial_i^{j}
\end{equation}
where we write $T=(p(T);\ [T_1],\dots, [T_{p(T)}])$.

According to Proposition \ref{propdeltaaction}, when $[n]\in \Delta$ varies, it gives rise to a functor $\Delta\to C^\udot(\k)$. 

More precisely, we have similar to the stated in Proposition \ref{propdeltaaction} formulas for the coface maps $\Omega_\Delta^i\colon Rp_*(X)[n]\to Rp_*(X)[n+1]$, $i=0,\dots, n+2$, and the codegeneracy maps $\Upsilon_\Delta^j\colon Rp_*(X)[n]\to Rp_*(X)[n-1]$, $j=0,\dots,n$:
\begin{equation}\label{deltaaction1x}
\Omega_\Delta^i=\sum_\sigma\pm D_{i,\sigma}\pm D_\min\pm D_\max
\end{equation}
and
\begin{equation}\label{deltaaction2x}
\Upsilon_\Delta^j=\pm \Upsilon_0^j
\end{equation}
in the notations of Section \ref{facetheta}. The signs are the same as in Proposition \ref{propdeltaaction}.

\begin{prop}\label{propdeltaactionbis}
The operators \eqref{deltaaction1x} and \eqref{deltaaction2x} commute with the differentials on $Rp_*(X)[n]$ for a given $n$, and satisfy the standard cosimplicial identities.
\end{prop}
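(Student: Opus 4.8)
The statement to prove, Proposition \ref{propdeltaactionbis}, is the ``absolute'' counterpart of Proposition \ref{propdeltaaction}: it asserts that the cosimplicial operators $\Omega_\Delta^i$ and $\Upsilon_\Delta^j$, defined by the same formulas \eqref{deltaaction1x}, \eqref{deltaaction2x} but now acting on $Rp_*(X)[n]$ rather than on the resolution $\mathfrak{R}_{[n]}^\udot$, (a) commute with the vertical differential $d$ for each fixed $n$, and (b) satisfy the cosimplicial identities. The key observation is that the operators $D_{i,\sigma}$, $D_\min$, $D_\max$ and $\Upsilon_0^j$ appearing in \eqref{deltaaction1x}--\eqref{deltaaction2x} are \emph{the very same} generators of $\Theta_2$ that were used to define the corresponding operators on $\mathfrak{R}_{[n]}^\udot$; the only difference is that in Proposition \ref{propdeltaaction} they act by \emph{pre}-composition $\Phi\mapsto \Phi\circ D_{i,\sigma}$ on morphisms $\Phi\colon T^\prime\to T$, whereas here they act directly on the cocellular space $X(T)$ via the functoriality $X(D_{i,\sigma})$. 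I would therefore structure the proof as a formal transfer of the computations of Appendix \ref{appendixb}.

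\textbf{Step 1: commutation with the differential.}
First I would fix $n$ and check $d\circ\Omega_\Delta^i=\Omega_\Delta^i\circ d$ and $d\circ\Upsilon_\Delta^j=\Upsilon_\Delta^j\circ d$. Both sides are sums, taken with signs, of compositions of two $\codim=1$ generators of $\Theta_2$: the differential $d$ is the signed sum of ``vertical'' inner/outer coface maps of types (F1), (F3) (the operators $\partial_i^j$), while $\Omega_\Delta^i$ is the signed sum of ``horizontal'' coface maps of type (F2) together with $D_\min, D_\max$, and $\Upsilon_\Delta^j$ is (up to sign) $\Upsilon_0^j$ of type (D2). The point is that a vertical operator and a horizontal operator commute, \emph{up to a controlled reindexing}, by the relations between the generators listed in Appendix \ref{relationstheta} (these are exactly relations \eqref{reltheta1}--\eqref{reltheta7} used in Lemma \ref{lemmatotd} and in the proof of Proposition \ref{propdeltaaction}). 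Applying these relations, the two composite sums match term by term once the signs are compared; because the operators here act by $X(-)$ rather than by pre-composition, functoriality of $X$ means the \emph{same} algebraic identities among generators hold after applying $X$. Hence the sign bookkeeping is identical to Appendix \ref{appendixb}, and no new cancellation phenomenon can arise.

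\textbf{Step 2: the cosimplicial identities.}
Next I would verify the standard cosimplicial relations \eqref{deltarel} (the coface--coface, codegeneracy--codegeneracy, and mixed relations) for $\Omega_\Delta^\udot$ and $\Upsilon_\Delta^\udot$. Each such identity is again an equality between two signed sums of length-two compositions of the $\Theta_2$-generators $D_{i,\sigma}, D_\min, D_\max, \Upsilon_0^j$, and is established by the same relations of Appendix \ref{relationstheta}; the shuffle-index sums $\sum_\sigma$ combine under composition exactly as the corresponding sums do in Proposition \ref{propdeltaaction}(2), where the identical operators were shown to induce the natural simplicial structure on $\Delta([n],p(T))$. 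Indeed, the cleanest formulation is that Proposition \ref{propdeltaaction} already establishes these identities at the level of the resolution $\mathfrak{R}_{[n]}^\udot$ via pre-composition, and the two actions are ``formally dual'': an identity of signed composites of generators that holds when they are applied as $\Phi\mapsto\Phi\circ(-)$ holds equally when they are applied as $X(-)$, since both are instances of functoriality of the respective $\Theta_2$-actions. I would therefore reduce Step 2 to citing the computation already carried out in Appendix \ref{appendixb}, remarking only on the direction in which the generators are composed.

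\textbf{Main obstacle.}
The genuine content is entirely in the sign verification, and that is where I expect the only real difficulty: the signs in \eqref{totd}, \eqref{deltaaction1x} involve the cumulative sums $k_1+\dots+k_{p-1}+p-1$ together with the shuffle sign $\sharp(\sigma)$, and when a horizontal operator $D_{i,\sigma}$ (which changes the bracketing, merging or splitting the ordinals $[k_j]$) is commuted past a vertical operator $\partial_i^j$, both the cumulative-degree prefactor and the shuffle index get reindexed simultaneously. Establishing that these two reindexings conspire to produce the correct relative sign in every case of the relations of Appendix \ref{relationstheta} is the combinatorial heart of the matter; everything else is a formal consequence of functoriality of $X$ and of the generator relations. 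Since the corresponding sign analysis is performed in Appendix \ref{appendixb} for Proposition \ref{propdeltaaction}, I would present this proof as a straightforward adaptation, explicitly noting the (sign-preserving) passage from pre-composition to the $X(-)$-action.
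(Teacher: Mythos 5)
Your proposal is correct and follows the same route as the paper, which proves this proposition by observing that the computations of Appendix B for Proposition \ref{propdeltaaction} apply verbatim (the paper also mentions a second option, deducing the statement from Proposition \ref{propdeltaaction} via the right Kan extension description of $Rp_*(X)$, but the transfer-of-computations argument you give is its stated first proof). Your added remark that the generator relations of Appendix \ref{relationstheta} are identities of morphisms in $\Theta_2$ and hence persist under the covariant action $X(-)$ just as under pre-composition is exactly the justification the paper leaves implicit.
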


For a proof, we have two options. We can either note that the proof of Proposition \ref{propdeltaaction} is literally applied to a proof of Proposition \ref{propdeltaactionbis}, with the same computations, or, alternatively, we can deduce it from Proposition \ref{propdeltaaction} and the definition of $Rp_*(X)[=]$ as the right Kan extension
\begin{equation}
\begin{tikzpicture}[baseline=(current bounding box.center), scale=1]
\node (1) at (0,0) {$\Theta_2^\op$};
\node (2) at (2,0) {$(\Sets^{\Theta_2^\op})^\op$};
\node (3) at (0,-2) {$(C^\udot(\k)^{\Delta^\op})^\op$};
\node (A) at (0.8,-0.5) {$\Uparrow$};
\path[->,font=\scriptsize]
(1)edge(2)
(1)edge["$\Phi$"](3)
(2)edge["$\Ran=Rp_*(-)$"](3);
\end{tikzpicture}
\end{equation}
Here $\Phi$ is defined as above, $T\mapsto \{[n]\mapsto \mathfrak{R}_{[n]}(T)\}$. In this way, the $\Delta$-action on $\mathfrak{R}_{[?]}(T)$ given by Proposition \ref{propdeltaaction} is translated to the action of $\Delta$ on $Rp_*(X)[?]$. 

\qed

\subsubsection{\sc $\Tot_\Delta(Rp_*(X))\simeq \Tot_{\Theta_2}(X)$}
 Here we prove the following
\begin{prop}\label{prop!}
Let $X\colon \Theta_2\to C(\k)$ be a 2-cocellular vector space. Then 
the $\Delta$-totalization $\Tot_\Delta(Rp_*(X))$ of $Rp_*(X)$ is a complex isomorphic to  the $\Theta_2$-totalization $\Tot_{\Theta_2}(X)$. 
 \end{prop}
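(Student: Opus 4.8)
The plan is to compare the two complexes term by term, using the explicit formulas \eqref{tott}--\eqref{totd} for $\Tot_{\Theta_2}(X)$ and the explicit description of $Rp_*(X)$ together with the standard Moore-complex formula for the $\Delta$-totalization. First I would unwind both sides into a single direct sum indexed by objects $T\in\Theta_2$. On the $\Theta_2$ side, the degree $\ell$ term is $\bigoplus_{\dim T=\ell}X_T$. On the $\Delta$ side, $\Tot_\Delta(Rp_*(X))$ has degree $m$ term $\bigoplus_{n\ge 0}\big(Rp_*(X)[n]\big)^{m-n}$ (the total complex of the cosimplicial object, with the horizontal index $n$ and the internal cohomological degree), and by definition $\big(Rp_*(X)[n]\big)^{m-n}=\bigoplus_{p(T)=[n],\ \dim T=n+(m-n)}X(T)=\bigoplus_{p(T)=[n],\ \dim T=m}X(T)$. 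Summing over $n$ and recalling that every $T$ has a well-defined $p(T)=[n]$ for a unique $n$, the degree $m$ term is exactly $\bigoplus_{\dim T=m}X(T)$. So the underlying graded vector spaces are canonically identified, which is the first key step.

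Next I would match the differentials under this identification. The total differential on $\Tot_\Delta(Rp_*(X))$ splits as $d_{\mathrm{int}}\pm d_{\mathrm{cosimp}}$, where $d_{\mathrm{int}}$ is the internal differential of each $Rp_*(X)[n]$ and $d_{\mathrm{cosimp}}=\sum_i(-1)^i\Omega_\Delta^i$ is the alternating sum of the cosimplicial cofaces. By construction the internal differential on $Rp_*(X)[n]$ is precisely the alternating sum of the vertical coface maps (F1),(F3), i.e.\ the operator denoted $d_1$ in Section \ref{sectionthetares}; and by the formula \eqref{deltaaction1x} the cosimplicial cofaces $\Omega_\Delta^i$ are built from the horizontal maps $D_{i,\sigma}$, $D_\min$, $D_\max$, i.e.\ assemble into the operator $d_0$. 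Thus $d_{\mathrm{int}}$ corresponds to $d_1$ and $\pm d_{\mathrm{cosimp}}$ to $d_0$, and $d_0+d_1$ is exactly the total differential \eqref{totd} on $\Tot_{\Theta_2}(X)$. The one genuine thing to verify is that the signs agree: the Koszul signs coming from forming the total complex of the cosimplicial object (the $(-1)^i$ in $\sum(-1)^i\Omega_\Delta^i$, together with the sign rule for combining horizontal and vertical degrees) must reproduce the prefactors $(-1)^{k_1+\dots+k_{p-1}+p-1+\sharp(\sigma)}$ and $(-1)^{k_1+\dots+k_n+n}$ appearing in \eqref{totd}.

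The main obstacle is therefore the sign bookkeeping rather than any conceptual difficulty: one must fix the sign convention in \eqref{deltaaction1x} (the ``$\pm$'' there, whose precise values are the ones from Proposition \ref{propdeltaaction}) so that, after adding the Koszul sign $(-1)^n$ coming from passing a horizontal coface past the internal degree, the two families of signs coincide on every summand. I would organize this by checking separately the three types of horizontal cofaces $D_{i,\sigma}$, $D_\min$, $D_\max$, confirming in each case that the horizontal index $i$ together with the internal degree contributes exactly the exponent $k_1+\dots+k_{p-1}+p-1$ (respectively $0$ for $D_\min$ and $k_1+\dots+k_n+n$ for $D_\max$) recorded in \eqref{totd}. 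Once the signs match, the identity map on the common graded vector space $\bigoplus_T X(T)$ is an isomorphism of complexes, which is the desired conclusion; the fact that both differentials square to zero is already guaranteed by Lemma \ref{lemmatotd} and Proposition \ref{propdeltaactionbis}, so no independent verification of $d^2=0$ is needed.
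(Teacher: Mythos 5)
Your proposal is correct and follows the same route as the paper: the paper's own proof is a one-line observation that applying the non-normalized cochain (Moore) complex functor to the cosimplicial object $Rp_*(X)$ reproduces formula \eqref{tott} verbatim, which is exactly the term-by-term identification and differential matching you carry out in detail. Your expanded version — identifying $\bigoplus_n (Rp_*(X)[n])^{m-n}$ with $\bigoplus_{\dim T=m}X(T)$ and checking that the internal/cosimplicial split of the differential matches the vertical/horizontal split $d_1+d_0$ of \eqref{totd}, with the signs fixed by Proposition \ref{propdeltaaction} — is simply the unpacking the paper leaves implicit.
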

 \comment
 \begin{remark}{\rm
 We provide two proofs, among which the first one is more conceptual, and the second one is more straightforward. 
 In fact, the two complexes in the statement are even {\it isomorphic} in the non-normalized setting, as follows from the second proof. 
 }
 \end{remark}
 \endcomment
 \comment
\noindent

{\it First proof:}\\
Let $C,D$ be small categories, $K\colon C\to D$ a functor. For a small-complete and small-cocomplete category $\mathcal{E}$, the restriction functor $K^*\colon \Fun(D,\mathcal{E})\to\Fun(C,\mathcal{E})$ has a left adjoint (which is $\Lan_K(-)$) and a right adjoint (which is $\Ran_K(-)$) functors. \\
Assume $\mathcal{E}=C(\k)$. The functor $K^*$ is exact, and the left homotopy Kan extension $\mathbf{L}\Lan_K(-)$ is left adjoint and the right homotopy Kan extension $\mathbf{R}\Ran_K(-)$ is right adjoint to it, as functors between the homotopy (in our case, derived) categories. 

Turn back to the proof of Proposition. The totalization $\Tot_{\Theta_2}(-)$ is quasi-isomorphic to the right homotopy Kan extension $\mathbf{R}\Ran_{\Theta_2\to*}(-)$ (see ???), and similarly $\Tot_\Delta(-)=\mathbf{R}\Ran_{\Delta\to *}(-)$. 
Denote the functors 
$$
\xymatrix{
\Theta_2\ar@/_1pc/[rr]_{P_2}\ar[r]^{p}&\Delta\ar[r]^{P_1}&{*}\\
}
$$
Clearly we have for the restriction functors $P_2^*=p^*\circ P_1^*$, and therefore similarly we have 
$\mathbf{R}\Ran_{P_1}\circ \mathbf{R}\Ran_{p}(X)\simeq\mathbf{R}\Ran_{P_2}(X)$ for their derived right adjoints. \\
\\
\endcomment

\begin{proof}
When one applies the usual non-normalized cochain complex functor to the cosimplicial vector space $Rp_*(X)$, we get exactly the formula \eqref{tott} for the (non-normalized) $\Theta_2$-totalization.
\end{proof}

\comment
\begin{remark}{\rm
If one imagines that one has a homotopy limit/ homotopy right Kan extension interpretation of our complexes (that is, if $A(C,D)(F,G)(\eta,\theta)$ were Reedy fibrant, which is not the case) one could give a more conceptual and less computational proof of the transitivity in the statement of Proposition \ref{prop!}, using general derived functor of composition theorem. 
}
\end{remark}
\endcomment

\section{\sc Bicategories and 2-bimodules over bicategories}\label{sectionbicat}

\subsection{\sc Reminder on bicategories}
Here we recall the basic definitions related to bicategories, basically aiming to fix our terminology and notations. The reference are [ML], [Ke], [Be], [L]. 

Shortly, a bicategory is a lax category enriched in $\Cat$. Here ``lax'' indicates that the associativity of composition holds up to a 2-arrow which is an isomorphism. Below is a detailed definition (of a small bicategory).

\begin{defn}{\rm
A small bicategory $C$ consists of the following data: 
\begin{itemize}
\item[(1)] a set $C_0$ whose elements are called the objects of $C$,
\item[(2)] for $x,y\in C_0$, a set $C_1(x,y)$ whose elements are called 1-morphisms or 1-arrows,
\item[(3)] for any $x,y\in C_0$ and $f,g\in C_1(x,y)$, a set $C_2(x,y)(f,g)$ called 2-morphisms or 2-arrows, 
\item[(4)] for any $x,y\in C_0$, one has a small category $C(x,y)$ whose objects are $C_1(x,y)$ and whose arrows $f\to g$ are $C_2(x,y)(f,g)$,
\item[(5)] for any $x,y,z\in C_0$, there is a composition $m_{x,y,z}\colon C(y,z)\times C(x,y)\to C(x,z)$, which is a functor of categories,
\item[(6)] for any $x,y,z,w\in C_0$, there is a natural transformation
\begin{equation}
\begin{tikzpicture}[baseline=(current bounding box.center), scale=1]
\node (1) at (0,0){$C(z,w)\times C(y,z)\times C(x,y)$};
\node (2) at (0,-2){$C(z,w)\times C(x,z)$};
\node (3) at (6,0){$C(y,w)\times C(x,y)$};
\node (4) at (6,-2){$C(x,w)$};
\node (B) at (3.7,-1){$\Uparrow_{\alpha_{x,y,z,w}}$};
 \path[->, font=\scriptsize]
 (1) edge["$\id\times m_{x,y,z}$"] (2)
 (1) edge["$m_{y,z,w}\times \id$"] (3)
 (2) edge["$m_{x,z,w}$"] (4)
 (3) edge["$m_{x,y,w}$"] (4);
 \end{tikzpicture}
\end{equation}
whose components are given by isomorphisms (for which we often use the notation $\alpha_{h,g,f}$ assuming $\alpha_{x,y,z,w}(h\times g\times f)$),
\item[(7)] 
for any $x,y\in C_0$, there are 2-arrows $\rho_{x,y}$ and $\lambda_{x,y}$ defined as 
\begin{equation}
\begin{tikzpicture}[baseline=(current bounding box.center), scale=1]
 \node (1) at (0,0){${C(x,y)\times \mathbf{1}}$};
 \node (2) at (0,-2){${C(x,y)\times C(x,x)}$};
 \node (3) at (4,-2){$C(x,y)$};
 \node (A) at (2,-1.5){$\Uparrow_{\rho_{x,y}}$};
 \node (4) at (8,0){${\mathbf{1}\times C(x,y)}$};
\node (5) at (8,-2){${C(y,y)\times C(x,y)}$};
\node (6) at (12,-2){$C(x,y)$};
\node (B) at (10,-1.5){$\Uparrow_{\lambda_{x,y}}$};
 \path[->, font=\scriptsize]
 (1)edge["${\id\times\id_x}$"'](2)
 (1)edge["$\simeq$"](3)
 (2)edge["${m_{x,x,y}}$"'](3) 
 (4)edge["${\id_y\times\id}$"'](5)
 (4)edge["$\simeq$"](6)
 (5)edge["${m_{x,y,y}}$"'](6);
 \end{tikzpicture}
\end{equation}
whose components are isomorphisms
(we often denote $\rho_{x,y}(f\times\id)$ and $\lambda_{x,y}(\id\times g)$ by $\rho(f)$ and $\lambda(g)$, correspondingly)
\end{itemize}

which are subject to the following identities:
\begin{itemize}
\item[(i)] The associator $\alpha_{x,y,z,w}$ is subject to the usual pentagon diagram:
\begin{equation}
    \begin{tikzpicture}[commutative diagrams/every diagram]
  \node (P0) at (90:2.3cm) {$(t\circ h)\circ (g\circ f)$};
  \node (P1) at (90+72:2cm) {$t\circ (h\circ (g\circ f))$} ;
  \node (P2) at (90+2*72:2cm) {\makebox[5ex][r]{$t\circ ((h\circ g)\circ f)$}};
  \node (P3) at (90+3*72:2cm) {\makebox[5ex][l]{$(t\circ (h\circ g))\circ f$}};
  \node (P4) at (90+4*72:2cm) {$((t\circ h)\circ g)\circ f$};
  \path[commutative diagrams/.cd, every arrow, every label]
    (P0) edge node {$\alpha_{t\circ h,g,f}$} (P4)
    (P1) edge node {$\alpha_{t,h,g\circ f}$} (P0)
    (P2) edge node[swap] {$\alpha_{t,h\circ g,f}$} (P3)
    (P1) edge node[swap] {$m_{t,\alpha_{h,g,f}}$} (P2)
    (P3) edge node[swap] {$m_{\alpha_{t,h,g},f}$} (P4);
\end{tikzpicture}
\end{equation}
\item[(ii)] for any $x,y\in C_0$, the unit 2-arrows
$$
\lambda(f) \colon m_{x,y,y}(\id_y, f)\Rightarrow f
$$
and 
$$
\rho(f)\colon m_{x,x,y}(f,\id_x)\Rightarrow f
$$
are subject to the following unit identity, for $f\in C_1(x,y), g\in C_1(y,z)$:
$$
\xymatrix{
g\circ (\id_y\circ f)\ar[rr]^{\alpha_{x,y,z,,w}}\ar[rd]_{\id(g)\circ\lambda(f)}&&(g\circ \id_y)\circ f\ar[dl]^{\rho(g)\circ\id(f)}\\
&g\circ f
}
$$
\end{itemize}
}
\end{defn}

The composition  of 2-arrows coming from the composition in $C(x,y)$ is called {\it vertical} (notation: $f\circ_v g$, where $f,g\in C(x,y)$), while the composition of 2-arrows 
coming from $m_{x,y,z}\colon C(y,z)\times C(x,z)\to C(x,z)$ is called {\it horizontal} (notation: $g\circ_h f$, where $f\in C(x,y), g\in C(y,z)$). 
\\

{\it Examples.}

(i) When $\alpha_{x,y,z,w}, \lambda(f), \rho(f)$ are the identity 2-arrows, one gets the concept of a 2-category (which is then a ``strict'' bicategory).

(ii) When $C_0=\{*\}$, the bicategory is a monoidal category. 

\vspace{0.5cm}

Next we recall lax-functors of bicategories and lax-natural transformations thereof. Also we recall the concept of a {\it modification} between two lax natural transformations, playing the role of 3-arrows in the (weak) 3-category of bicategories (they form a tricategory, see [GPS]). 

\begin{defn}\label{deflaxfunctor}{\rm
Let $C,D$ be bicategories. A {\it lax-functor} $F\colon C\to D$ is given by the following data:
\begin{itemize}
\item[(1)] an assignment $F\colon C_0\to D_0$ of objects, 
\item[(2)] for any $x,y\in C_0$, a functor $F_{x,y}\colon C(x,y)\to D(Fx,Fy)$,
\item[(3)] for any $x,y,z\in C_0$ there is a natural transformation $\phi_{x,y,z}$ making the diagram below commutative:
\begin{equation}
\begin{tikzpicture}[baseline=(current bounding box.center), scale=1]
\node (1) at (0,0){{$C(y,z)\times C(x,y)$}};
\node (2) at (0,-2){{$D(Fy,Fz)\times D(Fx,Fy)$}};
\node (3) at (6,0){{$C(x,z)$}};
\node (4) at (6,-2){{$D(Fx,Fz)$}};
\node (A) at (3,-1){$\Rightarrow_{\phi_{x,y,z}}$};
\path[->,font=\scriptsize]
(1) edge["$F_{y,z}\times F_{x,y}$"'] (2)
(1) edge["$m^C_{x,y,z}$"] (3)
(2) edge["$m^D_{Fx,Fy,Fz}$"] (4)
(3) edge["$F_{x,y}$"] (4);
\end{tikzpicture}
\end{equation}
(we use notations $\phi_{g,f}\colon Fg\circ Ff\Rightarrow F(gf)$),
\item[(4)] for any $x\in C_0$ there is a natural transformation $\phi_x$ making the diagram below commutative:
\begin{equation}
\begin{tikzpicture}[baseline=(current bounding box.center), scale=1]
\node (1) at (0,0){$\mathbf{1}$};
\node (3) at (0,-2){$\mathbf{1}$};
\node (2) at (4,0){$C(x,x)$};
\node (4) at (4,-2){$D(Fx,Fx)$};
\node (A) at (2,-1){$\Rightarrow_{\phi_x}$};
\path[->,font=\scriptsize]
(1) edge["$I_x$"] (2)
(1) edge["$=$"] (3)
(2) edge ["$F_{x,x}$"](4)
(3) edge ["$I^\prime_{Fx}$"] (4);
\end{tikzpicture}
\end{equation}
\end{itemize}
 which are subject to the following properties
\begin{itemize}
\item[(i)]
the hexagon diagram
\begin{equation}
\begin{tikzpicture}[baseline=(current bounding box.center), scale=1]
\node (1) at (0,0){$(Fh\circ Fg)\circ Ff$};
\node (2) at (4,0){$F(h\circ g)\circ Ff$};
\node (3) at (8,0){$F((h\circ g)\circ f)$};
\node (4) at (0,-2){$Fh\circ (Fg\circ Ff)$};
\node (5) at (4,-2){$Fh\circ F(g\circ f)$};
\node (6) at (8,-2){$F(h\circ (g\circ f))$};
\path[->,font=\scriptsize]
(1) edge ["$\phi\circ\id$"] (2)
(2) edge ["$\phi$"] (3)
(3) edge ["$F(\alpha^C)$"] (6)
(1) edge ["$\alpha^D$"] (4)
(4) edge ["$\id\circ\phi$"] (5)
(5) edge ["$\phi$"] (6);
\end{tikzpicture}
\end{equation}
commutes,
\item[(2)] the unit commutative diagrams for $\phi_x$, see e.g. [L], 1.1.
\end{itemize}
When $\{\phi_{x,y,z}\}$ and $\{\phi_x\}$ are isomorphisms, the lax functor $F$ is called {\it strong}, when they are identity natural transformations the lax functor is called {\it strict}.
}
\end{defn}

\begin{defn}{\rm
Let $C,D$ be bicategories, $F=(F,\phi),G=(G,\psi)\colon C\to D$ lax-functors. A {\it lax natural transformation} $\eta\colon F\Rightarrow G$ is given by the following data
\begin{itemize}
\item[(1)] a collection of 1-arrows $\eta_x\colon Fx\to Gx$, $x\in C_0$,
\item[(2)] a collection of natural transformations
\begin{equation}
\begin{tikzpicture}[baseline=(current bounding box.center), scale=1]
\node (1) at (0,0){$C(x,y)$};
\node (2) at (4,0){$D(Fx,Fy)$};
\node (3) at (0,-2){$D(Gx,Gy)$};
\node (4) at (4,-2){$D(Fx,Gy)$};
\node (A) at (2,-1){$\Rightarrow_{\eta_{x,y}}$};
\path[->,font=\scriptsize]
(1) edge ["$F_{x,y}$"] (2)
(1) edge ["$G_{x,y}$"] (3)
(2) edge ["$(\eta_y)_*$"] (4)
(3) edge ["$(\eta_x)^*$"] (4);
\end{tikzpicture}
\end{equation}
which are subject to commutative diagrams expressing the compatibility of $\{\eta_f\}$ with the composition $f_2\circ f_1$ and of $\{\eta_x\}$ with unit morphisms, see e.g. [L], 1.2. 
The 2-arrows $\eta_{x,y}$ are given in components by 2-arrows $\eta_f\colon G(f)\circ \eta_x\Rightarrow\eta_y\circ F(f)\colon F(x)\to G(y)$, for any $f\in C_1(x,y)$. 
\end{itemize}
A lax-monoidal transformation of bicategories is called {\it strong} if the 2-arrows $\{\eta_f\}$ are isomorphisms, and it is called {\it strict} if they are identity 2-arrows. 
}
\end{defn}

\begin{defn}\label{defmodif}{\rm
Let $C,D$ be bicategories, $F,G\colon C\to D$ lax-functors, $\eta,\theta\colon F\Rightarrow G$ lax natural transformations. 
A {\it modification} $\kappa\colon \eta \Rrightarrow \theta$ is given by a collection of 2-arrows $\{\Gamma_x\}$ for $x\in C_0$:
\begin{equation}
\begin{tikzpicture}[baseline=(current bounding box.center), scale=1]
\node (1) at (0,0){$Fx$};
\node (2) at (4,0){$Gx$};
\node (A) at (2.1,0){$\Downarrow^{\Gamma_x}$};
\path[->,font=\scriptsize]
(1) edge[bend right, "$\eta_x$"'] (2)
(1) edge[bend left, "$\theta_x$"] (2);
\end{tikzpicture}
\end{equation}
(denoted as $\Gamma\colon \eta\Rrightarrow\theta$) which are subject to the following axiom: for any $f\colon x\to y\in C_1$, the diagram below commutes:
\begin{equation}
\begin{tikzpicture}[baseline=(current bounding box.center), scale=1]
\node (1) at (0,0){$Gf\circ \eta_x$};
\node (2) at (4,0){$Gf\circ \theta_x$};
\node (3) at (0,-2){$\eta_y\circ Ff$};
\node (4) at (4,-2){$\theta_y\circ Ff$};
\path[->,  font=\scriptsize]
(1) edge[double, "$\id\circ \Gamma_x$"] (2)
(1) edge[double, "$\eta_f$"'] (3)
(2) edge[double, "$\theta_f$"] (4)
(3) edge[double, "$\Gamma_y\circ \id$"] (4);
\end{tikzpicture}
\end{equation}
 
}
\end{defn}

\begin{defn}\label{enrichbicat}{\rm
For a symmetric monoidal category $\mathscr{V}$, by $\mathscr{V}$-enriched bicategory we mean a modification of the previous definition, in which $C_2(x,y)(f,g)\in\mathscr{V}$ (while $C_0, C_1(x,y)\in \Sets$), so that each category $C(x,y)$ is a $\mathscr{V}$-enriched category, for any $x,y\in C_0$, and the compositions $m_{x,y,z}$ are given by $\mathscr{V}$-functors. 
Similarly one defines $\mathscr{V}$-lax functors, $\mathscr{V}$-lax natural transformations, and $\mathscr{V}$-modifications. 
Namely, one adjusts axioms by requiring that the 2-arrows $\phi_x,\phi_{x,y,z}, \eta_x,\eta_{x,y},\Gamma_x$ are objects of $\mathscr{V}$. The reader will find details in [Ke]. 

A particular case we deal with in this paper is $\mathscr{V}=C^\udot(\k)$, the category of complexes over a field $\k$. In this case, we call a $\mathscr{V}$-enriched bicategory a dg bicategory. 
}
\end{defn}

\subsubsection{\sc Coherence for bicategories}
We'll need the coherence theorem for bicategories [Be], [ML], [L]. 

Let $C,D$ be two bicategories. They are said to be {\it biequivalent} if there are {\it strong} functors $F\colon C\to D$ and $G\colon D\to C$ and {\it strong} natural transformations $\id_C\to G\circ F$ and $F\circ G\to\Id_D$ (that is, we use only the underlying 1-category structure on the bicategory $[C,D]$, the modifications are irrelevant for this definition). The coherence theorem for bicategories is the following result:

\begin{theorem}\label{theorcoherence}
Every bicategory is biequivalent to a 2-category. 
\end{theorem}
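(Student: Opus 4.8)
The plan is to realise $C$ inside a strict $2$-category by a bicategorical Yoneda embedding, following the strategy behind [Be], [ML], [L]. First I would form the presheaf bicategory $\mathcal{P}=[C^{\op},\Cat]$, whose objects are the strong functors $C^{\op}\to\Cat$, whose $1$-morphisms are strong natural transformations, and whose $2$-morphisms are modifications (Definitions \ref{deflaxfunctor}--\ref{defmodif}). The crucial first observation is that, because the target $\Cat$ is itself a strict $2$-category, the bicategory $\mathcal{P}$ is in fact \emph{strict}: vertical composition of modifications is computed componentwise by the strictly associative and unital vertical composition in $\Cat$, while the horizontal composite of two strong natural transformations has components given by composition of functors in $\Cat$, which is strictly associative and unital; consequently the associators and unitors one would build on $\mathcal{P}$ are all identities. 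Thus $\mathcal{P}$ is a strict $2$-category, and the whole problem reduces to mapping $C$ biequivalently into it.

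Next I would introduce the Yoneda pseudofunctor $y\colon C\to\mathcal{P}$, sending $x\in C_0$ to the representable $C(-,x)$ and acting on $1$- and $2$-arrows by post-composition; its structural isomorphisms $\{\phi_{g,f}\}$ and $\{\phi_x\}$ are supplied by the associator $\alpha$ and the unitors $\lambda,\rho$ of $C$, and checking that $y$ is a strong functor is exactly the coherence bookkeeping encoded in the pentagon and unit axioms. The technical heart of the argument is then the bicategorical Yoneda lemma: for all $x,y\in C_0$ the comparison functor
$$y_{x,y}\colon C(x,y)\longrightarrow \mathcal{P}\big(C(-,x),\,C(-,y)\big)$$
is an equivalence of categories. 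I would prove this by exhibiting the quasi-inverse ``evaluate at $x$ and apply to $\id_x$'', and verifying the two required natural isomorphisms by chasing the coherence data ($\alpha,\lambda,\rho$) of $C$. This shows that $y$ is a \emph{local equivalence}.

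Finally, let $\mathcal{P}'\subseteq\mathcal{P}$ be the full sub-$2$-category spanned by the representables $C(-,x)$. By construction $y$ is surjective on objects onto $\mathcal{P}'$, and by the previous step it is an equivalence on each hom-category. A standard fact about bicategories then applies: a strong functor that is essentially surjective on objects and a local equivalence admits a strong pseudo-inverse, hence is a biequivalence. Invoking this, one concludes that $C$ is biequivalent to the strict $2$-category $\mathcal{P}'$, which is the desired statement.

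The \textbf{main obstacle} is twofold and lies entirely in the two ``soft'' equivalences. First, the bicategorical Yoneda lemma $\mathcal{P}(C(-,x),C(-,y))\simeq C(x,y)$ must be established honestly: one has to write out strong natural transformations between representables together with their invertible structure $2$-cells and show that the coherence isomorphisms assemble correctly, which is where the unit and pentagon axioms are genuinely consumed. Second, promoting ``local equivalence plus essential surjectivity'' to an actual biequivalence requires choosing pointwise (adjoint) equivalence inverses and then gluing these local choices into a single coherent strong pseudo-inverse; this coherent globalisation, rather than any single computation, is the step I would expect to demand the most care.
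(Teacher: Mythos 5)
Your proposal is correct and is exactly the argument the paper points to: the paper's ``proof'' is a citation to [L, Sect.\ 2.3], which establishes coherence precisely by the Yoneda embedding into the strict $2$-category $[C^{\op},\Cat]$, the bicategorical Yoneda lemma as a local equivalence, and restriction to the representables. The obstacles you flag (the honest verification of the Yoneda lemma and the promotion of a surjective local equivalence to a biequivalence) are indeed where the work lies in that reference.
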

See e.g. [L, Sect. 2.3] for a short proof, based on the Yoneda embedding. 
\qed

In a 2-category, the associator and the identity maps are equal to identity. The following statement, which we primarily will use in the paper, is a direct consequence of Theorem \ref{theorcoherence}:

\begin{coroll}
Let $C$ be a bicategory, $x,y\in C_0$, $f,g\colon C_1(x,y)$. Let $\eta,\theta\colon f\Rightarrow g$ be two natural transformations each of which is a composition of the associators and the unit maps. Then $\eta=\theta$.
\end{coroll}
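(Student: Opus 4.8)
The plan is to deduce the statement directly from the strictification theorem (Theorem~\ref{theorcoherence}) together with the fact that a biequivalence is locally faithful. First I would fix, as furnished by Theorem~\ref{theorcoherence}, a strict $2$-category $D$ and a strong functor $F\colon C\to D$ realizing a biequivalence. A biequivalence induces equivalences on hom-categories, so each $F_{x,y}\colon C(x,y)\to D(Fx,Fy)$ is an equivalence of categories, hence in particular \emph{faithful} on $2$-arrows. Consequently it suffices to prove the equality $F(\eta)=F(\theta)$ in the hom-category $D(Fx,Fy)$; the desired $\eta=\theta$ then follows by faithfulness of $F_{x,y}$.

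The key step is to control the image under $F$ of the building blocks of $\eta$ and $\theta$. By the hexagon axiom of Definition~\ref{deflaxfunctor}, the $2$-arrow $F(\alpha^C_{h,g,f})$ is expressed as a composite of the associator $\alpha^D$ and the compositor isomorphisms $\phi_{h,g},\phi_{g,f},\phi_{h\circ g,f},\phi_{h,g\circ f}$ of $F$; similarly, by the unit axiom of Definition~\ref{deflaxfunctor}, the images $F(\lambda^C)$ and $F(\rho^C)$ are composites of $\lambda^D,\rho^D$ and the unitor $\phi_x$. Since $D$ is a strict $2$-category, $\alpha^D,\lambda^D,\rho^D$ are all identities, so every generator of $\eta$ and of $\theta$ maps under $F$ to a composite built \emph{solely} from the compositors $\phi$ and their inverses. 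Thus both $F(\eta)$ and $F(\theta)$ become canonical isomorphisms $Ff\Rightarrow Fg$ assembled from the structure $2$-cells of the strong functor $F$.

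The main obstacle is then to see that these two composites of compositors actually agree. Here I would invoke the coherence already built into the notion of a strong functor: the hexagon and unit diagrams of Definition~\ref{deflaxfunctor} are precisely MacLane-type coherence conditions on the families $\{\phi_{g,f}\}$ and $\{\phi_x\}$, and together with the naturality of all structure $2$-cells they force any two parallel composites of $\phi$'s and $\phi^{-1}$'s, with the same source and target, to coincide. To streamline this bookkeeping I would first reduce to showing that the single coherence \emph{automorphism} $\theta\circ_v\eta^{-1}\colon g\Rightarrow g$ is the identity (which is legitimate, since associators and unit $2$-arrows have isomorphism components by the axioms of a bicategory), and then run the strictification argument on this one cell. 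I expect tracking the compositors through $F$ to be the only nonformal part of the argument; if one wishes to bypass it entirely, the alternative is the classical normal-form route of MacLane, rewriting $f$ and $g$ into a fixed fully right-bracketed unit-free form by a \emph{unique} coherence isomorphism and comparing, which establishes the corollary without passing through $F$ at all.
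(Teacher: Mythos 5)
Your overall strategy is the one the paper itself intends: the corollary is stated there as a direct consequence of Theorem \ref{theorcoherence}, and you correctly identify the two ingredients needed to make that deduction precise, namely that a biequivalence $F\colon C\to D$ to a strict $2$-category is locally faithful (so it suffices to prove $F(\eta)=F(\theta)$), and that the images $F(\alpha^C)$, $F(\lambda^C)$, $F(\rho^C)$ are rewritten, via the hexagon and unit axioms of Definition \ref{deflaxfunctor}, as composites of the compositors $\phi$ once $\alpha^D,\lambda^D,\rho^D$ collapse to identities. Up to that point the argument is sound and matches the intended route.

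The gap is in your key step. The assertion that the hexagon and unit axioms, together with naturality, ``force any two parallel composites of $\phi$'s and $\phi^{-1}$'s with the same source and target to coincide'' is itself a coherence theorem --- coherence for strong functors --- and it is of essentially the same nature and difficulty as the statement you are trying to prove. The hexagon axiom controls one specific triple composite; it does not formally imply that arbitrary parallel pastings of compositors agree, any more than the pentagon alone formally implies that all associativity diagrams commute. As written, then, the argument reduces one coherence statement to another without proving either. The standard repairs are: (a) run the strictification argument not on $C$ itself but on the free bicategory generated by the underlying $2$-globular data, where the canonical comparison to the free strict $2$-category is locally an isomorphism on the hom-categories spanned by constraint cells, so that two constraint $2$-cells with the same image must already be equal, and then push forward to $C$ by the universal property; or (b) bypass $F$ entirely and give MacLane's normal-form argument directly (rewrite $f$ and $g$ into a fixed right-bracketed, unit-free normal form by a canonical constraint isomorphism and compare), which is the alternative you mention in your last sentence. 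Either of these closes the gap; the version you place in the main line of the proof does not.
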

\qed

A similar statement holds for enriched bicategories, in particular, for dg bicategories. 

\subsection{\sc 2-bimodules over a bicategory}\label{section2bimod}
Let $\mathscr{V}$ be a symmetric monoidal category, $C$ a $\mathscr{V}$-enriched bicategory (see Definition \ref{enrichbicat}).
A $C$-2-bimodule consists of the following data:
\begin{itemize}
\item[(i)] A 2-globular set $M$ whose second component $M_2$ is $\mathscr{V}$-enriched, and $M_{\le 1}=C_{\le 1}$,
\item[(ii)] The upper and the lower vertical compositions:
$$
\bar{\circ}^v_-\colon M_2(g,h)\otimes C_2(f,g)\to M_2(f,h),\ \ \bar{\circ}^v_+\colon C_2(g,h)\otimes M_2(f,g)\to M_2(f,h)
$$
where $f,g,h\in C_1(x,y)$,
\item[(iii)] The left and the right horizontal compositions 
$$
\bar{\circ}^h_-\colon M_2(g_1,g_2)\otimes C_2(f_1,f_2)\to M_2(g_1f_1,g_2f_2),\ \ \bar{\circ}^h_+\colon C_2(g_1,g_2)\otimes M_2(f_1,f_2)\to M_2(g_1f_1,g_2f_2)
$$
where $f_1,f_2\in C_1(x,y), g_1,g_2\in C_1(y,z)$
\end{itemize}

which are subject to the following properties:
\begin{itemize}
\item[(1)] the vertical compositions are strictly associative, in the sense that for $f_1,f_2,f_3,f_4\in C_1(x,y)$ the following three identities hold:
$$
m\bar{\circ}^v_-(\psi\circ^v\phi)=(m\bar{\circ}^v_- \psi)\bar{\circ}^v_-\phi
$$
where $m\in M_2(f_3,f_4), \phi\in C_2(f_1,f_2), \psi\in C_2(f_2,f_3)$,
$$
\psi\bar{\circ}^v_+(m\bar{\circ}^v_-\phi)=(\psi\bar{\circ}^v_+ m)\bar{\circ}^v_-\phi
$$
where $\psi\in C_2(f_3,f_4), m\in M_2(f_2,f_3), \phi\in C_2(f_1,f_2)$,
$$
\psi\bar{\circ}^v_+(\phi\bar{\circ}^v_+m)=(\psi\circ \phi)\bar{\circ}^v_+m
$$
where $\psi\in C_2(f_3,f_4), \phi\in C_2(f_2,f_3), m\in M_2(f_1,f_2)$,
\item[(2)] the horizontal compositions are associative up to the associator $\alpha$ in $C$, in the sense that for
$x,y,z,w\in C_0$, $f_1,f_2\in C_1(x,y)$, $g_1,g_2\in C_1(y,z)$, $h_1,h_2\in C_1(z,w)$, one has:
\begin{equation}\label{modass1}
\alpha_{h_2,g_2,f_2}\bar{\circ}^v_+\big(m\bar{\circ}^h_-(\psi\circ^h\phi))=\big((m\bar{\circ}^h_-\psi)\bar{\circ}^h_-\phi\big)\bar{\circ}^v_-\alpha_{f_1,g_1,h_1}
\end{equation}
where $\phi\in C_2(f_1,f_2), \psi\in C_2(g_1,g_2), m\in M_2(h_1,h_2)$,
\begin{equation}\label{modass2}
\alpha_{f_2,g_2,h_2}\bar{\circ}^v_+\big(\psi\bar{\circ}^h_+(m\bar{\circ}^h_- \phi)\big)=\big((\psi\bar{\circ}^h_+ m)\bar{\circ}^h_-\phi\big)\bar{\circ}^v_-\alpha_{f_1,g_1,h_1}
\end{equation}
where $\phi\in C_2(f_1,f_2), m\in M_2(g_1,g_2), \psi\in C_2(h_1,h_2)$,
\begin{equation}\label{modass3}
\alpha_{f_2,g_2,h_2}\bar{\circ}^v_+\big(\psi\bar{\circ}^h_+(\phi\bar{\circ}^h_+m)\big)=
\big((\psi\circ\phi)\bar{\circ}_h^+m\big)\bar{\circ}^v_-\alpha_{f_1,g_1,h_1}
\end{equation}
where $\phi\in C_2(g_1,g_2), \psi\in C_2(h_1,h_2), m\in C_2(f_1,f_2)$,
\item[(3)] four Eckmann-Hilton identities (depending on the place among the four arguments at which an element of $M_2$ is, then there are elements of $C_2$ on the three others). One among these 4 identities reads:
$$
(\phi_3\bar{\circ}^v_+m)\bar{\circ}^h_-(\phi_2\circ^v \phi_1)=(\phi_3\circ^h \phi_2)\bar{\circ}^v_+(m\bar{\circ}^v_- \phi_1)
$$
where $f_1,f_2,f_3\in C_1(x,y)$, $g_1,g_2,g_3\in C_1(y,z)$, $\phi_1\in C_2(f_1,f_2)$, $\phi_2\in C_2(f_2,f_3)$, $\phi_3\in C_2(g_1,g_2)$, $m\in M_2(g_2,g_3)$;
the three other identities are similar and are left to the reader,
\item[(4)] let $x,y\in C_0$, $f,g\in C_1(x,y)$, $m\in M_2(f,g)$, then:
\begin{equation}\label{modunit}
\rho(g)\bar{\circ}^v_+(m\bar{\circ}^h_-\id(x))\bar{\circ}^v_-\rho(f)^{-1}=m,\ \ \lambda(g)\bar{\circ}^v_+(\id(y)\bar{\circ}^h_+m)
\bar{\circ}^v_-\lambda(f)^{-1}=m
\end{equation}

\end{itemize}

The category of 2-bimodules over a bicategory $C$ is denoted $\Bimod_2(C)$. 

As a trivial example, $M=C$ satisfies the axioms; it is called the {\it tautological} 2-bimodule over $C$. 

\vspace{1cm}

\subsection{\sc Examples}
\subsubsection{\sc The free 2-bimodule}\label{free2bimod}
There is a forgetful functor $U$ from $C$-2-bimodules to 2-globular sets whose second component is enriched over $\mathscr{V}$. Assume for simplicity that $\mathscr{V}=\Vect(\k)$ or $C^\udot(\k)$.
The functor $U$ has a left adjoint $L$; the corresponding $C$-2-bimodule $L(M_0)$ is called the {\it free $C$-2-bimodule generated by the $\mathscr{V}$-enriched 2-globular set $M_0$}. We set $L(M_0)_{\le 1}=C_{\le 1}$, let $x,y\in C_0$ and $f,g\in C_1(x,y)$. Define
\begin{equation}\label{eqfree2bimod}
M(f,g)=\bigoplus_{\substack{{z_0,z_1\in C_0}\\
{\alpha\in C_1(x,z_0), \beta\in C_1(z_1,y)}\\
{f_0,g_0\in C_1(z_0,z_1)}\\
}}C(\beta\circ g_0\circ \alpha,g)\otimes M_0(f_0,g_0)\otimes C(f,\beta\circ f_0\circ \alpha)/\sim
\end{equation}
where the equivalence relation identifies the following elements, in notations of diagram \eqref{free2modpic} below:
\begin{equation}\label{free2bimodrel}
(\phi_2\circ^v(\sigma_2\circ^h\id_{g_0}\circ^h\sigma_1))\otimes m\otimes \phi_1\sim
\phi_2\otimes m\otimes((\sigma_2\circ^h\id_{f_0}\circ^h\sigma_1)\circ^v\phi_1)
\end{equation} 
\begin{equation}\label{free2modpic}
    \begin{tikzpicture}[baseline=(current bounding box.center),scale=1]
    \node (1) at (0,0){\scriptsize{$x$}}; \node(2) at (2,0){\scriptsize{$z_0$}}; \node(3) at (5,0){\scriptsize{$z_1$}}; \node (4) at (7,0){\scriptsize{$y$}}; 
    \node (A) at (1,0) {$\Uparrow_{\sigma_1}$}; \node (B) at (3.5,0) {$\Uparrow_m$}; \node (C) at (6,0) {$\Uparrow_{\sigma_2}$}; \node (D) at (3.5,1.35) {$\Uparrow_{\phi_2}$}; \node (E) at (3.5,-1.35) {$\Uparrow_{\phi_1}$};
    \path[->, font=\scriptsize]
    (1)edge[bend left, "$\alpha_+$"](2)(1)edge[bend right, "$\alpha_-$"'](2)(3)edge[bend left, "$\beta_+$"](4)(3)edge[bend right, "$\beta_-$"'](4)(2)edge[bend left, "$g_0$"](3)(2)edge[bend right, "$f_0$"'](3)(1)edge[bend left=60,"$g$"](4)(1)edge[bend right=60,"$f$"'](4);
    \end{tikzpicture}
\end{equation}
One easily checks that indeed 
\begin{equation}
\Hom_{\Bimod_2}(L(M_0),N)\simeq \Hom_{\mathrm{Glob}_2}(M_0, U(N))
\end{equation}

\subsubsection{\sc The 2-bimodule $M(C,D)(F,G)(\eta,\theta)$}\label{section2bimodtheta}
Let $C,D$ be $\k$-linear bicategories, $F,G\colon C\to D$ {\it strong} lax functors, $\eta,\theta\colon F\Rightarrow G$ two {\it strong} lax natural transformations. 

We associate with this data the following 2-$C$-bimodule $M(C,D)(F,G)(\eta,\theta)$: 
\begin{equation}\label{thebimodule}
 M(C,D)(F,G)(\eta,\theta)(f,g)=D(\eta(y)\circ F(f), G(g)\circ \theta(x))
\end{equation}
where $f,g\in C(x,y)$. 

\begin{lemma}
In notations as above, $M(C,D)(F,G)(\eta,\theta)$ is a 2-bimodule over $C$. 
\end{lemma}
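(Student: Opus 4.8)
The plan is to check the axioms (i)--(iii) and (1)--(4) of Section~\ref{section2bimod} in turn for the data \eqref{thebimodule}, the compositions being built from the structure 2-cells of $F,G,\eta,\theta$ and the associators of $D$. First I would fix the underlying 2-globular set by setting $M_{\le 1}=C_{\le 1}$ and, for $f,g\in C_1(x,y)$, $M_2(f,g)=D(\eta(y)\circ F(f),G(g)\circ\theta(x))$, with source $f$ and target $g$; since $D$ is a dg bicategory this lies in $\mathscr{V}=C^\udot(\k)$, so (i) holds. The two vertical compositions are the straightforward ones: for $\phi\in C_2(f,g)$ and $m\in M_2(g,h)$ I set $m\,\bar\circ^v_-\,\phi=m\circ^v(\id_{\eta(y)}\circ^h F\phi)$, and for $\psi\in C_2(g,h)$, $m\in M_2(f,g)$, $\psi\,\bar\circ^v_+\,m=(G\psi\circ^h\id_{\theta(x)})\circ^v m$; both are composites in $D$ with the required source and target.

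The horizontal compositions are where the coherence data of $F,G,\eta,\theta$ enters. For $m\in M_2(g_1,g_2)$ and $\phi\in C_2(f_1,f_2)$, with $f_i\in C_1(x,y)$ and $g_i\in C_1(y,z)$, I would define $m\,\bar\circ^h_-\,\phi\in M_2(g_1f_1,g_2f_2)=D(\eta(z)\circ F(g_1f_1),G(g_2f_2)\circ\theta(x))$ as the pasting in $D$ that first uses the compositor $\phi^F_{g_1,f_1}$ of $F$ and an associator of $D$ to rewrite $\eta(z)\circ F(g_1f_1)$ as $(\eta(z)\circ Fg_1)\circ Ff_1$, then applies the horizontal product $m\circ^h F\phi$, then regroups by an associator, whiskers the inverse naturality cell $\theta_{f_2}^{-1}$ by $Gg_2$, and finally uses an associator together with the compositor $\phi^G_{g_2,f_2}$ of $G$ to land on $G(g_2f_2)\circ\theta(x)$. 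The operation $\bar\circ^h_+$ is the mirror construction, using the inverse naturality cell $\eta_{g_1}^{-1}$ on the left and $G\psi$, $\phi^G_{g_2,f_2}$ on the right. Strongness of $F,G,\eta,\theta$ is used throughout, since it guarantees the compositors and naturality cells are invertible.

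It then remains to verify (1)--(4). Vertical associativity (1) reduces at once to associativity of $\circ^v$ in $D$ and functoriality of $F,G$ on 2-cells, and the four Eckmann--Hilton relations (3) are instances of the middle-four interchange law in $D$ after whiskering. The substantive steps are the horizontal-associativity identities \eqref{modass1}--\eqref{modass3} and the unit identities \eqref{modunit}. For these I would expand both sides into pastings in $D$ and match the structural cells using the hexagon axiom for $F$ and for $G$ (compatibility of $\phi^F,\phi^G$ with the associators, Definition~\ref{deflaxfunctor}(i)) together with the axioms expressing compatibility of $\eta,\theta$ with composition of 1-cells and with units. Once these are accounted for, the two sides of each identity differ only by a pasting of associators and unit isomorphisms of $D$, and such a pasting is unique by the Corollary to the coherence Theorem~\ref{theorcoherence}. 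I expect the horizontal associativity (2) to be the main obstacle: the difficulty is purely one of bookkeeping the associators of $D$ and of arranging the compositors $\phi^F,\phi^G$ so that the hexagon axioms apply. The coherence corollary is what makes this feasible, since it lets me discard the residual associator/unit discrepancies instead of checking a large pasting diagram cell by cell.
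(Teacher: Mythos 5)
Your construction is the same as the paper's: identical vertical compositions by whiskering with $F(\alpha)$ and $G(\beta)$, and the same horizontal compositions built from the compositors of $F,G$, the (inverse) naturality cells of $\eta,\theta$, and associators of $D$, with the axioms (1)--(4) checked by interchange, the hexagon axioms, and coherence. The paper in fact leaves the verification of \eqref{modass1}--\eqref{modass3} and \eqref{modunit} entirely to the reader, so your sketch of how coherence disposes of the residual associator/unit discrepancies is, if anything, slightly more detailed than the published argument.
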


\begin{proof}
Let $m\in M(C,D)(F,G)(\eta,\theta)(f,g)$. 

For a 2-morphism $\alpha\colon f^\prime\Rightarrow f$, the vertical composition $m\bar{\circ}_{-}^v \alpha$ is defined as the vertical composition $m{\circ}^v (\eta(y)\circ F(\alpha))$ in $D$. Similarly, for $\beta\colon g\Rightarrow g^\prime$, the vertical composition $\beta\bar{\circ}_+^v m$ is defined as the vertical composition $G(\beta)\circ \theta(x)){\circ}^v m$. 

For $f_0,g_0\in C(w,x), \alpha\in C(f_0,g_0)$, define the 2-morphism $(m\circ^h_{-}\alpha)_0$ in $D$ as the horizontal composition in 
$m\circ^h F(\alpha)$ in $D$ post-composed vertically with the 2-morphism $\theta(x)\circ F(g_0)\Rightarrow G(g_0)\circ \theta(x_0)$ (whiskered by $G(g)$).
Define the horizontal composition $m\bar{\circ}^h_{-}\alpha=\psi \circ^v (m\circ^h_-\alpha)_0)\circ^v \phi$
where $\phi$ is the vertical composition of 2-arrows $\eta(y)\circ F(f\circ f_0)\eqto \eta(y)\circ (F(f)\circ F(f_0))\eqto 
(\eta(y)\circ F(f))\circ F(f_0)$, and $\psi$ is the composition $G(g)\circ (G(g_0)\circ \eta(w))\eqto (G(g)\circ G(g_0))\circ \eta(w)\eqto G(g\circ g_0)\circ \eta(w)$.

For $f_1,g_1\in C(y,z)$, $\beta\in C(f_1,g_1)$, define the 2-morphism  $(\beta\circ^v_{+} m)_0$ in $D$ as the horizontal composition $m{\circ}^h G(\beta)$ in $D$ pre-composed vertically with the 2-morphism 
$\eta(z)\circ F(f_1)\Rightarrow G(f_1)\circ \eta(y)$ (whiskered by $F(f)$). Define the horizontal composition $\beta\bar{\circ}^v_+ m$ as $\psi\circ^v (\beta\circ^v_+ m)_0\circ^v \phi$, where $\phi$ is the vertical composition of 2-arrows
$\eta(z)\circ F(f_1\circ f)\eqto \eta(z)\circ (F(f_1)\circ F(f))\eqto (\eta(z)\circ F(f_1))\circ F(f)$, and $\psi$ is the vertical composition $G(g_1)\circ (G(g)\circ \theta(x))\eqto (G(g_1)\circ G(g))\circ \theta(x)\eqto G(g_1\circ g)\circ \theta(x)$. 
\begin{equation}
    \begin{tikzpicture}[baseline=(current bounding box.center),scale=1]
    \node (1) at (0,0){\scriptsize{$F(w)$}}; \node(2) at (0,2){\scriptsize{$G(w)$}}; \node(3) at (4,0){\scriptsize{$F(x)$}}; \node (4) at (4,2){\scriptsize{$G(x)$}}; \node(5) at (8,0){\scriptsize{$F(y)$}}; \node (6) at (8,2){\scriptsize{$G(y)$}};
    \node (A) at (2,0) {$\Uparrow_{F(\alpha)}$}; \node (B) at (6,1) {$\Uparrow_{m}$}; \node (C) at (2,1.75) {$\Uparrow_{\theta(g_0)}$}; \node (D) at (4,-1.25) {$\Uparrow_{\varphi}$}; \node (E) at (4,3.25) {$\Uparrow_{\psi}$};
    \path[->, font=\scriptsize]
    (1) edge["$\theta(w)$"](2)(3)edge["$\theta(x)$"'](4)(5)edge["$\eta(y)$"'](6)(1)edge[bend right, "$F(f_0)$"'](3)(1)edge[bend left, "$F(g_0)$"](3)(2)edge[bend left, "$G(g_0)$"](4)(4)edge[bend left, "$G(g)$"](6)(3)edge[bend right, "$F(f)$"](5)(1)edge[bend right=60, "$F(f\circ f_0)$"'](5)(2)edge[bend left=60, "$G(g\circ g_0)$"](6);
    \end{tikzpicture}
\end{equation}

One has to check the compatibilities with the associator \eqref{modass1}-\eqref{modass3} and the compatibility with the unit \eqref{modunit}. It is left to the reader.

\end{proof}

Note that the particular case $C=D$, $F=G=\Id$, $\eta=\theta=\id$ recovers the tautological 2-bimodule over $C$ (for which $M_2(x,y)=C_2(x,y)$). 

\subsection{\sc The category $\hat{\Theta}_2$}\label{sectionthetahat}
In analogy with the category $\Theta_2$, we define here its ``bicategorical extension'' $\hat{\Theta}_2$, playing an important role in definition of the complex $A(C,D)(F,G)(\eta,\theta)$ in the next Section. 

The objects of $\hat{\Theta}_2$ are the same as the objects of $\Theta_2$, that is, the 2-level trees $T$. The morphisms are defined in terms of 2-globular sets ${T}^*$, namely
\begin{equation}
\hat{\Theta}_2(S,T)=\Hom_\Bicat(\hat{\omega}_2(S^*), \hat{\omega}_2(T^*))
\end{equation}
where $\hat{\omega}_2(D)$ denotes the free {\it bicategory} generated by the 2-globular set $D$, and $\Hom_\Bicat(-,-)$ stands for the set of {\it strict} functors of bicategories. 

In sequel we need somewhat more explicit description of morphisms of $\hat{\Theta}_2$. 

First of all, we have the following explicit description of the bicategory $\hat{\omega}_2(T^*)$ (which follows from the Coherence Theorem \ref{theorcoherence}). The vertices are the 0-cells of $T^*$ which we denote by $\{0,1,\dots,k\}$. Consider an ordered set $S$ with $n$ elements. We consider an {\it extended} set $S(m)$, adding some $m$ marked new elements to the $n$ elements of $S$, along with a total order on $S(m)$ compatible with the total order on $S$. Define an {\it extended parenthesising} of $S$ as a full parenthesising of $S(m)$, $m\ge 0$. The 1-morphism set $(\hat{\omega}_2)_1(i,j)$ is empty if $i<j$, and is the set of {\it extended parenthesisings} of the ordered set of elementary 1-morphisms in the paths $\lambda(i,j)$ from $i$ to $j$ in $(T^*)_1$ (the marked elements indicate the positions of identity 1-morphisms). The two such paths are considered different also if they differ only by parenthesising or number and position of marked elements. The composition of 1-morphisms is defined naturally. 

The set of 2-morphisms $\hat{\omega}_2(\lambda(i,j), \lambda^\prime(i,j))$ consists of a single element if $\lambda^\prime(i,j)$ dominates $\lambda(i,j)$ (that is, in each column between $i$ and $j$ of $T^*$ the element of $\lambda^\prime(i,j)$ in this column is $\ge$ than the element of $\lambda(i,j)$ in this column, the domination relation does not depend on the new marked elements), and is empty set otherwise. The vertical and the horizontal compositions of 2-arrows are defined by the unique possible way. 

It gives rise to the following wreath-product-like description of morphisms in $\hat{\Theta}_2$.

A morphism $\Phi$ from $([k]; [n_1],\dots,[n_k])$  to $([\ell]; [m_1],\dots,[m_\ell])$ in $\hat{\Theta}_2$ is given by $(\phi,\{\phi_i\})$
\begin{itemize}
\item[(i)] a morphism $\phi\colon [k]\to [\ell]$ in $\Delta$,
\item[(ii)] for each $1\le i\le k$ maps $\phi_i^j\colon [n_i]\to [n_j]$, $\phi(i-1)+1\le j\le \phi(i)$,
\item[(iii)] for each $a\in [n_i]$ an extended parenthesising of the $(\phi(i)-\phi(i-1))$-element set (considered as the set $\{\phi_i^j(a)\}_{j}$) (for given $i$, the number of added marked elements and the extended parenthesising may depend on $a\in [n_i]$).
\end{itemize}
The composition of morphism is defined naturally. The only new feature compared with the case of $\Theta_2$ is that here we also compose parethesisings, in the operadic way. 

\vspace{5mm}

We are interested in explicit form of the left Kan extension $\Lan_{p^\opp}(F)$ of a functor $F\colon \hat{\Theta}_2^\opp\to \mathscr{E}$ along the functor $p^\opp\colon \hat{\Theta}_2^\opp\to\Theta_2^\opp$, where in our main examples $\mathscr{E}$ is an abelian category. 

Let $\phi\colon D\to D^\prime$ be a map in $\Theta_2$, we are interested in a lift $\hat{\phi}\colon D\to D^\prime$ in $\hat{\Theta}_2$, so that $p(\hat{\phi})=\phi$. It is clear that each map $\phi$ admits a non-empty (in fact, a finite) set of lifts. They depend on (multiple) insertions of the associator 2-arrows and the unit 2-arrows.

\begin{lemma}\label{lemmakanexttilde}
Let $F\colon \hat{\Theta}_2^\opp\to\mathscr{E}$ be a functor to an abelian category $\mathscr{E}$. Then the left Kan extension $\Lan_{p^\opp}(F)$
along the projection $p^\opp\colon \hat{\Theta}_2^\opp \to\Theta_2^\op$ is given by 
\begin{equation}
\Lan_{p^\opp}(F)(D)=F(D)/V_D
\end{equation}
where $V_D\subset F(D)$ is generated by the elements of the form $\hat{\phi}^*(F)(\xi)-\hat{\hat{\phi}}^*(F)(\xi)$,
where $\phi\colon D\to D_1$ is a map in $\Theta_2$, and $\hat{\phi},\hat{\hat{\phi}}$ are two its lifts to a morphism in $\hat{\Theta}_2$, $\xi\in F(D_1)$.
\end{lemma}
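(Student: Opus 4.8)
The plan is to compute $\Lan_{p^\opp}(F)(D)$ via the pointwise colimit formula for left Kan extensions and to recognise the resulting colimit as an explicit quotient of $F(D)$. Recall that for $K\colon\mathcal{C}\to\mathcal{D}$ and $F\colon\mathcal{C}\to\mathscr{E}$ with $\mathscr{E}$ admitting the relevant small colimits, one has $\Lan_K(F)(d)=\colim_{(c,\,K(c)\to d)}F(c)$, the colimit being over the comma category $K\downarrow d$. I apply this with $K=p^\opp\colon\hat\Theta_2^\opp\to\Theta_2^\opp$ and $d=D$. Since $p$ is the identity on objects, an object of $p^\opp\downarrow D$ is a pair $(D',\alpha)$ with $\alpha$ a morphism $D\to D'$ in $\Theta_2$; a morphism $(D',\alpha)\to(D'',\beta)$ is a morphism $h$ of $\hat\Theta_2^\opp$ with $\beta\circ p^\opp(h)=\alpha$; and the functor whose colimit we take sends $(D',\alpha)\mapsto F(D')$.

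The first key step is to single out the object $t=(D,\id_D)$ and to observe that a morphism $(D',\alpha)\to t$ in the comma category is exactly a lift $\hat\alpha\colon D\to D'$ of $\alpha$ to $\hat\Theta_2$: unwinding the compatibility condition gives precisely $p(\hat\alpha)=\alpha$. By the remark preceding the statement every $\alpha$ admits at least one lift, so every object of $p^\opp\downarrow D$ receives a morphism to $t$; that is, $t$ is \emph{weakly terminal}. Moreover the morphism of $\mathscr{E}$ induced by such a lift is, with the correct variance, exactly $\hat\alpha^*(F)\colon F(D')\to F(D)$.

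The second step is to invoke (and prove in passing) the elementary fact that the colimit of a functor over a category with a weakly terminal object $t$ is the quotient of its value at $t$ by the relations forced by the non-uniqueness of maps into $t$. Concretely, presenting the colimit as the coequalizer $\coprod_{h}F(\mathrm{src}\,h)\rightrightarrows\coprod_{c}F(c)$, weak terminality makes the composite $F(t)\to\coprod_c F(c)\to\colim$ an epimorphism, and a short diagram chase identifies its kernel with the subobject $W\subset F(t)$ generated by all elements $F(a)(\xi)-F(b)(\xi)$ over parallel pairs $a,b\colon j\to t$ and $\xi\in F(j)$. The verification is standard: one builds a cocone $\lambda_j=\bar q\circ F(a_j)$ into $F(t)/W$, well defined because any two maps $j\to t$ differ by something landing in $W$ and compatible because $a_{j'}\circ h$ is again a map $j\to t$, then compares the induced map $\colim\to F(t)/W$ with the quotient $F(t)\twoheadrightarrow\colim$ to conclude $W=\ker$.

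Finally I specialise to $t=(D,\id_D)$: the parallel pairs $a,b\colon(D',\alpha)\to t$ are precisely pairs of lifts $\hat\alpha,\hat{\hat\alpha}$ of a fixed $\alpha\colon D\to D'$ in $\Theta_2$, with $F(a)-F(b)=\hat\alpha^*(F)-\hat{\hat\alpha}^*(F)$. Hence $W$ coincides with the subspace $V_D$ of the statement, generated by the elements $\hat\phi^*(F)(\xi)-\hat{\hat\phi}^*(F)(\xi)$, and therefore $\Lan_{p^\opp}(F)(D)=F(D)/V_D$. I expect the only genuine care to be the bookkeeping of opposite-category conventions, so that the comma-category morphisms into $t$ are correctly identified with lifts and the induced maps acquire the right variance; the categorical input on weakly terminal objects is routine, so the main obstacle here is notational rather than conceptual.
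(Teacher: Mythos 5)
Your proof is correct and follows essentially the same route as the paper's: the pointwise colimit formula over the comma category $p^\opp\downarrow D$, the observation that $(D,\id_D)$ is weakly terminal because every map in $\Theta_2$ admits a lift, and the resulting identification of the colimit as a quotient of $F(D)$ by the differences of lifts. The only difference is that you spell out the coequalizer/cocone argument identifying the quotient, which the paper dismisses with ``one easily gets the description,'' so your write-up is if anything more complete.
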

\begin{proof}
By the classical formula for a point-wise Kan extension, one has
$$
\Lan_{p^\opp}(F)(D)=\colim_{D_1\to D\in p^{\opp}/D}F(D_1)
$$
The colimit is taken over the comma-category whose objects are $D_1\in \hat{\Theta}_2^\opp$ and a map $p(D_1)=D_1\to D$ in $\Theta_2^\opp$, the morphisms $\phi\colon (i_1\colon D_1\to D)\to (i_2\colon D_2\to D)$ are maps $\hat{\phi}\colon D_1\to D_2$ in $\hat{\Theta}_2^\opp$ such that $i_1=i_2\circ p^\opp(\hat{\phi})$. Replacing the opposite categories by the non-opposite ones, the colimit is taken over the category $I_D$ whose objects are morphisms $i_1\colon D\to D_1$ in $\hat{\Theta}_2$, and a morphism $\phi\colon (i_1\colon D\to D_1)\to (i_2\colon D\to D_2)$ is given by a map $\hat{\phi}\colon D_2\to D_1$ in $\hat{\Theta}_2$ such that $i_1=p(\hat{\phi})\circ i_2$. 

Clearly any object $\phi\colon D\to D_1$ of $I_D$ admits a morphism to $\id\colon D\to D$ (one has to take any lift $\hat{\phi}\colon D\to D_1$ of $\phi$). That is, $\id\colon D\to D$ is a generalised final object: any other object admits (a non-unique) morphism to it. It follows that the colimit is a quotient of $F(D)$. One easily gets the description of the quotient given in the statement. 
\end{proof}

Now we want to get a more explicit description of the quotient in the statement of Lemma \ref{lemmakanexttilde}.
It turns out that we easily can restrict ourselves with some ``minimal'' maps $\phi\colon D\to D_1$ in $\Theta_2$.
We call them ``standard maps''. There are two types of them: the associator morphism type and the unit morphism type.

\vspace{3mm}

\noindent{\it The associator type maps.}\\
Let $D=([k];[n_1],\dots,[n_k])$, $D_1=([k+2];[n_1],[n_2],\dots, [n_a],[n_a],[n_a], [n_{a+1}],\dots, [n_k])$. 
Consider the map $\Phi=(\phi,\{\phi_i^j\})\colon D\to D_1$ where 
$\phi(0)=0,\dots,\phi(a-1)=a-1, \phi(a)=a+2,\dots,\phi(k)=k+2$, $\phi_i^j=\id$ for all $i$ and $j$ (clearly one has a unique $j$ for $i\ne a$, and there are $j=a,a+1,a+2$ for $i=a$). 
Consider a lift $\hat{\Phi}$ for which $\phi_a(\ell)$ is either $(\ell_{a}\ell_{a+1})\ell_{a+2}$ or $\ell_a(\ell_{a+1}\ell_{a+2})$ (the choice depends on $\ell\in [n_a]$ (we denote by $\ell_j$ the element $\ell\in [n_j]$). One has to specify the image of the two-morphisms. Clearly the two-morphisms corresponded to 1-morphisms in $[n_i]$ (considered as the interval category $I_{n_i}$) are defined as the corresponding two-morphisms, for $i\ne a$. When $i=a$, the prescription is as follows:
the image of the 2-morphism corresponded to $m_\ell:=\ell\to\ell+1$ in $[n_a]$ is 
$$
\begin{cases}
((m_\ell)_{a}\circ_h( m_\ell)_{a+1})\circ_h (m_\ell)_{a+2}&\text{ if both }\phi_a(\ell)\text{ and }\phi_a(\ell+1)\text{ are of type }(**)*\\
(m_\ell)_{a}\circ_h(( m_\ell)_{a+1}\circ_h (m_\ell)_{a+2})& \text{ if both }\phi_a(\ell)\text{ and }\phi_a(\ell+1)\text{ are of type }*(**)\\
\alpha\circ_v\big(\big((m_\ell)_{a}\circ_h( m_\ell)_{a+1}\big)\circ_h (m_\ell)_{a+2}\big)&\text{ if }\phi_a(\ell)=(\ell_{a}\ell_{a+1})\ell_{a+2},\\ & \phi_a(\ell+1)=(\ell+1)_a\big((\ell+1)_{a+1}(\ell+1)_{a+2}\big)\\
\big(\big((m_\ell)_{a}\circ_h( m_\ell)_{a+1}\big)\circ_h (m_\ell)_{a+2}\big)\circ_v\alpha^{-1}&\text{ if }\phi_a(\ell)=\ell_{a}(\ell_{a+1}\ell_{a+2}),\\ & \phi_a(\ell+1)=\big((\ell+1)_a(\ell+1)_{a+1}\big)(\ell+1)_{a+2}
\end{cases}
$$
(note that we use here non-conventional writing from the left to the right for the composition of 1-morpisms and for the horizontal 
composition of 2-morphisms).

We refer to such morphisms as Type A ones. 

\vspace{3mm}

\noindent{\it The unit type maps.}\\
Here $D=([k]; [n_1],\dots,[n_k])$, $\Phi=\Id\colon D\to D$ in $\Theta_2$, but $\hat{\Phi}\colon D\to D$ in $\hat{\Theta}_2$ is defined by insertion of several marked points to $[k]$ (corresponded to the identity 1-morphisms). Accordingly, each $\phi_i^j$ is the identity modulo the market points, but with their presence it is given by some parenthesising (of words only 1 elements of which is not marked). The images of elementary 2-morphisms are defined accordingly, pre- or post-composing vertically with the unit 2-morphisms or their inverse ones.

We refer to such morphisms as Type U ones.

\vspace{3mm}

\begin{prop}\label{propkanexttilde}
In the notations of Lemma \ref{lemmakanexttilde},
\begin{equation}
\Lan_{p^\opp}(F)(D)=F(D)/W_D
\end{equation}
where $W_D\subset F(D)$ is generated by the elements of the form $\hat{\phi}^*(F)(\xi)-\hat{\hat{\phi}}^*(F)(\xi)$,
where $\phi\colon D\to D_1$ is a map in $\Theta_2$ listed in Type A or in Type U, and $\hat{\phi},\hat{\hat{\phi}}$ are two its lifts to a morphism in $\hat{\Theta}_2$, listed in Type A or Type U, $\xi\in F(D_1)$.
\end{prop}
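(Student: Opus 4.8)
The inclusion $W_D\subseteq V_D$ is immediate from Lemma \ref{lemmakanexttilde}: every Type A and every Type U map is in particular a map in $\Theta_2$, and the two lifts exhibited for it are among its lifts to $\hat\Theta_2$, so each generator of $W_D$ is already a generator of $V_D$. The content of the Proposition is therefore the reverse inclusion $V_D\subseteq W_D$, that is, that every generator $\hat\phi^*(F)(\xi)-\hat{\hat\phi}^*(F)(\xi)$ attached to an \emph{arbitrary} $\phi\colon D\to D_1$ in $\Theta_2$ and an arbitrary pair of lifts already lies in $W_D$. The plan is to split this into a \emph{connectivity} step and a \emph{localization} step.

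\emph{Connectivity.} Fix $\phi\colon D\to D_1$. Two lifts $\hat\phi,\hat{\hat\phi}$ have the same underlying datum $(\phi,\{\phi_i^j\})$ and differ only in the extended parenthesisings attached to the elements $a\in[n_i]$ (i.e. in the bracketings and in the number and positions of the marked points). For a fixed position $(i,a)$ the extended parenthesisings of a linearly ordered word are the vertices of an associahedron equipped with unit insertions, and any two are joined by a finite sequence of \emph{elementary moves}: a single re-bracketing $(AB)C\leftrightarrow A(BC)$ of three consecutive parenthesised groups (an associator move), or a single insertion/deletion of a marked point (a unit move). This connectivity is exactly the word problem for the free bicategory, governed by the Coherence Theorem \ref{theorcoherence}. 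Applying moves one position and one step at a time yields a chain of lifts of the \emph{same} $\phi$ (each move projects to the identity in $\Theta_2$)
$$
\hat\phi=\hat\phi_0,\ \hat\phi_1,\ \dots,\ \hat\phi_N=\hat{\hat\phi},
$$
in which consecutive terms differ by exactly one elementary move. Telescoping gives
$$
\hat\phi^*(F)(\xi)-\hat{\hat\phi}^*(F)(\xi)=\sum_{t=0}^{N-1}\bigl(\hat\phi_t^*(F)(\xi)-\hat\phi_{t+1}^*(F)(\xi)\bigr),
$$
so it suffices to show each summand lies in $W_D$.

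\emph{Localization.} Consider a pair $\hat\phi_t,\hat\phi_{t+1}$ differing by one associator move at a position $(i_0,a_0)$, re-bracketing three consecutive groups $A,B,C$ as $(AB)C\leftrightarrow A(BC)$. Since composition in $\hat\Theta_2$ grafts parenthesisings operadically, we factor the underlying map as $\phi=g\circ h$ in $\Theta_2$, where $h\colon D\to D''$ is the standard Type A map tripling the $i_0$-th slot into three copies and $g\colon D''\to D_1$ fills these three copies with the subwords $A,B,C$ (distributing the components $\phi_{i_0}^j$) and is the identity elsewhere. The two lifts $\hat h_1,\hat h_2$ of $h$ bracketing the three copies as $(\ast\ast)\ast$ and $\ast(\ast\ast)$, together with one fixed lift $\hat g$ of $g$, satisfy $\hat\phi_t=\hat g\circ\hat h_1$ and $\hat\phi_{t+1}=\hat g\circ\hat h_2$. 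By contravariant functoriality of $F$,
$$
\hat\phi_t^*(F)(\xi)-\hat\phi_{t+1}^*(F)(\xi)=\hat h_1^*(F)(\zeta)-\hat h_2^*(F)(\zeta),\qquad \zeta:=\hat g^*(F)(\xi)\in F(D''),
$$
which is precisely a Type A generator of $W_D$. A unit move is localized the same way through a standard Type U map (insertion of a single marked point), giving a Type U generator. Hence each summand, and therefore the whole difference, lies in $W_D$.

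The \textbf{main obstacle} is the localization step: one must verify that the operadic grafting of parenthesisings really does split an arbitrary local move as a standard map $h$ with source $D$ followed by a \emph{common} lift $\hat g$, so that the two lifts of $h$ match $\hat\phi_t$ and $\hat\phi_{t+1}$ on the nose. This entails careful bookkeeping of the indices, of the non-identity components $\phi_{i_0}^j$ absorbed into $\hat g$, of the fact that a standard tripling only handles three \emph{slots} (the subwords $A,B,C$ being supplied by $g$), and of the interaction between associator and unit moves; once these are checked, the remainder of the argument is formal.
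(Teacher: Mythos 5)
Your architecture — telescope the difference of two lifts along a chain of elementary associator/unit moves, then realize each single move as a Type A or Type U generator by factoring through a standard map — is exactly what the paper's own (one-sentence) proof gestures at, and your connectivity step is fine. The gap is in the localization step, which you rightly flag as the crux but then only assert. It fails whenever the elementary associator move is performed at a \emph{non-root} node of the parenthesising. Since a morphism of $\hat{\Theta}_2$ is a strict functor between free bicategories, composition grafts parenthesisings operadically at the leaves; hence for $h\colon D\to D''$ the Type A tripling and any $\hat g\colon D''\to D_1$, the composite $\hat g\circ \hat h_i$ carries, at the tripled slot, a parenthesising whose \emph{outermost} bracketing is the one chosen by $\hat h_i$. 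So the pair $\{\hat g\circ\hat h_1,\hat g\circ\hat h_2\}$ always differs by a re-bracketing of the three outermost groups, and a move $P[(AB)C]\leftrightarrow P[A(BC)]$ inside a non-trivial context $P[-]$ is never of this form: the common $\hat g$ you need does not exist.

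Concretely, take $D=([1];[0])$, $D_1=([4];[0],[0],[0],[0])$, $\phi(0)=0$, $\phi(1)=4$, and the two lifts $P_1=((x_1x_2)x_3)x_4$ and $P_2=(x_1(x_2x_3))x_4$, which differ by one associator move inside the left factor. The only Type A map out of $D$ is the tripling $D\to([3];[0],[0],[0])$, and running over all $\hat g$ the pairs $\{\hat g\circ\hat h_1,\hat g\circ\hat h_2\}$ realize exactly the three root edges $\{P_1,P_5\}$, $\{P_2,P_3\}$, $\{P_4,P_5\}$ of the pentagon (where $P_3=x_1((x_2x_3)x_4)$, $P_4=x_1(x_2(x_3x_4))$, $P_5=(x_1x_2)(x_3x_4)$), plus pairs involving marked points; Type U generators only wrap a whole parenthesising in marked points at the outermost level. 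Rerouting around the pentagon does not help: the root edges split it into the two components $\{P_1,P_4,P_5\}$ and $\{P_2,P_3\}$, and for $F=\k\hat{\Theta}_2(-,D_1)$ the linear functional sending an extended parenthesising to $1$ or $0$ according to which component its marked-point collapse lies in annihilates every Type A and Type U generator yet separates $P_1$ from $P_2$. So $P_1^*(F)(\id_{D_1})-P_2^*(F)(\id_{D_1})$ lies in $V_D$ but not in $W_D$ as defined. To close the argument one must enlarge the class of standard maps so that the three-fold regrouping may occur at an arbitrary node (i.e., allow refining a slot into $m\ge 3$ copies with the two lifts differing by one internal associahedron edge), or find a genuinely different mechanism; note that the paper's own "rather evident" factorization runs into exactly the same obstruction, so this is not a defect you could have repaired by bookkeeping alone.
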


\begin{proof}
It follows from Lemma \ref{lemmakanexttilde} and (a rather evident) observation that any morphism $\hat{\phi}\colon D\to D_1$ factors as $D\xrightarrow{\hat{\psi}}D^\prime\to D_1$, where $\hat{\psi}$ is of Type A or of Type U.
\end{proof}

\section{\sc The 2-cocellular vector space $A(C,D)(F,G)(\eta,\theta)$}

\subsection{\sc The Davydov-Yetter complex and an attempt of generalisation}\label{sectiondygen}
Let $C,D$ be $\k$-linear monoidal categories (that is, $\k$-linear bicategories with a single object), $F\colon C\to D$ a strong monoidal functor. 
For $n\ge 1$, denote by 
$$
F^{\otimes n}\colon C^{\otimes n}\to D
$$
defined on objects as
$$
F^{\otimes n}(X_1,\dots,X_n)=F(X_1)\otimes_D F(X_2)\otimes_D\dots\otimes_D F(X_n)
$$
(one has to fix any parenthesising of the right-hand side, for example from the left to the right). 

Define
\begin{equation}
A^n(F)=\mathrm{Nat}(F^{\otimes n},F^{\otimes n})=\prod_{X_1,\dots,X_n\in C}D(F(X_1)\otimes\dots\otimes F(X_n), F(X_1)\otimes\dots\otimes F(X_n))_{\mathrm{Nat}}
\end{equation}
where $\Nat$ stands for (linear) natural transformations. 

The assignment $[n]\rightsquigarrow A^n(F)$ gives rise to a cosimplicial object. Its non-normalized dg totalization is called {\it the 
Davydov-Yetter complex} of $F\colon C\to D$. Let us recall this definition, with notations for simplicial coface and codegeneracy maps from the beginning of Section \ref{nord}. Let $\Psi\in \hat{A}_{n}(F)$. 
The elementary coface maps $\partial^i\colon [n-1]\to [n]$, $1\le i\le n-1$, act by plugging $X_i\otimes X_{i+1}$ in place of the $i$-th argument $X_i$ of $\Psi$, followed by the application of the colax-map $F(X_i\otimes X_{i+1})\to F(X_i)\otimes F(X_{i+1})$ and rearranging the parentheses (note that by the MacLane coherence theorem one needn't specify the way by which the parentheses are rearranged, as any two such maps are equal). The extreme coface map $\partial^0$ act by $\Psi\mapsto \id_{F(X_0)}\otimes \Psi(X_1\otimes\dots\otimes X_k)$, and the other extreme coface map $\partial^k$ acts by $\Psi\mapsto \Psi\otimes \id_{F(X_k})$, followed by the necessary reparenthesizing. The codegeneracy map $\epsilon^i$ acts on $k$-cochain $\Psi$ by plugging the monoidal unit $e$ to the $i$-th position of $\Psi$, thus decreasing the number of arguments by 1, followed by the necessary rearrangements. The reader is referred to [EGNO], Ch.7 or [BD] for a more detailed description.

Now the question is: does the construction still give rise to a cosimplicial object in $\mathscr{V}$ when the polynaturality condition of Davydov-Yetter is dropped? The answer is negative, unless the monoidal category is strict (the associator and the unit maps are the identity maps), because the relations in $\Delta$ are not longer respected. Denote the action of elementary cosimplicial operators as above by $\mathcal{O}$. Then, for instance, the actions of $\mathcal{O}(\partial^{i+1})\circ \mathcal{O}(\partial^i)$ differs from $\mathcal{O}(\partial^i)\circ\mathcal{O}(\partial^i)$ only by the $i$-th argument, which is $(X_i\otimes X_{i+1})\otimes X_{i+2}$ for the first composition, and $X_i\otimes(X_{i+1}\otimes X_{i+2})$ for the second one. These two expressions are mapped one to another by the associator map $\alpha$. Therefore, in order the relation $\partial^{i+1}\partial^i=\partial^i\partial^i$ to be respected under the action $\mathcal{O}$, one has to {\it require the naturality with respect to $\alpha$} on $i$-th argument. Similar considerations are applied to the unit map and degeneracy operators. 

It is clear from this reasoning that the naturality under all monoidal {\it structure morphisms} maps (that is, compositions of products the associator, the unit maps, and its inverse, with the identity maps on some factors) acting in each argument is the {\it minimal} naturality condition one has to impose on the cochains to get a cosimplicial object in $\mathscr{V}$. 

A drawback of the Davydov-Yetter complex, from the point of view of deformation theory, is that it controls only deformation of the associator. The full deformation theory of a linear monoidal category should also control deformations of the underlying linear category (which are controlled by the Hochschild cochain complex of this underlying category) and the action of {\it morphisms} on the monoidal product (the monoidal product on objects is a non-linear data and thus is assumed to remain unchanged under the deformation). So our goal is to define a ``bigger'' {\it bi}complex of Davydov-Yetter type, 
whose ``vertical'' differential is of the Hochschild type, and whose ``horizontal'' differential is of Davydov-Yetter type. 
The classical Davydov-Yetter complex is obtained by a sort of truncation. This truncation is the kernel of the vertical (Hochschild type) differential of the 0-th row of the bicimplex (so the truncated complex is a subcomplex of the 0-th row of our bicomplex). Taking this kernel effects in imposing the Davydov-Yetter naturality condition.

\subsection{\sc A functor $\Bar\colon \hat{\Theta}_2\to\Bimod_2$}\label{sectiondefa1}
Let $C$ be small dg bicategory. 
In this subsection, we define a functor
\begin{equation}
\Bar(C)\colon \hat{\Theta}_2^\opp\to\Bimod_2(C)
\end{equation}
(playing the role of bicategorical bar-complex), as follows.

Let $T=([k];[n_1],\dots,[n_k])$ be an object of $\Theta_2$. Let $x^\prime,y^\prime\in C_0, f_0,g_0\in C_1(x^\prime,y^\prime)$. Define
\begin{equation}\label{eqbarcorr1}
(M_T)_2(f_0,g_0)=\bigoplus_{\substack{{x_0,\dots,x_k\in C_0, x_0=x,x_k=y}\\
{f_{ij}\in C_1(x_{i-1},x_i), j=0\dots n_i}\\
{f_{\circ\min}=f_0, f_{\circ\max}=g_0}}}
{\bigotimes_{\substack{{i=1\dots k}\\{j=1\dots n_i}}}} C(f_{i,j-1},f_{i,j})
\end{equation}
where
\begin{equation}\label{eqbarcorr0}
f_{\circ\min}:=f_{k0}\circ (f_{k-1,0}\circ(\dots (f_{20}\circ f_{10})\dots)),\ \ \ 
f_{\circ\max}=f_{kn_k}\circ(f_{k-1,n_{k-1}}\circ(\dots(f_{2n_2}\circ f_{1n_1})\dots))
\end{equation}
It is considered as the second component of the enriched 2-globular set $M_T$ such that $(M_T)_{\le 1}=C_{\le 1}$.

Next define a dg 2-bimodule $\Bar(C)_T$ over $C$ as
\begin{equation}\label{eqbarcorr2}
\Bar(C)_T=L(M_T)
\end{equation}
where $L\colon \mathrm{Glob}_2\to \Bimod_2$ is the left adjoint functor to the forgetful functor, which was discussed in Section \ref{free2bimod}.

Explicitly, for $x,y\in C_0, f,g\in C_1(x,y)$, one has
\begin{equation}
\Bar(C)_T(f,g)=\bigoplus_{\alpha\in C_1(x,x^\prime), \beta\in C_1(y^\prime,y)}
C(f,\beta\circ(f_0\circ \alpha))\otimes_\k (M_T)_2(f_0,g_0)\otimes_\k C(\beta\circ (g_0\circ\alpha),g)/\sim
\end{equation}
where $\sim$ is the relation in \eqref{free2bimodrel}. 

\begin{prop}
In the notations as above, the assignment $T\rightsquigarrow\Bar(C)_T$ is an object part of a functor
$$
\Bar(C)\colon \hat{\Theta}_2\to \Bimod_2(C)
$$
\end{prop}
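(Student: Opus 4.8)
The plan is to exploit freeness to reduce the whole construction to a map of \emph{generating} 2-globular sets, and then to verify functoriality generator-by-generator, letting the coherence theorem absorb all associativity and unit constraints. Recall from \eqref{eqbarcorr2} that $\Bar(C)_T=L(M_T)$, where $L$ is left adjoint to the forgetful functor $U\colon\Bimod_2(C)\to\mathrm{Glob}_2$ of Section \ref{free2bimod}. Hence, for any 2-bimodule $N$, a morphism $\Bar(C)_T\to N$ is the same datum as a morphism of enriched 2-globular sets $M_T\to U(N)$. In the variance $\hat{\Theta}_2^\opp\to\Bimod_2(C)$ announced at the start of the subsection, a morphism $\Phi\colon S\to T$ of $\hat{\Theta}_2$ must be sent to a morphism of 2-bimodules $\Bar(C)_\Phi\colon\Bar(C)_T\to\Bar(C)_S$; by the adjunction it is enough to specify, for each generator of $(M_T)_2(f_0,g_0)$ in \eqref{eqbarcorr1}, an element of the underlying 2-globular set of $\Bar(C)_S$, and then to extend $L$-freely. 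Since $M_T$ carries no relations, nothing has to be checked on the source, and well-definedness modulo the relation $\sim$ of \eqref{free2bimodrel} is automatic, because the prescription lands in the genuine bimodule $\Bar(C)_S$.

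\emph{Definition on generators.} By the wreath-type description of Section \ref{sectionthetahat}, a morphism of $\hat{\Theta}_2$ is given by $(\phi,\{\phi_i^j\})$ together with extended parenthesisings, so it suffices to treat the generators: the elementary coface and codegeneracy maps of Section \ref{facetheta} and the associator- and unit-type morphisms special to $\hat{\Theta}_2$. A generator of $(M_T)_2$ is a grid of $2$-cells, namely $k$ columns, each a vertical chain of $2$-morphisms whose bottom (resp.\ top) horizontal composite is $f_0$ (resp.\ $g_0$). On such a grid I would let (F1) act by the vertical composition $\circ^v$ of the two adjacent $2$-cells of the indicated column; (F2), i.e.\ $D_{j,\sigma}$, by the horizontal composition $\circ^h$ of columns $j$ and $j+1$ interleaved according to the shuffle $\sigma$, with the associator $2$-cells of $C$ inserted as dictated by the parenthesising data; the outer maps (F3), (F4) and $D_\min,D_\max$ by absorbing or transferring an extreme column into the whiskering $1$-morphisms $\alpha,\beta$ of the presentation \eqref{eqfree2bimod}; the codegeneracies (D1), (D2) by inserting an identity $2$-cell, resp.\ an identity-$1$-morphism column; and the associator/unit-type morphisms by whiskering with the $2$-cells $\alpha,\lambda,\rho$ of $C$ prescribed by the parenthesising and marked-point data. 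Each clause is built solely from the bicategory operations of $C$, so the result indeed lies in $U(\Bar(C)_S)$.

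\emph{Functoriality and the main obstacle.} It remains to check $\Bar(C)_\Psi\circ\Bar(C)_\Phi=\Bar(C)_{\Phi\circ\Psi}$, which by freeness may be verified on generators. I would reduce this to the relations among the generators of $\hat{\Theta}_2$; by definition $\hat{\Theta}_2(S,T)=\Hom_{\Bicat}(\hat{\omega}_2(S^*),\hat{\omega}_2(T^*))$ with \emph{strict} functors, so these relations are exactly those valid in the free bicategories $\hat{\omega}_2(S^*)$. The assignment above is nothing but the evaluation of such a strict-functor datum in the genuine bicategory $C$, and functoriality becomes the assertion that the operations of $C$ obey the same relations: associativity of $\circ^v$, associativity of $\circ^h$ up to the associator, and the interchange law, which are precisely axioms (1)--(3) of the $2$-bimodule $\Bar(C)_S$ in Section \ref{section2bimod}, together with the unit axiom (4) and the pentagon and triangle coherences for the inserted $\alpha,\lambda,\rho$. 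The genuinely delicate cases are those involving the associator- and unit-type morphisms, where the parenthesising and marked-point data are not canonical; here I would invoke the Coherence Theorem \ref{theorcoherence} and its Corollary, which guarantee that any two composites of associators and units between the same source and target $1$-cells of $C$ coincide, so that every choice of parenthesising washes out and the two composites agree on the nose. Organizing this case analysis—matching each relation of $\hat{\Theta}_2$ with the corresponding bicategory or $2$-bimodule axiom, with coherence absorbing all associativity and unit constraints—is the main obstacle; once it is set up, each individual verification is routine.
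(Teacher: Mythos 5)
Your construction is correct and is, at its core, the same as the paper's: both rest on the identification of $M_T$ with (a linearised variant of) the set of strict functors $\hat{\omega}_2(T^*)\to C$, i.e.\ of maps of $2$-globular sets $T^*\to U(C)$, and both use the free-bimodule structure of $\Bar(C)_S=L(M_S)$ to absorb the operations that cannot be realised inside $M_S$ itself --- your ``absorbing an extreme column into the whiskering $1$-morphisms $\alpha,\beta$'' is exactly the paper's ``from inside'' action. The one place where your argument is too quick is the closing claim that functoriality is literally ``evaluation of a strict-functor datum in $C$'', so that composition of strict functors does all the work: this is true only for morphisms $\Phi$ whose underlying map $p(\Phi)\in\Theta_2$ is dominant (preserves minima and maxima). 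For the non-dominant generators --- the outer cofaces (F3) and $D_{\min},D_{\max}$ --- the set-level precomposition argument would use projections $V\times W\to V$, which have no analogue for $\otimes_\k$; this is precisely why the paper contrasts $M'_T$ and $M''_T$ and routes the definition through $L(M_T)$, and it is why the compatibility of your absorption clauses with composition (and with the Type A/Type U reparenthesisations) is a genuine, if routine, verification rather than a formal consequence of strict-functor composition. Since your generator-level prescriptions never actually invoke a projection, the construction itself is sound; you should just not present the functoriality check as automatic, and you would also need to justify that the listed morphisms generate $\hat{\Theta}_2$, a presentation the paper never writes down and deliberately avoids needing.
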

\begin{remark}{\rm
It will be clear from the proof that the assignment $T\rightsquigarrow M_T$, $\Ob(\hat{\Theta}_2)\to \mathrm{Glob}_2$ can't be extended to a functor $\hat{\Theta}_2\to \mathrm{Glob}_2$. So it becomes a functor only after application of $L(-)$. 
}
\end{remark}

\begin{proof}
If we considered an ordinary (set-enriched) bicategory ${C}$, we would use the fact that the set of strict functors $\Hom_\Bicat^\str(\hat{\omega}_2(T^*),{C})$ is functorial with respect to strict bicategory maps $\hat{\Theta}_2(S,T)=\Hom_\Bicat^\str(\hat{\omega}_2(S^*),\hat{\omega}_2(T^*))$. On the other hand, a strict functor $\hat{\omega}_2(T)\to C$ is the same as a map of 2-globular sets $T^*\to U(C)$, where $U(C)$ denotes the underlying 2-globular object. A map of 2-globular sets $T^*\to U(C)$ is defined by its values on the sets of $i$-cells, $i=0,1,2$. The set of these maps is ``very closed'' to our $M_T$. More precisely, the set of maps of 2-globular sets $T^*\to U(C)$ is 
\begin{equation}
M^\prime_T=\coprod_{\substack{{x_0,\dots,x_k\in C_0}\\
{f_{ij}\in C_1(x_{i-1},x_i), j=0\dots n_i}}}
{\prod_{\substack{{i=1\dots k}\\{j=1\dots n_i}}}} C(f_{i,j-1},f_{i,j})
\end{equation}
(with dropped condition $f_{\circ\min}=f_0, f_{\circ\max}=g_0$). 
\end{proof}
It follows from the discussion just above that the assignment $T\rightsquigarrow M^\prime_T$ gives rise to a functor 
$M^\prime(C)\colon \hat{\Theta}_2^\opp\to \mathscr{V}$. 

Note that if a map $p(\Phi)\in\Theta_2$ is not dominant (that is not necessarily preserves all minima and maxima), the projection along several factors is used to define an action of $\Phi$.

We are mostly interested in the case when the bicategory $C$ is dg $\k$-linear, and in this case the projections of $V\otimes W$ to $V$ and to $W$ are not defined. Moreover, it is not true that, for a $\k$-linear bicategory $C$, 
\begin{equation}
M^\pprime_T=\bigoplus_{\substack{{x_0,\dots,x_k\in C_0}\\
{f_{ij}\in C_1(x_{i-1},x_i), j=0\dots n_i}}}
{\bigotimes_{\substack{{i=1\dots k}\\{j=1\dots n_i}}}} C(f_{i,j-1},f_{i,j})
\end{equation}
is $\Hom_{\Bicat(\k)}^\str(\k\hat{\omega}_2(T^*), C)$, where $\k\hat{\omega}_2(T^*)$ denotes the free $\k$-linear bicategory
generated by $T^*$ made $\k$-linear in dimension 2. Indeed, a homogeneous element of a complex of vector space $V$ is not the same as $C^\udot(\k)(\k,V)$.

On the other hand, the same formulas with the direct product of sets replaced by the tensor product of complexes of $\k$-vector space define an action of $\hat{\Theta}_2^\mathrm{dom}$ on $M^\pprime$, where the upper script dom stands for the subcategory of morphisms $\Phi$ such that $p(\Phi)$ is dominant. 

Now instead of non-existing projections we use the ``from inside'' action in the free 2-bimodule case (it can be considered as a 2-dimensional version of the two-sided bar-construction). It gives an action of general (possibly non-dominant) morphisms. Also, the parenthesising in \eqref{eqbarcorr0} is fixed, so after an application of a morphism in $\hat{\Theta}_2$ one may get a wrong parenthesising in $f_{\circ \min}$ and $f_{\circ\max}$. The same holds when extra identity 1-morphisms are present after
an application of a morphism in $\hat{\Theta}_2$.  In these cases we have to reduce $f_{\circ\min}$ and $f_{\circ\max}$ to the standard parenthesising and without extra identity morphisms, by applying the associator and the unit 2-morphisms ``from inside'' on the (upper and lower) 2-bimodule arguments. The reader easily checks that this description indeed gives rise to a functor $\Bar(C)\colon \hat{\Theta}_2\to \Bimod_2(C)$.

\subsection{\sc The 2-cocellular complex $A(C,D)(F,G)(\eta,\theta)$}\label{sectiondefa2}
Let $C,D$ be a dg bicategories, $F,G\colon C\to D$ be strong functors, $\eta,\theta\colon F\Rightarrow G$ be strong natural transformations. We associate with this data a complex $A(C,D)(F,G)(\eta,\theta)\in C^\udot(\k)$. 

In Section \ref{sectiondefa1}, we associated to a dg bicategory $C$ a functor
$$
\Bar(C)\colon \hat{\Theta}_2^\opp\to \Bimod_2(C)
$$
Let $p\colon \hat{\Theta}_2\to\Theta_2$ be the projection which is the identity on objects. 
The left Kan extension $\Lan_{p^\opp}(\Bar(C))$ is a functor $\Theta_2^\opp\to\Bimod(C)$.

Define
\begin{equation}
A^\Theta(C,D)(F,G)(\eta,\theta)_T=\underline{\Bimod}_2(C)\big((\Lan_{p^\opp}\Bar(C))_T, M(C,D)(F,G)(\eta,\theta)\big)\in C^\udot(\k)
\end{equation}
Here the Hom is taken internally with respect to complexes of vector spaces, so it takes values in $C^\udot(\k)$, the 2-$C$-bimodule $M(C,D)(F,G)(\eta,\theta)$ was defined in Section \ref{section2bimodtheta}. 

For the final object $T_{*}=([0];\varnothing)$, we set $$A^\Theta(C,D)(F,G)(\eta,\theta)_{T_*}=\prod_{X\in \Ob(C)}D_2(\eta_X,
\theta_X)
$$

Finally, define 
\begin{equation}
A(C,D)(F,G)(\eta,\theta)=\Tot_{T\in\Theta_2}A^\Theta(C,D)(F,G)(\eta,\theta)_T
\end{equation}

In Sections \ref{secaexpl1}, \ref{secaexpl2} below we unwind this definition making it more explicit. 

We consider this complex as ``derived modifications'' $\eta\Rrightarrow\theta$ (see Definition \ref{defmodif} for the definition of a classical modification). 

\begin{remark}{\rm
Likewise, for the dg 1-categories $C,D$, and two dg functors $F,G\colon C\to D$, the Hochschild complex 
$$
\Hoch^\udot(C,D(F(-),G(=))=\underline{\Bimod}(C)(\Bar(C), D(F(-),G(=))
$$
is interpreted as ``derived natural transformations'' $F\Rightarrow G$, in the sense that closed degree 0 0-cochains correspond to classical dg natural transformations. Here $\Bar(C)$ is the classical bar-complex of the dg category $C$, taking values in $C$-bimodules.
}
\end{remark}

More precisely, one has:
\begin{prop}
Assume $C,D$ are dg bicategories, $F,G\colon C\to D$ strong functors, $\eta,\theta\colon F\Rightarrow G$ strong dg transformations. Then the vector space $V\subset A^{\Theta}(C,D)(F,G)(\eta,\theta)_{T_*}$ of degree 0 closed elements in $A(C,D)(F,G)(\eta,\theta)$ is isomorphic to the vector space of degree 0 modifications $\eta\Rrightarrow\theta$.
\end{prop}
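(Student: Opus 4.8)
The plan is to compute the degree $0$ part of $A(C,D)(F,G)(\eta,\theta)=\Tot_{\Theta_2}A^\Theta(C,D)(F,G)(\eta,\theta)$ together with its differential, and to match the resulting closedness condition with the modification axiom of Definition \ref{defmodif}.

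First I would note that $T_*=([0];\varnothing)$ is the unique object of $\Theta_2$ of dimension $0$, so by \eqref{tott} the degree $0$ cochains are exactly $A^\Theta(C,D)(F,G)(\eta,\theta)_{T_*}=\prod_{X\in\Ob(C)}D_2(\eta_X,\theta_X)$. Writing $\Gamma_X:=m_X$, an element $m=(m_X)_X$ is precisely the underlying datum of a candidate modification before any axiom is imposed, and this identification is linear. The total differential splits on such an $m$ as an internal part $d_{\mathrm{int}}(m)\in A^\Theta_{T_*}$ (of internal degree $1$) and a cocellular part $d_{\Theta_2}(m)\in\bigoplus_{\dim T=1}A^\Theta_T$, which live in different summands; hence $m$ is closed iff both vanish. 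The vanishing of $d_{\mathrm{int}}(m)$ says each $m_X$ is a closed degree $0$ $2$-cell, i.e. the first half of the data of a dg modification.

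Next I would analyse $d_{\Theta_2}$. The only dimension $1$ object is $([1];[0])$, and the only codimension $1$ cofaces out of $T_*$ are the outer maps $D_\min,D_\max$ of item (F4) in Section \ref{facetheta}, both occurring with coefficient $+1$ in \eqref{totd} (the relevant exponents vanish). To make them explicit I would unwind $A^\Theta_{([1];[0])}$: by \eqref{eqbarcorr1}--\eqref{eqbarcorr2} the $2$-bimodule $\Bar(C)_{([1];[0])}$ is free on the identity $2$-cells $\id_f$, one per $1$-morphism $f$ of $C$, and passing to $\Lan_{p^\opp}$ (Proposition \ref{propkanexttilde}) keeps these as generators. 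Since a $2$-bimodule morphism is determined by its values on generators, an element of $A^\Theta_{([1];[0])}$ is a family $\{\Psi(f)\in M(f,f)=D(\eta_y\circ Ff,\ Gf\circ\theta_x)\}_{f\colon x\to y}$, so the condition $d_{\Theta_2}(m)=0$ may be tested on each $\id_f$ separately.

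The computational heart, which I expect to be the main obstacle, is to evaluate $A^\Theta(D_\min)(m)(\id_f)$ and $A^\Theta(D_\max)(m)(\id_f)$ by tracing $m$ through precomposition with $(\Lan_{p^\opp}\Bar(C))(D_\min)$ and $(\Lan_{p^\opp}\Bar(C))(D_\max)$ and through the ``from inside'' bimodule action of Section \ref{sectiondefa1}. Geometrically $D_\min$ whiskers $m_x$ by $Gf$ and inserts the structural cell $\eta_f$, while $D_\max$ whiskers $m_y$ by $Ff$ and inserts $\theta_f$; as $\eta,\theta$ are \emph{strong}, $\eta_f,\theta_f$ are invertible, and one gets, up to the signs of \eqref{totd}, $A^\Theta(D_\min)(m)(\id_f)=(\id_{Gf}\circ_h m_x)\circ^v\eta_f^{-1}$ and $A^\Theta(D_\max)(m)(\id_f)=\theta_f^{-1}\circ^v(m_y\circ_h\id_{Ff})$, both landing in $D(\eta_y\circ Ff,\ Gf\circ\theta_x)$. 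Setting their signed sum to zero and clearing the invertible $\eta_f,\theta_f$ by vertical pre/post-composition then yields
$$
\theta_f\circ^v(\id_{Gf}\circ_h m_x)=(m_y\circ_h\id_{Ff})\circ^v\eta_f ,
$$
which is exactly the modification axiom of Definition \ref{defmodif} for $\Gamma_X=m_X$. The real bookkeeping lies in tracking these signs and in checking, via the coherence theorem (Theorem \ref{theorcoherence}), that the inserted associator and unit cells make both composites land in the \emph{same} hom-complex $M(f,f)$. Combining the two halves, $m$ is closed of degree $0$ iff each $m_X$ is closed and $\{m_X\}$ satisfies the modification axiom, i.e. iff it is a degree $0$ modification $\eta\Rrightarrow\theta$; this linear correspondence is the claimed isomorphism.
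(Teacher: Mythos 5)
Your proposal is correct and follows essentially the same route as the paper's proof: identify the degree~$0$ cochains with families $\{m_X\in D_2(\eta_X,\theta_X)\}$, observe that the cocellular differential lands in arity $([1];[0])$ via the two outer cofaces $D_\min,D_\max$, and match the resulting equation with the modification axiom of Definition \ref{defmodif}. You are in fact somewhat more explicit than the paper about the role of the structural cells $\eta_f,\theta_f$ and the internal differential, but the argument is the same.
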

\begin{proof}
A general element $\Psi$ of $A^{\Theta}(C,D)(F,G)(\eta,\theta)_{T_*}$ belongs to $\prod_{X\in C}D(\eta_X,\theta_X)$. Its boundary belongs to the arity $T_1=([1];[0])$. A general cochain in $A^\Theta(C,D)(F,G)(\eta,\theta)_{T_1}$ is an element of
$$
\prod_{f\colon X\to Y\in C_1}D_2(\eta_Y\circ F(f), G(f)\circ \theta_X)
$$
Let $\Psi$ have the components $\Psi_X\in D_2(\eta_X,\theta_X)$, $X\in C_0$. Then $d\Psi$ has the components $(d\Psi)_f$, $f\in C_1(X,Y)$, which are 
$$
(d\Psi)_f=\Psi_Y\circ F(f)-G(f)\circ \Psi_X
$$
(The two summands come from the two morphisms $T_\varnothing\to T_1$ in $\Theta_2$ corresponded to the two morphisms $[0]\to [1]$ in $\Delta$, they are $D_\min$ and $D_\max$ in notations of Section \ref{facetheta}, (F4)).

Then $(d\Psi)_f=0$ for any $f\in C(X,Y)$ is precisely the condition for $\Psi$ being a modification. 
\end{proof}

\subsection{\sc Some properties of the category of 2-bimodules}
\begin{lemma}\label{lemmabarproj}
Let $C$ be a $\k$-linear bicategory, $\k$ a field. The category $\Bimod_2(C)$ of 2-bimodules over $C$ is abelian $\k$-linear. The 2-bimodules $L(M)$ are projective, where $L$ is the left adjoint to the forgetful functor$U\colon \Bimod_2(C)\to\mathrm{Glob}_2(\k)$, see Section \ref{section2bimod}.
\end{lemma}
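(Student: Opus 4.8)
The plan is to show that the entire structure of $\Bimod_2(C)$ is controlled \emph{levelwise} by the second component $M_2$, and then to transport the abelian $\k$-linear structure of $\mathscr{V}$ (where $\mathscr{V}=\Vect(\k)$ or $C^\udot(\k)$) through the faithful forgetful functor $U$. Since every $2$-bimodule $M$ has $M_{\le 1}=C_{\le 1}$ fixed, the datum of a $2$-bimodule is the same as a family of objects $M_2(f,g)\in\mathscr{V}$ indexed by parallel pairs $f,g\in C_1(x,y)$, $x,y\in C_0$, together with the four composition maps of Section \ref{section2bimod}; a morphism of $2$-bimodules is exactly a family of $\mathscr{V}$-morphisms $\phi_{f,g}\colon M_2(f,g)\to N_2(f,g)$ (the identity on $C_{\le 1}$) commuting with all compositions. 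Thus $U$ identifies $\mathrm{Glob}_2(\k)$ with the product category $\prod_{(f,g)}\mathscr{V}$, which is abelian $\k$-linear. The hom-sets $\Hom_{\Bimod_2(C)}(M,N)$ are $\k$-subspaces of $\prod_{(f,g)}\Hom_{\mathscr{V}}(M_2(f,g),N_2(f,g))$, so $\Bimod_2(C)$ is $\k$-linear; the zero bimodule ($M_2\equiv 0$, $M_{\le 1}=C_{\le 1}$) is a zero object, and the levelwise biproduct $(M\oplus N)_2(f,g)=M_2(f,g)\oplus N_2(f,g)$ with diagonal compositions is again a $2$-bimodule (each axiom holds in each summand), so the category is additive.

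For the abelian structure I would compute kernels and cokernels levelwise: for $\phi\colon M\to N$, set $(\Ker\phi)_2(f,g)=\Ker(\phi_{f,g})$ and $(\Coker\phi)_2(f,g)=\Coker(\phi_{f,g})$ in $\mathscr{V}$. The only point requiring verification is that these inherit a $2$-bimodule structure, and this uses that every operation $\bar{\circ}^v_{\pm},\bar{\circ}^h_{\pm}$ is $\mathscr{V}$-bilinear and that $\phi$ commutes with them: e.g. if $m\in\Ker(\phi_{g,h})$ and $\psi\in C_2(f,g)$ then $\phi(m\bar{\circ}^v_-\psi)=\phi(m)\bar{\circ}^v_-\psi=0$, so the operations restrict to $\Ker\phi$, and dually the image of $\phi$ is closed under all operations, so they descend to $\Coker\phi$. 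The axioms of Section \ref{section2bimod}, being identities valid in $M$ (resp.\ $N$), pass automatically to the subobject $\Ker\phi$ (resp.\ quotient $\Coker\phi$); the unit axiom (4), though it involves the isomorphisms $\rho,\lambda$ of $C$, is just an equality of elements and so survives passage to sub- and quotient objects. Because $U$ is faithful and creates these (co)limits, and because in $\mathscr{V}$ the canonical map $\mathrm{Coim}\to\mathrm{Im}$ is a levelwise isomorphism, the same holds in $\Bimod_2(C)$, which is therefore abelian. I expect the main obstacle to be exactly this bookkeeping: confirming that the associator-twisted horizontal-associativity axioms (item (2) of Section \ref{section2bimod}) are preserved; this works because $\alpha$ enters only through fixed $C$-data while the operations remain $\k$-linear in the bimodule variable.

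Finally, projectivity of $L(M)$ follows formally from the adjunction $L\dashv U$. First, $U$ is exact, since it preserves the levelwise kernels and cokernels above and hence preserves epimorphisms (which in both categories are the levelwise surjections in $\mathscr{V}$). Second, every $M\in\mathrm{Glob}_2(\k)\simeq\prod_{(f,g)}\mathscr{V}$ is projective: lifting a morphism out of $M$ along a levelwise surjection reduces to lifting each $\k$-linear map $M_2(f,g)\to N_2(f,g)$ along a surjection, which is possible because $\k$ is a field, so every $\k$-vector space is projective and $\Hom_\k(M_2(f,g),-)$ is exact. Combining these, for any epimorphism $p\colon A\onto B$ in $\Bimod_2(C)$ and any $f\colon L(M)\to B$, the adjoint $\bar f\colon M\to U(B)$ lifts along $U(p)\colon U(A)\onto U(B)$ to some $\bar g\colon M\to U(A)$, whose adjoint $g\colon L(M)\to A$ satisfies $p\circ g=f$. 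Equivalently, $\Hom_{\Bimod_2(C)}(L(M),-)\cong\Hom_{\mathrm{Glob}_2(\k)}(M,U(-))$ is exact, which is precisely the projectivity of $L(M)$. In the dg case one reads ``projective'' through exactness of the internal $\underline{\Hom}(L(M),-)$, and the argument is identical, since over a field $\underline{\Hom}_\k(V,-)$ is exact for every complex $V$.
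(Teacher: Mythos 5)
Your proof is correct and follows essentially the same route as the paper: the paper identifies the target of the forgetful functor with the comma category $\xi\setminus C_{\le 1}$ (your product $\prod_{(f,g)}\mathscr{V}$), observes that kernels and cokernels of $U(f)$ inherit the $2$-bimodule operations, and deduces projectivity of $L(M)$ from the adjunction together with the projectivity of every object downstairs over a field. You merely spell out two steps the paper leaves implicit — the verification that the operations restrict to $\Ker\phi$ and descend to $\Coker\phi$, and the exactness of $U$ needed to make the adjunction argument work — both of which are correct and worth recording.
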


\begin{proof}
 Consider the truncation functor $\xi\colon \mathrm{Glob}_2(\k)\to 
\mathrm{Glob}_1$, where $\mathrm{Glob}_2(\k)$ denotes the category of 2-globular objects enriched in $\Vect(\k)$ in degree 2 (so that the degree 0 and the degree 1 components are sets).  It is clear that the comma-category $\xi\setminus Y$  is an abelian $\k$-linear category for any $Y\in\mathrm{Glob}_1$. One can consider the forgetful functor $U$ as a functor $U_\xi\colon\Bimod_2(C)\to\xi\setminus C_{\le 1}$. 

Let $f\colon M\to N$ be a morphism of 2-bimodules over $C$. The kernel and the cokernel of $U_\xi(f)$ have natural structures of $C$-2-bimodules. Then the first statement follows from the fact that the comma-category $\xi\setminus C_{\le 1}$ is abelian.

For the second statement, there is an adjunction
$$
L_\xi\colon \xi\setminus C_{\le 1}\rightleftarrows \Bimod_2(C)\colon U_\xi
$$
where the free 2-bimodule functor $L_\xi$ is the same as the functor $L$ defined in Section \ref{section2bimod}.
The equality
$$
\Bimod_2(C)(L(M), N)=(\xi\setminus C_{\le 1})(M, U_\xi(N))
$$
and the projectivity of $M\in \xi\setminus C_{\le 1}$ proves the second statement.

\end{proof}

\begin{remark}\label{noresolution}{\rm
Consider $B(C)=\mathrm{Real}_{T\in\Theta_2}\Lan_{p^\opp}\Bar(C)_T$, where $\mathrm{Real}$ stands for the $\Theta_2$-realisation. 
It is {\it not} true that $B(C)$ is a resolution of the tautological 2-bimodule $C$ over $C$. Consequently, Lemma \ref{lemmabarproj} does not imply that (*) $A(C,D)(F,G)(\eta,\theta)=\RHom_{\Bimod_2(C)}(C,M(C,D)(F,G)(\eta,\theta)$. As we show now (*) can't be true. 

Indeed, consider the most limit case of a $\k$-linear bicategory, namely, when it has a single object and a single 1-morphism (the identity morphism $\id$ of this object). Such a category is just a commutative algebra $X=C_2(\id,\id)$ over $\k$. Our results of Section \ref{sectionlast} show that $H^3(A(C,C)(\Id,\Id)(\id,\id))$ computes infinitesimal deformations of $X$ as a commutative $\k$-algebra. When $X$ is singular such cohomology may not vanish. On the other hand, the category of 2-bimodules over such $C$ is just the category of left $X$-modules. If (*) was true we would have that this cohomology is $H^3(\RHom_{Mod(X)}(X,X))$, but the latter cohomology vanishes for any $X$. 

It would be interesting to compare, for such $C$, the cohomology of $A(C,C)(\Id,\Id)(\id,\id)$ with the Andre-Quillen cohomology of $X$. 
}
\end{remark}

\subsection{\sc The complex $A(C,D)(F,G)(\eta,\theta)$: an explicit description}\label{secaexpl1}
In this Subsection, we provide a more direct description of $A(C,D)(F,G)(\eta,\theta)$ and of the differential on it, defined in Section \ref{sectiondefa2}, where $C,D$ are $\k$-linear bicategories, $F,G$ strong functors, $\eta,\theta$ strong natural transformations. The reader easily checks that the description given below agrees with the one given in Section \ref{sectiondefa2}. 

We define $A(C,D)(F,G)(\eta,\theta)_T$, for $T=([k]; [n_1],\dots,[n_k])$, as a graded subspace (subcomplex) of $\hat{A}(C,D)(F,G)(\eta,\theta)_T$, the latter is defined as follows. 

For $x,y\in C_0$, $f,g\in C_1(x,y)$, $n\ge 0$, denote
\begin{equation}
C^n_{x,y}(f,g)=\bigoplus_{\substack{{f_0,\dots,f_n\in C_1(x,y)}\\
{f_0=f,f_n=g}}}C_1(f_{n-1},f_n)\otimes_\k C_1(f_{n-2},f_{n-1})\otimes_\k\dots\otimes_\k C_1(f_0,f_1)
\end{equation}
(when $n=0$ we mist have $f=g$). We use notation $\theta$ for an element in $C^n_{x,y}(f,g)$, and then $\theta(i)=f_i$. Also, we use the notation $\theta(\sigma_1,\dots,\sigma_n)$ for such an element, where $\sigma_i\in C_2(f_{i-1},f_i)$.

We will need one more notation. Let $x,y,z,w\in C_0$, $f_i\in C_1(x,y), f_i^\prime\in C_1(y,z), f_i^\pprime\in C_1(z,w)$,
$\sigma_i\in C_2(f_{i-1},f_i), \sigma_i^\prime\in C_1(f_{i-1}^\prime,f_i^\prime), \sigma_i^\pprime\in C_1(f_{i-1}^\pprime,f_i^\pprime)$. Denote $\Sigma_i^-=\sigma_i^\pprime\circ^h( \sigma_i^\prime\circ^h \sigma_i)$,
$\Sigma_i^+=(\sigma_i^\pprime\circ^h \sigma_i^\prime) \circ^h \sigma_i$. Then we denote by $\theta^{(3)}(\Sigma_1,\dots,
\Sigma_k)$ the corresponding element in $C^k_{x,w}$, where each $\Sigma_i$ is either $\Sigma_i^+$ or $\Sigma_i^-$. In particular, we may consider $\theta^{(3)}(\Sigma_1^-,\dots,\alpha\circ^h \Sigma_i^-,\Sigma_{i+1}^+,\dots,\Sigma_k^+)$ where $\alpha$ is the associator.

Then for $T=([k]; [n_1],\dots,[n_k])$, one has
\begin{equation}
\begin{aligned}
\ &\hat{A}(C,D)(F,G)(\eta,\theta)_T=\\
&\prod_{\substack{{x_0,\dots,x_k\in C_0}\\
{f_i,g_i\in C_1(x_{i-1},x_i)}}}\underline{\Hom}_\k\Big(C^{n_1}_{x_0,x_1}(f_1,g_1)\otimes_\k\dots\otimes_\k C^{n_k}_{x_{k-1},x_k}(f_k,g_k),D(\eta_{x_k}\circ f^F_\tot,g_\tot^G\circ\theta_{x_0} )\Big)
\end{aligned}
\end{equation}
where 
$$
f_\tot^F=F(f_k)\circ(F(f_{k-1})\circ (\dots (F(f_2)\circ F(f_1))\dots)), \ g_\tot^G=G(g_k)\circ(G(g_{k-1})\circ (\dots (G(g_2)\circ G(g_1))\dots))
$$

For $T=([0];\varnothing)$ we set
$$
\hat{A}(C,D)(F,G)_{([0];\varnothing)}=\prod_{x\in C_0}D_2(\eta_{x},\theta_{x})
$$

Now $A(C,D)(F,G)(\eta,\theta)_T$ is a subcomplex of $\hat{A}(C,D)(F,G)(\eta,\theta)_T$ formed by the cochains $\Psi\in\hat{A}(C,D)(F,G)(\eta,\theta)$ depicted by the following conditions.

The conditions are divided into two groups: the associator and the unit map conditions. They are direct consequences of Proposition \ref{propkanexttilde}.

The associator conditions read:
\begin{equation}\label{relmain11}
\begin{aligned}
\ & \Psi(\theta_1\otimes\dots\otimes \theta_{j-1}\otimes\theta_j^{(3)}(\Sigma_1^-,\dots,\alpha\circ^v \Sigma_i^-, \Sigma_{i+1}^+,\dots,\Sigma_{n_j}^+)\otimes\theta_{j+1}\otimes\dots\otimes\theta_k)=\\
&\Psi(\theta_1\otimes\dots\otimes \theta_{j-1}\otimes \theta_j^{(3)}(\Sigma_1^-,\dots, \Sigma_i^-,\Sigma_{i+1}^+\circ^v\alpha,  \Sigma_{i+2}^+,\dots,\Sigma_{n_j}^+)\otimes \theta_{j+1}\otimes\dots\otimes \theta_k), 1\le j\le k, 1\le i\le n_j-1\\
&\Psi(\theta_1\otimes\dots\otimes \theta_{j-1}\otimes\theta_j^{(3)}(\Sigma_1^+,\dots,\alpha^{-1}\circ^v \Sigma_i^+, \Sigma_{i+1}^-,\dots,\Sigma_k^-)\otimes\theta_{j+1}\otimes\dots\otimes\theta_k)=\\
&\Psi(\theta_1\otimes\dots\otimes \theta_{j-1}\otimes \theta_j^{(3)}(\Sigma_1^+,\dots, \Sigma_i^+,\Sigma_{i+1}^-\circ^v\alpha^{-1},  \Sigma_{i+2}^-,\dots,\Sigma_k^-)\otimes \theta_{j+1}\otimes\dots\otimes \theta_k), 1\le j\le k, 1\le i\le n_j-1\\
&\Psi(\theta_1\otimes\dots\otimes\theta_j^{(3)}(\Sigma_1^+\circ^v\alpha,\Sigma_2^+,\dots,\Sigma_{n_j}^+)\otimes\dots\otimes\theta_k)=\\
&\Psi(\theta_1\otimes\dots\otimes\theta^{(3)}_j(\Sigma_1^+,\dots,\Sigma_{n_j}^+)\otimes\dots\otimes\theta_k)\circ^v \tilde{\alpha}_j, 1\le j\le k\\
&\Psi(\theta_1\otimes\dots\otimes\theta^{(3)}_j(\Sigma_1^-,\dots, \Sigma_{n_j-1}^-,\alpha\circ^v \Sigma_{n_j}^-)\otimes\dots\otimes\theta_k)=\\
&\tilde{\alpha}_j\circ^v\Psi(\theta_1\otimes\dots\otimes \theta_j^{(3)}(\Sigma_1^-,\dots,\Sigma_{n_j}^-)\otimes\dots\otimes\theta_k), 1\le j\le k\\
&\Psi(\theta_1\otimes\dots\otimes\theta_j^{(3)}(\Sigma_1^-\circ^v\alpha^{-1}, \Sigma_2^-,\dots,\Sigma_{n_j}^-)\otimes\dots\otimes \theta_k)=\\
&\Psi(\theta_1\otimes\dots\otimes\theta_j^{(3)}(\Sigma_1^-, \Sigma_2^-,\dots,\Sigma_{n_j}^-)\otimes\dots\otimes \theta_k)\circ^v\tilde{\alpha}^{-1}_j, 1\le j\le k\\
&\Psi(\theta_1\otimes\dots\otimes\theta_j^{(3)}(\Sigma_1^+,\dots, \alpha^{-1}\circ^v \Sigma_{n_j}^+)\otimes\dots\otimes\theta_k)=\\
&\tilde{\alpha}^{-1}_j\circ^v\Psi(\theta_1\otimes\dots\otimes\theta_j^{(3)}(\Sigma_1^+,\dots,\Sigma_{n_j}^+)\otimes\dots\otimes\theta_k), 1\le j\le k
\end{aligned}
\end{equation}
where $\alpha$ is the associator 2-morphism, and $\tilde{\alpha}_j$ (resp., $\tilde{\alpha}^{-1}_j$) stand for the wiskering of $\alpha$ (resp., of $\alpha^{-1}$) acting on $j$-th output with suitable identity 2-morphisms. 

When $n_j=0$ for some $j$, the list above for this $j$ gives (and is reduced to) the following equations:
\begin{equation}\label{relmain22}
\begin{aligned}
\ & \tilde{\alpha}_j\circ^v\Psi(\theta_1\otimes\dots\otimes \theta^{(3)}(\id_{f^\pprime\circ (f^\prime\circ f)})\otimes\dots\otimes\theta_k)=\Psi(\theta_1\otimes\dots\otimes\theta^{(3)}(\id_{(f^\pprime\circ f\prime)\circ f})\otimes\dots\otimes\theta_k)\circ^v \tilde{\alpha}_j\\
&\tilde{\alpha}_j^{-1}\circ^v \Psi(\theta_1\otimes\dots\otimes \theta_j^{(3)}(\id_{(f^\pprime\circ f^\prime)\circ f})\otimes\dots\otimes\theta_k)=
\Psi(\theta_1\otimes\dots\otimes \theta_j^{(3)}(\id_{f^\pprime\circ (f\prime\circ f)})\otimes\dots\otimes\theta_k)\circ^v\tilde{\alpha}_j^{-1}
\end{aligned}
\end{equation}
(compare with discussion in Section \ref{sectiondygen}).
\\

The unit map conditions read:

\begin{equation}\label{relmain33}
\begin{aligned}
\ &\Psi(\theta_1\otimes\dots\otimes\theta_j(\sigma_1,\dots,\lambda^{-1} \circ^v\sigma_i,\id\circ^h \sigma_{i+1},\dots,\id\circ^h\sigma_{n_j})\otimes\dots\otimes\theta_k)=\\
&\Psi(\theta_1\otimes\dots\otimes\theta_j(\sigma_1,\dots,\sigma_i,(\id\circ^h\sigma_{i+1})\circ^v \lambda^{-1},\id\circ\sigma_{i+1},\dots,\id\circ \sigma_{n_j})\otimes\dots\otimes\theta_k), 1\le j\le k\\
&\Psi(\theta_1\otimes\dots\otimes\theta_j(\id\circ^h\sigma_1,\dots,\id\circ^h\sigma_{i-1}   ,\sigma_i\circ^v\lambda,\sigma_{i+1},\dots,\sigma_{n_j})=\\
&\Psi(\theta_1\otimes\dots\otimes\theta_j(\id\circ^h\sigma_1,\dots, \lambda \circ^v (\id\circ^h\sigma_{i-1}),\sigma_i,\dots,\sigma_{n_j})\otimes\theta_{j+1}\otimes\dots\otimes\theta_k), 1\le j\le k\\
&\Psi(\theta_1\otimes\dots\otimes\theta_j(\sigma_1\circ^v\lambda,\sigma_2,\dots,\sigma_{n_j})\otimes\dots\otimes\theta_k)=\\
&\Psi(\theta_1\otimes \dots\otimes\theta_j(\sigma_1,\dots,\sigma_{n_j})\otimes\theta_{j+1}\otimes\dots\otimes\theta_k)\circ^v\tilde{\lambda}_j, 1\le j\le k\\
&\Psi(\theta_1\otimes\dots\otimes\theta_j((\id\circ^h\sigma_1)\circ^v\lambda^{-1},\id\circ^h\sigma_2,\dots,\id\circ^h\sigma_{n_j})\otimes\dots\otimes\theta_k)=\\
&\Psi(\theta_1\otimes \dots\otimes\theta_j(\id\circ^h \sigma_1,\dots,\id\circ^h \sigma_{n_j})\otimes\theta_{j+1}\otimes\dots\otimes\theta_k)\circ^v\tilde{\lambda}_j^{-1}, 1\le j\le k\\
&\Psi(\theta_1\otimes\dots\otimes\theta_j(\id\circ^h \sigma_1,\dots,\id\circ^h\sigma_{n_j-1}, \lambda\circ^v(\id\circ^h \sigma_{n_j})\otimes\dots\otimes\theta_k)=\\
&\tilde{\lambda}_j\circ^v \Psi(\theta_1\otimes \dots\otimes\theta_j(\id\circ^h \sigma_1,\dots,\id\circ^h \sigma_{n_j})\otimes\theta_{j+1}\otimes\dots\otimes\theta_k), 1\le j\le k\\
&\Psi(\theta_1\otimes\dots\otimes \theta_j(\sigma_1,\dots,\sigma_{n_j-1},\lambda^{-1}\circ^v \sigma_{n_j})\otimes\dots\otimes\theta_k)=\\
&\tilde{\lambda}_j^{-1} \circ^v\Psi(\theta_1\otimes\dots\otimes\theta_j(\sigma_1,\dots,\sigma_{n_j})\otimes \dots\otimes\theta_k),
1\le j\le k
\end{aligned}
\end{equation}
where $\lambda_f\colon\id\circ^h f\to f$ is the left unit map, and $\tilde{\lambda}_j$ (resp., $\tilde{\lambda}^{-1}_j$) stands for  whiskering of $\lambda$ acting on the $j$-th factor of the output with the suitable identity maps. 

As well, one has similar relations for the right unit map $\rho$, which we don't write down here. 
\\

The next point is to extend the assignment $T\rightsquigarrow A(C,D)(F,G)(\eta,\theta)$ to a functor  
$$A(C,D)(F,G)(\eta,\theta)\colon \Theta_2\to C^\udot(\k)$$
Once again, existence of such functor  follows from the construction given in Section \ref{sectiondefa2}, and our task here is to provide explicit formulas for the action of morphisms of $\Theta_2$.

We restrict ourselves to the case when $C$ and $D$ are {\it strict} 2-categories,  $F,G$ {\it strict} functors, and $\eta,\theta$ {\it strict} natural transformations. The reason is that even this case  shows the nature of the aforementioned action, but essentially simplifies the formulas. The formulas for the $\Theta_2$-action in the general case differ by numerous conjunctions with the structure 2-isomorphisms.

\subsection{\sc  An explicit description of the complex $A(C,D)(F,G)(\eta,\theta)$, II}\label{secaexpl2}
\subsubsection{}\label{subsecaexpl1}
Let $C$ be a dg bicategory, $x,y\in C_0$, and let $\phi\colon [m]\to [n]$ be a morphism in $\Delta$. We associate with $\phi$ a map of complexes 
\begin{equation}
C_{x,y}(\phi)\colon C_{x,y}^n(f_0,f_n)\to \underset{A_L(\phi)}{C_2(f_{\phi(m)}, f_n)}\otimes_\k \underset{M(\phi)}{C^m_{x,y}(f_{\phi(0)},f_{\phi(m)})}\otimes_\k \underset{A_R(\phi)}{C_2(f_0,f_{\phi(0)})}
\end{equation}
as follows.

We use notation $\sigma_n\otimes\dots\otimes\sigma_1$ for an element in $C_{x,y}^n(f_0,f_n)$, where $\sigma_i\in C_2(f_{i-1},f_i)$ (a general element in $C^n_{x,y}(f_0,f_n)$ is a linear combination of such elements). 

The two ``extreme'' factors $A_L(\phi)$ and $A_R(\phi)$ are defined as the compositions
$$
A_L(\phi)=\sigma_n\circ^v\dots\circ^v\sigma_{\phi(m)+1},\ \ A_R(\phi)=\sigma_{\phi(0)}\circ^v\dots\circ^v \sigma_1
$$
($A_L(\phi)$ is by definition equal to $\id(f_n)$ if $\phi(m)=n$, and $A_R(\phi)$ is $\id(f_0)$ if $\phi(0)=0$). 

The middle factor $$
M(\phi)(\sigma_{\phi(m)}\otimes\dots\sigma_{\phi(0)+1})=\omega_m\otimes\dots\otimes\omega_1\in C_{x,y}^m(f_{\phi(0)},f_{\phi(m)})
$$ 
is defined by
\begin{equation}
\omega_a=\omega_a(\phi)=
\begin{cases}
\sigma_{c-1}\circ^v\dots\circ^v\sigma_b\colon f_b\to f_c&\text{ if }\phi(a-1)=b,\phi(a)=c, c>b\\
\id(f_b)&\text{ if }\phi(a-1)=\phi(a)=b
\end{cases}
\end{equation}
It completes the definition of $C_{x,y}(\phi)$.
\begin{remark}{\rm
The reader certainly realises that this construction just mimics the classical nerve construction in a non-cartesian monoidal ($\k$-linear) case. In the classical case of enrichment in $\Sets$ one just projects along the two extreme factors $A_L(\phi)$ and $A_R(\phi)$. Our ``bimodule'' version is a way to phrase out the same construction in the situation when the projections (with respect to the monoidal product) do not exist. 
}
\end{remark}

\subsubsection{\sc }\label{subsecaexpl2}
Let $C$ be as above, and let $x_0,\dots,x_k\in C_0$. Assume we are given maps in $\Delta$
$\phi_1\colon [m]\to[n_1],\dots, \phi_k\colon [m]\to [n_k]$. 
For $f_{i0},\dots, f_{in_i}\in C_1(x_{i-1}, x_i)$, $1\le i\le k$, define
$$
f_\min=f_{\otimes 0}=f_{k0}\circ f_{k-1,0}\circ\dots\circ f_{10},\ \ f_\max=f_{\otimes n}=f_{kn_k}\circ\dots\circ f_{1n_1}
$$
and, for $0\le s\le m$,
$$
f_{\otimes \phi(s)}=f_{k\phi_k(s)}\circ\dots\circ f_{1\phi_1(s)}
$$
We define a map generalising the map defined above when we had $k=1$:
\begin{equation}\label{mapeq1}
\begin{aligned}
\ &C_{x_0,\dots,x_k}(\phi_1,\dots,\phi_k)\colon C^{n_1}_{x_0,x_1}(f_{10},f_{1n_1})\otimes_\k\dots\otimes_kC_{x_{k-1},x_k}^{n_k}(f_{k0},f_{kn_k})\to \\
&\underset{A_L(\phi_1,\dots,\phi_k)}{C_2(f_{\otimes\phi(m)},f_{\otimes n})}\otimes_\k \underset{B(\phi_1,\dots,\phi_k)}{C^m_{x_{0},x_{k}}(f_{\otimes \phi(0)},f_{\otimes \phi(m)})}\otimes_\k
\underset{A_R(\phi_1,\dots,\phi_k)}{C_2(f_{\otimes 0},f_{\otimes \phi(0)})}
\end{aligned}
\end{equation}
where

\begin{equation}\label{mapeq2}
\begin{aligned}
\ & A_L(\phi_1,\dots,\phi_k)=A_L(\phi_k)\circ^h A_L(\phi_{k-1})\circ^h\dots \circ^h A_L(\phi_1)\in C_2(f_{\otimes\phi(m)},f_{\otimes n})\\
&A_R(\phi_1,\dots,\phi_k)=A_R(\phi_k)\circ^h\dots\circ^h A_R(\phi_1)\in C_2(f_{\otimes 0},f_{\otimes \phi(0)})\\
&B(\phi_1,\dots,\phi_k)=\Omega_1\otimes_\k\dots\otimes_\k\Omega_m\in C_2(f_{\otimes \phi(0)},f_{\otimes \phi(m)}) \text{  where  }\\
&\Omega_i=\omega_i(\phi_k)\circ^h \omega_i(\phi_{k-1})\circ^h\dots\circ^h\omega_i(\phi_1)\in C_2(f_{\otimes \phi(i-1)},f_{\otimes \phi(i)})
\end{aligned}
\end{equation}

\subsubsection{}\label{subsecaexpl3}
Let $T=([k];[m_1],\dots,[m_k]), S=([\ell]; [n_1],\dots,[n_\ell])$, and let 
$\Phi=(\phi; \{\phi^{i,\ell}\})\colon T\to S$ a morphism in $\Theta_2$. Here we construct a map of complexes
$$
\Phi_*\colon A(C,D)(F,G)(\eta,\theta)_T\to A(C,D)(F,G)(\eta,\theta)_{S}
$$
Let $\Psi\in A(C,D)(F,G)(\eta,\theta)_T$, and let 
$x_0,\dots,x_{\ell}\in C_0$, $\{f_{ij}\in C_1(x_{i-1}, x_i)\}_{i=1\dots \ell, j=0\dots n_i}$,
and
$$
\{\sigma_{ij}\in C_2(f_{i,j-1},f_{i,j})\}_{i=1\dots \ell, j=1\dots n_i}
$$
be a datum ``of shape $S$''. 

We have to define 
$$
\Phi_*(\Psi)(\{\sigma_{ij}\})\in D_2\Big(\eta_{x_{\ell}}\circ F(f_{\ell 0})\circ \dots\circ F(f_{10}), G(f_{\ell n_\ell})\circ\dots\circ G(f_{1n_1})\circ \theta_{x_0}\Big)
$$
Let $\min=\phi(0), \max=\phi(k)$. 

For each $0\le i\le k-1$, one gets a sequence of maps $\{\phi^{i,j}\colon [m_i]\to [n_j]\}_{\phi(i-1)+1\le j\le \phi(i)}$.
Assume $\phi(i-1)<\phi(i)$, then the construction of Section \ref{subsecaexpl2} (\eqref{mapeq1} and \eqref{mapeq2}) gives a map 
\begin{equation}
\begin{aligned}
\ &C^{n_{\phi(i-1)+1}}_{x_{\phi(i-1)},x_{\phi(i-1)+1}}(f_{\phi(i-1)+1,0},f_{\phi(i-1)+1,n_{\phi(i-1)+1}})\otimes_\k\dots
\otimes_\k C^{n_{\phi(i)}}_{x_{\phi(i)-1},x_{\phi(i)}}(f_{\phi(i),0}, f_{\phi(i),n_{\phi(i)}})\to\\
&\underset{A_L^i}{C_2(f_{\otimes \phi_i(m_i)}, f_{\otimes i, \max}) }\otimes_\k \underset{B^i}{C^{m_i}_{x_{\phi(i-1)},x_{\phi(i)}}(f_{\otimes \phi_i(0)}, f_{\otimes\phi_i(m_i)})}\otimes_\k \underset{A_R^i}{C_2(f_{\otimes i, 0},f_{\otimes \phi_i(0)})}
\end{aligned}
\end{equation}
where
\begin{equation}
f_{\otimes \phi_i(s)}=f_{\phi(i),\phi_i^{\phi(i)}(s)}\circ \dots\circ f_{\phi(i-1)+1,\phi_i^{\phi(i-1)+1}(s)}
\end{equation}
where $0\le s\le m_i$,
and 
\begin{equation}
\begin{aligned}
\ & f_{\otimes i,0}=f_{\phi(i),0}\circ f_{\phi(i)-1,0}\circ\dots f_{\phi(i-1)+1,0}\\
&f_{\otimes i,\max}=f_{\phi(i), n_{\phi(i)}}\circ f_{\phi(i)-1, n_{\phi(i)-1}}\circ\dots\circ f_{\phi(i-1)+1,n_{\phi(i-1)+1}}
\end{aligned}
\end{equation}

For the case when $\phi(i-1)=\phi(i)$, we set 
$$
A_L^i=\id\in C_2(\id_{x_{\phi(i)}} ,\id_{x_{\phi(i)}}), A_R^i=\id\in C_2(\id_{x_{\phi(i)}} ,\id_{x_{\phi(i)}}), B^i=\id_{x_{\phi(i)}}\xrightarrow{\id}\id_{x_{\phi(i)}}\xrightarrow{\id}\dots\xrightarrow{\id}\id_{x_{\phi(i)}}
$$
where $B^i$ is the chain with $m_i$ arrows. 

Define
$$
G(A_L^{\otimes})=G(A_L^k)\circ^h G(A_L^{k-1})\circ^h \dots \circ^h G(A_1^1),\ \ F(A_R^{\otimes})=F(A_R^k)\circ^h F(A_R^{k-1})\circ^h\dots\circ^h F(A_R^1)
$$
For a string
$$
f_{j0}\xrightarrow{\sigma{j1}}f_{j1}\xrightarrow{\sigma{j2}}f_{j2}\dots\xrightarrow{\sigma_{jn_j}}f_{jn_j}
$$
($f_{js}\in C_1(x_{j-1},x_j)$ for $0\le s\le n_j$) denote by 
$$
\sigma_{j,\tot}=\sigma_{jn_j}\circ^v\dots \circ^v\sigma_{j0}
$$
Let $\min=\phi(0), \max=\phi(k)$,
denote
$$
\sigma_{\otimes\min}=\sigma_{\min,\tot}\circ^h\sigma_{\min-1,\tot}\circ^h\dots\circ^h\sigma_{1,\tot}\circ^h\sigma_{0,\tot}
$$
$$
\sigma_{\otimes (\max+1)}=\sigma_{\ell,\tot}\circ^h\sigma_{\ell-1,\tot}\circ^h\dots\circ^h\sigma_{\max+1,\tot}
$$

Finally, we have:
\begin{equation}\label{eqaexplfinal}
\Phi_*(\Psi)=G(\sigma_{\otimes (\max+1)})\circ^h \Big((G(A_L^\otimes))\circ^v \Psi(B^1\otimes_\k\dots\otimes_\k B^k)\circ^v (F(A_R^\otimes))\Big)\circ^h F(\sigma_{\otimes\min})
\end{equation}

Formula \eqref{eqaexplfinal} is an explicit expression for the definition of $A(C,D)(F,G)(\eta,\theta)$, in particular case when $C,D,F,G,\eta,\theta$ are strict. The general case differs by numerous insertions of structure 2-isomorphisms,which makes them more complicated and tedious, but can be written down in a similar way.

Note that it follows from the discussion in Sections \ref{sectionthetahat}, \ref{sectiondefa1}, \ref{sectiondefa2} that the prescription \eqref{eqaexplfinal} gives rise to a functor $\Theta_2\to C^\udot(\k)$.

\subsection{\sc An example: a shuffle permutation}\label{sectionshuffle}
Consider in more detail the action of an inner face map $D_{j,\sigma}$ (F2) (see Section \ref{facetheta}) on $A(C,D)(F,G)(\eta,\theta)$, corresponded to an $(m_j,m_{j+1})$-shuffle permutation $\sigma$. 

Let $t\in \Sigma_{\ell_j}$ be an $(m_j,m_{j+1})$-shuffle, $\ell_j=m_j+m_{j+1}$. Let $p^*\colon [\ell_j]\to [m_j]$ and $q^*\colon [\ell_j]\to [m_{j+1}]$ be the two maps Joyal dual to the natural embeddings $[m_j-1]\to [\ell_j-1]$ and $[m_{j+1}-1]\to [\ell_j-1]$ (see Section \ref{facetheta}(F2)). 
Let $T=([n]; [\ell_1],\dots,[\ell_n])$, $S=([n+1]; [\ell_1],\dots, [\ell_{j-1}], [m_j], [m_{j+1}],[\ell_{j+1}], \dots, [\ell_{n+1}])$. 
Consider the morphism $\Phi=D_{j,t}\colon T\to S$ in $\Theta_2$, corresponded to the shuffle $t$.

Then the morphism $D_{j,t}$ acts on $A(C,D)(F,G)(\eta,\theta)$ as follows.

Use notation
$\underline{f}_s$ for a chain of 2-morphisms $\{\sigma_{si}\colon f_{s,i-1}\to f_{si}\}$ in $C$:
$$
f_{s0}\xrightarrow{\sigma_{s1}}f_{s2}\xrightarrow{\sigma_{s2}}\dots\xrightarrow{\sigma_{s,m_s}}f_{s,m_s}
$$
For a cochain $\Psi\in A(C,D)(F,G)(\eta,\theta)_T$, one has:

\begin{equation}
({D_{j,t}}_*(\Psi))(\underline{f}_1,\dots, \underline{f}_{n+1})=
\Psi_T(\underline{f}_1,\dots,\underline{f}_{j-1}, \underline{g}_j, \underline{f}_{j+2},\dots,\underline{f}_{n+1})
\end{equation}
where $\underline{g}_j$ is the chain 
$$
 f_{j+1,0}\circ f_{j0}\xrightarrow{\hspace{3mm}\omega_1\hspace{3mm}}\dots\dots\xrightarrow{\omega_{m_j+m_{j+1}}} f_{j+1,m_{j+1}}\circ  f_{j,m_j}
$$
and 
\begin{equation}
\omega_i=\begin{cases}
\id \circ^h \sigma_{ja} & \text{if  }t^{-1}(i)=a, \ 0\le a\le m_j\\
\sigma_{j+1,b}\circ^h \id& \text{if  }t^{-1}(i)=b, \ m_j+1\le b\le m_j+m_{j+1}
\end{cases}
\end{equation}

\subsection{\sc Normalized vs non-normalized chain complexes of a 2-cellular object in $C^\udot(\k)$}
In Section \label{sectionlast}, in the proof Theorem \ref{infdefcat} we use that the $\Theta_2$-cochain complex of $A(C,D)(F,G)(\eta,\theta)$ is quasi-isomorphic to its {\it normalized} subcomplex $A(C,D)(F,G)(\eta,\theta)_\norm(C,D)(F,G)(\eta,\theta)$. The latter is, by definition, the sub-complex which consists of all cochains $\Psi$ which are equal to 0 if some of its 2-morphism arguments $\sigma_{i,j}$ is the identity morphism of a 1-morphism. 

In Proposition \ref{propnormtheta}, we prove that the complexes $A(C,D)(F,G)(\eta,\theta)$ and $A_\norm(C,D)(F,G)(\eta,\theta)$ are quasi-isomorphic. Theorem \ref{infdefcat} is a statement about cohomology. Therefore, due to Proposition \ref{propnormtheta}, one can assume in its proof that we work with the normalized complex.

Recall that for a simplicial object in an abelian category $\mathscr{A}$ its normalized Moore complex $N(X)$ is defined as the quotient-complex of the ordinary Moore complex $C(X)$ by the subcomplex $DC(X)$ spanned by elements of the form $s_iy$ (here $s_i$ stands for the simplicial version of the degeneracy morphisms  $\varepsilon_i\in \Delta$, see Section \ref{nord}).

Recall the following classical result, in a slightly more general version:
\begin{prop}\label{propml}
Let $X\colon \Delta^\opp\to C^\udot(\k)$ be a simplicial object in $C^\udot(\k)$. Then the total sum complex $\Tot^\oplus(C(X))$ of the Moore complex of $X$ is quasi-isomorphic to the total sum complex $\Tot^\oplus(N(X))$ of the normalized Moore complex.
\end{prop}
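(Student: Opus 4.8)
The plan is to reduce the statement to the classical normalization theorem in its strong (chain-homotopy) form, applied not to vector spaces but to the abelian category $\mathscr{A}=C^\udot(\k)$ itself. Recall that for a simplicial object $Y$ in \emph{any} abelian category $\mathscr{A}$ one has a direct sum decomposition $C(Y)=N(Y)\oplus DC(Y)$ of complexes in $\mathscr{A}$, where $DC(Y)$ is the degenerate subcomplex (the kernel of the projection $C(Y)\to N(Y)$), and moreover $DC(Y)$ is contractible: there is an explicit contracting homotopy $h$, assembled as a signed sum of composites of the simplicial degeneracy operators $s_i$, with $\partial h+h\partial=\id_{DC(Y)}$. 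In particular the quotient map $C(Y)\to N(Y)$ is a chain homotopy equivalence, natural in $Y$. I would simply quote this result (it is proved for general abelian $\mathscr{A}$, the only hypothesis being that $\mathscr{A}$ is abelian), and observe that $\mathscr{A}=C^\udot(\k)$ qualifies.

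Next I would unwind what this produces when $\mathscr{A}=C^\udot(\k)$. A simplicial object $X\colon\Delta^\opp\to C^\udot(\k)$ is precisely a bicomplex: each $X_n$ carries an internal differential $d_{\mathrm{int}}$, and the face and degeneracy maps $d_i,s_i$ are morphisms in $C^\udot(\k)$, hence commute with $d_{\mathrm{int}}$. Consequently the decomposition $C(X)=N(X)\oplus DC(X)$ and the contracting homotopy $h$, being built entirely from the operators $s_i$, consist of morphisms in $C^\udot(\k)$; in bicomplex language this means $h$ commutes with $d_{\mathrm{int}}$. Thus the homotopy equivalence $C(X)\simeq N(X)$ is an equivalence of bicomplexes, not merely of their simplicial pieces.

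Finally I would pass to total complexes. Since $\Tot^\oplus$ carries a direct sum of bicomplexes to the direct sum of their total complexes, we get $\Tot^\oplus(C(X))=\Tot^\oplus(N(X))\oplus\Tot^\oplus(DC(X))$, and it suffices to show $\Tot^\oplus(DC(X))$ is acyclic. For this I would promote $h$ to the total complex, with total differential $D=\partial+(-1)^p d_{\mathrm{int}}$ on the bidegree-$(p,q)$ summand (here $p$ is the simplicial and $q$ the internal degree). Taking $\tilde h=h$, a short computation gives $D\tilde h+\tilde h D=\partial h+h\partial+\big((-1)^{p+1}+(-1)^{p}\big)\,h\,d_{\mathrm{int}}=\id$, the internal cross-terms cancelling precisely because of the Koszul sign $(-1)^p$ together with $d_{\mathrm{int}}h=h\,d_{\mathrm{int}}$. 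Hence $\Tot^\oplus(DC(X))$ is contractible, and the projection $\Tot^\oplus(C(X))\to\Tot^\oplus(N(X))$ is a quasi-isomorphism. Because we exhibit an honest contraction rather than mere exactness of rows, there are no spectral-sequence convergence subtleties, and the argument is indifferent to whether $\Tot$ uses $\oplus$ or $\prod$.

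The only place requiring care is the sign bookkeeping in the last step, namely checking that the simplicial contracting homotopy lifts verbatim to the total complex; as the computation above shows this is immediate once one notes that $h$, being a morphism in $C^\udot(\k)$, commutes strictly with the internal differential. Conceptually all the content is supplied by the classical normalization theorem interpreted in the abelian category $C^\udot(\k)$.
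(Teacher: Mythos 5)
Your argument is correct and is essentially the same as the paper's: both invoke MacLane's normalization theorem (in its chain-homotopy form, valid in any abelian category) and observe that the homotopy, being built from the degeneracy operators, commutes with the internal differentials of the $X_n$ and therefore lifts to the total complexes. The only cosmetic difference is that you phrase the conclusion via the splitting $C(X)=N(X)\oplus DC(X)$ with $DC(X)$ contractible, whereas the paper phrases it via a section $g$ of the projection $\pi$ with $\pi g=\id$ and $g\pi$ homotopic to the identity; these are equivalent formulations of the same classical fact, and your explicit sign check on the total differential is a welcome addition.
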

\begin{proof}
The proof given in [ML1], Section VIII.6, can be easily adopted to this case. Indeed, MacLane constructs a map $g\colon C(Y)/DC(Y)\to C(Y)$, for $Y$ a simplicial object in an abelian category, such that $g$ is a ``quasi-inverse'' to the natural projection $\pi\colon C(Y)\to C(Y)/DC(Y)$ in the sense that $\pi\circ g=\id$, and $g\circ \pi$ is chain homotopic to the identity. 
The chain homotopy constructed in loc. cit. clearly commutes with ``inner'' differentials on $X_i$s. 
Consequently, if one defines $\pi^\prime \colon \Tot^{\oplus}(C(X))\to \Tot^\oplus(N(X))$ and $g^\prime\colon \Tot^\oplus(N(X))\to \Tot^\oplus(C(X))$ one still has $\pi^\prime g^\prime=\id$ and $g^\prime \pi^\prime$ chain homotopic to the identity. 
\end{proof}

The next step is to generalise Proposition \ref{propml} to the case of 2-cellular objects in $C^\udot(\k)$, that is, to the case of functors $X\colon \Theta_2^\opp\to C^\udot(\k)$. 

For $Y\colon \Theta_2^\opp\to \Vect(\k)$, its chain complex is defined as the complex $C(Y)$, with 
$$
C_{-\ell}(Y)=\oplus_{T, \dim T=\ell} Y_T
$$
with the differential dual to \eqref{totd}, and its normalized complex is defined as the quotient-complex of $C(Y)$ by the subcomplex $DC(Y)$ generated by the elements $\varepsilon_p^j(y)$ of type (D1) (see Section \ref{facetheta}), $y\in Y_D$:
$$
N(Y)=C(Y)/DC(Y)
$$
That is, we use only ``vertical'' degeneracy morphisms of type (D1), {\it not} ``horizontal'' degeneracy morphisms of type (D2), in the definition of $DC(Y)$. 

For the case of a functor $X\colon \Theta_2^\opp\to C^\udot(\k)$ as above, $C(X), DC(X), N(X)$ are defined as $\Tot^\oplus(C(X)), \Tot^\oplus(DC(X)), \Tot^\oplus(N(X))$, correspondingly. 

\begin{prop}\label{propnormtheta}
Let $X\colon \Theta_2^\opp\to C^\udot(\k)$ be a 2-cellular complex. Then the natural projection 
$\pi\colon \Tot^\oplus(C(X))\to \Tot^\oplus(N(X))$ is a quasi-isomorphism of complexes.
\end{prop}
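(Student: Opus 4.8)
The plan is to reduce the statement to the classical one-direction normalization result, Proposition \ref{propml}, by exploiting the factorization of the $\Theta_2$-structure into an \emph{outer} $\Delta$-direction (the one coming from $p\colon\Theta_2\to\Delta$) and an \emph{inner} multisimplicial direction, and to normalize only the inner one. First I would exhibit $\Tot^\oplus(C(X))$ as the total complex of a bicomplex whose horizontal differential $d_0$ is the alternating sum of the outer face maps (F2), (F4) and whose vertical differential $d_1$ is the alternating sum of the inner face maps (F1), (F3); the relation $d_0d_1+d_1d_0=0$ is exactly the one already used in Section \ref{sectionthetares}. Equivalently, by Proposition \ref{prop!}, $\Tot^\oplus(C(X))$ is the total Moore complex of the simplicial object $Rp_*(X)_\bullet\colon\Delta^\opp\to C^\udot(\k)$, whose $[n]$-th term $Rp_*(X)[n]$ is the inner totalization over the fiber $p^{-1}([n])$. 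Since $DC(X)$ is generated by the images of the inner degeneracies $\varepsilon_p^j$ of type (D1)---which act fiberwise (they fix the outer ordinal) and which, by the commutation relations of Appendix \ref{relationstheta}, anticommute with $d_0$ up to the bicomplex sign---the subspace $DC(X)$ is a sub-bicomplex. Hence $N(X)$ is a quotient bicomplex and $\pi$ is the map induced on total complexes by the objectwise projections $\pi_n\colon Rp_*(X)[n]\to N(Rp_*(X)[n])$.

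Next I would prove the fiberwise statement that each $\pi_n$ is a quasi-isomorphism. With the outer degree $n$ fixed, the fiber $p^{-1}([n])$ is the product of $n$ copies of $\Delta$ acting independently on the columns $[n_1],\dots,[n_n]$ of $([n];[n_1],\dots,[n_n])$; thus $Rp_*(X)[n]$ is the total complex of an $n$-fold multisimplicial object in $C^\udot(\k)$, and $N(Rp_*(X)[n])$ is obtained by normalizing all $n$ inner directions. Normalizing one inner direction at a time and applying Proposition \ref{propml}---whose explicit MacLane homotopy commutes with the differentials in the remaining inner directions as well as with the internal differentials of the $X_T$---shows each $\pi_n$ is a quasi-isomorphism, so its kernel $DRp_*(X)[n]$ is acyclic.

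Finally I would assemble these local facts into the global statement: $DC(X)=\Tot^\oplus$ of the simplicial object $DRp_*(X)_\bullet$, all of whose terms are acyclic, and I must deduce that $DC(X)$ is itself acyclic (equivalently, that $\pi$ is a quasi-isomorphism). The cleanest route, which also sidesteps convergence issues of a spectral-sequence argument for possibly unbounded $X_T$, is homotopy-theoretic: the inner-normalization homotopies of the previous step are built from the operators $\varepsilon_p^j$, which are morphisms in $\Theta_2$ and hence commute with the internal differentials by functoriality, and which commute with the outer faces (up to the bicomplex sign) by Appendix \ref{relationstheta}. Granting this compatibility, they assemble into a contracting homotopy $H$ together with $g$ satisfying $\pi g=\id$ and $\id-g\pi=dH+Hd$ on $\Tot^\oplus(C(X))$, exhibiting $\pi$ as a homotopy equivalence and in particular a quasi-isomorphism.

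I expect the last compatibility---checking that the inner homotopy genuinely commutes with the outer differential $d_0$ across fibers whose number of columns varies---to be the main obstacle, and it is precisely where the relations listed in Appendix \ref{relationstheta} are needed. A lighter alternative, if one is content to assume the $X_T$ are bounded below in each internal degree, is to run the filtration by outer degree (which is nonnegative) and compare $E_1$-pages via the fiberwise quasi-isomorphisms $\pi_n$; but the homotopy argument above is preferable since it applies without boundedness hypotheses.
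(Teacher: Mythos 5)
Your fiberwise step is sound and coincides with the key lemma inside the paper's own proof: after passing to the associated graded of the filtration by the number of columns (equivalently, fixing the outer simplicial degree $n$), the contributions of the face maps of types (F2) and (F4) vanish, only the vertical faces (F1), (F3) survive, and there MacLane's argument (or an iterated application of Proposition \ref{propml} in each inner direction) gives the quasi-isomorphism. The gap is in your assembly step. You assert that the degeneracies $\varepsilon_p^j$ ``commute with the outer faces up to the bicomplex sign'' and then grant that the fiberwise homotopies glue to a global contracting homotopy on $DC(X)$; this is precisely the point that fails, and it is the obstruction the paper flags at the very start of its proof. The relations of Appendix \ref{relationstheta} show that commuting a deshuffling face $D_{q,\sigma}$ past a degeneracy $\varepsilon_p^i$ produces a degeneracy $\varepsilon_{q'}^{i'}$ whose index $i'$ depends on the shuffle and may be strictly larger than $i$. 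Hence the MacLane filtration $D_0C(X)\subset D_1C(X)\subset\cdots$ by degeneracy index is \emph{not} a filtration by subcomplexes of $C(X)$, and the homotopies $h_i$ --- each of which is built from one specific degeneracy index and is designed to lower that index by one --- do not intertwine with $d_0$, even up to sign. Your proposal does not get around this; it assumes it away. (A homological-perturbation argument, exploiting that $d_0$ strictly lowers the column number so that the perturbation is locally nilpotent for the bounded-below exhaustive filtration, could in principle repair the homotopy route, but that requires verifying the side conditions of the perturbation lemma and is not what you wrote.)

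Ironically, the ``lighter alternative'' you set aside is the paper's actual proof, and it requires no boundedness hypothesis on $X$. The filtration $F_0\subset F_1\subset\cdots$ of $C(X)$ by the number of columns is exhaustive and bounded below, its $E_0$-page is concentrated in a half-plane with horizontal differential $d_0$, and the classical convergence theorem for bounded-below exhaustive filtrations applies irrespective of whether the internal complexes $X_T$ are bounded; completeness issues only arise in the dual, cocellular situation of Theorem \ref{theornorm}, where the paper invokes the complete convergence theorem instead. With the fiberwise quasi-isomorphism on the $E_0$-page (your step two), the comparison of the two spectral sequences finishes the proof. So the correct repair of your argument is simply to promote your fallback to the main line and drop the global homotopy.
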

\begin{proof}
One can not follow directly the same line as in the proof of [ML1], Ch. VIII, Th.6.1, by the following reason. The subspaces $D_iC(X)$, $i\ge 0$ (or rather their direct analogues) are {\it not} subcomplexes of $C(X)$, because the components $D_{j,\sigma}$ of type (F2) (see Section \ref{facetheta}) in the differential \eqref{totd} may {\it increase} $i$. Indeed, these components act as ``deshuffling'' of two neighbour columns, resulting in a column of a greater length, so this operation may send $\varepsilon_p^iy$
to $\varepsilon_q^{i^\prime}(y^\prime)$ with $i^\prime>i$ (here $q=p$ or $p-1$). 

To overcome this obstacle, we employ the following spectral sequence argument.

Denote by $F_N\subset C(X)$ the subspace spanned by $X_T$, $T=([n];[\ell_1],\dots,[\ell_n])$ with $n\le N$. Then $F_N$ is a subcomplex: the boundary operators of type (F1) and (F3) preserve $n$, and the boundary operators of types (F2) and (F4) decrease $n$ by 1, see Section \eqref{facetheta}. 

We get an exhausting ascending filtration of $C(X)$ by subcomplexes:
$$
F_0\subset F_1\subset F_2\subset \dots
$$

A similar filtration exists for $N(X)$ as well, denote the corresponding subspaces by $F_N^\prime$. The natural projection $\pi\colon C(X)\to N(X)$ sends $F_N$ to $F_N^\prime$, hence $\pi$ induces a map of the corresponding spectral sequences. Denote these spectral sequences by $\{E_n^{pq}\}$ and $\{E_n^{\prime pq}\}$, so that $\pi$ induces a map $\pi_*\colon (E_n^{pq}, d_n)\to (E_n^{\prime pq}, d_n^\prime)$. 

The spectral sequences at the term $E_0$ (resp., $E_0^\prime$) are non-zero at the lower half plane $y\le 0$, the differential $d_0$ is horizontal. So the spectral sequences converge by dimensional reasons. 

\begin{lemma}
The map $\pi_*\colon (E_0^{\udot,\ell}, d_0)\to (E_0^{\prime \udot,\ell}, d_0^\prime)$ is a quasi-isomorphism, for any $\ell\le 0$. In particular, $\pi_*$ defines an isomorphism $\pi_*\colon E_1^{pq}\to E_1^{\prime pq}$, for all $p,q$. 
\end{lemma}
\begin{proof}
For any fixed $\ell$, the complex $(E_0^{\udot,\ell}, d_0)$ is $C^{(\ell)}(X)$, whose degree $-n$ component is equal to the direct sum $\oplus_T X_T$ over $T=([\ell]; [n_1],\dots,[n_\ell])$ with $\dim T=n$, and with the differential components given only by (F1) and (F3) types, see \eqref{facetheta}. That is, the contribution of types (F2) and (F4) components in \eqref{totd} becomes 0 in the associated graded complex $C^{(\ell}(X)=F_{\ell}/F_{\ell-1}$. The complex $(E_0^{\prime \udot,\ell}, d_0^\prime)$ has a similar description.

It makes us possible to employ the construction of the proof of  [ML1], Ch.VIII, Th.6.1. Namely, we define {\it subcomplexes} $D_iC^{(\ell)}(X)$, for any $i\ge 0$, such that $D_{i+1} C^{(\ell)}(X)\supset D_i C^{(\ell)}(X)$ and $DC^{(\ell)}(X)=\cup_{i\ge 0}D_iC^{(\ell)}(X)$. As in loc.cit., we construct a map $h_i\colon C^{(\ell)}(X)\to C^{(\ell)}(X)$ chain homotopic to $\id$ and mapping $D_i$ to $D_{i-1}$. The composition of these maps is well-defined, is chain homotopic to $\id$, and sends $DC^{(\ell)}(X)$ to 0. It gives a map $g\colon N^{(\ell)}(X)\to C^{(\ell)}(X)$ such that $\pi_* g=\id$ and $g\pi_*$ is chain homotopic to $\id$, which completes the proof. 

\end{proof}

It follows from this Lemma that $\pi_*$ defines an isomorphism at $E_\infty$ sheet, hence $\pi$ is a quasi-isomorphism.

\end{proof}

We can prove 
\begin{theorem}\label{theornorm}
The natural embedding
$$
i\colon A_\norm(C,D)(F,G)(\eta,\theta)\to A(C,D)(F,G)(\eta,\theta)
$$
is a quasi-isomorphism of complexes. 
\end{theorem}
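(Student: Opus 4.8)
The plan is to deduce Theorem~\ref{theornorm} from a ``cochain dual'' of Proposition~\ref{propnormtheta}, running the same spectral sequence argument with all arrows reversed. First I would unwind the definitions to recognize $A_\norm(C,D)(F,G)(\eta,\theta)$ as the \emph{normalized subcomplex} of the cocellular object $A^\Theta(C,D)(F,G)(\eta,\theta)\colon\Theta_2\to C^\udot(\k)$ with respect to the vertical codegeneracies of type (D1). Concretely, for a codegeneracy $\varepsilon_p^j$ of type (D1) the induced operator $(\varepsilon_p^j)_*$ on cochains acts, by the explicit formula \eqref{eqaexplfinal}, by inserting an identity $2$-morphism in the $(p,j)$-slot; hence a cochain $\Psi$ lies in $\bigcap_{p,j}\ker\big((\varepsilon_p^j)_*\big)$ if and only if $\Psi$ vanishes whenever one of its $2$-morphism arguments $\sigma_{i,j}$ is an identity. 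This identifies $A_\norm=\bigcap_{p,j}\ker((\varepsilon_p^j)_*)$ with the standard normalized subcomplex, and in particular reconfirms (via the cosimplicial identities) that it is a subcomplex.

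Next I would run the filtration argument of Proposition~\ref{propnormtheta} in the dual direction. Let $k(T)$ denote the number of level $1$ vertices of $T$, and set $G^{\ge N}=\bigoplus_{k(T)\ge N}A^\Theta_T$. Since the coface maps of types (F1),(F3) preserve $k$ while those of types (F2),(F4) raise $k$ by one (see \eqref{totd}), the subspaces $G^{\ge N}$ form a decreasing filtration of $A$ by subcomplexes, bounded in each total degree because $k(T)\le\dim T$; the inclusion $i$ is filtered and the associated spectral sequences converge. On the associated graded at $k=N$ the contributions of (F2),(F4) vanish, so the differential reduces to the internal Hom-differential together with the alternating sum of the vertical cofaces (F1),(F3), one cosimplicial family per column; thus $\mathrm{gr}^N A$ is the total complex of an $N$-fold multicosimplicial object in $C^\udot(\k)$, and $\mathrm{gr}^N A_\norm$ is its multi-normalized subcomplex.

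It then remains to check that the inclusion $\mathrm{gr}^N A_\norm\hookrightarrow \mathrm{gr}^N A$ is a quasi-isomorphism, which is exactly the $E_1$-comparison. This I would obtain by iterating, once for each of the $N$ columns, the cochain dual of Proposition~\ref{propml}: for a cosimplicial object in the abelian category of complexes the inclusion of the normalized subcomplex $\bigcap_i\ker(s^i)$ into the full cochain complex is a quasi-isomorphism, with an explicit retraction and contracting homotopy obtained from MacLane's construction by reversing arrows. As in the proof of Proposition~\ref{propml}, these homotopies commute with the internal Hom-differential and with normalization in the remaining columns, so they assemble into a homotopy equivalence on $\mathrm{gr}^N$. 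An isomorphism on $E_1$ forces an isomorphism on $E_\infty$, whence $i$ is a quasi-isomorphism.

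The main obstacle is precisely this $E_1$-comparison on the associated graded, that is, the iterated normalization in the several independent vertical directions. As already flagged in Proposition~\ref{propnormtheta}, the degenerate part is \emph{not} a subcomplex of $A$ (the shuffle cofaces (F2) move between columns of different lengths), so one cannot split off an acyclic complement directly; the filtration by $k(T)$ is exactly what confines the normalization to the well-behaved vertical cofaces, and the only genuine bookkeeping is to verify that the reversed MacLane homotopies remain compatible with the internal differentials and with normalization in the other columns. Since this is the precise dual of the situation already settled in Proposition~\ref{propnormtheta}, I expect no new difficulty beyond careful sign and variance tracking.
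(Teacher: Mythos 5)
Your argument is essentially the paper's own proof: the same decreasing filtration by the number of level-$1$ vertices $k(T)$ (the paper's $\Phi^p$ is your $G^{\ge p+1}$), the same observation that on the associated graded the shuffle cofaces (F2), (F4) vanish so that only the columnwise vertical cosimplicial directions survive, and the same columnwise MacLane normalization to get the $E_1$-comparison. The only divergence is the convergence justification: the paper invokes the complete convergence and comparison theorems [W, Th.\ 5.5.10--5.5.11] rather than boundedness of the filtration in each total degree, which is the safer route, since for unbounded hom-complexes a fixed total degree receives contributions from $T$ with arbitrarily large $k(T)$ and your boundedness claim only holds when the hom-complexes are bounded below (e.g.\ concentrated in degree $0$, as in the deformation-theoretic application).
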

\begin{proof}
One can apply the arguments ``dual'' to the ones provided above, for the case of a 2-cocellular complex. 
Define $\Phi^p\subset A(C,D)(F,G)(\eta,\theta)=A$ be the subcomplex formed by cochains which vanish on $F_p$ defined above. Then $\{\Phi^p\}$ form a descending filtration of $A$. This filtration is complete in the sense that 
$$
A=\underset{\leftarrow}{\lim}A/\Phi^p A
$$
The term $E_0$ lives in $x\ge 0$ half of the plane, thus it is bounded below. It follows from the complete convergence theorem [W], Th.5.5.10 (in its version when $d$ has degree +1) that the sequence converges. 
A similar convergent spectral sequence exists on $A_\norm$, given by the filtration $\Phi^{p^\prime}A_\norm =(\Phi^p A)\cap A_\norm$. The argument given in the 2-cellular case gives a quasi-isomorphism of complexes $\Phi^{p^\prime}A_\norm/\Phi^{(p+1)\prime}A_\norm\to \Phi^p A/\Phi^{p+1}A$ induced by $i$. Then the result follows from  follows from the complete comparison theorem [W], Th. 5.5.11.
\end{proof}

\section{\sc The $p$-relative totalization $Rp_*(A(C,D)(F,G)(\eta,\theta))$ and higher structures via Davydov-Batanin}
We know from Propositions \ref{propdeltaaction} and \ref{prop!} that the relative totalization of the cosimplicial vector space 
 $Rp_*(A(C,D)(F,G)(\eta,\theta))$ is a cosimplicial vector space, 
and its $\Delta$-Moore complex is equal to the (absolute) $\Theta_2$-totalization of 
such that its non-normalized  Moore complex is isomorphic to the (absolute) $\Theta_2$-totalization of $A(C,D)(F,G)(\eta,\theta)$.

In this section we apply some results of [BD] for studying the higher structures on the complexes $C^\udot(C,D)(F,G)(\eta,\theta)$.  We show that $(R p_*)(X)$ enjoys, for the case $X=A(C,D)(F,F)(\id,\id)$, the property of being a {\it 1-commutative} cosimplicial monoid, in the sense of [BD]. Consequently, $C^\udot(C,D)(F,F)(\id,\id)$ is a homotopy 2-algebra, for any $\k$-linear strict 2-functor $F\colon C\to D$. 

At the same time, for the case $X=A(C,C)(\Id,\Id)(\id,\id)$, the cosimplicial monoid $(R p_*)(X)$ is {\it not} 2-commutative (unlike the case of the Davydov-Yetter complex)). At the moment, we don't know the correct homotopy refinement of $2$-commutativity, which would imply that $C^\udot(C,D)(\Id,\Id)(\id,\id)$ is a homotopy 3-algebra. 

\comment
Proposition \ref{propnormtheta} we prove that for $A(C,D)(F,G)(\eta,\theta)$ the non-normalized Moore complex for $\Theta_2$ is quasi-isomorphic to its normalized subcomplex, by a refinement of the classical argument for $\Delta$. The normalized complex is the one we use in Section 4 in the proof Theorem \ref{infdefcat}, and Proposition \ref{propnormtheta} guarantees that both normalized and non-normalized complexes have isomorphic cohomology.
\endcomment

\subsection{\sc The totalization $\Tot_{\Theta_2}A(C,D)(F,F)(\id,\id)$ is a homotopy 2-algebra}\label{sectionn=2}
Recall a {\it cosimplicial monoid} $X$ (in a symmetric monoidal category $\mathscr{C}$) is a cosimplicial object in the category of monoids $\Mon(\mathscr{C})$. The question raised in [BD] is the following: 

{\it Which condition on $X$ implies that the totalization $\Tot(X)$ admits an action of an operad (homotopy equivalent to) $E_n$}?

It follows immediately that the condition that $X$ is a cosimplicial monoid implies that $X^\udot$ is a monoid with respect to the Batanin $\square$-product [Ba2]. Thus, it follows from loc.cit. that for a cosimplicial monoid $X^\udot$, the totalization $\Tot(X)$ is an $A_\infty$ monoid, that is, a $E_1$-algebra. 

In [BD, Section 2.2], the following definition is given:
\begin{defn}\label{linkingdef1}
{\rm
Let $\tau\colon [p]\to[m]$ and $\pi\colon [q]\to [m]$ be two maps in $\Delta$. A {\it shuffling of lenth $n$} of $\tau,\pi$ is a decomposition of the images of $\tau$ and $\pi$ into disjoint union of {\it connected intervals}
\begin{equation}
\begin{aligned}
\ &\mathrm{Im}(\tau)=A_1\cup A_2\cup\dots \cup A_s,\ \ \ A_1<A_2<\dots<A_s\\
&\mathrm{Im}(\pi)=B_1\cup B_2\cup\dots\cup B_t,\ \ \ B_1<B_2<\dots<B_t\\
&s+t=n+1
\end{aligned}
\end{equation}
which satisfy either
\begin{equation}
A_1\le B_1\le A_2\le B_2\le\dots
\end{equation}
or
\begin{equation}
B_1\le A_1\le B_2\le A_2\le \dots
\end{equation}
(that is, the rightmost end-point of $A_i$ may coincide with the leftmost end-point of the sequel $B$). \\
The linking number $\mathbf{lk}(\tau,\pi)$ is defined as $n$ is the minimal possible shuffling of $\tau,\pi$ has length $n$.
}
\end{defn}
See [BD, Section 2.2], for examples.

\begin{defn}\label{linkingdef2}
{\rm
Let $X$ be a cosimplicial monoid, $n\ge 0$. $X$ is called {\it $n$-commutative} if for any $\tau\colon [p]\to[m]$, $\pi\colon [q]\to [m]$ in $\Delta$ with $\mathbf{lk}(\tau,\pi)\le n$, the diagram below commutes:
\begin{equation}
\xymatrix{
X(p)\otimes X(q)\ar[rrrr]^{X(\tau)\otimes X(\pi)}\ar[d]&&&&X(m)\otimes X(m)\ar[d]^{\mu}\\
X(q)\otimes X(p)\ar[rr]^{X(\pi)\otimes X(\tau)}&&X(m)\otimes X(m)\ar[rr]^{\mu}&&X(m)
}
\end{equation}
}
\end{defn}

The following result is proven in [BD, Th. 2.45, Cor. 2.46]:
\begin{theorem}\label{theorembd}
Let $X$ be an $n$-commutative cosimplicial monoid in $C(\k)$. Then there is an action of the operad homotopy equivalent to $C_\ldot(E_{n+1},\k)$ on the totalization $\Tot(X)\in C(\k)$. 
\end{theorem}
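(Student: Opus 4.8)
This statement is quoted from [BD, Th.~2.45, Cor.~2.46], so in the present paper it requires only the citation; nonetheless, here is the line of argument I would follow to recover it. The plan is to equip $\Tot(X)$ with an action of an explicit operad $\mathcal{O}$ built from the combinatorics of $\Delta$, and then to identify the homotopy type of $\mathcal{O}$ with that of $C_\ldot(E_{n+1};\k)$. I would begin from the box product $\square$ on the category $\Fun(\Delta,C(\k))$ of cosimplicial complexes (the Day-type convolution along the ordinal sum $[p],[q]\mapsto[p]\oplus[q]$) together with the classical fact that $\Tot$ is lax monoidal from $(\Fun(\Delta,C(\k)),\square)$ to $(C(\k),\otimes)$. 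A cosimplicial monoid $X$, being a monoid for the pointwise tensor product, acquires a canonical $\square$-monoid structure from its multiplication and its cofaces; applying the lax monoidal $\Tot$ to this $\square$-monoid already produces the $A_\infty=E_1$-algebra structure on $\Tot(X)$ recalled before the statement. This is the base case $n=0$, where Definition \ref{linkingdef2} imposes no constraint.

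Next I would generate the higher operations. Operations on $\Tot(X)$ that are natural in the cosimplicial monoid are indexed by interleavings of the simplices being multiplied, which is exactly the data of a shuffling in Definition \ref{linkingdef1}: a pair $\tau\colon[p]\to[m]$, $\pi\colon[q]\to[m]$ together with an interleaving of their images into connected intervals yields a multilinear operation $\Tot(X)^{\otimes 2}\to\Tot(X)$, and iterating over several arguments assembles all such patterns into an operad $\mathcal{O}$ acting on $\Tot(X)$, with signs dictated by the coface and codegeneracy relations. These are the cosimplicial counterparts of the Gerstenhaber $\smile_i$-products, and the cosimplicial identities impose on them the Hirsch and higher-homotopy relations. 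The linking number $\lk(\tau,\pi)$ of Definition \ref{linkingdef1} is precisely the complexity grading of such an interleaving, so $\mathcal{O}$ is filtered by it.

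I would then invoke the $n$-commutativity hypothesis. By Definition \ref{linkingdef2}, every operation coming from an interleaving of complexity $\le n$ is forced to be (graded) commutative, so the corresponding components of $\mathcal{O}$ become symmetric. Read in the language of the present paper, these operations organize into a globular $(n+1)$-operad, and $n$-commutativity is exactly what makes this operad contractible, pruned, reduced and $n$-terminal. By the mechanism of [Ba3] recalled in the Introduction (applied with $n$ replaced by $n+1$), the symmetrisation functor followed by a cofibrant replacement turns an action of such an $(n+1)$-operad into an action of $C_\ldot(E_{n+1};\k)$; transporting the action along this weak equivalence gives the desired structure on $\Tot(X)$.

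The main obstacle is to prove that the linking-number-$\le n$ condition matches the complexity filtration that produces exactly $E_{n+1}$, and to verify the full coherence of the construction: the signs, the associativity of the interleaving operations, and the compatibility of the higher homotopies, so that $\mathcal{O}$ is a genuine contractible $n$-terminal $(n+1)$-operad rather than a mere graded collection of operations. Once contractibility and $n$-terminality are established, the identification of the symmetrisation with $E_{n+1}$ is a black-box application of [Ba3]; verifying the hypotheses of that black box is where essentially all of the combinatorial work lies.
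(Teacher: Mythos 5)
The paper itself gives no proof of this statement: it is imported verbatim from [BD, Th.~2.45, Cor.~2.46], so your opening observation that only the citation is required is exactly right. Your supplementary sketch follows the actual strategy of [BD] reasonably closely (the $\square$-monoid/$E_1$ base case, the operad of shuffling operations filtered by linking number, and Batanin's symmetrisation machinery), though the assertion that $n$-commutativity is what makes the resulting $(n+1)$-operad \emph{contractible} is not quite how the argument runs --- contractibility is a property of the combinatorial operad itself, and $n$-commutativity is instead what forces the action on $\Tot(X)$ to factor through the complexity-$\le n$ filtration stage, which is then identified with an $E_{n+1}$-operad.
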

In [BD], some explicit formulas for the degree $-n$ Lie bracket are provided, see [BD, Sections 2.9, 2.10]. 

\vspace{0.5cm}

W easily prove:
\begin{prop}\label{propneq2}
Let $C,D$ be $\k$-linear bicategories, $F\colon C\to D$ a strong bicategorical functor. Then the cosimplicial vector space $Rp_*(A(C,D)(F,F)(\id,\id))$ is a 1-commutative cosimplicial monoid.
\end{prop}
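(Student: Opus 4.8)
The plan is to exhibit the monoid structure explicitly and then deduce $1$-commutativity from the combinatorial meaning of the hypothesis $\lk(\tau,\pi)\le 1$. First I would make explicit the multiplication on each $Rp_*(X)[m]$, where $X=A(C,D)(F,F)(\id,\id)$. Writing a cochain of $Rp_*(X)[m]$ as a family $\Psi_T$ indexed by $T=([m];[T_1],\dots,[T_m])$ with $p(T)=[m]$, the product $\mu(\Psi\otimes\Psi')$ is the ``columnwise vertical cup product'': on the tree $T''=([m];[T_1+T'_1],\dots,[T_m+T'_m])$, and on an input whose $i$-th column carries a chain of $T_i+T'_i$ composable $2$-morphisms, one splits that chain into its bottom $T_i$ and its top $T'_i$ arrows, feeds the bottom parts to $\Psi$ and the top parts to $\Psi'$, and composes the two resulting $2$-morphisms of $D$ with $\circ^v$. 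The middle $1$-morphism $F(f_{\otimes\mathrm{mid}})$ matches by construction, so the composite lands in $D(F(f_{\otimes 0}),F(f_{\otimes\mathrm{top}}))$. I would then check associativity (from associativity of $\circ^v$ in $D$), that the cochain with all $T_i=0$ of value $\id_{F(f_{\otimes 0})}$ is a two-sided unit, and that $\mu$ preserves the subspace cut out by \eqref{relmain11}--\eqref{relmain33}, since those relations are imposed columnwise and survive splitting a chain and composing outputs vertically.

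Next I would verify that $Rp_*(X)$ is a cosimplicial monoid, i.e.\ that the operators $\Omega^i_\Delta$ and $\Upsilon^j_\Delta$ of Proposition \ref{propdeltaactionbis} are homomorphisms for $\mu$. Both act in the ``horizontal'' $\Delta$-direction --- $\Upsilon^j_\Delta$ deletes a column of vertical degree $0$, while $\Omega^i_\Delta$ is a signed sum of the column-splitting maps $D_{i,\sigma}$ together with $D_{\min},D_{\max}$ --- whereas $\mu$ acts in the ``vertical'' direction, so that their commutation is precisely the interchange law between $\circ^h$ and $\circ^v$. This holds strictly in a $2$-category and, in the general bicategorical case, after inserting the structure isomorphisms, the resulting ambiguity being annihilated by the coherence Theorem \ref{theorcoherence}. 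For $\Omega^i_\Delta$ one additionally uses that the deshuffling sum over $(k,k')$-shuffles of a column of length $T_i+T'_i$ factors, after the vertical split, through the product of shuffle sums on the two blocks of lengths $T_i$ and $T'_i$; this is the standard compatibility of shuffles with vertical concatenation, for which the explicit formula of Section \ref{sectionshuffle} is used.

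The heart is $1$-commutativity. Fix $\tau\colon[p]\to[m]$, $\pi\colon[q]\to[m]$ with $\lk(\tau,\pi)\le 1$; by Definition \ref{linkingdef1} this forces $\mathrm{Im}(\tau)$ and $\mathrm{Im}(\pi)$ to be single connected intervals of $[m]$ that are comparable, say $\max\mathrm{Im}(\tau)\le\min\mathrm{Im}(\pi)$ (sharing at most one vertex). I would first establish the key support statement: for any $a$, the element $X(\tau)(a)$ has vertical degree $0$ in every column outside $(\min\mathrm{Im}(\tau),\max\mathrm{Im}(\tau)]$, because the elementary operators building $X(\tau)$ insert only empty columns there ($D_{\min},D_{\max}$, which on cochains amount to whiskering by an identity $1$-morphism), and a column of vertical degree $0$ carries only an identity $2$-morphism. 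The same holds for $X(\pi)(b)$ on $(\min\mathrm{Im}(\pi),\max\mathrm{Im}(\pi)]$. Since $\max\mathrm{Im}(\tau)\le\min\mathrm{Im}(\pi)$, these two column-intervals are disjoint, so in every column at most one of $X(\tau)(a)$, $X(\pi)(b)$ has positive vertical degree; hence the vertical composition computing $\mu$ involves an identity on one side in each column and is independent of the order of the factors. Thus $\mu(X(\tau)(a)\otimes X(\pi)(b))=\mu(X(\pi)(b)\otimes X(\tau)(a))$, which is exactly the diagram of Definition \ref{linkingdef2}.

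The main obstacle is the support statement together with the possibly shared boundary vertex: one must check that at such a junction the two outputs are combined only by the fixed horizontal composition $\circ^h$ (spatial juxtaposition), so no vertical reordering is ever required, and that in the non-strict case the inserted structure $2$-isomorphisms match up by coherence. This is also precisely the point that breaks at linking number $2$: the minimal witness, two equal single-column faces $\tau=\pi\colon[1]\to[m]$ with $\mathrm{Im}(\tau)=\mathrm{Im}(\pi)$, has $\lk=2$ and occupies a single common column, where the two orders of $\mu$ differ by the genuinely noncommutative $a\circ^v b$ versus $b\circ^v a$; this accounts for the failure of $2$-commutativity recorded in the introduction. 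Finally, via the transitivity $\Tot_\Delta\circ Rp_*=\Tot_{\Theta_2}$ of Proposition \ref{prop!}, the $1$-commutativity just proven is exactly what is needed to invoke [BD, Cor.\ 2.46].
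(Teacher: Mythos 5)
Your proposal is correct and follows essentially the same route as the paper: the monoid product is the columnwise vertical composition, and $1$-commutativity comes down to the interchange (Eckmann--Hilton) identity $(\alpha\circ^h\id)\circ^v(\id\circ^h\beta)=\alpha\circ^h\beta=(\id\circ^h\beta)\circ^v(\alpha\circ^h\id)$, valid because linking number $\le 1$ forces the two cochains to be whiskerings by identities on complementary blocks of columns, with coherence and the relations \eqref{relmain11}--\eqref{relmain33} absorbing the structure isomorphisms in the non-strict case. The only cosmetic difference is that the paper first reduces to the standard pair $\tau_{m,n},\pi_{m,n}$ via the epi-mono factorisation lemma of [BD] and then computes, whereas you argue directly for arbitrary $\tau,\pi$ with $\lk\le 1$ through a support statement on columns; both amount to the same computation.
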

\begin{proof}
Let $\tau_{m,n}\colon [n]\to [m+n]$ and $\pi_{m,n}\colon [m]\to [m+n]$ are defined as $\tau_{m,n}(i)=i$ and $\pi_{m,n}(j)=n+j$. It is clear that $\lk(\tau_{m,n},\pi_{m,n})=1$. Moreover, the general case of linking number 1 is reduced to this particular case, due to the following simple observation ([BD, Lemma 2.1]):\\
Let $\tau\colon [p]\to [m], \pi\colon [q]\to [m]$ be morphisms in $\Delta$, and
$$
[p]\to [p^\prime] \xrightarrow{\tau^\prime}[m],\ \ \ [q]\to[q^\prime]\xrightarrow{\pi^\prime}[m]
$$
be their epi-mono factorisations. Then $\lk(\tau,\pi)=\lk(\tau^\prime,\pi^\prime)$.\\
\\
We check the 1-commutativity of $Rp_*(A(F,F))$. We firstly assume that $C$ is strict and $F$ is strict. Let $\Phi\in Rp_*(A(C,D)(F,F)(\id,\id))^n$, $\Psi\in Rp_*(A(C,D)(F,F)(\id,\id))^m$ be represented by cochains 
$\Phi\in A(C,D)(F,F)(\id,\id)_D$,\\ $\Psi\in A(C,D)(F,F)(\id,\id)_{D^\prime}$, with $p(D)=[n], p(D^\prime)=[m]$. Assume $D=([n]; [k_1],\dots,[k_n])$ and $D^\prime=([m]; [\ell_1],\dots, [\ell_m])$. Then $\tau_{m,n}(\Phi)$ takes a non-zero value on the object $\hat{D}=([n+m]; [k_1],\dots,[k_n],[0],\dots,[0])$, and is equal to
$$
\tau_{m,n}(\Phi)(-,X_{n+1},\dots,X_{m+n})=\Phi(-)\circ^h (\id_{F(X_{n+1}\circ\dots \circ X_{n+m})})
$$
Analogously, $\pi_{m,n}(\Psi)$ takes a non-zero value on $\hat{D^\prime}=([m+n]; [0],\dots,[0], [\ell_1],\dots,[\ell_m])$, and
$$
\pi_{m,n}(\Psi)(Y_1,\dots,Y_m,-)=\id_{F(Y_1\circ\dots\circ Y_m)}\circ^h \Psi(-)
$$
Finally, for their product in the monoid $Rp_*(A(C,D)(F,F)(\id,\id))^{m+n}$, one has
\begin{equation}\label{linking1}
\begin{aligned}
\ &\tau_{m,n}(\Phi)* \pi_{m,n}(\Psi)(T_1,\dots,T_{m+n})=\\
&\Big(\Phi(T_1,\dots, T_n)\circ^h \id_{F(X_{n+1}\circ\dots \circ X_{m+n})}\Big)\circ^v \Big(\id_{F(Y_1\circ\dots\circ Y_n)}\\circ^h \Psi(T_{n+1},\dots,T_{m+n}\Big)=\\
&\Phi(T_1,\dots,T_n)\circ^h \Psi(T_{n+1},\dots,T_{m+n})
\end{aligned}
\end{equation}
where $T_i$ is a string of morphisms of length $k_i$ for $1\le i\le n$ and $\ell_{j-n}$ for $j=n+1,\dots,n+m$, starting at $X_i$ and ending at $Y_i$. 

We clearly get the same expression when computing $\pi_{m,n}(\Psi)*\tau_{m,n}(\Phi)(T_1,\dots,T_{m+n})$, and 1-commutativity for $Rp_*(A(C,D)(F,F)(\id,\id))$ follows. It completes the proof for $F$ strict.

Now, when $C$ is a bicategory and $F$ is strong bicategorical functor, we argue as follows. 

There are two sorts of the structure isomorphisms which figure in (the strong counterpart of) \eqref{linking1}. These two sorts are (a) the structure constraints of the bicategory, and (b) the structure constraints of the functor $F$. It follows from Definition \ref{deflaxfunctor}, the structure constraints of type (a) commute with the structure constraints of type (b); on the other hand, the elements 
$Rp_*(A(C,D)(F,F)(\id,\id))$ {\it commute} with the constraints of type (a),  in the sense of \eqref{relmain11}-\eqref{relmain33}. These two properties imply that the presence of these constraints does not affect the previous speculation in the strict case. 
\end{proof}

\begin{remark}\label{remfailure3}{\rm
The fulfilment of the 1-commutativity condition for $Rp_*(A(C,D)(F,F)(\id,\id))$ is a lucky situation, which is easily generalised from the corresponding proof for the classical Davydov-Yetter complex in [BD, Theorem 3.4]. Namely, (although our cochains are not natural transformations) one does not use the naturality of cochains for general morphisms in this proof. One does use the naturality with respect to the identity morphisms, which automatically holds. \\
The case of 2-commutativity of $Rp_*(\Id,\Id)$ is not that lucky, because the corresponding proof for the classical counterpart given in [BD, Theorem 3.8], essentially uses the naturality for non-identity morphisms. Our cochains are not natural transformations, which results in the failure of 2-commutativity for $Rp_*(A(C,D)(F,F)(\id,\id))$. However, a sort of ``homotopy 2-commutativity'' still holds.
}
\end{remark}

\begin{theorem}\label{defftheorem}
Let $C,D$ be $\k$-linear bicategories, $F\colon C\to D$ a strong bicategorical functor. Then the 2-cocellular totalization $\Tot_{\Theta_2}(A(C,D)(F,F)(\id,\id))$ has a structure of an algebra over an operad homotopically equivalent to $C_\ldot(E_2;\k)$.
\end{theorem}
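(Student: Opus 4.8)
The plan is to deduce the statement by assembling three results already established in the paper, so that the proof itself is little more than chaining them together. Each of these inputs carries out a genuine piece of work --- Proposition \ref{propneq2} verifies $1$-commutativity, the imported Batanin--Davydov Theorem \ref{theorembd} constructs the $E_{n+1}$-action on the totalization of an $n$-commutative cosimplicial monoid, and the transitivity Proposition \ref{prop!} compares the two totalizations --- and the theorem is simply their composite.

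Concretely, I would proceed as follows. Set $X=A(C,D)(F,F)(\id,\id)\colon \Theta_2\to C^\udot(\k)$. First, by Proposition \ref{propneq2} the relative totalization $Rp_*(X)$ is a $1$-commutative cosimplicial monoid in $C^\udot(\k)$; here one uses that $F$ is a strong bicategorical functor, exactly the hypothesis of that proposition. Second, I would apply Theorem \ref{theorembd} with $n=1$ to the cosimplicial monoid $Rp_*(X)$: this endows its $\Delta$-totalization $\Tot_\Delta(Rp_*(X))$ with an action of an operad homotopy equivalent to $C_\ldot(E_2;\k)$. Third, I would invoke Proposition \ref{prop!}, which supplies an isomorphism of complexes $\Tot_\Delta(Rp_*(X))\cong \Tot_{\Theta_2}(X)$; since this is an isomorphism in $C^\udot(\k)$ rather than a mere quasi-isomorphism, it transports the operad action across unchanged, yielding the desired $C_\ldot(E_2;\k)$-algebra structure on $\Tot_{\Theta_2}(A(C,D)(F,F)(\id,\id))$.

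The one point requiring care --- the \emph{obstacle}, such as it is --- is the matching of conventions between the two totalizations. I would check that the notion of totalization of a cosimplicial monoid underlying the hypotheses and conclusion of Theorem \ref{theorembd} (as used in [BD]) is precisely the non-normalized cochain Moore complex, which is exactly the functor $\Tot_\Delta$ appearing in Proposition \ref{prop!}, and that the operadic action produced by [BD] is defined on this very complex rather than on a quasi-isomorphic normalized model. If [BD] works instead with the normalized model, I would insert the comparison between normalized and non-normalized $\Delta$-Moore complexes (the cosimplicial analogue of Proposition \ref{propml}) to bridge the gap; the action then transfers along a quasi-isomorphism up to the coherence already packaged in the notion of ``algebra over an operad homotopically equivalent to $C_\ldot(E_2;\k)$''.

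A secondary, and ultimately harmless, worry is whether the isomorphism of Proposition \ref{prop!} respects the ambient structures (the Batanin $\square$-product and cosimplicial monoid structure) through which the action of Theorem \ref{theorembd} is built. Because the conclusion of the theorem is merely the existence of an $E_2$-action on the underlying complex $\Tot_{\Theta_2}(X)$, and because Proposition \ref{prop!} delivers an honest isomorphism of underlying complexes, it suffices to pull back the action along this isomorphism; no further homotopy-coherence bookkeeping is needed. Once the convention-matching of the previous paragraph is confirmed, the three cited results combine at once to give the claim.
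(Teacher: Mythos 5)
Your proposal is correct and follows essentially the same route as the paper's proof, which likewise chains Proposition \ref{propneq2} (1-commutativity of $Rp_*(A(C,D)(F,F)(\id,\id))$), Theorem \ref{theorembd} with $n=1$, and the identification of totalizations from Proposition \ref{prop!}. Your extra care about matching the non-normalized Moore complex conventions between [BD] and Proposition \ref{prop!} is a reasonable refinement that the paper leaves implicit.
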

\begin{proof}
By Proposition \ref{prop!}, $\Tot_{\Theta_2}(A(C,D)(F,F)(\id,\id))\sim \Tot_\Delta(Rp_*(A(C,D)(F,F)(\id,\id))$. By Proposition \ref{propneq2}, $Rp_*(A(C,D)(F,F)(\id,\id))$ is a 1-commutative cosimplicial monoid. Then the result follows from Theorem \ref{theorembd}.
\end{proof}

\section{\sc The totalizations $\Tot_{\Theta_2}(A(C,D)(F,F)(\id,\id))$ and \\ $\Tot_{\Theta_2}(A(C,C)(\Id,\Id)(\id,\id))$ as deformation complexes}\label{sectionlast}

\subsection{\sc Infinitesimal deformation theory of a monoidal dg category $C$}
We start with deformations of a monoidal category $C$. 

Let $C$ be a $\k$-linear monoidal category (or a monoidal dg category over $\k$). The deformations we consider are {\it formal} deformations, that is, $C_t$ may not make sense unless $t=0$. That is, the category $C_t$ is a monoidal category over the formal power series $\k[[t]]$. 

We consider {\it flat} deformations $C_t$ of $C$ in the following sense: the set of objects, the vector spaces (complexes) $C_t(x,y)$ of morphisms, and the monoidal product {\it on objects}, remain undeformed. 

Then the data which is being deformed is:
\begin{itemize}
\item[(A1)] the composition of morphisms $m_{X,Y,Z}\colon C(Y,Z)\otimes C(X,Y)\to C(X,Z)$, $X,Y,Z\in C$,
\item[(A2)] for $f\in C(X,X^\prime), g\in C(Y,Y^\prime)$, the monoidal products of {\it morphisms} $m_{X,g}= \id_X\otimes g\colon C(X,Y)\to C(X,Y^\prime)$ and $m_{f,Y}=f\otimes \id_Y\colon C(X,Y)\to C(X^\prime,Y)$,
\item[(A3)] the associator $\alpha_{X,Y,Z}\colon X\otimes (Y\otimes Z)\to (X\otimes Y)\otimes Z$, $X,Y,Z\in C$,
\item[(A4)] the left and right unit maps $\lambda_X\colon e\otimes X\to X$  and $\rho_X\colon X\otimes e\to X$.
\end{itemize}
{\it It is assumed that (a) the identity morphism $\id_X$, $X\in C$, (b) the monoidal unit $e$, (c) the maps $\lambda_Y$, $\rho_X$ are stable under the deformations, (d) $m_{f,e}$ and $m_{e,g}$ are stable under the deformations, and (e) $m_{X,\id_Y}=m_{\id_X,Y}=\id_{X\otimes Y}$}.

The following example shows that this set-up is realistic:

\begin{example}{\rm
Let $A$ be a bialgebra over $\k$, $C=\Mod(A)$ the category of left $A$-modules over the underlying algebra.
It is a monoidal category in a standard way: for two modules $M,N$, the tensor product of the underlying vector spaces $M\otimes_\k N$ is naturally an $A\otimes A$-module, and the precomposition with $\Delta\colon A\to A\otimes A$ makes it an $A$-module. 

Assume that $A$ is a Hopf algebra. Then the monoidal product $A\otimes A$ of two {\it free} modules of rank 1 is a free module again, whose underlying vector space is {\it canonically} isomorphic to $A\otimes A_u$, where $A_u$ is the underlying vector space of $A$. 

Indeed, define the maps $\alpha\colon A\otimes A\to A\otimes A_u$ and $\beta\colon A\otimes A_u\to A\otimes A$ as
$$
\alpha(a\otimes b)=\sum a_1\otimes S(a_2)b
$$
$$
\beta(a\otimes b)=\sum a_1\otimes a_2b
$$
where $S\colon A\to A$ is the antipode, and we use the Swindler notations $\Delta(a)=\sum a_1\otimes a_2$.

One that $\alpha$ and $\beta$  are maps of $A$-modules, and that $\alpha\circ \beta=\id$, $\beta\circ \alpha=\id$. It proves the claim.

Thus, if we consider a deformation $A_t$ of a Hopf algebra $A$, the $\k$-linear subcategory $C_\free(A_t)$ is a deformation of a monoidal $\k$-linear category $C_\free(A)$, for which the conditions $(A1)-(A3)$ are fulfilled. 

}
\end{example}

The data listed in (A1)-(A4) is subject to the following axioms:
\begin{itemize}
\item[(R1)] the composition $m_{X,Y,Z}$ in (A1) is associative,
\item[(R2)] for maps in (A2) one has $(f\otimes\id_y)\circ (\id_x\otimes g)=(\id_x\otimes g)\circ (f\otimes \id_y)$,
(both sides are equal to $f\otimes g\colon X\otimes Y\to X^\prime\otimes Y^\prime$, therefore, the deformation of $f\otimes g$ are determined by deformations of $f\otimes\id_y$ and $\id_x\otimes g$),
\item[(R3)] for any two composable morphisms $X\xrightarrow{f}X^\prime\xrightarrow{f^\prime}X^{\pprime}$, and any $Y\in C$, one has $m_{f^\prime,Y}\circ m_{f,Y}=m_{f^\prime\circ f,Y}$; similarly, for any two composable morphisms $Y\xrightarrow{g}Y^\prime\xrightarrow{g^\prime}Y^\pprime$, and any $X\in C$, one has
$m_{X,g^\prime}\circ m_{X,g}=m_{X,g^\prime\circ g}$,
\item[(R4)] this and the next two axioms express naturality of the associator. 
Let $f\colon X\to X^\prime$ be a morphism in $C$, and $Y,Z$ objects. The following diagram commutes:
$$
\xymatrix{
X\otimes(Y\otimes Z)\ar[rr]^{\alpha_{X,Y,Z}}\ar[d]_{m_{f,Y\otimes Z}}&&(X\otimes Y)\otimes Z\ar[d]^{m_{m_{f,Y},Z}}\\
X^\prime\otimes (Y\otimes Z)\ar[rr]^{\alpha_{X^\prime,Y,Z}}&&(X^\prime\otimes Y)\otimes Z
}
$$

\item[(R5)] let $g\colon Y\to Y^\prime$ be a morphism in $C$, $X,Z$ objects. The following diagram commutes:
$$
\xymatrix{
X\otimes (Y\otimes Z)\ar[rr]^{\alpha_{X,Y,Z}}\ar[d]_{m_{X,m_{g,Z}}}&&(X\otimes Y)\otimes Z\ar[d]^{m_{m_{X,g},Z}}\\
X\otimes (Y^\prime \otimes Z)\ar[rr]^{\alpha_{X,Y^\prime, Z}}&&(X\otimes Y^\prime)\otimes Z
}
$$

\item[(R6)] let $h\colon Z\to Z^\prime$ be a morphism in $C$, $X,Y$ objects. Then the following diagram commutes:
$$
\xymatrix{
X\otimes(Y\otimes Z)\ar[rr]^{\alpha_{X,Y,Z}}\ar[d]_{m_{X,m_{Y,h}}}&&(X\otimes Y)\otimes Z\ar[d]^{m_{X\otimes Y,h}}\\
X\otimes (Y\otimes Z^\prime)\ar[rr]^{\alpha_{X,Y,Z^\prime}}&&(X\otimes Y)\otimes Z^\prime
}
$$

\item[(R7)] the pentagon equation for the associator:
\begin{equation}\label{r7assoc}
    \begin{tikzpicture}[commutative diagrams/every diagram]
  \node (P0) at (90:2.3cm) {$(X\otimes Y)\otimes (Z\otimes T)$};
  \node (P1) at (90+72:2cm) {$X\otimes (Y\otimes (Z\otimes T))$} ;
  \node (P2) at (90+2*72:2cm) {\makebox[5ex][r]{$X\otimes ((Y\otimes Z)\otimes T)$}};
  \node (P3) at (90+3*72:2cm) {\makebox[5ex][l]{$(X\otimes (Y\otimes Z))\otimes T$}};
  \node (P4) at (90+4*72:2cm) {$((X\otimes Y)\otimes Z)\otimes T$};
  \path[commutative diagrams/.cd, every arrow, every label]
    (P0) edge node {$\alpha_{X\otimes Y,Z,T}$} (P4)
    (P1) edge node {$\alpha_{X,Y,Z\otimes T}$} (P0)
    (P2) edge node[swap] {$\alpha_{X,Y\otimes Z,T}$} (P3)
    (P1) edge node[swap] {$m_{X,\alpha_{Y,Z,T}}$} (P2)
    (P3) edge node[swap] {$m_{\alpha_{X,Y,Z},T}$} (P4);
\end{tikzpicture}
\end{equation}
\item[(R8)] left unit functionality: for any map $f\colon X\to X^\prime$ the diagram
$$
\xymatrix{
X\otimes e\ar[r]^{\rho_X}\ar[d]_{m_{f,e}}&X\ar[d]^{f}\\
X^\prime\otimes e\ar[r]_{\rho_{X^\prime}}&X^\prime
}
$$
commutes,
\item[(R9)] right unit functionality: for any $g\colon Y\to Y^\prime$ the diagram
$$
\xymatrix{
e\otimes Y\ar[r]^{\lambda_Y}\ar[d]_{m_{e,g}}&Y\ar[d]^{g}\\
e\otimes Y^\prime\ar[r]_{\lambda_{Y^\prime}}&Y^\prime
}
$$
commutes,
\item[(R10)] left right unit compatibility: the two possible maps $\lambda_e,\rho_e\colon e\otimes e\to e$ are equal.

\end{itemize}

Among the deformations $C_t$ there are ones which we consider as ``trivial''. It appears in the literature under the name ``twist'', however, here we consider ``upgraded'' twists acting not only on the associator, but as well on the underlying category structure and on the action of morphisms on the monoidal product. 

In deformation theory, one identifies two deformations if one is obtained from another by a twist, and interests in the ``quotient-space''. 

\begin{lemma}\label{twist}
Let $C$ be a $\k$-linear (or dg over $\k$) monoidal category, denote by $C_u$ the underlying $\k$-linear quiver of $C$. Assume that, for any $X,Y\in C$, we are given an isomorphism $\varphi_{X,Y}\colon C(X,Y)\to C(X,Y)$, and an isomorphism $\psi_{X,Y}\in C(X\otimes Y, X\otimes Y)$. Then these data gives rise to a monoidal equivalence functor $F$ from $C$ to another monoidal $\k$-linear (resp., dg over $\k$) category $\tilde{C}$ on the quiver ${C}_u$, such that $F$ is the identity map on any object of $C$. 
\end{lemma}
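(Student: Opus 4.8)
The plan is to obtain $\tilde{C}$ by \emph{transport of structure} along the isomorphisms $\varphi$ and $\psi$, arranged so that $F$ becomes a strong monoidal functor by construction. I set $F$ to be the identity on objects and $\varphi_{X,Y}$ on morphisms, and declare $\tilde{C}$ to have the same objects and hom-spaces as $C$ (that is, the quiver $C_u$), with the monoidal product on objects unchanged, $X\mathbin{\tilde\otimes}Y:=X\otimes Y$. First I would fix the underlying $\k$-linear (resp.\ dg) category structure of $\tilde{C}$ by transporting the composition of $C$ through $\varphi$:
\[
g\mathbin{\tilde\circ}f:=\varphi_{X,Z}\bigl(\varphi_{Y,Z}^{-1}(g)\circ\varphi_{X,Y}^{-1}(f)\bigr),\qquad \tilde\id_X:=\varphi_{X,X}(\id_X),
\]
for $f\in\tilde{C}(X,Y)=C(X,Y)$ and $g\in\tilde{C}(Y,Z)=C(Y,Z)$. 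Associativity and unitality of $\mathbin{\tilde\circ}$ are immediate from those of $C$, so $\tilde{C}$ is a $\k$-linear (resp.\ dg, once $\varphi$ is taken to be a degree-$0$ chain isomorphism) category, and $F=(\mathrm{id}_{\Ob},\varphi)$ is an isomorphism of underlying categories, in particular an equivalence.

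Next I would install the remaining monoidal data on $\tilde{C}$ by \emph{forcing} the pair $(F,\psi)$ to be strong monoidal, taking the $\psi_{X,Y}$ (viewed in $\tilde{C}$ via $\varphi$) as the comparison isomorphisms of $F$. Naturality of the comparison maps is what defines the tensor product of morphisms: for $f\in\tilde{C}(X,X')$, $g\in\tilde{C}(Y,Y')$ one is forced to set
\[
f\mathbin{\tilde\otimes}g:=\varphi\Bigl(\psi_{X',Y'}^{-1}\circ\bigl(\varphi^{-1}(f)\otimes\varphi^{-1}(g)\bigr)\circ\psi_{X,Y}\Bigr).
\]
Likewise, the associativity and unit coherence constraints for a strong monoidal functor, read as equations in $\tilde{C}$, have a unique solution for the associator $\tilde\alpha_{X,Y,Z}$ and the unit maps $\tilde\lambda_X,\tilde\rho_X$; each is the $\varphi$-image of the corresponding structure map of $C$ pre- and post-composed with the appropriate comparison isomorphisms (e.g.\ $\tilde\alpha_{X,Y,Z}$ is $\varphi(\alpha_{X,Y,Z})$ conjugated by the four maps $\psi_{X\otimes Y,Z}$, $\psi_{X,Y}\mathbin{\tilde\otimes}\id$, $\id\mathbin{\tilde\otimes}\psi_{Y,Z}$, $\psi_{X,Y\otimes Z}$). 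Since all the $\psi_{X,Y}$ are isomorphisms, each formula produces an isomorphism in $\tilde{C}$, as required.

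Finally I would verify that $(\tilde{C},\mathbin{\tilde\otimes},\tilde\alpha,\tilde\lambda,\tilde\rho)$ satisfies the monoidal category axioms. This is pure transport: writing out the pentagon (resp.\ triangle) for $\tilde{C}$ and substituting the definitions above, every comparison isomorphism $\psi$ occurs together with its inverse on adjacent edges, so the $\psi$'s cancel telescopically by invertibility and the diagram collapses to the $\varphi$-image of the pentagon (resp.\ triangle) of $C$, which commutes by hypothesis; the naturality of $\alpha$ (axioms (R4)--(R6)) and the functoriality of $\varphi$ are what legitimise the cancellations and also yield naturality of $\tilde\alpha,\tilde\lambda,\tilde\rho$. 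The main obstacle is exactly this bookkeeping of the $\psi$-insertions in the pentagon, where one must match each $\psi$ with its partner across the two composite paths; conceptually it is the standard fact that a monoidal structure transports uniquely along a strong monoidal equivalence, but here it must be checked in the present identity-on-objects presentation with an arbitrary (non-natural a priori) choice of $\psi$. Having verified the axioms, $(F,\psi)$ is by construction a strong monoidal functor whose underlying functor is an isomorphism of categories, hence a monoidal equivalence $C\to\tilde{C}$ that is the identity on objects, which is the assertion. The dg case is identical once $\varphi$ and $\psi$ are taken to be closed degree-$0$ isomorphisms, so that all transported operations are chain maps commuting with the differential.
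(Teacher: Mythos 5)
Your proposal is correct and follows essentially the same route as the paper: define $F$ as the identity on objects and $\varphi$ on morphisms, take the $\psi_{X,Y}$ as the monoidal comparison isomorphisms, and let the requirement that $F$ be a strong monoidal equivalence force the transported composition, tensor product of morphisms, and associator on $\tilde{C}$ (your displayed formulas match the paper's \eqref{exttwist1}--\eqref{exttwist3}, up to the paper's additional normalisations $\varphi_{X,X}(\id_X)=\id_X$, $\psi_{e,Y}=\id$, $\psi_{X,e}=\id$). Both arguments then conclude by the same direct verification that the transported data satisfies (R1)--(R10).
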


\begin{proof}
It is standard. We define $F$ on morphisms by $F(f)=\varphi_{X,Y}(f)$ if $f\in C(X,Y)$, and define monoidal constrains 
$F(X\otimes Y)\to F(X)\otimes F(Y)$ as the isomorphisms $\psi_{X,Y}$. Then the monoidal category structure on $\tilde{C}$ is uniquely determined by the requirement that $F$ is a monoidal functor. 

We assume that $\varphi_{X,X}(\id_X)=\id_X$, $\psi_{e,Y}=\id_{e\otimes Y}$, $\psi_{X,e}=\id_{X\otimes e}$. As well, we assume that the constrain $F(e)\to e$ is the identity map.

For convenience of the reader, we provide explicit formulas for the new composition of morphisms, for the action of morphisms on the tensor product, and for the associator. We use the same notations decorated by $\sim$ for the corresponding data (A1)-(A3) of the new category on $\tilde{C}$. We use the same notations as in (A1)-(A3). One has:
\begin{equation}\label{exttwist1}
\tilde{m}_{X,Y,Z}(f,g)=\varphi_{X,Z}(m_{X,Y,Z}(\varphi^{-1}_{Y,Z}(g),\varphi_{X,Y}^{-1}(f)))
\end{equation}

\comment
\begin{equation}\label{exttwist2}
\begin{aligned}
\ &\tilde{m}_{X,g}=\psi_{X,Y^\prime}\circ m_{X,\varphi_{Y,Y^\prime}^{-1}(g)}\circ \psi_{X,Y}^{-1}\\
&\tilde{m}_{f,Y}=\psi_{X^\prime,Y}\circ m_{\varphi_{X,X^\prime}^{-1}(f),Y}\circ \psi^{-1}_{X,Y}
\end{aligned}
\end{equation}
\endcomment

\begin{equation}\label{exttwist2}
\begin{aligned}
\ &\tilde{m}_{X,g}\tilde{\circ} \psi_{X,Y}=\psi_{X,Y^\prime}\tilde{\circ}\varphi_{X\otimes Y,X\otimes Y^\prime}(m_{X,\varphi^{-1}_{Y,Y^\prime}g})\\
&\tilde{m}_{f,Y}\tilde{\circ}\psi_{X,Y}=\psi_{X^\prime,Y}\tilde{\circ}\varphi_{X\otimes Y,X^\prime\otimes Y}(m_{\varphi^{-1}_{X,X^\prime}f,Y})
\end{aligned}
\end{equation}
where $\tilde{\circ}$ denotes the composition in $\tilde{C}$ (given by \eqref{exttwist1}).

The two last equations follow from the commutative diagram
$$
\xymatrix{
F(X\otimes Y)\ar[r]\ar[d]_{F(f\otimes g)}&F(X)\otimes F(Y)\ar[d]^{F(f)\otimes F(g)}\\
F(X^\prime\otimes Y^\prime)\ar[r]&F(X^\prime)\otimes F(Y^\prime)
}
$$
where $\tilde{\circ}$ is the composition in $\tilde{C}$, and $F(?)=?$ for any object $?\in C$. 

\begin{equation}\label{exttwist3}
\tilde{\alpha}_{X,Y,Z}= \tilde{m}_{\psi_{X,Y},Z}\tilde{\circ}\psi_{X\otimes Y, Z}\tilde{\circ} \varphi(\alpha_{X,Y,Z})\tilde{\circ }\psi^{-1}_{X,Y\otimes Z}\tilde{\circ} \tilde{m}_{X, \psi^{-1}_{Y,Z}}
\end{equation}

It comes from the commutative diagram:
$$
\xymatrix{
F(X\otimes (Y\otimes Z))\ar[r]^{F(\alpha)}\ar[d]_{\psi_{X,Y\otimes Z}}&F((X\otimes Y)\otimes Z)\ar[d]^{\psi_{X\otimes Y,Z}}\\
F(X)\otimes F(Y\otimes Z)\ar[d]_{\id_X\otimes \psi_{Y,Z}}&F(X\otimes Y)\otimes F(Z)\ar[d]^{\psi_{X,Y}\otimes\id_Z}\\
F(X)\otimes (F(Y)\otimes F(Z))\ar[r]^{\tilde{\alpha}}&(F(X)\otimes F(Y))\otimes F(Z)
}
$$
where $\tilde{\circ}$ is the composition in $\tilde{C}$, and $F(?)=?$ for any object $?\in C$.

One can check directly that $\tilde{m}_{X,Y,Z}, \tilde{m}_{X,g}, \tilde{m}_{f,Y}, \tilde{\alpha}_{X,Y,Z}$ satisfy (R1)-(R7), and thus define a monoidal category $\tilde{C}$, such that the functor $F\colon C\to \tilde{C}$ is a monoidal equivalence. 
\end{proof}

Note that in the assumption of Lemma $\varphi_{X,Y}$ and $\psi_{X,Y}$ are arbitrary isomorphisms. Now we switch back to formal deformation theory. 

By definition, a {\it trivial deformation} depends on the following data:
\begin{itemize}
\item[(T1)]
a formal power series $\varphi_{X,Y}\colon C(X,Y)\to C(X,Y)$, for any $X,Y\in C$ of the form
\begin{equation}\label{inftw1}
\varphi_{X,Y}(t)=\Id_{C(X,Y)}+t\cdot \varphi_{X,Y}^1+t^2\cdot \varphi_{X,Y}^2+\dots
\end{equation}
where $\varphi_{X,Y}^i\in \Hom_\k(C(X,Y), C(X,Y)), i\ge 1$,
\item[(T2)] a formal power series $\psi_{X,Y}\colon C(X\otimes Y, X\otimes Y)$, for any $X,Y\in C$, of the form
\begin{equation}\label{inftw2}
\psi_{X,Y}=\Id_{X\otimes Y}+t\cdot \psi_{X,Y}^1+t^2\cdot \psi_{X,Y}^2+\dots
\end{equation}
where $\psi^i_{X,Y}\in C(X\otimes Y, X\otimes Y), i\ge 1$.
\end{itemize}
Out of this data, a formal deformation of $C$ is constructed as in \eqref{exttwist1}-\eqref{exttwist3}.

One defines the concepts of an {\it infinitesimal deformation} and of a {\it trivial infinitesimal deformation} of a monoidal (linear or dg) category by replacing in the previous definitions the ring of formal power series $\k[[t]]$ by the dual numbers $\k[t]/(t^2)$. We say that two infinitesimal deformations belong to the same equivalence class if the corresponding monoidal categories are equivalent by an (extended) twist, as in Lemma \ref{twist} but over $\k[t]/(t^2)$. 

One has:

\begin{theorem}\label{infdefcat}
Let $C$ be a $\k$-linear (or a dg over $\k$) monoidal category. 
The third cohomology $H^3(\Tot_{\Theta_2}A((C,C)(\Id,\Id)(\id,\id)))$ is isomorphic to the equivalence classes of infinitesimal deformations (in the sense specified above) of the monoidal (dg) category $C$.
\end{theorem}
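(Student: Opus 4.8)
The strategy is the one standard for deformation‑cohomology identifications: exhibit an explicit bijection between degree $3$ cochains and first‑order deformation data, show that the cocycle equation $d\Psi=0$ is equivalent to the list of infinitesimal compatibilities (R1)--(R10), and show that the image of $d$ from degree $2$ is precisely the space of infinitesimal twists. By Theorem \ref{theornorm} I would first replace $\Tot_{\Theta_2}A(C,C)(\Id,\Id)(\id,\id)$ by its normalized subcomplex $A_\norm$, so that all cochains vanish as soon as one of their $2$‑morphism arguments is an identity; this is exactly what matches the stability of identity morphisms and of the unit data in our hypotheses.

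First I would unwind degree $3$. Viewing $C$ as a one‑object $\k$‑linear bicategory (objects of $C$ are $1$‑morphisms, morphisms of $C$ are $2$‑morphisms, $\otimes$ is horizontal composition $\circ^h$ and composition in $C$ is vertical composition $\circ^v$), the $2$‑level trees of dimension $3$ are exactly $([3];[0],[0],[0])$, $([2];[1],[0])$, $([2];[0],[1])$ and $([1];[2])$. Using the explicit description of Section \ref{secaexpl1} I would read off the dictionary: the $([1];[2])$‑component is a Hochschild $2$‑cochain on the underlying $\k$‑linear category, i.e. the first‑order composition (A1); the $([2];[1],[0])$‑ and $([2];[0],[1])$‑components are the first‑order monoidal product on morphisms $m_{X,g}$, $m_{f,Y}$ (A2); and the $([3];[0],[0],[0])$‑component, which carries no $2$‑morphism argument and so is valued in $C(X_3\otimes X_2\otimes X_1,X_3\otimes X_2\otimes X_1)$, is the first‑order associator (A3). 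The unit data (A4) does not appear, in accordance with the stability hypotheses, and the built‑in naturality relations \eqref{relmain11}--\eqref{relmain33} defining $A$ inside $\hat A$ are precisely the infinitesimal compatibilities of these cochains with $\alpha,\lambda,\rho$.

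Next I would compute the coboundary $d\Psi$ into degree $4$ using \eqref{totd} together with the codimension‑$1$ coface maps (F1)--(F4) of Section \ref{facetheta}, and match each vanishing component of $d\Psi=0$ with one relation among (R1)--(R10): the $([1];[3])$‑component is the Hochschild $3$‑cocycle condition, giving associativity (R1); the $([2];[2],[0])$‑, $([2];[0],[2])$‑ and $([2];[1],[1])$‑components give the functoriality and interchange laws (R3),(R2) (here the inner map (F1) contributes the Hochschild part and the shuffle maps $D_{j,\sigma}$ of type (F2) contribute the interchange); the three $([3];\,\cdot\,)$‑components express naturality of the associator in its three arguments, i.e. (R4)--(R6); and the top component $([4];[0],[0],[0],[0])$, assembled from the outer maps $D_{\min},D_{\max}$ and the shuffle maps, is the infinitesimal pentagon (R7). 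The unit compatibilities (R8)--(R10) I would extract from the unit conditions \eqref{relmain33} together with the normalization. Finally I would identify coboundaries: a degree $2$ cochain is a pair indexed by $([1];[1])$ and $([2];[0],[0])$, which I interpret as the Hochschild $1$‑cochain $\varphi^1$ of \eqref{inftw1} (T1) and the reparametrization $\psi^1$ of \eqref{inftw2} (T2); computing $d$ on such a pair via (F1)--(F4) reproduces the first‑order parts of the extended twist formulas \eqref{exttwist1}--\eqref{exttwist3}, so that $\operatorname{im} d$ is exactly the space of trivial infinitesimal deformations.

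Assembling the three steps gives
$$
H^3\bigl(\Tot_{\Theta_2}A(C,C)(\Id,\Id)(\id,\id)\bigr)\;\cong\;\frac{\{\text{deformation data (A1)--(A4) satisfying (R1)--(R10)}\}}{\{\text{trivial infinitesimal deformations}\}},
$$
which is the asserted space of equivalence classes. The main obstacle is the bookkeeping in the middle step: one must verify that the signs in \eqref{totd} organize the summands of $d\Psi$ into the relations (R1)--(R10) with none missing and none spurious, the delicate points being the contribution of the shuffle maps $D_{j,\sigma}$ to the interchange law (R2) and to the pentagon (R7), and the correct emergence of the unit relations (R8)--(R10) from the normalization and \eqref{relmain33}. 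A clean way to keep this under control is to run the spectral sequence of the bicomplex $(d_0,d_1)$ coming from $\Delta\times\Delta\hookrightarrow\Theta_2$, handling the Hochschild (vertical) direction $d_1$ first and the Davydov--Yetter (horizontal) direction $d_0$ afterwards.
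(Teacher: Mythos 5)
Your proposal follows essentially the same route as the paper's proof: reduce to the normalized complex via Theorem \ref{theornorm}, decompose a degree-$3$ cochain into the four components of types $(1;2)$, $(2;1,0)$, $(2;0,1)$, $(3;0,0,0)$ matching the deformation data (A1)--(A3), identify the components of $d\pi=0$ in each arity with the infinitesimal versions of (R1)--(R10) (with the relations \eqref{relmain11}--\eqref{relmain33} killing the terms involving $\alpha$, $\lambda$, $\rho$, and the unit relations coming for free from the stability assumptions), and identify coboundaries of the $(1;1)$- and $(2;0,0)$-components $\varphi^1,\psi^1$ with the infinitesimal extended twists \eqref{exttwist1}--\eqref{exttwist3}. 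The bookkeeping you flag is exactly what the paper carries out term by term; your closing suggestion to organize it via the bicomplex spectral sequence is an optional extra not used in the paper.
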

\begin{proof}
The proof is a rather long but standard computation, for which we refer the reader to [PS], Section 5. 

For simplicity, we assume that all complexes $C(X,Y)$ are concentrated in cohomological degree 0, that is, are ordinary vector spaces over $\k$. The general case is similar but technically more involved, and we leave it to the reader. 

By theorem \ref{theornorm}, we can establish the isomorphism in Theorem for the {\it normalized} complex $A_\norm(C,C)(\Id,\Id)(\id,\id)$. Below we assume that the  cochains are normalized. 

A general element $\pi$ of $\Tot_{\Theta_2}(A(C,C)(\Id,\Id))$ of degree 3 is a sum of cochains of types (1;2), (2;1,0), (2;0,1), and (3;0,0,0). Denote them by $\kappa,\beta^\ell, \beta^r,\gamma$, correspondingly:
$$
\pi=\kappa+\beta^{\ell}+\beta^r+\gamma
$$
We identify them with infinitesimal deformations of $m_{X,Y,Z}$ for $\kappa$, of $m_{f,Y}$ for $\beta^\ell$, of $m_{X,g}$ for $\beta^r$, and of $\alpha_{X,Y,Z}$ for $\gamma$, see (A1)-(A4):
\begin{equation}\label{eqcycle2}
\begin{aligned}
\ &\tilde{m}_{X,Y,Z}=m_{X,Y,Z}+t\cdot \kappa_{X,Y,Z}\\
&\tilde{m}_{f,Y}=m_{f,Y}+t\cdot \beta^\ell_{f,Y}\\
&\tilde{m}_{X,g}=m_{X,g}+t\cdot \beta^r_{X,g}\\
&\tilde{\alpha}_{X,Y,Z}=\alpha_{X,Y,Z}(1+t\cdot \gamma_{X,Y,Z})
\end{aligned}
\end{equation}
It follows from our conditions on deformations (listed below (A1)-(A4)) that $\kappa,\beta^\ell,\beta^r$  are reduced cochains. Also it is clear that $\kappa,\beta^\ell,\beta^r,\gamma$ belong to $A(\Id,\Id)$, that is, \eqref{exttwist1}-\eqref{exttwist3} are fulfilled for them. Note also that by our assumptions made just below the list (A1)-(A4), one has $\beta^\ell_{f,e}=\beta^r_{e,g}=0$, where $e$ is the monoidal unit.  

 Assume $\pi$ is a cycle:
\begin{equation}\label{eqcycle3}
d\pi=d(\kappa+\beta^{\ell}+\beta^r+\gamma)=0
\end{equation}
This equation is a system of several equations, one equation for any diagram of dimension 4. We write schematically:
\begin{equation}
\begin{aligned}
\ &d\kappa=(1;3)+(2;0,2)+(2;1,1)+(2;2,0)\\
&d\beta^\ell=(3;0,1,0)+(3;1,0,0)+(2;2,0)+(2;1,1)\\
&d\beta^r=(3;0,1,0)+(3;0,0,1)+(2;0,2)+(2;1,1)\\
&d\gamma=(4;0,0,0,0)+(3;0,0,1)+(3;0,1,0)+(3;1,0,0)
\end{aligned}
\end{equation}
We see that there are many cross-terms in \eqref{eqcycle3}. 

Now consider relations (R1)-(R10) for tilde-data \eqref{eqcycle2}, taking to the account $t^2=0$. We get system of linear in $\kappa,\beta^\ell,\beta^r,\gamma$ equations. The claim is that these equations are exactly the homogeneous components (for any given degree, e.g. (2;1,1)) of the equation $d\pi=0$.

\vspace{1mm}
Note that for the cases (R4)-(R9) we have to use relations \eqref{relmain11}-\eqref{relmain33}. 

The case of (R1) is standard, the computation here is basically the same as the classical computation with Hochschild complex. 

\vspace{1mm}

The infinitesimal version of (R2) gives:

$$
(m_{f,Y^\prime}+t\beta^\ell_{f,Y^\prime})\tilde{\circ }(m_{X,g}+t\beta^r_{X,g})=
(m_{X^\prime,g}+t\beta^r_{X^\prime,g})\tilde{\circ} (m_{f,Y}+t\beta^\ell_{f,Y}) \mod t^2
$$
where $\tilde{\circ}=\circ +t\cdot\kappa$,
gives
$$
\beta^\ell_{f,Y^\prime}\circ m_{X,g}+m_{F,Y^\prime}\circ\beta^r_{X,g}+\kappa(m_{f,Y^\prime}, m_{X,g})=
\beta^r_{X^\prime,g}\circ m_{f,Y}+m_{X^\prime,g}\circ \beta^\ell_{f,Y}+\kappa(m_{X^\prime,g}, m_{f,Y})
$$

It is the vanishing of type $(2;1,1)$ cross-terms in $d\beta^\ell+d\beta^r+d\kappa$. (The other summands of $\pi$ do not contain components of type (2;1,1) in their boundary).

For $d\kappa$ these are two possible shuffle maps. 

\vspace{1mm}

The case (R3) comprises two sub-cases, which are analogous. We consider one of them. It is, in the infinitesimal version
$$
(m_{f^\prime,Y}+t\cdot \beta^\ell_{f^\prime,Y})\tilde{\circ}(m_{f,Y}+t\cdot \beta^\ell_{f,Y})=
m_{f^\prime\tilde{\circ} f, Y}+t\cdot \beta^{\ell}_{f^\prime\tilde{\circ}f,Y}\mod t^2
$$
It gives
$$
\beta^\ell_{f^\prime,Y}\circ m_{f,Y}+m_{f^\prime,Y}\circ \beta^\ell_{f,Y}+\kappa(m_{f^\prime,Y}, m_{f,Y})=\\
m_{\kappa(f^\prime,f),Y}+\beta^\ell_{f^\prime\circ f,Y}
$$

It is the vanishing of type $(2;2,0)$ cross-terms in $d(\beta^\ell+\kappa)$. The other summands of $\pi$ do not contain $(2; 2,0)$ type elements in their boundary. 

\vspace{1mm}

The cases (R4), (R5), (R6) are similar, we consider one of them, (R5). 
This is the first case where we essentially use the relations \eqref{relmain11}-\eqref{relmain33}.

The case (R5) in the infinitesimal version reads:

\begin{equation}
(m_{m_{X,g},Z}+t\beta^\ell_{m_{X,g},Z}+t m_{\beta^r_{X,g},Z})  \tilde{\circ}(\alpha+t\gamma\circ \alpha)=(\alpha+t\gamma\circ\alpha)\tilde{\circ}(m_{X,m_{g,Z}}+t\beta^r_{X,m_{g,Z}}+t m_{X,\beta^\ell_{g,Z}})\ \mod t^2
\end{equation}
The terms in $t$ give the identity, all terms are of type $(3; 0,1,0)$:
\begin{equation}\label{r5comp}
\begin{aligned}
\ &m_{m_{X,g},Z}\circ (\gamma\circ \alpha)+\kappa(\alpha, m_{m_{X,g},Z})+\beta^\ell_{m_{X,g},Z}\circ \alpha+m_{\beta^r_{X,g},Z}\circ\alpha=\\
&(\gamma\circ\alpha)\circ m_{X,m_{g,Z}}+\kappa(m_{X,m_{g,Z}},\alpha)+\alpha\circ \beta^r_{X,m_{g,Z}}+\alpha\circ m_{X,\beta^\ell_{g,Z}})
\end{aligned}
\end{equation}
\begin{lemma}\label{r5l}
One has $\kappa(\alpha, m_{m_{X,g},Z})=0$ and $\kappa(m_{X,m_{g,Z}},\alpha)=0$.
\end{lemma}
\begin{proof}
We prove the first equation, the second one is analogous.

One has by \eqref{relmain11}: $\kappa(\alpha, m_{m_{X,g},Z})=\kappa(\id, m_{m_{X,g},Z}\circ\alpha)=0$, where the second equality follows from the requirement that the identity maps are preserved under the deformation. 
\end{proof}

The 6 non-vanishing terms in \eqref{r5comp} are interpreted as the degree (3;0,1,0) terms in $d\pi$, as follows. We rewrite \eqref{r5comp} as:

\begin{equation}\label{eqr5lx}
m_{m_{X,g},Z}\circ \gamma+\beta^\ell_{m_{X,g},Z}+m_{\beta^r_{X,g},Z}=
\gamma\circ(\alpha\circ m_{X,m_{g,Z}}\circ \alpha^{-1})+\alpha\circ \beta^r_{X,m_{g,Z}}\circ \alpha^{-1}+\alpha\circ m_{X,\beta^\ell_{g,Z}}\circ\alpha^{-1}
\end{equation}
(here we made use of Lemma \ref{r5l}). 

Next, $\alpha\circ m_{X,m_{g,Z}}\circ \alpha^{-1}=m_{m_{X,g},Z}$, so the first summand in the r.h.s. of \eqref{eqr5lx} is equal to $\gamma\circ m_{m_{X,g},Z}$. 

We have:
\begin{equation}
\begin{aligned}
\ &(d\gamma)_{(3;0,1,0)}=m_{m_{X,g},Z}\circ \gamma-\gamma\circ m_{m_{X,g},Z}\\
&(d\beta^\ell)_{(3;0,1,0)}=\beta^\ell_{m_{X,g},Z}-\alpha\circ m_{X,\beta^\ell_{g,Z}}\circ\alpha^{-1}\\
&(d\beta^r)_{(3;0,1,0)}=m_{\beta^r_{X,g},Z}-\alpha\circ \beta^r_{X,m_{g,Z}}\circ \alpha^{-1}
\end{aligned}
\end{equation}

\vspace{1mm}

The infinitesimal version of (R7) gives rise to the following equation (see \eqref{r7assoc}):
\begin{equation}\label{r71}
\begin{aligned}
\ &(\alpha_{XY,Z,W}+t\gamma_{XY,Z,W}\circ\alpha_{XY,Z,W})\tilde{\circ}
(\alpha_{X,Y,ZW}+t\gamma_{X,Y,ZW}\circ \alpha_{X,Y,ZW})=\\
&(m_{\alpha_{X,Y,Z},W}+t\beta^\ell_{\alpha_{X,Y,Z},W}+tm_{\gamma_{X,Y,Z}\circ\alpha_{X,Y,Z},W})
\tilde{\circ}(\alpha_{X,YZ,W}+t\gamma_{X,YZ,W}\circ \alpha_{X,YZ,W})\\
&\tilde{\circ}(m_{X,\alpha_{Y,Z,W}}+t\beta^r_{X,\alpha_{Y,Z,W}}+tm_{X,\gamma_{Y,Z,W}\circ\alpha_{Y,Z,W}})\mod t^2
\end{aligned}
\end{equation}
where, as above, $-\tilde{\circ}-=m(-,-)+t\kappa(-,-)$. 
One has:
\begin{lemma}\label{r7l}
The following identities hold:
\begin{itemize}
\item[(1)] $\kappa(\alpha,-)=\kappa(-,\alpha)=0$, where $\alpha$ has any arguments such as $\alpha_{XY,Z,W}$ etc.,
\item[(2)] $\beta^\ell_{\alpha_{X,Y,Z},W}=\beta^{r}_{X,\alpha_{Y,Z,W}}=0$.
\end{itemize}
\end{lemma}
\begin{proof}
(1) is proven as in Lemma \ref{r5l}. For (2), prove the first assertion (the second one is analogous):
$\beta^\ell_{\alpha_{X,Y,Z},W}\overset{\eqref{relmain22}}{=}\beta^\ell_{\id_{X\otimes (Y\otimes Z),W}}\circ m_{\alpha_{X,Y,Z},W}=0$, where the second equality follows from a more general $\beta^\ell_{\id,-}=0$.
\end{proof}
Removing the summands which vanish by Lemma \ref{r7l}, \eqref{r71} in ordet $t$ becomes:
\begin{equation}\label{r72}
\begin{aligned}
\ &\alpha_{XY,Z,W}\circ \gamma_{X,Y,ZW}\circ \alpha_{X,Y,ZW}+\gamma_{XY,Z,W}\circ \alpha_{XY,Z,W}\circ\alpha_{X,Y,ZW}=\\
&m_{\alpha_{X,Y,Z},W}\circ \alpha_{X,YZ,W}\circ m_{X,\gamma_{Y,Z,W}\circ\alpha_{Y,Z,W}}+m_{\alpha_{X,Y,Z},W}\circ(\gamma_{X,YZ,W}\circ\alpha_{X,YZ,W})\circ m_{X,\alpha_{Y,Z,W}}+\\
&m_{\gamma_{X,Y,Z}\circ\alpha_{X,Y,Z},W}\circ\alpha_{X,YZ,W}\circ m_{X,\alpha_{Y,Z,W}}
\end{aligned}
\end{equation}
Next, by \eqref{relmain22} or (R3),
$m_{X,\gamma_{Y,Z,W}\circ\alpha_{Y,Z,W}}=m_{X,\gamma_{Y,Z,W}}\circ m_{X,\alpha_{Y,Z,W}}$ and
$m_{\gamma_{X,Y,Z}\circ\alpha_{X,Y,Z},W}=m_{\gamma_{X,Y,Z},W}\circ m_{\alpha_{X,Y,Z},W}$. 

Now we recognize in the 5 summands of \eqref{r72} the 5 terms in $(d\gamma)_{(4;0,0,0,0)}$.
\vspace{3mm}

It remains to consider (R8)-(R10). The identity (R10) is fulfilled automatically for the deformed category.
The identities (R8)-(R9) are analogous, we consider (R8). Here all morphisms in the diagram are not deformed by our assumptions, but the composition does. So one has to prove that the terms, coming from the deformation of the composition, vanish. These terms are $\rho_X^{-1}\circ \kappa(\rho_X,f)$ and $\kappa(f,\rho_X^{-1})\circ \rho_X$. These terms vanish by \eqref{relmain22}. For example, $\kappa(\rho_X,f)=\kappa(\id,f)\circ \rho_X=0$. (To be precise, one {\it derives} that $\kappa$ obeys \eqref{relmain22} from this speculation).

We have identified 3-cycles $\pi=\kappa+\beta^\ell+\beta^r+\gamma$ in $\Tot_{\Theta_2}(A(\Id,\Id))$ with infinitesimal deformations of the monoidal category $C$. Now we show that the infinitesimal deformations of type \ref{exttwist1}-\eqref{exttwist3} are corresponded to coboundaries $\pi=d\omega$, for a 2-cochain $\omega\in\Tot_{\Theta_2}(A(\Id,\Id))$.

A general 2-cochain $\omega$ is a linear combination of components of the types $(1; 1)$ and $(2;0)$. 
On the other hand, our infinitesimal twists $\varphi^1_{X,Y}$ and $\psi^1_{X,Y}$ (see \eqref{inftw1} and \eqref{inftw2}) are of the same type. We identify their cobounaries with the infinitesimal versions of \eqref{exttwist1}-\eqref{exttwist3}. That means, that we have to identify the components of $d(\varphi^1+\psi^1)$ of degrees $(1;2), (2; 0,1), (2;1,0), (3;0,0,0)$ with the infinitesimal versions of \eqref{exttwist1}, \eqref{exttwist2}(1), \eqref{exttwist2}(2), \eqref{exttwist3} correspondingly. 

For \eqref{exttwist1}, it is clear that $d(\varphi^1+\psi^1)_{(1;2)}=d(\varphi^1)_{(1;2)}$. The computation with \eqref{exttwist1} is standard, it is the same as for the Hochschild cochains, and we leave it to the reader.

For \eqref{exttwist2}(1), rewrite it as
\begin{equation}
\tilde{m}_{X,g}=\varphi(\varphi^{-1}\psi_{X,Y^\prime}{\circ}m_{X,\varphi^{-1}_{Y,Y^\prime}g}{\circ}\varphi^{-1}\psi_{X,Y}^{-1})
\end{equation}
Note that
\begin{equation}
\varphi^{-1}\psi=(\id-t\varphi^1+\dots)(\id+t\psi^1+\dots)=\id+t\psi^1 \mod t^2
\end{equation}
because $\phi^1(\id)=0$, similarly for $\varphi^{-1}\psi^{-1}$. 

Then
\begin{equation}
{m}_{X,g}+t m^1_{X,g}=(\id+t\varphi^1)\Big( (\id+t\psi^1_{X,Y^\prime})\circ (m_{X,g}-tm^1_{X,\varphi^1 g})\circ (\id-t\psi^1_{X,Y})\Big) \mod t^2
\end{equation}
from which 
\begin{equation}\label{exttwist2bis}
m^1_{X,g}=\varphi^1 m_{X,g}+\psi^1_{X,Y^\prime}\circ m_{X,g}-m^1_{X,\varphi^1 g}-m_{X,g}\circ \psi^1_{X,Y}
\end{equation}
One easily recognises $d(\varphi^1+\psi^	1)_{(2;0,1)}$ in the r.h.s. of \eqref{exttwist2bis}. 

The computation for \eqref{exttwist2}(2) is similar. 

The remaining case is to identify $d(\varphi^1+\psi^1)_{(3;0,0,0)}=d(\psi^1)_{(3;0,0,0)}$ with (the r.h.s. of) the infinitesimal version of \eqref{exttwist3}, written in the form
\begin{equation}\label{exttwist3_1}
\tilde{\alpha}=\tilde{m}_{\psi_{X,Y},Z}\tilde{\circ}\psi_{X\otimes Y,Z}\tilde{\circ}\varphi(\alpha)\tilde{\circ}\psi^{-1}_{X,Y\otimes Z}\tilde{\circ}\tilde{m}^{-1}_{X,\psi_{Y,Z}}
\end{equation}
The order $t$ term of the r.h.s. of \eqref{exttwist3_1} is:
\begin{equation}\label{exttwist3_2}
\begin{aligned}
\ &(\id_{(X\otimes Y)\otimes Z}+tm_{\psi^1_{X,Y},Z})\tilde{\circ} (\id_{(X\otimes Y)\otimes Z}+t\psi^1_{X\otimes Y,Z})\tilde{\circ}
((\id+t\varphi^1)\circ \alpha)\tilde{\circ}\\
&\tilde{\circ}(\id_{X\otimes (Y\otimes Z)}-t\psi^1_{X,Y\otimes Z})\tilde{\circ}
(\id_{X\otimes (Y\otimes Z)}-tm_{X,\psi^1_{Y,Z}})=\\
&(\id+t\varphi^1)\Big((\id-t\varphi^1)(\id+tm_{\psi^1_{X,Y},Z})\circ (\id+t\varphi^1)(\id+t\psi^1_{X\otimes Y, Z})\circ \alpha\circ\\
&\circ (\id-t\varphi^1)(\id-t\psi^1_{X,Y\otimes Z})\circ (\id-t\varphi^1)(\id-tm_{X,\psi^1_{Y,Z}})\Big)
\end{aligned}
\end{equation}
(we made use that $\tilde{m}_{X,\id}=\tilde{m}_{\id,Y}=\id$, which results in $m_{X,\id}=\id$ and $m^1_{X,\id}=0$ etc).
We compute the r.h.s. of \eqref{exttwist3_2} modulo $t^2$ (using $\phi^1(\id)=0$):
\begin{equation}\label{exttwist3_3}
\begin{aligned}
\ &t\gamma_{X,Y,Z}\circ\alpha=\\
&(\id+t\varphi^1)\Big((\id+tm_{\psi^1_{X,Y},Z})\circ (\id+t\psi^1_{X\otimes Y,Z})\circ \alpha\circ (\id-t\psi^1_{X,Y\otimes Z})\circ(\id-tm_{X,\psi^1_{Y,Z}})\Big)
\end{aligned}
\end{equation}
Thus,
\begin{equation}\label{exttwist3_4}
\begin{aligned}
\ &\gamma_{X,Y,Z}=\Big(\varphi^1(\alpha)+m_{\psi^1_{X,Y},Z}\circ \alpha+\psi^1_{X\otimes Y,Z}\circ \alpha-\alpha\circ \psi^1_{X,Y\otimes Z}-\alpha\circ m_{X,\psi^1_{Y,Z}}\Big)\circ \alpha^{-1}=\\
&\varphi^1(\alpha)\circ\alpha^{-1}+m_{\psi^1_{X,Y},Z}+\psi^1_{X\otimes Y,Z}-\alpha\circ \psi^1_{X,Y\otimes Z}\circ\alpha^{-1}-\alpha\circ m_{X,\psi^1_{Y,Z}}\circ \alpha^{-1}
\end{aligned}
\end{equation}
The first summand of the r.h.s. of \eqref{exttwist3_4} vanishes, because 
$$
\varphi^1(\alpha)\overset{\eqref{relmain22}}{=}\varphi^1(\id)\circ \alpha =0
$$
The 4 remaining summands are the 4 summands in $d(\psi^1)_{(3;0,0,0)}$.

\end{proof}

\subsection{\sc Infinitesimal deformation theory of a strict monoidal functor}
The following Theorem is proven analogously to but easier than Theorem \ref{infdefcat}, and we leave details to the reader.

\begin{theorem}\label{definffun}
Let $C,D$ be $\k$-linear (or dg over $\k$) monoidal categories, $F\colon C\to D$ a monoidal functor. 
The second cohomology $H^2(\Tot_{\Theta_2}A(C,D)(F,F)(\id,\id))$ is isomorphic to the equivalence classes of infinitesimal deformations of the functor $F$.
\end{theorem}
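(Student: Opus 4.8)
The plan is to run the argument of the proof of Theorem~\ref{infdefcat} one cohomological degree lower, replacing the deformation data of a monoidal category by that of a monoidal functor. By Theorem~\ref{theornorm} we may work throughout with the normalized complex $A_\norm(C,D)(F,F)(\id,\id)$. Keeping the object assignment $F\colon\Ob C\to\Ob D$ and the unit constraint fixed, an infinitesimal deformation of the strong monoidal functor $F$ consists of (a) a deformation $\tilde F_{X,Y}=F_{X,Y}+t\cdot\mu_{X,Y}$ of the action of $F$ on morphisms, with $\mu_{X,Y}\in\Hom_\k(C(X,Y),D(FX,FY))$, and (b) a deformation $\tilde\phi_{X,Y}=\phi_{X,Y}\circ(\id+t\cdot\nu_{X,Y})$ of the monoidal constraint $\phi_{X,Y}\colon FX\otimes FY\to F(X\otimes Y)$, with $\nu_{X,Y}\in D(FX\otimes FY,FX\otimes FY)$. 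A degree $2$ element $\pi\in\Tot_{\Theta_2}A(C,D)(F,F)(\id,\id)$ is supported on the two trees of dimension $2$, namely $([1];[1])$ and $([2];[0],[0])$; write $\pi=\mu+\nu$ accordingly. Unwinding the explicit description of Section~\ref{secaexpl1} with $F=G$, $\eta=\theta=\id$, the $([1];[1])$-component is a family of linear maps $C(X,Y)\to D(FX,FY)$ and the $([2];[0],[0])$-component lands in $D(FX\otimes FY,FX\otimes FY)$; these I identify with the data (a) and (b).

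The core of the proof is to verify that the cocycle equation $d\pi=0$ is, component by component, the infinitesimal form of the strong monoidal functor axioms of Definition~\ref{deflaxfunctor}. The boundary $d\pi$ has degree $3$ and decomposes over the trees $([1];[2])$, $([2];[1],[0])$, $([2];[0],[1])$ and $([3];[0],[0],[0])$. I expect the following dictionary: the $([1];[2])$-component, fed only by $d\mu$, yields functoriality $\mu(\sigma_2\circ^v\sigma_1)=\mu(\sigma_2)\circ^v F(\sigma_1)+F(\sigma_2)\circ^v\mu(\sigma_1)$, a Hochschild-type identity; the two mixed components $([2];[1],[0])$ and $([2];[0],[1])$, fed by both $d\mu$ and $d\nu$, yield the naturality of the deformed constraint in each of its two arguments; and the $([3];[0],[0],[0])$-component, fed only by $d\nu$, yields the hexagon coherence for $\tilde\phi$ against the undeformed associators of $C$ and $D$. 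As in Lemmas~\ref{r5l} and~\ref{r7l}, I would use relations~\eqref{relmain22} and~\eqref{relmain33} together with the normalization $\mu(\id)=0$ to discard all cross-terms in which $\mu$ is evaluated on an associator or a unit $2$-morphism.

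For the coboundaries, a degree $1$ cochain $\omega$ is supported on the single dimension-$1$ tree $([1];[0])$, hence is a family $\omega_X\in D(FX,FX)$, which is precisely an infinitesimal monoidal natural endomorphism $\psi_X=\id+t\cdot\omega_X$ of $F$, i.e.\ a twist. Its boundary $d\omega$ has $([1];[1])$-component $\sigma\mapsto\omega_Y\circ F(\sigma)-F(\sigma)\circ\omega_X$ and $([2];[0],[0])$-component the conjugation of $\phi_{X,Y}$ by $\omega$; these are exactly the first-order effects of twisting $F$ by $\psi$, so coboundaries correspond to the trivial (twist) deformations. Passing to cohomology then identifies $H^2(\Tot_{\Theta_2}A(C,D)(F,F)(\id,\id))$ with infinitesimal deformations of $F$ modulo twists, which is the asserted space of equivalence classes.

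The main labour, exactly as in Theorem~\ref{infdefcat}, is the sign-accurate verification that the four families of degree-$3$ homogeneous components of $d\pi=0$ reproduce the infinitesimal functor axioms with no residual terms. This step is genuinely lighter here: since the associators of $C$ and $D$ are not being deformed, the $([3];[0],[0],[0])$-equation is a direct linear identity among the $\nu_{X,Y}$ rather than the pentagon identity with a deformed associator, so no analogue of the full computation~\eqref{r72} is required, and the matching of the mixed $([2];[1],[0])$ and $([2];[0],[1])$ components is correspondingly shorter. This is the sense in which the present theorem is easier than Theorem~\ref{infdefcat}.
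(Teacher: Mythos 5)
Your proposal is correct and is exactly the argument the paper intends: the paper's own ``proof'' of Theorem \ref{definffun} is the single remark that it is proven analogously to, but more easily than, Theorem \ref{infdefcat}, and your dictionary (the degree-$2$ trees $([1];[1])$ and $([2];[0],[0])$ carrying the morphism-level and constraint-level deformation data, the four degree-$3$ components of $d\pi=0$ giving functoriality, the two naturalities of $\tilde\phi$, and the hexagon, and degree-$1$ cochains giving infinitesimal twists) is precisely the one-degree-lower analogue of that proof. The only point left implicit that a full write-up should record is the unit coherence axiom for $\tilde\phi$, which, as with (R8)--(R10) in the monoidal-category case, is disposed of by the normalization $\mu(\id)=0$, the assumption that the unit constraint is undeformed, and relations \eqref{relmain33}.
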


By Theorem \ref{defftheorem}, $\Tot_{\Theta_2}A(C,D)(F,F)$ is a homotopy 2-algebra. In fact, one can construct a dg Lie algebra on $\Tot_{\Theta_2}A(C,D)(F,F)[1]$ directly (without any use of loc.cit.), and to develop, via the Maurer-Cartan equation and the deformation functor associated to dg Lie algebra formalism, the ``global'' deformation theory for $F\colon C\to D$ over $\k[[t]]$.

\appendix

\section{\sc Relations in $\Theta_2$}\label{relationstheta}\label{appendixa}
One has the following relations between the elementary face and degeneracy maps in $\Theta_2$, which are checked straightforwardly.

\begin{equation}\label{reltheta1}
D_{q,\sigma^\prime}D_{p,\sigma}=D_{p,\sigma}D_{q-1,\sigma^\prime}\text{   if   }p<q-1
\end{equation}

\begin{equation}\label{reltheta2}
D_{q,\sigma_2}D_{q-1,\sigma_1}=D_{q-1,\eta_2}D_{q-1,\eta_1}
\end{equation}
Here is an explanation of the notations: any $(a,b)$-shuffle $\sigma_1$ and $(a+b,c)$-shuffle $\sigma_2$ define uniquely a $(b,c)$-shuffle $\eta_1$ and an $(a,b+c)$-shuffle $\eta_2$ such that $\sigma_2\circ (\sigma_1,\id_c)=\eta_2\circ (\id_a,\eta_1)$ (the latter is an $(a,b,c)$-shuffle).

\begin{equation}\label{reltheta3}
\partial_p^j\partial_q^i=\partial_q^i\partial_p^j\text{   if   }p\ne q
\end{equation}

\begin{equation}\label{reltheta4}
\partial_p^j\partial_p^i=\partial_p^i\partial_p^{j-1}\text{   if   }i<j
\end{equation}

\begin{equation}\label{reltheta5}
\begin{aligned}
\ &D_{q,\sigma}\partial_p^j=\partial_{p+1}^j D_{q,\sigma} \text{   if   }p>q\\
&D_{q,\sigma}\partial_p^j=\partial_{p}^j D_{q,\sigma}\text{   if   }p<q
\end{aligned}
\end{equation}

\begin{equation}\label{reltheta6}
D_{p,\sigma}\partial_p^i=\begin{cases}
\partial^a_p D_{p,\bar{\sigma}} &\text{ if   }\sigma^{-1}(\overrightarrow{i,i+1})=\overrightarrow{a,a+1}\in [0,k_p]\\
\partial^b_{p+1}D_{p,\bar{\sigma}}&\text{   if   }\sigma^{-1}(\overrightarrow{i,i+1})=\overrightarrow{b,b+1}\in [k_p,k_p+k_{p+1}]
\end{cases}
\end{equation}
where $\bar{\sigma}$ is the shuffle obtained from $\sigma$ by collapsing $\sigma^{-1}(\overrightarrow{i,i+1})$, and $\{k_s\}$ is used as in Section \ref{facetheta}, (F2).

\begin{equation}\label{reltheta7}
\begin{aligned}
\ & \partial_{p}^iD_\min=D_\min \partial_{p-1}^i\text{   if   }p\ge 1\\
&D_{p,\sigma} D_\min=D_\min D_{p-1,\sigma}\text{   if   }p\ge 1
\end{aligned}
\end{equation}
and similarly for $D_\max$.

\begin{equation}
    \epsilon^j_p \circ \epsilon^i_q = \epsilon^i_q \circ \epsilon^j_p \text{  if  } p\neq q 
\end{equation}
\begin{equation}
    \epsilon^j_p \circ \epsilon^i_p = \epsilon^i_p \circ \epsilon^{j-1}_p \text{  if  } i \leq j 
\end{equation}

\begin{equation}\label{reltheta10}
\Upsilon_0^q\circ \Upsilon_0^p=\Upsilon_0^p\circ \Upsilon_0^{q+1}\text{  if  }p\le q
\end{equation}

\begin{equation}
 \Upsilon^q_\ell \circ \epsilon^j_p = \begin{cases}\epsilon^j_{p-1}\circ \Upsilon^q_\ell &\text{  if  } p > q+1 \\
  \epsilon^j_{p}\circ \Upsilon^q_\ell &\text{  if  } p \le q\\
  \Upsilon^q_{\ell+1}&\text{   if   }p=q+1
  \end{cases}
\end{equation}

\begin{equation}
    \partial^i_p \circ \epsilon^j_q = \epsilon^j_q \circ \partial^i_p \text{  if  } p\neq q
\end{equation}

\begin{equation}
    \epsilon^j_p \circ \partial^i_p = \begin{cases}
       \partial^i_p\circ \epsilon^{j-1}_p &\text{  if  } i < j; \\
       id & \text{  if  } i = j, j+1; \\
       \partial^{i-1}_p \circ \epsilon^j_p & \text{  if  } i > j+1.
    \end{cases}
\end{equation}

\begin{equation}\label{reltheta14}
    \Upsilon^q_\ell \circ \partial^j_p = \begin{cases}
       \partial^{j}_{p-1}\circ \Upsilon^q_\ell &\text{  if  } p > q+1; \\
       \partial^j_p \circ \Upsilon^q_\ell & \text{  if  } p \le q\\
       \Upsilon^q_{\ell-1}&\text{   if   }p=q+1
    \end{cases}
\end{equation}

\begin{equation}
D_{q,\sigma}\circ \epsilon^i_p=\begin{cases}\epsilon^i_{p+1}\circ D_{q,\sigma}& \text{   if   }q<p\\
\epsilon^i_{p}\circ D_{q,\sigma}& \text{   if   }q>p\\
       \epsilon^a_p \circ D_{q,\sigma'}&\text{   if   } q=p, \sigma^{-1}(\overrightarrow{i,i+1})=\overrightarrow{a,a+1}\in [0,k_p]\\
       \epsilon^b_{p+1}\circ D_{q,\sigma'}&\text{   if   }q=p, \sigma^{-1}(\overrightarrow{i,i+1})=\overrightarrow{b,b+1}\in [k_p,k_p+k_{p+1}]
    \end{cases}
\end{equation}
where $\sigma^\prime$ is obtained from $\sigma$ by adding a new element (blowing up) at $\sigma^{-1}(\overrightarrow{i,i+1})$.

\begin{equation}\label{reltheta16}
    \Upsilon^q_{0} \circ D_{p,\sigma} = \begin{cases}
       D_{p,\sigma}\circ \Upsilon^{q-1}_{0} &\text{  if  } p<q \\
       D_{p-1,\sigma}\circ \Upsilon^q_0 & \text{  if  } p > q+1\\
       \id&\text{   if   }p=q, \sigma=(0,k_p+k_{p+1})\\
              \id&\text{   if   }p=q+1, \sigma=(k_p+k_{p+1},0)
    \end{cases}
\end{equation}

\begin{equation}
    D_{\min}\circ \epsilon^i_p = \epsilon^i_{p+1}\circ D_{\min}
\end{equation}

\begin{equation}
    D_{\max}\circ \epsilon^i_p = \epsilon^i_{p}\circ D_{\max}
\end{equation}
\comment
\begin{equation}
    D_{\min}\circ \Upsilon^q_{0} = \begin{cases}
       \Upsilon^{q+1}_0\circ D_{\min} & \text{  if  } q > 0; \\
       \id & \text{  if  } q = 0.
    \end{cases}
\end{equation}
\endcomment
\begin{equation}
\Upsilon^q_0\circ D_\min=\begin{cases}
D_\min\circ \Upsilon^{q-1}_0&\text{  if  }q>0;\\
\id&\text{  if  }q=0
\end{cases}
\end{equation}
\comment
\begin{equation}
    D_{\max}\circ \Upsilon^q_0 = \begin{cases}
       \Upsilon^{q}\circ D_{\max} & \text{  if  } q < n+1; \\
       \id & \text{  if  } q = n+1.
    \end{cases}
\end{equation}
\endcomment
\begin{equation}
\Upsilon^q_0\circ D_\max=\begin{cases}
D_\max\circ \Upsilon^q_0&\text{  if  }q<n+1;\\
\id&\text{  if  }q=n+1
\end{cases}
\end{equation}

\section{\sc A proof of Proposition \ref{propdeltaaction}}\label{sec.proofpropactiondelta}\label{appendixb}
Here we give a proof of Proposition \ref{propdeltaaction}:
\begin{proof}
Let $\phi \in \mathfrak{R}^{\ldot}_{[n]}$. We first want to show that $\Upsilon^q_\Delta(d\phi) = d\Upsilon^q_\Delta(\phi)$:
\begin{equation*}
\begin{split}
\Upsilon^q_\Delta\left( \sum_{s=1}^n\sum_{i=0}^{\kappa_s}(-1)^{\kappa_1 + \dots + \kappa_{s-1}+s-1+i}\phi_{s,i} \right) = \left( \sum_{s=1}^n\sum_{i=0}^{\kappa_s}(-1)^{\kappa_1 + \dots + \kappa_{s-1}+s-1+i}\phi\circ \partial^i_s \right) \circ \Upsilon^q_0 = \\
 = \sum_{s=1, s\neq q+1}^{n+1}\sum_{i=0}^{\kappa_s}(-1)^{\kappa_1 + \dots + \kappa_{s-1}+s-1+i}(\phi\circ \Upsilon^q_0) \circ \partial^i_s = d\Upsilon^q_\Delta(\phi)
\end{split}
\end{equation*}
where the second equality follows from \eqref{reltheta14}. The reason for excluding $s=q+1$ by the summation is due to the fact that there is no face map with codomain $[0]$, which is the $q^{th}$ interval.

Now we would like to show that $\Omega^0_{\Delta}(d\phi) = - d\Omega^0_\Delta(\phi)$: 
\begin{equation*}
    \begin{split}
        &\Omega^0_\Delta \left( \sum_{s=1}^n\sum_{i=0}^{\kappa_s}(-1)^{\kappa_1 + \dots + \kappa_{s-1}+s-1+i}\phi_{s,i} \right) = \left( \sum_{s=2}^n\sum_{i=0}^{\kappa_s}(-1)^{\kappa_2 + \dots + \kappa_{s-1}+s-1+i}\phi\circ \partial^i_s \right) \circ D_{min} = \\
        &- \sum_{s=2}^{n}\sum_{i=0}^{\kappa_s}(-1)^{\kappa_2 + \dots + \kappa_{s-1}+s-2+i}(\phi\circ D_{\min}) \circ \partial^i_{s-1} = \\
        &- \sum_{s=1}^{n-1}\sum_{i=0}^{\kappa'_s}(-1)^{\kappa'_1 + \dots + \kappa'_{s-1}+s-1+i}(\phi\circ D_{\min}) \circ \partial^i_{s} = -d\Omega^0_\Delta(\phi)
    \end{split}
\end{equation*}
the first equality comes from the fact that $k_1 = 0,$ so that there is no $\partial^i_1\colon \dots \to [0]$; the second equality follows from \eqref{reltheta7}, and we define $\kappa'_i \coloneqq \kappa_{i+1}$, for each $i = 1,\dots, n-1$.

One can similarly prove that $\Omega^{n}_{\Delta}(d\phi) = - d\Omega^n_\Delta(\phi)$.

Now we would like to show that $\Omega^p_{\Delta}(d\phi) = - d\Omega^p_\Delta(\phi)$:

\begin{equation*}
    \begin{split}
        &\Omega^p_\Delta \left( \sum_{s=1}^n\sum_{i=0}^{\kappa_s}(-1)^{\kappa_1 + \dots + \kappa_{s-1}+s-1+i}\phi_{s,i} \right) = \\
        &= \left( \sum_{s=1}^n\sum_{i=0}^{\kappa_s}(-1)^{\kappa_1 + \dots + \kappa_{s-1}+s-1+i}\phi\circ \partial^i_s \right) \circ \left( \sum_\sigma (-1)^{\ell_1 + \dots + \ell_{p-1}+p-1 +\sharp(\sigma)} D_{p,\sigma} \right) =  \\
        &(a) + (b) + (c)
    \end{split}
\end{equation*}
where: \[ (a) = \left( \sum_{s=1}^{p-1}\sum_{i=0}^{\kappa_s}(-1)^{\kappa_1 + \dots + \kappa_{s-1}+s-1+i} \phi\circ \partial^i_s \right) \circ \left( \sum_{\sigma}(-1)^{\ell_1 + \dots + \ell_{p-1}+p-1 +\sharp(\sigma)}D_{p,\sigma} \right) \]
\[ (b) = \left( \sum_{s=p,p+1}\sum_{i=0}^{\kappa_s}(-1)^{\kappa_1 + \dots + \kappa_{s-1}+s-1+i} \phi\circ \partial^i_s \right) \circ \left( \sum_{\sigma}(-1)^{\ell_1 + \dots + \ell_{p-1}+p-1 +\sharp(\sigma)}D_{p,\sigma} \right) \]
\[ (c) = \left( \sum_{s=p+2}^{n}\sum_{i=0}^{\kappa_s}(-1)^{\kappa_1 + \dots + \kappa_{s-1}+s-1+i} \phi\circ \partial^i_s \right) \circ \left( \sum_{\sigma}(-1)^{\ell_1 + \dots + \ell_{p-1}+p-1 +\sharp(\sigma)}D_{p,\sigma} \right) \]
Using \eqref{reltheta5} we easily get:
\begin{equation*}
    \begin{split}
    (a) = \left( \sum_{s=1}^{p-1}\sum_{i=0}^{\kappa_s}(-1)^{\kappa_1 + \dots + \kappa_{s-1}+s-1+i} \phi\circ \partial^i_s \right) \circ \left( \sum_{\sigma}(-1)^{\ell_1 + \dots + \ell_{p-1}+p-1 +\sharp(\sigma)}D_{p,\sigma} \right) = \\
    -\sum_{s=1}^{p-1}\sum_{i=0}^{\kappa_s}\left(\sum_\sigma (-1)^{\kappa_1 + \dots + \kappa_{s-1}+s-1+i+\ell'_1 + \dots + \ell'_{p-1}+p-1 +\sharp(\sigma) } \phi\circ D_{p,\sigma}\circ \partial^i_s \right) = \\
    - \sum_{s=1}^{p-1}\sum_{i=0}^{\kappa_s} (-1)^{\kappa_1 + \dots + \kappa_{s-1}+s-1+i} \left(\sum_\sigma (-1)^{\ell'_1 + \dots + \ell'_{p-1}+p-1 +\sharp(\sigma) } \phi\circ D_{p,\sigma} \right) \circ \partial^i_s = \\
    - \left(\sum_\sigma (-1)^{\ell'_1 + \dots + \ell'_{p-1}+p-1 +\sharp(\sigma) } \phi\circ D_{p,\sigma} \right) \circ \left( \sum_{s=1}^{p-1}\sum_{i=0}^{\kappa_s} (-1)^{\kappa_1 + \dots + \kappa_{s-1}+s-1+i} \partial^i_s \right) = - (a')
    \end{split}
\end{equation*}
where: \[ \ell'_j = \begin{cases}
   \ell_j &\text{  if  } j \neq s \\
   \ell_s+1 &\text{otherwise}
\end{cases}\] 
Similarly:
\begin{equation*}
    \begin{split}
        (c) = \left( \sum_{s=p+2}^{n}\sum_{i=0}^{\kappa_s}(-1)^{\kappa_1 + \dots + \kappa_{s-1}+s-1+i} \phi\circ \partial^i_s \right) \circ \left( \sum_{\sigma}(-1)^{\ell_1 + \dots + \ell_{p-1}+p-1 +\sharp(\sigma)}D_{p,\sigma} \right) = \\
        - \sum_{s=p+2}^{n}\sum_{i=0}^{\kappa'_s} \left( \sum_\sigma (-1)^{\kappa'_1 + \dots + \kappa'_{s-2} +s-2+i+\ell_1+ \dots +\ell_{p-1}+p-1+\sharp(\sigma)} \phi\circ D_{p,\sigma}\circ \partial^i_{s-1} \right) = \\
        - \sum_{s=p+1}^{n-1}\sum_{i=0}^{\kappa'_s} (-1)^{\kappa'_1 + \dots + \kappa'_{s-1}+s-1+i} \left( \sum_\sigma (-1)^{\ell_1+ \dots +\ell_{p-1}+p-1+\sharp(\sigma)} \phi\circ D_{p,\sigma} \right) \circ \partial^i_s = \\
        - \left( \sum_\sigma (-1)^{\ell_1+ \dots +\ell_{p-1}+p-1+\sharp(\sigma)} \phi\circ D_{p,\sigma} \right) \circ \left( \sum_{s=p+1}^{n-1}\sum_{i=0}^{\kappa'_s} (-1)^{\kappa'_1 + \dots + \kappa'_{s-1}+s-1+i}\partial^i_s \right) = -(c')
    \end{split}
\end{equation*}
where: \[ \kappa'_i = \begin{cases}
   \kappa_i &\text{  if  } i < p ; \\
   \kappa_p + \kappa_{p+1} &\text{  if  } i = p ; \\
   \kappa_{i+1} &\text{  if  } i > p ;
\end{cases}\]

The last and more tricky summand is the following, which follows from \eqref{reltheta6}:
\begin{equation*}
    \begin{split}
        (b) = \left( \sum_{s=p,p+1}\sum_{a=0}^{\kappa_s}(-1)^{\kappa_1 + \dots + \kappa_{s-1}+s-1+a} \phi\circ \partial^a_s \right) \circ \left( \sum_{\sigma}(-1)^{\ell_1 + \dots + \ell_{p-1}+p-1 +\sharp(\sigma)}D_{p,\sigma} \right) = \\
        - \sum_{i=0}^{\kappa'_p}\left(\sum_{\tilde{\sigma}} (-1)^{\kappa'_1+ \dots +\kappa'_{p-1}+p-1+j+\ell_1+ \dots +\ell_{p-1}+p-1+\sharp(\tilde{\sigma})} \phi\circ D_{p,\tilde{\sigma}} \circ \partial^i_p \right) = \\
        - \left( \sum_{\sigma}(-1)^{\ell_1+ \dots +\ell_{p-1}+p-1+\sharp(\sigma)} \phi\circ D_{p,\sigma} \right) \circ \left( \sum_{i=0}^{\kappa'_p}(-1)^{\kappa'_1+ \dots +\kappa'_{p-1}+p-1+i}\partial^i_p\right) = -(b')
    \end{split}
\end{equation*}
where: \[ \kappa'_i = \begin{cases}
   \kappa_i &\text{  if  } i < p ; \\
   \kappa_p + \kappa_{p+1} &\text{  if  } i = p.
\end{cases}\]
and $\tilde{\sigma}$ is the ``extended" shuffle, which is explained in the following lemma:
\begin{lemma}
Let $\partial^a_p$ and $D_{p,\sigma}=(\alpha,\beta)\colon [\kappa'_p]\to [\kappa_p]\times[\kappa_{p+1}]$. Let $i\coloneqq \min\{j \in [\kappa'_p]\; |\; \alpha(j)= a \}$. 
Then we set $D_{p,\tilde{\sigma}} \coloneqq (\tilde{\alpha},$ $\tilde{\beta})\colon [\kappa'_p+1]\to [\kappa_p+1]\times[\kappa_{p+1}]$, where:
\[ \tilde{\alpha}(j) = \begin{cases}
   \alpha(j) &\text{  if  } j<i; \\
   a &\text{  if  } j=i; \\
   \alpha(j-1)+1 &\text{  if  } j>i.
\end{cases} \;\;\; \text{    and    }\;\;\;  \tilde{\beta}(j) = \begin{cases}
   \beta(j) &\text{  if  } j\leq i; \\
   \beta(j-1) &\text{  if  } j>i.
\end{cases} \]
Then we have: 
\begin{equation*}
    \begin{aligned}
        &(-1)^{\kappa_1 + \dots + \kappa_{p-1}+p-1+a+\ell_1 + \dots + \ell_{p-1}+p-1 +\sharp(\sigma)} \partial^a_p\circ D_{p,\sigma} = \\
        -&(-1)^{ \kappa'_1+ \dots +\kappa'_{p-1}+p-1+j+\ell_1+ \dots +\ell_{p-1}+p-1+\sharp(\tilde{\sigma})} D_{p,\tilde{\sigma}}\circ \partial^i_p
    \end{aligned}
\end{equation*}
and similarly for $\partial^b_{p+1}$ and $D_{p,\sigma}$.
\end{lemma}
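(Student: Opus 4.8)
The plan is to separate the statement into its \emph{unsigned} content---an identity between two morphisms of $\Theta_2$---and its \emph{sign} content, a parity bookkeeping, and to dispatch the first using the relations already collected in Appendix~\ref{appendixa}.

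First I would establish the morphism identity $\partial^a_p\circ D_{p,\sigma}=D_{p,\tilde\sigma}\circ\partial^i_p$ in $\Theta_2$, forgetting signs. This is nothing but relation \eqref{reltheta6} read in the opposite direction: \eqref{reltheta6} says that pushing a vertical face $\partial^i_p$ through a deshuffling $D_{p,\sigma}$ produces $\partial^a_p\circ D_{p,\bar\sigma}$, where $\bar\sigma$ is obtained from $\sigma$ by \emph{collapsing} the arrow $\overrightarrow{i,i+1}$. Since the operation $\tau\mapsto\tilde\tau$ of \emph{inserting} a new first-column arrow is inverse to collapsing, it suffices to check that the extended shuffle $\tilde\sigma$ given by the explicit formulas for $\tilde\alpha,\tilde\beta$ is precisely the one whose collapse at $\overrightarrow{i,i+1}$ returns $\sigma$, and that $i=\min\{\,j:\alpha(j)=a\,\}$ is the unique index for which the inserted arrow $\overrightarrow{i,i+1}$ of $\tilde\sigma$ is sent to the arrow $\overrightarrow{a,a+1}$ of the first column. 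Both facts are immediate from the displayed definitions: one reads off that $\overrightarrow{i,i+1}$ is an $\tilde\alpha$-arrow mapping to $\overrightarrow{a,a+1}$, that the second-column datum $\beta$ is merely reindexed, and that deleting this arrow recovers $(\alpha,\beta)=D_{p,\sigma}$.

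Next I would reduce the signed equality to a parity statement. With the two morphisms known to coincide, the assertion is exactly $(-1)^{E_L}=-(-1)^{E_R}$ for the two exponents in the statement. Cancelling the summand $\ell_1+\dots+\ell_{p-1}+(p-1)$ common to both, and using $\kappa'_s=\kappa_s$ for $s<p$ so that the prefixes $\kappa_1+\dots+\kappa_{p-1}$ agree, this collapses to a single congruence relating $a$, $i$, $\sharp(\sigma)$ and $\sharp(\tilde\sigma)$ modulo $2$; concretely, one must show that the change $\sharp(\tilde\sigma)-\sharp(\sigma)$ in the shuffle statistic is compatible with the shift of the inserted index from $a$ (its position in the first column) to $i$ (its position in the amalgamated column).

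The computation of $\sharp(\tilde\sigma)-\sharp(\sigma)$ is the main obstacle and is the real content of the embedded sub-lemma. Here I would argue directly from the picture of $\tilde\sigma$ as $\sigma$ with one extra first-column arrow spliced in immediately adjacent to the $a$-th first-column arrow: the new arrow forms inversions precisely with the second-column arrows lying on one fixed side of it, a number determined by $i$ and $a$ (by construction $i-a$ counts the second-column arrows preceding position $i$). Matching this count against the explicit exponents, while keeping careful track of the $0$- versus $1$-indexing built into $\partial^a$, $\partial^i$ and into the end-point-preserving maps $\alpha=p^{*}$, $\beta=q^{*}$, is the delicate point and yields the claimed global minus sign. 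Finally, the case of $\partial^b_{p+1}$ is handled by the identical argument applied to the second branch of \eqref{reltheta6}, the only change being that the spliced arrow now belongs to the second column and inversions are counted against the first-column arrows; I would merely indicate this symmetry rather than repeat the bookkeeping.
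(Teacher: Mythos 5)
Your overall strategy is sound, and since the paper's own proof of this lemma is the single word ``Straightforward,'' you are supplying exactly what the paper omits. The unsigned identity $\partial^a_p\circ D_{p,\sigma}=D_{p,\tilde\sigma}\circ\partial^i_p$ is indeed \eqref{reltheta6} read backwards; your check that $\tilde\sigma$ collapses back to $\sigma$, and that $i=\min\alpha^{-1}(a)$ is the \emph{unique} admissible insertion index, is correct and genuinely needed (it is what makes the terms of $(b)$ and $(b')$ correspond bijectively rather than many-to-one); and your count that exactly $i-a$ second-column arrows precede the inserted arrow is also correct.

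The gap is in the final step, which you label ``the delicate point'' and then assert rather than carry out — and the assertion is exactly where the content of the lemma lives, since the whole statement is the global minus sign. With $\sharp$ taken to be the inversion number of the shuffle (the only natural reading; the paper never defines it), your own count gives $\sharp(\tilde\sigma)-\sharp(\sigma)=i-a$. Cancelling the prefix sums (which agree modulo $2$ because $\kappa'_s=\kappa_s$ for $s<p$) and reading the undefined exponent ``$j$'' as $i$, the difference of the two exponents is $a-i+\sharp(\sigma)-\sharp(\tilde\sigma)=2(a-i)\equiv 0\pmod 2$, so the two signed terms come out \emph{equal}, not opposite. The smallest example confirms this: for $\kappa_p=1$, $\kappa_{p+1}=0$ both shuffles are trivial, $\sharp(\sigma)=\sharp(\tilde\sigma)=0$, the matching is $i=a$, and both sides carry the sign $(-1)^a$. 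So before the proof is complete you must pin down what $\sharp(\sigma)$ denotes, or what the index ``$j$'' in the right-hand exponent really is, and show that this choice produces the odd parity; as written, your closing sentence claims a sign that the computation you propose does not yield. Separately, $\{j:\alpha(j)=a\}$ is empty for the top coface $a=\kappa_p+1$, so the definition of $i$ — and hence your bijection between the $(a,\sigma)$ and $(i,\tilde\sigma)$ indexings — needs a word about that boundary case.
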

\begin{proof}
Straightforward.
\end{proof}
Now the desired equality: $\Omega^p_{\Delta}(d\phi) = - d\Omega^p_\Delta(\phi)$ follows since \[ d\Omega^p_\Delta(\phi) = (a')+(b')+(c'). \]

(2) Now we need to prove that $\Omega^p_\Delta$ and $\Upsilon^q_\Delta$ satisfy the simplicial identities.
The first
\[ \Omega^p_\Delta\circ\Omega^q_\Delta = \Omega^{q-1}_\Delta\circ\Omega^p_\Delta \text{  if  } p < q; \]
follows directly from \eqref{reltheta1} for $p < q-1$; for $p=q-1$, it follows from \eqref{reltheta2}.

The second identity
\[ \Upsilon^p_\Delta\circ\Upsilon^q_\Delta = \Upsilon^{q+1}_\Delta\circ\Upsilon^{p}_\Delta \text{  if  } p \leq q; \]
follows directly from \eqref{reltheta10}.

Now the last identity: \[ \Omega^p_\Delta \circ \Upsilon^q_\Delta = \begin{cases}
   \Upsilon^{q-1}_\Delta\circ\Omega^p_\Delta &\text{  if  } p < q; \\
   \id &\text{  if  } p=q,q+1; \\
   \Upsilon^{q}_\Delta\circ\Omega^{p-1}_\Delta &\text{  if  } p >q+1.
\end{cases} \]
easily follows from \eqref{reltheta16}.
\end{proof}

\bigskip

\noindent{\small P.P.:
 {\sc Universiteit Antwerpen, Campus Middelheim, Wiskunde en Informatica, Gebouw G\\
Middelheimlaan 1, 2020 Antwerpen, Belgi\"{e}}}

\bigskip

\noindent{{\it e-mail}: {\tt panero1994@gmail.com}}

\bigskip

\noindent{\small B.S.:
 {\sc Euler International Mathematical Institute\\
10 Pesochnaya Embankment, St. Petersburg, 197376 Russia }}

\bigskip

\noindent{{\it e-mail}: {\tt shoikhet@pdmi.ras.ru}}

\end{document}